\newtheorem{theorem}{Theorem}[section]
\newtheorem{lemma}[theorem]{Lemma}
\newtheorem{prop}[theorem]{Proposition}
\newtheorem{corollary}[theorem]{Corollary}
\newtheorem{observation}[theorem]{Observation}
\newtheorem{remark}[theorem]{Remark}
\theoremstyle{definition}
\newtheorem{definition}[theorem]{Definition}
\definecolor{shadecolor}{named}{GreenYellow}
\newcommand{\al}[1]{\begin{align*}#1\end{align*}}
\newcommand{\algn}[1]{\begin{align}#1\end{align}}
\newcommand{\eqq}[1]{\begin{equation}#1\end{equation}}
\newcommand{\p}{\mathbb P}
\newcommand{\pla}{\mathbb P_\lambda}
\newcommand{\E}{\mathbb E}
\newcommand{\R}{\mathbb R}
\newcommand{\Rd}{\mathbb R^d}
\newcommand{\N}{\mathbb N}
\newcommand{\B}{\mathcal B}
\newcommand{\dd}{\, \mathrm{d}}
\newcommand{\C}{\mathscr {C}}
\newcommand{\thinn}[1]{\langle #1 \rangle}
\newcommand{\piv}[1]{\textsf {Piv}(#1)}
\newcommand{\conn}[3]{#1 \longleftrightarrow #2\textrm { in } #3}
\newcommand{\nconn}[3]{#1 \centernot\longleftrightarrow #2\textrm { in } #3}
\newcommand{\dconn}[3]{#1 \Longleftrightarrow #2\textrm { in } #3}
\newcommand{\ndconn}[3]{#1 \centernot\Longleftrightarrow #2\textrm { in } #3}
\newcommand{\xconn}[4]{#1 \xleftrightarrow{\,\,#4\,\,} #2\textrm { in } #3}
\newcommand{\offconn}[4]{#1 \longleftrightarrow #2\textrm { in } #3 \textrm{ off } #4}
\newcommand{\viaconn}[4]{#1 \longleftrightarrow #2\textrm { in } #3 \textrm{ through } #4}
\newcommand{\sqconn}[3]{#1 \leftrightsquigarrow #2\textrm { in } #3 }
\newcommand{\sqarrow}{\leftrightsquigarrow}
\newcommand{\thinning}[2]{#1_{\langle #2 \rangle}}
\newcommand{\tlam}{\tau_\lambda}
\newcommand{\tlamo}{\tau_\lambda^\circ}
\newcommand{\tlame}{\tlam^{(\varepsilon)}}
\newcommand{\tillam}{\tilde\tau_\lambda}
\newcommand{\tklam}{\tau_{\lambda,k}}
\newcommand{\tilklam}{\tilde\tau_{\lambda,k}}
\newcommand{\ftlam}{\widehat\tau_\lambda}
\newcommand{\ftillam}{\widehat{\tilde\tau}_\lambda}
\newcommand{\ftklam}{\widehat\tau_{\lambda,k}}
\newcommand{\fpilam}{\widehat\Pi_\lambda}
\newcommand{\vardbtilde}[1]{\tilde{\raisebox{0pt}[0.85\height]{$\tilde{#1}$}}}
\newcommand{\dtilklam}{\vardbtilde{\tau}_{\lambda,k}}
\newcommand{\fdtilklam}{\widehat{\vardbtilde{\tau}}_{\lambda,k}}
\newcommand{\trilam}{\triangle_\lambda}
\newcommand{\trilamo}{\triangle^\circ_\lambda}
\newcommand{\trilamoo}{\triangle^{\circ\circ}_\lambda}
\newcommand{\trilame}{\triangle^{(\varepsilon)}_\lambda}
\newcommand{\greensmu}{G_\mu}
\newcommand{\gmu}{G_{\mulam}}
\newcommand{\fgreensmu}{\widehat G_\mu}
\newcommand{\fgmu}{\widehat G_{\mulam}}
\newcommand{\cmu}{C_{\mulam}}
\newcommand{\fcreensmu}{\widehat C_\mu}
\newcommand{\fcmu}{\widehat C_{\mulam}}
\newcommand{\phiint}{q_\connf}
\newcommand{\phiintL}{q_{\connf_L}}
\newcommand{\mulam}{\mu_\lambda}
\newcommand{\orig}{\mathbf{0}}
\newcommand{\e}{\text{e}}
\renewcommand{\i}{\text{i}}
\newcommand{\parl}{\mathrel{\raisebox{-0.05 cm}{\includegraphics{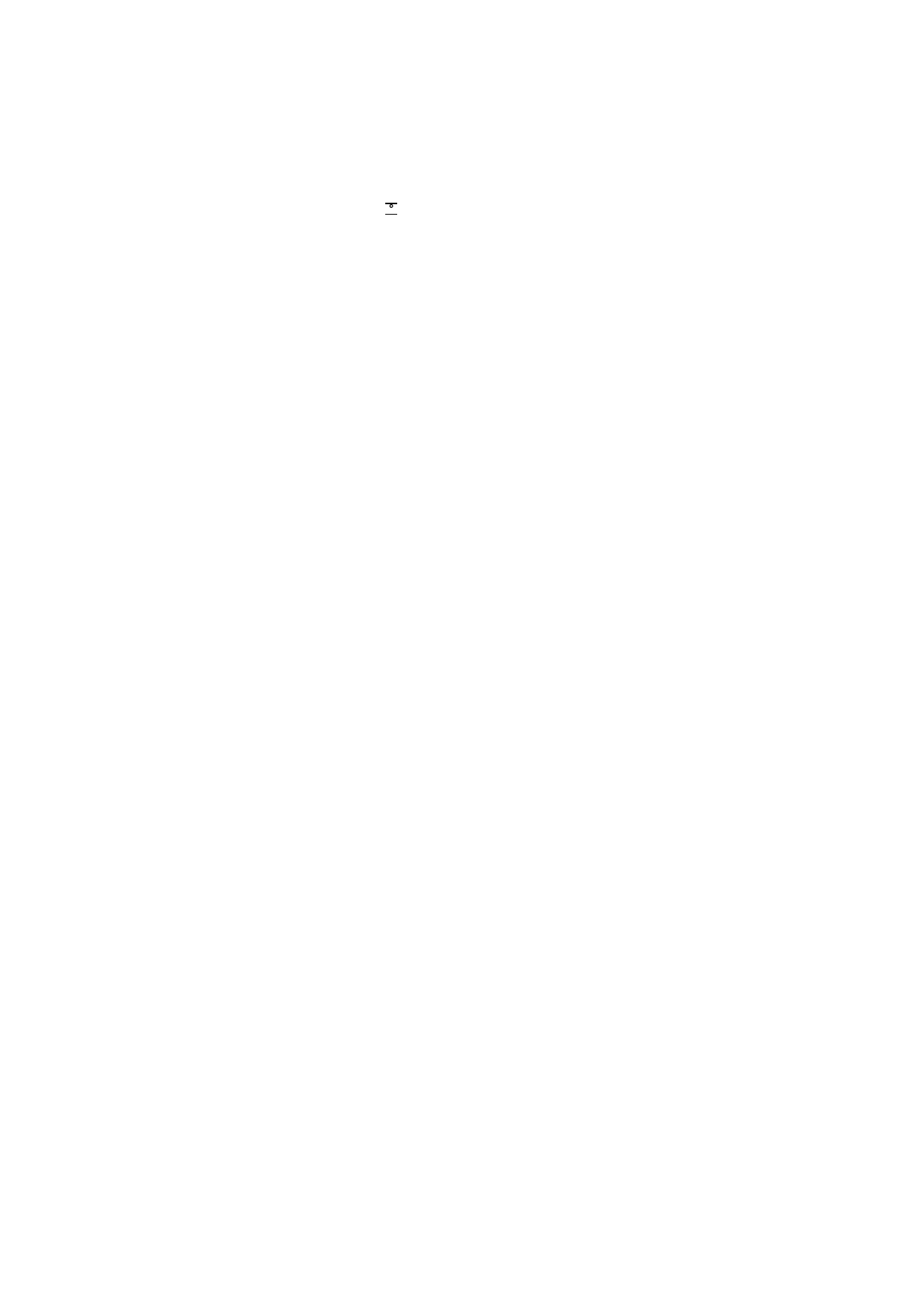}}} \hspace{-0.05cm}}
\newcommand{\connf}{\varphi}
\newcommand{\fconnf}{\widehat\connf}
\newcommand{\unitball}{\mathbb B^d}
\newcommand{\ballep}{\mathsf B^{(\varepsilon)}}
\newcommand{\const}{c_f}
\definecolor{darkorange}{RGB}{255,165,0}
\definecolor{altviolet}{RGB}{139,0,139}
\definecolor{turquoise}{RGB}{64,224,208}
\definecolor{lblue}{RGB}{173,216,230}
\definecolor{violet}{RGB}{238,130,238}
\definecolor{darkgreen}{RGB}{0,100,0}
\newcommand{\twodoin}[1]{\todo[inline, backgroundcolor=green!20!white,bordercolor=red]{#1}}
\newcommand{\col}[1]{#1}
\newcommand{\ch}[1]{#1}
\newcommand{\chr}[1]{#1}
\newcommand{\cov}[1]{#1}
\newcommand{\red}[1]{#1}
\newcommand{\eqn}[1]{\begin{equation}#1\end{equation}}
\newcommand{\eqan}[1]{\begin{align}#1\end{align}}
\newcommand{\nn}{\nonumber}
\newcommand{\sss}{\scriptscriptstyle}
\numberwithin{equation}{section}
\title{Lace Expansion and Mean-Field Behavior\linebreak for the Random Connection Model}
\author{Markus Heydenreich\footnote{Universität Augsburg, Institut für Mathematik, 86135 München, Germany; Email: markus.heydenreich@uni-a.de} \and Remco van der Hofstad\footnote{Eindhoven University of Technology, Department of Mathematics and Computer Science, P.O. Box 513, 5600 MB Eindhoven, The Netherlands; Email: r.w.v.d.hofstad@tue.nl} \and G\"unter Last\footnote{Karlsruher Institut für Technologie (KIT), Institut für Stochastik, Postfach 6980, 76049 Karlsruhe, Germany; Email: guenter.last@kit.edu} \and Kilian Matzke\footnote{Ludwig-Maximilians-Universität München, Mathematisches Institut, Theresienstr.\ 39, 80333 München, Germany; Email: kilianmatzke@web.de}
}
\date{}
\begin{document}
\maketitle

\vspace{-1em}

{\centering{ June 7, 2022}\par}

\vskip-3em

\begin{abstract}
We study the random connection model 
driven by a stationary Poisson process.
In the first part of the paper, we derive a 
lace expansion with remainder term in the continuum and bound the coefficients using
a new version of the BK inequality. For our main results, 
we consider three versions of the
  connection function $\connf$: a finite-variance version (including
  the Boolean model), a spread-out version, and a long-range
  version. For sufficiently large dimension (resp., spread-out parameter and $d>6$), we then prove the convergence of the lace expansion,
derive the triangle condition, and
  establish an infra-red bound. From this, mean-field behavior of the
  model can be deduced. As an example, we show that the critical
  exponent $\gamma$ takes its mean-field value $\gamma=1$ and that the
  percolation function is continuous.
\end{abstract}

\noindent\emph{Mathematics Subject Classification (2010).} 60K35, 82B43, 60G55.

\noindent\emph{Keywords and phrases.} Random connection model, continuum percolation,
lace expansion, mean-field behavior, triangle condition, Ornstein-Zernike equation

{\footnotesize
\tableofcontents
}

\section{Introduction}
\subsection{The random connection model}
Consider a stationary Poisson point process (PPP) $\eta$ on $\Rd$ with intensity $\lambda \geq 0$
along with a measurable \emph{connection function}
$\connf\colon \Rd \to [0,1]$. 
We assume that
\eqq{\connf(x) = \connf(-x)\qquad (x\in\Rd),
 \label{eq:phisymmetry}} 
as well as 
\eqq{\col{0 < \phiint := \int\connf(x) \dd x < \infty}. \label{eq:phisummability}} 
\ch{A careful reader might observe that  without loss of generality  we could set $\phiint=1$ by rescaling space, in fact we make use of this in Section \ref{sec:rescaling}. However, since such a normalization is not common in stochastic geometry, we state our results for general $\phiint$.} 
Suppose any two distinct points $x,y \in\eta$ form an edge with probability
$\connf(y-x)$ independently of all other pairs and independently of
$\eta$. This yields the random connection model (RCM), an undirected
random graph denoted by $\xi$, whose vertex set $V(\xi)$ is $\eta$ and
whose edge set we denote by $E(\xi)$. We point to
Section~\ref{sec:literature} for a brief literature overview of the
RCM.

We stress the difference between $\eta$ and $\xi$, as the former is
used to denote a random set of \emph{points}, whereas the latter is a
\emph{random graph}, which contains complete information about $\eta$
as well as the additional information about edges between the points
of $\eta$. It is convenient to define $\xi$ on a probability space
$(\Omega, \mathcal F, \pla)$ \cov{with associated expectation operator
$\E_\lambda$} and to treat $\lambda$ as a parameter.

For $x,y \in \Rd$, we use $\eta^x$ and $\xi^x$ (respectively,
$\eta^{x,y}$ and $\xi^{x,y}$) to denote a PPP and an RCM
augmented by $x$ (respectively, by $x$ and $y$). For $\xi$, this
includes the random set of edges incident to $x$ or $\{x,y\}$, with
the edge probabilities governed by $\connf$. We refer to
Section~\ref{sec:constructionofRCM} for a formal construction of the
model.

For $x,y \in \Rd$, we write $x \sim y$ if $\{x,y\} \in E(\xi)$ and say that $x$ and $y$
are \emph{neighbors} (or \emph{adjacent}). We say 
that $x$ and $y$ are \emph{connected} and write $\conn{x}{y}{\xi}$ if
either $x=y$ or if there is a \emph{path} in $\xi$ connecting $x$ and
$y$---that is, there are distinct
$x=v_0,v_1, \ldots, v_k, v_{k+1}=y \in\eta$ (with $k\in\N_0=\N\cup\{0\}$) such
that $v_i \sim v_{i+1}$ for $0 \leq i \leq k$. 
Given $x,y\in\R^d$, we shall use this definition also with $(\eta^x,\xi^x)$ (resp.\ $\eta^{x,y},\xi^{x,y}$) in place of $(\eta,\xi)$.
In particular we can then talk about the event $\{\conn{x}{y}{\xi^{x,y}}\}$.
For $x\in\Rd$, we define $\C(x) = \C(x, \xi^x)= \{y \in \eta^x: \conn{x}{y}{\xi^x} \}$ to be
the \emph{cluster} of $x$.

We define the pair-connectedness (or two-point) function $\tlam\colon \Rd \to [0,1]$ to be
	\eqq{\tlam(x) = \pla (\conn{\orig}{x}{\xi^{\orig,x}}), \label{eq:two-point-fct}}
where $\orig$ denotes the origin in $\Rd$. The two-point function can also be defined as $\tlam\colon\Rd\times\Rd\to[0,1]$ via $\tlam(x,y) = \pla (\conn{x}{y}{\xi^{x,y}})$. The two definitions relate by translation invariance, as $\tlam(x,y) = \tlam(x-y)$, and we stick to~\eqref{eq:two-point-fct} throughout this paper. Observe that $\lambda \mapsto \tlam(x)$ is \chr{non-decreasing.} We next define the \emph{percolation function} $\lambda \mapsto \theta(\lambda)$ as
	\[\theta(\lambda) = \pla(|\mathscr C(\orig)| = \infty ), \]
where $| \mathscr C(x)|$ denotes the number of points in $\mathscr C(x)$. Note that $|\C(\orig)|$ has the same distribution as $|\C(x)|$ for any $x \in\Rd$ due to translation invariance. We next define the \emph{critical value} for the RCM as
	\[\lambda_c = \inf \{ \lambda \geq 0: \theta(\lambda)>0\}.\]
To state our main theorem, for an (absolutely) integrable function $f\colon\Rd\to \R$, we define the Fourier transform of $f$ to be
	\[\widehat f(k) = \int \e^{\i k\cdot x} f(x) \dd x \qquad (k\in\Rd),\]
where $k\cdot x = \sum_{j=1}^{d} k_j x_j$ denotes the scalar product \chr{and the integral is over $\R^d$}. We next define the \emph{expected cluster size} as
	\eqq{ \chi(\lambda) := \E_\lambda[|\C(\orig)|]= 1+ \lambda \int \tlam(x) \dd x = 1+ \lambda \ftlam(\orig), \label{eq:ftlam_chi_relation} } 
where $\orig$ also denotes the origin in the Fourier dual (which is also $\Rd$). The second, elementary but helpful, identity is proved in~\eqref{eq:prelim:ftlam_chi_relation_proof}. This allows us to define
	\[ \lambda_T := \sup\{ \lambda \geq 0: \chi(\lambda) < \infty\},\]
and to point out that $\lambda_T=\lambda_c$ (proven by Meester~\cite{Mee95}).

Let us now specify $\connf$. \col{In the finite-variance model defined below,} our goal is to obtain a result valid for all dimensions $d \geq d_0$ for some $d_0$, \col{and so} we are interested in a sequence $(\connf_d)_{d\in\N}$, where $\connf_d\colon\Rd \to [0,1]$. Meester et al.~\cite{MeePenSar97} demonstrate a simple way to do this: They take a function $\tilde\connf\colon\R_{\geq 0} \to [0,1]$ and model the RCM such that two points $x,y\in\Rd$ are connected with probability $\tilde\connf(|x-y|)$, where $|\cdot |$ denotes Euclidean distance. The integrability condition for $\connf$ then becomes $\int t^{d-1} \tilde\connf(t) \dd t < \infty$. 

\subsection{Conditions on the connection function}
In the first four sections we will not impose
further restrictions on the connection function.
But for our main results (Theorems \ref{thm:maintheorem} and \ref{thm:mainthmcorollaries} below) we consider, similarly to Heydenreich et al.~\cite{HeyHofSak08}, three
classes (or versions) for $\connf\colon\Rd \to [0,1]$. Those are
the ``finite-variance'' case, the ``finite-variance spread-out'' (or
simply ``spread-out'') case, and the ``long-range spread-out'' (or
simply ``long-range'') case. The second version comes with an extra
spread-out parameter $L$, the third with $L$ as well as another
parameter $\alpha$, controlling the power-law decay of $\connf$. For
each of these three cases, we make several assumptions. 
We give at least one example for each of the three versions.

\paragraph{(H1): Finite-variance model.} We require $\connf$ to satisfy the following three properties: 
\begin{itemize}
\item[(H1.1)] The density $\connf\equiv \connf_d$ has a second moment, i.e. $\int |x|^2 \connf(x) \dd x<\infty$ for all $d$. 
\item[(H1.2)] There is a function $g\colon\mathbb N \to \R_{\geq 0}$ 
\chr{converging to zero as $d\to\infty$ slowly enough so that there exists $\zeta\in(0,1)$ with $g(d)\ge\zeta^d$ and such that the following is true. 
}
For $m \geq 3$, the $m$-fold convolution $\connf^{\star m}$ of $\connf$ satisfies
\[ \red{ \phiint^{-(m-1)} \sup_{x\in\Rd}\connf^{\star m}(x) \leq g(d)} \] 
(we point to the notational remarks in Section~\ref{sec:notation} for a definition of the $m$-fold convolution). For $m=2$, we make the (weaker) assumption that there exists some $\varepsilon$ with $0 \leq \varepsilon < r_d := \pi^{-1/2} \Gamma(\tfrac d2+1)^{1/d}$ (i.e.,~$r_d$ is the radius of the ball of volume $1$) such that
\[  \red{  \phiint^{-1} \sup_{x: |x| \geq \varepsilon\phiint^{1/d}} (\connf\star\connf) (x) \leq g(d)}. \]
\item[(H1.3)] \ch{There are constants $b,c_1,c_2>0$ (independent of $d$) such that} the Fourier transform $\fconnf$ of $\connf$ satisfies 
	\eqq{
	\ch{\inf_{|k|\leq b/\red{\phiint^{1/d}}}  |k|^{-2}\,(1-\phiint^{-1}\fconnf(k)) >c_1 \label{eq:Boolean_D_infraredbound},}}
and
	\eqq{
	\ch{
	\inf_{|k|> b/\red{\phiint^{1/d}}} (1-\phiint^{-1}\fconnf(k)) > c_2 \label{eq:Boolean_D_infraredbound2}.}
	}
\end{itemize}
The assumption (H1.2) will make sure that our results below apply in all \chr{dimensions} $d$ that are larger than some unspecified, but large, value. 

We now present two examples of functions that satisfy (H1). The first
is $\connf(x) = \mathds 1_{\{|x| \leq r\}}$ for some $r>0$, which corresponds to the Poisson
blob model or (spherical) Boolean model. It is the most prominent
example of a continuum-percolation model and in some sense the easiest
of the ones commonly investigated.

Another very natural connection function is the density of a $d$-dimensional (independent) Gaussian, given in its standardized form by $\connf(x)=(2\pi)^{-d/2} \exp(-|x|^2/2)$. Even though the Gaussian case, very much unlike the Poisson blob model, is supported on the whole space $\Rd$, its light tail allows us to treat it in the same way.

\paragraph{(H2): Spread-out model.}
In this version, we introduce a new parameter $L \geq 1$, upon which $\connf=\connf_L$ depends. It describes the range of the model and will be taken to be large. 
Later the dimension $d$ will be taken to be larger than some specified value, depending on the precise model setting. 
We make the following assumptions:

\begin{itemize}
\item[(H2.1)] For every $L \geq 1$, the second moment exists, i.e.~$\int |x|^2 \connf_L(x) \dd x < \infty$.
\item[(H2.2)] There exists a constant $C$ such that, for all $L \geq 1$, $\| \connf_L \|_\infty \leq C L^{-d}$, where $\| \connf_L \|_\infty = \sup_{x\in\Rd} \connf_L(x)$.
\item[(H2.3)] There are constants $b,c_1, c_2>0$ (independent of $L$) such that
	\algn{1- \col{\phiintL^{-1}} \widehat \connf_L(k) \geq \begin{cases} c_1 L^2 |k|^2 &\mbox{for } |k| \leq b L^{-1}, \notag \\
	 	c_2 &\mbox{for } |k| > b L^{-1}. \end{cases}\label{eq:stepdistFTbounds}}
\end{itemize}

In this setup, we will prove our results for all dimensions $d>6$. This indicates that $d>6$ \emph{is} the right requirement for (H1) as well.

Let us now give an example for (H2). Let \col{$h\colon \Rd \to [0,1]$ } satisfy $0 < \int h(x) \dd x<\infty$ and \col{$h(x) = h(-x)$}. Furthermore, \col{assume $\int |x|^2 h(x) \dd x <\infty$} and that the matrix $\big(\int x_jx_lh(x)\dd x\big)_{j,l\in\{1,\ldots,d\}}$ is regular (i.e., invertible). Then 
	\eqq{\connf_L(x) = \col{L^{-d} h(x/L)}\qquad \text{for } x \in\Rd \label{eq:Dspreadoutdefinition}}
satisfies (H2.1)-(H2.3), see Proposition \ref{thm:prelim:conn_fct_examples}. An explicit example in this class is $h(x) = \mathds 1_{\{|x| \leq r\}}$. Another explicit example is $h(x) = \mathds 1_{\{|x_i| \leq b_i \forall i \in [d]\}}$ for a collection $(b_i)_{i=1}^d$ of positive values.

\paragraph{(H3): Long-range spread-out model.}
We introduce an additional parameter $\alpha>0$ (describing the long-range behavior of $\connf$) so that $\connf=\connf_L=\connf_{L,\alpha}$ depends on both $L$ and $\alpha$. As for (H2), we assume later that $d$ is larger than some specified value. 
The assumptions are now as follows:
\begin{itemize}
\item[(H3.1)] For all $0<\varepsilon \leq\alpha$, we have $\int |x|^{\alpha-\varepsilon} \connf_L(x) \dd x < \infty$.
\item[(H3.2)] =(H2.2).
\item[(H3.3)] There are constants $b, c_1, c_2>0$ (independent of $L$) such that
	\al{1- \col{\phiintL^{-1} \widehat \connf_L(k) \geq \begin{cases} c_1 (L|k|)^{\alpha \wedge 2} &\mbox{for } |k| \leq b L^{-1} \text{ and } \alpha \neq 2, \\
	 	c_1 (L|k|)^2 \log \frac{1}{L |k|} &\mbox{for } |k| \leq b L^{-1} \text{ and } \alpha = 2 \end{cases} }}
and
	\[ 1- \col{\phiintL^{-1}} \widehat \connf_L(k) \geq c_2 \text{ for } |k| > b L^{-1}. \]
\end{itemize}

We introduce (H3) in order to model long-range interactions, where $\connf(x)$ decays as some inverse power of $|x|$ as $|x|\to\infty$. 
Our results apply for all dimensions $d>3(\alpha  \wedge 2)$ (where $\alpha  \wedge 2$ denotes the minimum of $\alpha$ and 2) \ch{and also for $d=6$ if $\alpha=2$}. 
Even though the long-range model is defined for
$\alpha>0$, we remark that the interesting new regime arises from
$\alpha \leq 2$, as $\alpha>2$ does not differ from the spread-out
model (H2). As an example, set \eqq{ \col{h(x) = \frac{1}{( |x| \vee
      1)^{d+\alpha}} \quad\qquad \text{for } x
    \in\Rd,} \label{eq:Dlongrangedefinition}} 
  and define $\connf_L$ as
in \eqref{eq:Dspreadoutdefinition}.

The following proposition verifies the required conditions for the
respective examples. It is proved in Appendix~\ref{sec:appendix}.
\begin{prop}[Verification of conditions for connection-function
  examples] \label{thm:prelim:conn_fct_examples} \
\begin{compactitem}
\item[(a)] There exists $\rho \in (0,1)$ \chr{independent of $d$} such that the Boolean as well
  as the Gaussian \ch{densities} satisfy (H1.1)-(H1.3) with $g(d) = \rho^d$.
\item[(b)] 
The density defined in (and above) 
  \eqref{eq:Dspreadoutdefinition} satisfies (H2.1)-(H2.3).
\item[(c)] The density defined in~\eqref{eq:Dspreadoutdefinition} with
  $h$ as in~\eqref{eq:Dlongrangedefinition} satisfies (H3.1)-(H3.3).
\end{compactitem}
\end{prop}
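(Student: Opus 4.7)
The plan is to verify each of the conditions (H1)--(H3) in turn for the specified example.

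\textbf{Part (a).} Conditions (H1.1) and (H1.3) reduce to direct calculations. The Gaussian has second moment $d$, and for the Boolean with $r$ of order $r_d$, $\int |x|^2 \connf(x)\dd x$ is proportional to $r^{d+2}$. For (H1.3), a Taylor expansion of the normalized Fourier transform gives $\phiint^{-1}\fconnf(k)=1-\tfrac12\sigma^2|k|^2+O(|k|^4)$, where $\sigma^2>0$ is the per-coordinate variance of $D_0:=\connf/\phiint$; since both densities are isotropic, $\sigma^2$ is easily computed and the infra-red bound follows. The main work is (H1.2). For the Gaussian, $D_0^{\star m}$ is the density of $N(0,mI_d)$, so $\|D_0^{\star m}\|_\infty = (2\pi m)^{-d/2} \leq (6\pi)^{-d/2}$ uniformly in $m\geq 3$, and the $m=2$ bound on $\{|x|\geq \varepsilon\}$ is immediate from the explicit Gaussian form. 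For the Boolean with $r=r_d$ (so $\phiint=1$), Stirling gives $r_d^2/(d+2)\to 1/(2\pi e)$, and either a local CLT or a Bessel-function Fourier inversion yields $\|D_0^{\star m}\|_\infty = (e/m)^{d/2}(1+o(1))$, which for $m\geq 3$ provides the desired exponential decay. The $m=2$ bound becomes a high-dimensional geometric statement on the exponential decay of the lens volume $|B_{r_d}\cap(x+B_{r_d})|$ for $|x|$ exceeding a suitable fraction of $r_d$.

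\textbf{Part (b).} The substitution $y=x/L$ gives $\int|x|^2\connf_L(x)\dd x = L^2\int|y|^2h(y)\dd y$, establishing (H2.1), while (H2.2) is immediate from $h\leq 1$. For (H2.3), set $h_0:=h/\int h$, so that $\phiintL^{-1}\widehat\connf_L(k) = \widehat{h_0}(Lk)$. A second-order Taylor expansion yields $1-\widehat{h_0}(u) = \tfrac12 u^\top M u + O(|u|^{2+\delta})$, and invertibility of the second-moment matrix $M$ produces $1-\widehat{h_0}(u)\geq c_1|u|^2$ for small $|u|$. For $|u|>b$, strict positivity of $1-\widehat{h_0}$ away from zero, combined with the Riemann--Lebesgue lemma and a compactness argument on intermediate frequencies, supplies the uniform lower bound $c_2>0$.

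\textbf{Part (c).} After rescaling, (H3.1) follows by spherical integration using $h(x) \leq (|x|\vee 1)^{-(d+\alpha)}$, and (H3.2) is identical to (H2.2). The core is (H3.3). Writing
\[1-\widehat{h_0}(u) = \int(1-\cos(u\cdot x))\,h_0(x)\dd x\]
and splitting the integral at $|x|\sim 1/|u|$, a classical analysis via the symmetric $\alpha$-stable domain of attraction gives $1-\widehat{h_0}(u)\asymp|u|^{\alpha}$ for $0<\alpha<2$, the logarithmic boundary $|u|^2\log(1/|u|)$ at $\alpha=2$, and $\asymp|u|^2$ for $\alpha>2$; in the last case the second moment of $h_0$ is finite and the argument from (H2.3) applies. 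The large-$u$ bound is handled as in part (b).

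The principal obstacle is (H1.2) for the Boolean example: extracting exponential decay $\rho^d$ for $\|D_0^{\star m}\|_\infty$ requires careful dimensional tracking, either via a dimension-sensitive local CLT or via Bessel-function asymptotics in the regime $r_d^2 \sim d/(2\pi e)$. The logarithmic boundary at $\alpha=2$ in (H3.3) is the other delicate point; all remaining steps reduce to routine Fourier or Gaussian manipulations.
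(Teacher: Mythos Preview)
Your outline is correct, and for parts (b) and (c) it matches the paper's argument essentially verbatim: the scaling $\phiintL^{-1}\fconnf_L(k)=\widehat{h_0}(Lk)$, Taylor expansion with the regular second-moment matrix for (H2.3), and the split of the integral at $|x|\sim 1/|k|$ for (H3.3) are exactly what the paper does.

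The two places where your approach diverges from the paper are both in part (a), for the Boolean model.

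\textbf{(H1.2), $m\ge 3$.} You propose a dimension-sensitive local CLT or a Bessel--Fourier inversion and correctly flag this as the main obstacle. The paper's route is more elementary and sidesteps this entirely. First, $\|\connf^{\star(m+1)}\|_\infty \le \|\connf^{\star m}\|_\infty\|\connf\|_1=\|\connf^{\star m}\|_\infty$, so it suffices to treat $m=3$. By log-concavity the supremum is at the origin, and
\[
\connf^{\star 3}(\orig)=\int_{\unitball}|\unitball(\orig)\cap\unitball(y)|\dd y.
\]
Splitting over $\{|y|\le \delta r_d\}$ and $\{|y|>\delta r_d\}$ and applying your own lens-volume estimate to the second piece gives $\connf^{\star 3}(\orig)\le \delta^d+(1-\delta^2/4)^{d/2}$ directly. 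No CLT or Bessel asymptotics are needed; the $m=2$ geometric estimate already does all the work.

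\textbf{(H1.3).} Your Taylor argument verifies $\liminf_{k\to\orig}(1-\fconnf(k))/|k|^2>0$ for each fixed $d$, which is all the condition literally asks. The paper works harder: it expands $\fconnf$ via the Bessel series and uses Stirling to show that the leading coefficient is bounded below and the remainder above \emph{uniformly in $d$}. This uniformity is what the downstream random-walk estimates actually need (the constants there are claimed independent of $d$), though it is not part of the proposition's statement. Your argument suffices for the proposition as written; the paper's suffices for what comes after.
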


We set $\alpha=\infty$ for (H1) and (H2). This convention allows us to
refrain from tedious case distinctions in later statements. In order
to state the main theorem, we introduce a parameter $\beta$, dependent
on the version of $\connf$, as \eqq{ \beta := \begin{cases}
    g(d)^{\frac 14} &\mbox{for (H1),} \\ L^{-d} & \mbox{for (H2),
      (H3)}. \end{cases} \label{eq:def:beta}} The function $g$ in the
definition of $\beta$ is the same as in (H1.2). Our methods crucially
depend on the fact that $\beta$ can be made arbitrarily small. Not
only is $\beta$ important for the statement of the main theorem, it
will also appear prominently throughout the paper. Whenever we speak
of small $\beta$ in this paper, we refer to large $d$ for (H1) and to
large $L$ (with \emph{fixed} $d$) for (H2), (H3). In particular,
whenever the Landau notation $\mathcal O(\beta)$ appears, the
asymptotics are $d\to\infty$ for (H1) and $L\to\infty$ for (H2) and
(H3).

\subsection{Main results}

The main contribution of this paper is the establishment of the triangle condition as well as the infra-red bound for the RCM in dimension $d> 3(\alpha \wedge2)$  (\ch{and also for $d=6$ if $\alpha=2$}) and for $\beta$ sufficiently small. This is achieved using the lace expansion. To formulate our main theorem, we define the \emph{triangle} by
	\eqn{
	\label{triangle-defs}
	\triangle_\lambda(x):= \lambda^2\iint \tlam(z)\tlam(y-z)\tlam(x-y) \dd z \dd y, \qquad \text{and } \qquad \triangle_\lambda := \sup_{x\in\Rd} \triangle_\lambda(x).
	}
Then by the \emph{triangle condition}, we mean that $\triangle_{\lambda_c}<\infty$. Our main result is the following theorem. 
\begin{theorem}[Infra-red bound and triangle condition]\label{thm:maintheorem}~
\begin{enumerate}
\item If $\connf$ satisfies (H1), then there is \ch{$d^* >6$} and a
  constant $C$ such that, for all $d\geq d^*$, \eqq{
    \col{\lambda|\ftlam(k)| \leq \frac{|\fconnf(k)| + C
        \beta}{\fconnf(\orig)-\fconnf(k)}} \qquad
    (k\in\Rd), \label{eq:mainthm:IRB}} as well as
  $\trilam \leq C\beta$, and both bounds are uniform in
  $\lambda\in[0,\lambda_c]$ (the right-hand side
  of~\eqref{eq:mainthm:IRB} is understood to be $+\infty$ for
  $k=\orig$).
\item \ch{Let $d>3(\alpha \wedge 2)$ (when $\alpha\neq2$) or $d \geq 6$ (when $\alpha=2$).} If $\connf$ satisfies (H2) or (H3), then there \ch{are} $L^* \geq 1$ and $C$ such that~\eqref{eq:mainthm:IRB} and $\trilam\leq C\beta$ both hold uniformly in $\lambda\in[0,\lambda_c]$ for all $L \geq L^*$.
\end{enumerate}
\end{theorem}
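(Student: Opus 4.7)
The overall plan is to combine the lace expansion (with its remainder term) and the BK-type diagrammatic bounds for the expansion coefficients, both derived earlier in the paper, with a standard but technically delicate bootstrap argument of Hara--Slade type, adapted to the continuum setting. The lace expansion produces an Ornstein--Zernike-type identity of the schematic form $\tau_\lambda = \Pi_\lambda + \lambda\,\Pi_\lambda \star \varphi \star \tau_\lambda + R_\lambda$, whose Fourier transform rearranges to
\[
\widehat\tau_\lambda(k) \;=\; \frac{\widehat\Pi_\lambda(k) + \widehat R_\lambda(k)}{1 - \lambda\,\widehat\varphi(k)\widehat\Pi_\lambda(k)}.
\]
The BK inequality (in its continuum incarnation developed earlier) bounds each $\Pi_\lambda^{(N)}$ and the remainder $R_\lambda$ by products of two-point functions, and hence by (powers of) the triangle $\triangle_\lambda$. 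So once we know $\triangle_\lambda$ and various convolution norms of $\tau_\lambda$ are small, the denominator $1-\lambda\widehat\varphi(k)\widehat\Pi_\lambda(k)$ stays bounded away from $0$ in a controlled way, and \eqref{eq:mainthm:IRB} follows, with $\triangle_\lambda\le C\beta$ extracted from it via Parseval.

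The crux is therefore the bootstrap that simultaneously controls $\widehat\tau_\lambda$ on the whole Fourier side and yields smallness of $\triangle_\lambda$. I would follow the standard recipe and define three bootstrap functions on $[0,\lambda_c)$, morally
\[
f_1(\lambda) = \lambda\widehat\varphi(\orig), \qquad
f_2(\lambda) = \sup_{k} \frac{\lambda|\widehat\tau_\lambda(k)|}{\widehat\varphi(\orig)^{-1}\bigl(\widehat\varphi(\orig)-\widehat\varphi(k)\bigr)^{-1}},
\]
together with an analog $f_3(\lambda)$ capturing second-difference bounds on $\widehat\tau_\lambda$ (needed in order to bound the convolution $\tau_\lambda\star\tau_\lambda$ pointwise via Fourier inversion, and hence to close the loop on $\triangle_\lambda$). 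The scheme is then: (i) $f_i$ is continuous in $\lambda$ on $[0,\lambda_c)$, because $\widehat\tau_\lambda$ depends continuously on $\lambda$ as long as $\chi(\lambda)<\infty$, which is the case by $\lambda_T=\lambda_c$; (ii) $f_i(0)$ is explicitly small (say $\le 1$), since at $\lambda=0$ the graph has no edges; (iii) the key \emph{improvement step}: if $f_i(\lambda)\le K$ for all $i$ with $K$ a fixed constant and $\beta$ is small enough, then in fact $f_i(\lambda)\le K/2$; (iv) combining (i)--(iii) gives $f_i\le K$ throughout $[0,\lambda_c)$ by a standard connectedness argument, and extending to $\lambda_c$ follows from monotone convergence $\tau_\lambda\nearrow\tau_{\lambda_c}$ together with the uniform bounds.

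The improvement step (iii) is where all the real work happens, and I expect it to be the main obstacle. It requires translating the a priori bounds $f_i(\lambda)\le K$ into bounds on the diagrammatic estimates for $\widehat\Pi_\lambda$ and $\widehat R_\lambda$: each diagram reduces to (powers of) $\triangle_\lambda$ times a factor accounting for the single connection function $\varphi$ that the BK argument produces, and the assumption (H1.2) or (H2.2) is precisely what allows one to conclude that these diagrams are of size $\mathcal O(\beta)$. One then plugs these bounds back into the Ornstein--Zernike identity above, using (H1.3) or (H2.3)/(H3.3) to bound $\widehat\varphi(\orig)-\widehat\varphi(k)$ from below and thus control the denominator. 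The hypothesis $d>3(\alpha\wedge 2)$ enters here because it makes the triangle convergent (via the infra-red bound on $\varphi$ and $\tau_\lambda$). For (H1) we need $d\ge d^*>12$ since the single-loop bound uses (H1.2) at $m=2$, and closing the triangle requires $d>6$ twice in a nested way. Once the improvement is achieved and the bootstrap closes, \eqref{eq:mainthm:IRB} is read off directly from the Ornstein--Zernike representation, while $\triangle_\lambda\le C\beta$ follows from the infra-red bound by writing $\triangle_\lambda(x)$ via Fourier inversion as $\int |\widehat\tau_\lambda(k)|^3 \mathrm{e}^{-\mathrm{i}k\cdot x}\,\mathrm{d}k/(2\pi)^d$ and using (H1.3)/(H2.3)/(H3.3) to estimate the integral uniformly in $x$.
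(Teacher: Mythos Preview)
Your high-level strategy matches the paper: lace expansion to an Ornstein--Zernike identity, BK-type diagrammatic bounds on $\Pi_\lambda$, and a Hara--Slade bootstrap with three functions including a second-difference control $f_3$. But several specifics are off, and one of them would make the argument fail as written. First, the OZE in the paper is $\tau_\lambda=(\varphi+\Pi_\lambda)+\lambda(\varphi+\Pi_\lambda)\star\tau_\lambda$, so the Fourier denominator is $1-\lambda(\widehat\varphi+\widehat\Pi_\lambda)$, not $1-\lambda\widehat\varphi\,\widehat\Pi_\lambda$. Second, the paper does \emph{not} compare $\widehat\tau_\lambda$ directly to $(\widehat\varphi(\orig)-\widehat\varphi(k))^{-1}$ in $f_2$; it introduces the $\lambda$-dependent parameter $\mu_\lambda:=1-1/\widehat\tau_\lambda(\orig)$ and sets $f_2(\lambda)=\sup_k|\widehat\tau_\lambda(k)|/\widehat G_{\mu_\lambda}(k)$ with $\widehat G_{\mu_\lambda}(k)=(1-\mu_\lambda\widehat\varphi(k))^{-1}$. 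This moving normalization forces the ratio to equal $1$ at $k=\orig$ for every $\lambda$ and is what makes the forbidden-region step $f\le 3\Rightarrow f\le 2$ and the continuity argument tractable. Your explanation of $d>12$ is also not right: it comes from the random-walk estimate $d>4s$ with $s=3$ needed to bound $\int|\widehat\varphi|^m\widehat G_\mu^3$ (Proposition~\ref{thm:randomwalkestimates}), not from ``$d>6$ twice nested''.

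The step that would actually fail is your closing argument for $\triangle_\lambda\le C\beta$ via Fourier-inverting $|\widehat\tau_\lambda|^3$. On $\mathbb R^d$ the infra-red bound only gives $\lambda|\widehat\tau_\lambda(k)|\le(|\widehat\varphi(k)|+C\beta)/(\widehat\varphi(\orig)-\widehat\varphi(k))$, and the constant $C\beta$ in the numerator is not integrable at infinity, so $\widehat\tau_\lambda\notin L^3(\mathbb R^d)$ a priori. The paper gets around this by first bounding $\tau_\lambda\le\widetilde\tau_\lambda:=\varphi+\lambda(\varphi\star\tau_\lambda)$ (Observation~\ref{obs:tildebounds}), which inserts extra factors of $\widehat\varphi$ into the Fourier integrals and restores integrability; the triangle bound is then obtained \emph{inside} the bootstrap (Lemma~\ref{lem:triangle-bubble-consequence-bound}) through the auxiliary quantities $V_\lambda^{(m,n)}$, rather than as an after-the-fact consequence of~\eqref{eq:mainthm:IRB}. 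This non-compactness of the Fourier dual is precisely one of the continuum-specific obstacles the paper flags.
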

Theorem~\ref{thm:maintheorem} has multiple consequences (some of them are listed in Theorem~\ref{thm:mainthmcorollaries}), such as asymptotics of $\lambda_c$ (as $d \to \infty$ or $L\to\infty$) and continuity of $\lambda \mapsto \theta(\lambda)$. Furthermore, Theorem~\ref{thm:maintheorem} enables us to prove mean-field behavior in the sense that several critical exponents take their mean-field values. For example, the exponent $\gamma$ is the dimension-dependent value that governs the predicted behavior
	\eqq{ \chi(\lambda) \approx (\lambda_c-\lambda)^{-\gamma} \qquad \text{as $\lambda \nearrow \lambda_c$}. \label{eq:def:gamma}}
This definition of $\gamma$ already assumes a certain behavior of $\chi(\lambda)$. It is also predicted that for $d>d_c$, where $d_c$ is the \emph{upper critical dimension} believed to be $d_c=6$ for percolation, $\gamma$ no longer depends on the dimension (it takes its \emph{mean-field value}). We prove that the critical exponent $\gamma$ exists (in a bounded-ratio sense) and takes its mean-field value 1. 
\red{We refer to the recent preprint \cite{CaiceDicks23}, where further critical exponents are derived,} 
and to the book~\cite{HeyHof17}, where other exponents (in bond percolation on the lattice) are discussed.

\begin{theorem}[The critical point and $\gamma=1$]\label{thm:mainthmcorollaries}
Under (H1), there is $d^*>\ch{6}$ such that for all $d \geq d^*$, and under (H2) or (H3) 
\ch{in dimension $d>3(\alpha\wedge2)$ (or $d=6$ if $\alpha=2$)}, there is $L^*\geq 1$ such that for all $L \geq L^*$
	\eqq{ \lambda (\lambda_c-\lambda)^{-1} \leq \chi(\lambda) \leq \lambda(1+C\beta) (\lambda_c-\lambda)^{-1} \qquad \text{for } \lambda < \lambda_c, \label{eq:critexp_gamma_boundedratio}}
that is, the critical exponent $\gamma$ takes its mean-field value $1$. \col{Under (H1), $C=C(d^*)$ and under (H2) or (H3), $C=C(d,L^*)$}. Furthermore, $\theta(\lambda_c) =0$ and $1 \leq \lambda_c \phiint\leq 1+ C\beta$.
\end{theorem}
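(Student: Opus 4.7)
The plan is to base the entire argument on the Ornstein--Zernike identity at $k=\orig$ that is an immediate consequence of the lace expansion derived earlier in the paper. Setting $k=\orig$ (and absorbing the remainder via the triangle condition of Theorem~\ref{thm:maintheorem}), the expansion yields
\[\chi(\lambda) \;=\; \frac{1}{1 - \lambda \widehat M_\lambda}, \qquad \widehat M_\lambda := \phiint + \fpilam(\orig).\]
The BK-type lace-expansion bounds proved in the body of the paper, combined with $\trilam \leq C\beta$, give $|\fpilam(\orig)| \leq C\beta$ uniformly in $\lambda \in [0, \lambda_c]$, together with monotonicity and continuity of $\lambda \mapsto \widehat M_\lambda$ on $[0, \lambda_c]$.

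To identify $\lambda_c$, I would invoke Meester's theorem $\lambda_c = \lambda_T$: since $\chi$ diverges as $\lambda \nearrow \lambda_c$, the denominator in the OZ identity must vanish there, forcing $\lambda_c \widehat M_{\lambda_c} = 1$. Combined with $|\fpilam(\lambda_c,\orig)| \leq C\beta$, this gives $\lambda_c \phiint = 1 - \lambda_c \fpilam(\lambda_c, \orig) \leq 1 + C\beta$. For the matching lower bound $\lambda_c \phiint \geq 1$, I would use a standard Galton--Watson stochastic-domination argument: conditional on exploration order, the number of new neighbors added at each step is dominated by $\mathrm{Poisson}(\lambda \phiint)$, so $|\C(\orig)|$ is stochastically dominated by the total progeny of a Galton--Watson branching process with $\mathrm{Poisson}(\lambda \phiint)$ offspring. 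The latter has finite mean whenever $\lambda \phiint < 1$, hence $\lambda_T \geq 1/\phiint$.

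For the two-sided bound on $\chi(\lambda)$, I would use the algebraic identity
\[\chi(\lambda)^{-1} \;=\; \lambda_c \widehat M_{\lambda_c} - \lambda \widehat M_\lambda \;=\; (\lambda_c - \lambda) \widehat M_\lambda + \lambda_c\bigl(\widehat M_{\lambda_c} - \widehat M_\lambda\bigr).\]
Monotonicity of $\widehat M$ makes the second term non-negative, giving $\chi(\lambda)^{-1} \geq (\lambda_c - \lambda) \widehat M_\lambda \geq (\lambda_c - \lambda)\phiint(1 - C\beta)$; combined with $\lambda_c \phiint = 1 + O(\beta)$ this yields the desired upper bound on $\chi(\lambda)$. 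The matching lower bound uses $|\widehat M_{\lambda_c} - \widehat M_\lambda| \leq C\beta$ to argue that the correction term is negligible relative to the leading $(\lambda_c - \lambda) \widehat M_\lambda$. Finally, $\theta(\lambda_c) = 0$ is a standard consequence of the finite triangle condition $\trilam_{\lambda_c} \leq C\beta < \infty$ via an Aizenman--Barsky--Newman type differential-inequality argument adapted to the continuum.

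The main obstacle I foresee is rigorously establishing continuity and monotonicity of $\lambda \mapsto \widehat M_\lambda$ up to $\lambda = \lambda_c$, which is essential to conclude $\lambda_c \widehat M_{\lambda_c} = 1$ from the divergence of $\chi$. This requires careful control of the remainder term in the lace expansion at criticality and a detailed analysis of the sign-alternating coefficients $\Pi_\lambda$, together with a continuity argument to push the expansion identity to $\lambda = \lambda_c$.
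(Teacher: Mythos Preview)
Your overall strategy—reading off the two-sided bound on $\chi$ directly from the OZ identity $\chi(\lambda)=(1-\lambda\widehat M_\lambda)^{-1}$—differs substantially from the paper's route, and it has a real gap.

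The paper does \emph{not} argue algebraically from the OZ identity for the $\gamma$ bounds. Instead it works with differential inequalities for $\ftlam(\orig)$. The lower bound $\chi(\lambda)\ge\lambda(\lambda_c-\lambda)^{-1}$ comes from integrating $\tfrac{d}{d\lambda}\ftlam(\orig)\le\ftlam(\orig)^2$ (Lemma~\ref{lem:differentialinequalityclustersize}, a direct consequence of Margulis--Russo plus BK), together with an equicontinuity argument to show $1/\ftlam(\orig)\to 0$ at $\lambda_c$. This is elementary and valid in \emph{all} dimensions; the paper explicitly remarks that the lower bound requires neither the lace expansion nor any of (H1)--(H3). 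The upper bound comes from a matching reverse differential inequality $\tfrac{d}{d\lambda}\ftlam(\orig)\ge\ftlam(\orig)^2-\lambda^{-2}\trilam\,\chi(\lambda)^2$, obtained by a more careful Russo--Margulis computation together with the cutting-point/thinning machinery and BK; smallness of $\trilam$ then allows integration as before.

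Your approach, by contrast, hinges on monotonicity of $\widehat M_\lambda=\phiint+\fpilam(\orig)$ in $\lambda$. This is neither established in the paper nor obvious: $\Pi_\lambda=\sum_n(-1)^n\Pi_\lambda^{(n)}$ is an alternating sum of terms each individually increasing in $\lambda$, so no monotonicity of the sum follows. Moreover, your argument for the opposite inequality—that $|\widehat M_{\lambda_c}-\widehat M_\lambda|\le C\beta$ makes the correction $\lambda_c(\widehat M_{\lambda_c}-\widehat M_\lambda)$ ``negligible relative to $(\lambda_c-\lambda)\widehat M_\lambda$''—fails as stated: $C\beta$ is a constant, whereas $(\lambda_c-\lambda)\widehat M_\lambda\to 0$ as $\lambda\nearrow\lambda_c$. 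What you would actually need is a Lipschitz-type estimate $|\widehat M_{\lambda_c}-\widehat M_\lambda|\le C\beta(\lambda_c-\lambda)$, i.e.\ a uniform $O(\beta)$ bound on $\tfrac{d}{d\lambda}\fpilam(\orig)$; the paper does not prove this, and doing so would require additional diagrammatic work beyond what is available.

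For $\theta(\lambda_c)=0$ the paper uses a two-line argument that is far simpler than the Aizenman--Barsky--Newman machinery you invoke: by FKG, $\theta(\lambda_c)^2\le\tau_{\lambda_c}(x)$ for every $x$, and the triangle condition forces $\tau_{\lambda_c}(x)\to 0$ along $|x|\to\infty$.
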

We note that the proof of Theorem~\ref{thm:mainthmcorollaries} gives the stronger result that the lower bound on $\chi(\lambda)$ in~\eqref{eq:critexp_gamma_boundedratio} is valid in all dimensions and does not require any set of assumptions (H1), (H2), or (H3). This implies $\chi(\lambda_c) = \infty$ for the general RCM provided that \eqref{eq:phisummability} is valid. 

As a consequence of
\col{Corollary~\ref{cor:convergenceoflaceexpansion_uniform}}, we get
an explicit identity for $\lambda_c$. In particular, we define a
function $\Pi_\lambda$ in
Proposition~\ref{thm:convergenceoflaceexpansion} that satisfies \eqq{
  \lambda_c =
  \frac{1}{1+\widehat\Pi_{\lambda_c}(\orig)}. \label{eq:lambda_c_identity_fpilam}}
As $\lambda \mapsto \theta(\lambda)$ is non-decreasing and the
decreasing limit of \ch{the continuous functions that the origin is connected to a point outside a large box},
we have that $\theta$ is
continuous from the right for all $\lambda \geq 0$ (see~\cite[Lemma
8.9]{Gri99}). \ch{The fact that $\theta(\lambda_c)=0$ implies that
$\lambda\mapsto\theta(\lambda)$ is continuous on $[0,\infty)$, since
the left-continuity of $\theta$ for $\lambda>\lambda_c$ follows as in \cite[Lemma~8.10]{Gri99} in the discrete setting or \cite{Sarka97} for the Poisson Boolean model; the extension to the RCM is straightforward.} 
The asymptotics
of $\lambda_c$ \col{for $d \to\infty$} was already shown by Meester, Penrose and Sarkar~\cite{MeePenSar97}. \col{The asymptotics in the spread-out case were shown in a slightly weaker form by Penrose~\cite{Pen93}.}

\subsection{Literature overview and discussion}\label{sec:literature}

We first give some general background on percolation theory, then highlight the important literature on continuum percolation and the random connection model. After this, we put the results of this paper into context.

The foundations of percolation theory are generally attributed to Broadbent and Hammersley in 1957~\cite{BroHam57}. Several textbooks were published, we refer to Grimmett~\cite{Gri99} as a standard reference, and Bollob{\'a}s and Riordan~\cite{BolRio06}, which puts an extra focus on two-dimensional percolation. More recent treatments of two-dimensional percolation are the book by Werner~\cite{Wer09} and the survey by Beffara and Duminil-Copin~\cite{BefDum13}.

A book on percolation in high dimensions was written by the first two authors~\cite{HeyHof17}. It contains a self-contained proof of the lace expansion for bond percolation as well as an extensive summary of recent results on high-dimensional percolation. Another detailed description of the lace expansion is given by Slade~\cite{Sla06}, with a focus on self-avoiding walk. One of the corner stones of high-dimensional percolation is the seminal 1990 paper by Hara and Slade~\cite{HarSla90}, successfully applying the lace expansion to bond percolation on $\mathbb Z^d$ in sufficiently high dimension.

While this paper contains many ideas and techniques from percolation, the above references deal with discrete lattices mostly, whereas we deal with a model of continuum percolation. When highlighting the difference of the former models to continuum percolation, we refer to them as \chr{lattice or discrete percolation.}

Continuum percolation may be regarded as a branch of percolation theory, including some aspects of stochastic geometry, and, in particular, the theory of point processes. A textbook on the Poisson point process was written by the third author and Penrose~\cite{LasPen17}. Continuum percolation was first considered in 1961 by Gilbert~\cite{Gil61} for the Poisson blob model. 
Partially motivated by \cite{Gil61}, the random connection model was first introduced in 1991 by Penrose~\cite{Pen91}. A textbook treatment of continuum percolation was given by Meester and Roy~\cite{MeeRoy96}, also summarizing some properties of the random connection model. Among those properties is the essential result that $\lambda_c = \lambda_T$, which was first obtained in full generality in 1995 by Meester~\cite{Mee95}. As a representative treatment of continuum percolation in the physics literature, we point to the book by Torquato~\cite{Tor02}. More recently, the RCM was considered by the third author and Ziesche~\cite{LasZie17}, and they prove that the subcritical two-point function satisfies the Ornstein-Zernike equation (OZE). We point out that~\eqref{eq:LE_identity_OZE} is precisely the OZE (and~\eqref{eq:fourier-lace-expansion-equation} is the OZE in Fourier space).

Continuum percolation has a finite-volume analogue, by restricting to a bounded domain---see the monograph by Penrose~\cite{Pen03} about random geometric graphs. The finite-volume analogue of the RCM was investigated by Penrose~\cite{Pen16}, \ch{who calls it the \emph{soft random geometric graph}. Recently, the RCM has attracted lots of attention, e.g.\ \cite{BetLas21,GraHeyMonMor19,LasNesSch18}.} 

The RCM is related to some fundamental lattice models and has, in fact, features of both site and bond percolation. The Poisson blob model, for instance, can be considered as a continuum version of nearest-neighbor site percolation. The parameter $p$ in the discrete setting is then analogous to the intensity $\lambda$ of the PPP, as both describe the point density. \ch{In that sense, the general RCM corresponds to a mixed bond-site percolation  model with long-range connections governed by $\connf$.} The RCM can also be interpreted as bond percolation (again with long-range interactions governed by $\connf$) on the complete graph generated by the PPP. In this perspective, the parameter $p$ in the discrete setting can be compared to $\lambda$ in the continuum, as the mean degree in the discrete setting is $2dp$, whereas $\lambda\int \connf(x) \dd x$ is the mean degree of a typical point of the PPP.

The results obtained in this paper mirror several results of lattice percolation. The treatment of nearest-neighbor models and their spread-out version, first performed by Hara and Slade~\cite{HarSla90}, can be compared to our versions (H1) and (H2) for $\connf$. For bond percolation on $\mathbb Z^d$, Fitzner and the second author proved that $d \geq 11$ suffices to obtain an analogue of Theorem~\ref{thm:maintheorem}~\cite{FitHof16, FitHof17}. In our corresponding regime, which is (H1), we give no quantitative bound on the dimension $d$. The ``discrete analogue'' of (H3) is long-range percolation, for which the corresponding results were obtained by the first two authors and Sakai~\cite{HeyHofSak08} for $d>3(\alpha\wedge2)$, and by Chen and Sakai~\cite{CheSak19} for the marginal case $d=6$ and $\alpha=2$.

It is worth noting that Tanemura~\cite{Tan96} already devised a lace expansion for the Poisson blob model. For the special case of the Poisson blob model, the expansion itself is the same as the one devised in this paper. However, we were unable to give a proof of the expansion's convergence based on~\cite{Tan96}.

A possible application of the results of this paper is the deduction of the existence of several critical exponents (and their computation) other than $\gamma$. Analogous results for the lattice were proved by Aizenman and Newman~\cite{AizNew84}, by Aizenman and Barsky~\cite{AizBar91}, by Hara~\cite{Har90,Har08}, by Hara, the second author, and Slade~\cite{HarHofSla03}, and furthermore by Kozma and Nachmias~\cite{KozNac09, KozNac11} (this list is not exhaustive). These results have not yet been shown for the RCM. However, the third author together with Penrose and Zuyev~\cite{LasPenZuy17} proved the mean-field upper bound on the critical exponent $\beta$ in $\theta(\lambda)\approx (\lambda-\lambda_c)^{\beta}$ as $\lambda \searrow \lambda_c$ for the Boolean model with random (deterministically bounded) radii. It may also be possible to investigate an asymptotic expansion of the critical point $\lambda_c$ (at least for specific choices of $\connf$). We point to Torquato~\cite{Tor12} for predictions of such results in the continuum, and to \cite{HarSla95,HeyMat19b,HofSla05,HofSla06} for corresponding rigorous results for percolation on the lattice $\mathbb Z^d$.

\subsection{Overview, discussion of proof, and notation} \label{sec:notation}
\paragraph{Overview of the proofs of Theorems \ref{thm:maintheorem}
and  \ref{thm:mainthmcorollaries}.} 
We interpret $\phiint^{-1} \connf$
as a random walk density and define its Green's function as \eqq{
  \greensmu(x) := \sum_{m \geq 0}  \big(\mu/ \phiint\big)^m
  \connf^{\star m}(x), \label{eq:def:greens_fct}} where $|\mu|<1$ and
$\connf^{\star 0}$ is a generalized (Dirac) function. In Fourier
space, this gives \eqq{ \col{ \phiint^{-1} \fgreensmu (k) =
    \frac{1}{\fconnf(\orig)-\mu
      \fconnf(k)},} \label{eq:def:greens_fct_fourier}} \col{noting
  that $\fconnf(\orig) = \phiint$}. The main aim of our paper can be
summarized as follows: For small $\beta$, we intend to show that
$\lambda \tlam$ is close to $(\connf\star \greensmu)$, where
$\mu$ depends on $\lambda$ in an appropriate way. The latter we
understand much better than we understand $\tlam$, in particular, we
know that $(\connf^{\star m} \star G_1^{\star 3})(x)$ is finite for
$m=1$ and small for $m=2$. This ``closeness'' will allow us to
transfer this result to $\tlam$ and prove the triangle condition.

\paragraph{\ch{Organisation of the proof and paper.}} The lace-expansion technique proceeds in three major steps, which also dictate the structure of this paper. Before the first step, we need to make sure to have the relevant tools that are analogous to those used in discrete percolation theory available to us; also, some methodology from point-process theory is introduced. 
This is done in Section~\ref{sec:preliminaries}.

In the first major step of the proof, which is key to proving Theorem~\ref{thm:maintheorem} and is executed in Section~\ref{sec:expansion}, we show that the lace expansion for the two-point function $\tlam$ takes the form
	\eqq{\tlam = \connf + \Pi_{\lambda, n} + \lambda\big((\connf+\Pi_{\lambda, n})\star \tlam\big) + R_{\lambda, n} \quad (n\in\N_0), \label{eq:laceexpansionequation}}
where the lace-expansion coefficients $\Pi_{\lambda, n}$ and $R_{\lambda, n}$ arise during the expansion and will be defined later.

\chr{In the second step of the proof}, we bound $\Pi_{\lambda, n}$ and $R_{\lambda, n}$ by simpler diagrammatic functions. Those diagrams are large integrals over products of two-point functions which can then be decomposed into factors of $\trilam$ and related quantities. We eventually want to prove $\lambda|\widehat\Pi_{\lambda, n}(k)| = \mathcal O(\beta)$ (recall that this means as $d\to \infty$ for (H1) and as  $L\to\infty$ for (H2),(H3)) uniformly in $k\in\Rd$ and $n\in\N_0$, and the intermediate step of diagrammatic bounds is essential to do so.
\chr{The results of the second step are formulated in Section~\ref{sec:diagrammaticbounds}, whereas the proofs of most diagrammatic bounds on the lace-expansion coefficients are deferred to Section \ref{sec:bounds-lace-expansion}.}

The third step is the so-called ``bootstrap argument'' and it is performed in Section~\ref{sec:bootstrapanalysis}. Since the diagrammatic bounds obtained in the previous step are in terms of $\tlam$ itself, we need this step in order to gain meaningful bounds. More details about this are given at the beginning of Section~\ref{sec:bootstrapanalysis}. As the general strategy of proof is standard, we refer to~\cite{HeyHof17} for a more detailed informal description of the methodology. However, we note that in this step, several functions are introduced, among them the fraction $|\ftlam / \fgmu|$, where the parametrization $\mulam$ satisfies $\mulam \nearrow 1$ as $\lambda \nearrow \lambda_c$. The boundedness of these functions is shown, which justifies the notion of $\ftlam$ and $\fgmu$ being ``close''. 

Section~\ref{sec:bootstrapanalysis} also contains the consequences of the completed argument. The most important of these is that if we let $n\to\infty$ in~\eqref{eq:laceexpansionequation}, then $R_{\lambda, n}\to 0$ and $\Pi_{\lambda, n} \to \Pi_\lambda=\mathcal O(\beta)$ for any $\lambda<\lambda_c$. In Fourier space, \eqref{eq:laceexpansionequation} consequently translates to
	\eqq{\ftlam = \frac{\fconnf + \fpilam}{1-\lambda (\fconnf + \fpilam)}. \label{eq:fourier-lace-expansion-equation}}
Together with the obtained bounds on $\lambda\fpilam(k)$, this implies our two main results and justifies the comparison with the Green's function of the random walk with step distribution $\chr{\connf/ \phiint}$. 

After having completed the lace expansion successfully, we prove our main theorems in Section~\ref{sec:maintheorems}.

\paragraph{Differences to percolation on the lattice.}
We informally describe the novelties that the application of the lace
expansion to the RCM setting brings, in contrast to the lace expansion
for, say, bond percolation on $\mathbb Z^d$. 
By stationarity and isotropy of the 
underlying Poisson point process
we can use re-scaling arguments more easily (see
Section~\ref{sec:rescaling}).  
The Mecke equation (see Section~\ref{sec:bkrusso}) 
allows \chr{us} to deal with expectations of multiple sums over
the random Poisson points, while a suitable Margulis-Russo
type formula (see \eqref{eq:prelim:russo}) is used 
to derive differential equalities and inequalities.
The biggest novelty of the expansion
in Section~\ref{sec:expansion} is the occurrence of \emph{thinnings}
(Definition~\ref{def:LE:thinning_events} and
Lemma~\ref{lem:stoppingset}),
a phenomenon which does not occur in a discrete setting with 
deterministic vertices. 
The thinning events from Section~\ref{sec:expansion} 
require some extra work to decompose in the fashion
intended by Section~\ref{sec:diagrammaticbounds}. This is done in
Definition~\ref{def:DB:extended_thinning_events}, in
Lemma~\ref{lem:DB:squigarrow_tlam_equality} and in
Lemma~\ref{lem:DB:bounds_E_by_F}. We also highlight that the
decomposition crucially relies on the BK inequality, which is new for
the RCM (see Theorem~\ref{lem:prelimbk_inequality}).

While several other differences in the decomposition of
Section~\ref{sec:diagrammaticbounds} can be attributed to the site
percolation nature of the RCM, it is a challenge unique to certain
versions of the RCM (among them, the Poisson blob model) that
$\tlam\star\tlam$ is \chr{not} bounded away from $0$ in a neighborhood of the
origin. 
For discrete percolation on $\mathbb Z^d$, 
$(\tlam\star\tlam)$ takes on a (strictly) positive value at the origin as well,
but the continuum space forces us to deal with this issue in a
different way. We point to the introduction of $\ballep$ in
Definition~\ref{def:bubble} and the discussion \chr{after Proposition \ref{thm:Psi_Diag_bound}} for more details.

Lastly, two issues arise in Section~\ref{sec:bootstrapanalysis}. The first is that $\tlam$ is closer to $(\connf\star \greensmu)$ rather than $\greensmu$, which is again a manifestation of the site-percolation nature of the RCM. The second is the fact that, unlike $\mathbb Z^d$, the space $\Rd$ has a non-compact Fourier domain (namely, $\Rd$ itself), which demands some extra care in the Fourier analysis of the bootstrap functions introduced in Section~\ref{sec:bootstrap_functions}.

\paragraph{Some notation.}
Let us fix some helpful notation, which we will use throughout this paper:
\begin{compactitem}
\item If not specified otherwise, $\xi$ is used to refer to an edge-marking of a PPP (an edge-marking is the random object encoding the RCM, see Section~\ref{sec:constructionofRCM}). This PPP is the ``ground process'' of $\xi$ and is always denoted by $\eta$.

\item For two real numbers $a,b$, we use $a \wedge b := \min \{a,b\}, a \vee b := \max\{a,b\}$.
\item For a vector $v \in\Rd$, $|v|=\| v\|_2$ denotes the Euclidean norm. Secondly, for a set $A$, we use $|A|$ to denote the cardinality of $A$.

\item For a natural number $n$, let $[n]=\{1,\ldots, n\}$. Given $ x_1, x_2, \ldots, y_1, y_2, \ldots \in \Rd$ as well as integers $a \leq b$, we write $\vec y_{[a,b]} = (y_a, \ldots, y_b)$, and similarly $ (\vec x, \vec y)_{[a,b]} = (x_a, \ldots, x_b, y_a, \ldots, y_b)$.

\item An integral over a non-specified set is always to be understood as the integral over the whole space.

\item We use $\delta_{x,y}$ to denote the distributional Dirac delta, i.e.~$\int\delta_{x,y} f(x) \dd x = f(y)$ for measurable $f\colon\Rd\to\R$. The use of $\delta_{\cdot,\cdot}$ in this paper is detailed in a remark above Definition~\ref{def:DB:psi_functions}.

\item We recall that for measurable functions $f,g\colon\Rd\to\R$, we write $(f\star g)(x) = \int f(y) g(x-y)\dd y$ for their convolution. Moreover, the $m$-fold convolution is set to be $f^{\star 0}(x) = \delta_{\orig, x}$ and $f^{\star m}(x) = (f^{\star (m-1)} \star f)(x)$. 

\item We recall some basic notation from graph theory. If $G$ is a graph, then $V(G)$ is its set of vertices (points, sites), and $E(G)$ is its set of edges (bonds). Since we will be concerned with \ch{a (random) graph} $\xi$ whose vertex set is a Poisson point process $\eta$, we usually write $V(\xi) = \eta$. A subgraph $G'$ of $G$ is a graph where $V(G') \subseteq V(G)$ and $E(G') \subseteq E(G)$. For $W\subseteq V(G)$, the subgraph of $G$ \emph{induced} by $W$, denoted by $G[W]$, is the subgraph of $G$ whose vertex set is $W$ and whose edge set is such that two vertices in $W$ are adjacent in $G[W]$ if and only if they are adjacent in $G$.

\end{compactitem}

\section{Preliminaries} \label{sec:preliminaries}

\subsection{Point processes} \label{sec:pointprocesses}

We briefly recall some basic facts about point and Poisson processes,
referring to \cite{LasPen17}
for a comprehensive treatment. Let $\mathbb{X}$ be a metric space with
Borel $\sigma$-field $\B(\mathbb{X})$. 
\cov{Call a set $A\subset\mathbb{X}$ locally finite 
if $|A\cap B|<\infty$  
for each bounded $B\in \B(\mathbb{X})$
and let $\mathbf{N}(\mathbb{X})$ denote the space of all those sets.
Equip $\mathbf N(\mathbb{X})$
with the $\sigma$-field $\mathcal N(\mathbb{X})$ generated by the
sets $\{A:|A \cap B| = k\}$,  $B \in\B(\mathbb{X}),k \in \N_0$.
By \cite[Corollary 6.5]{LasPen17}, there exist
measurable mappings $\pi_i\colon \mathbf N(\mathbb{X})\to\mathbb{X}$, $i\in\N$,
such that $A=\{\pi_i(A):i\le |A|\}$.  
(If $|A|=n\in\N_0$ and $i>n$, then $\pi_i(\zeta)$ can be defined
in any measurable way, for instance by choosing a fixed point in $\mathbb{X}$.)}
A point process on $\mathbb X$ is a measurable mapping
$\zeta\colon \Omega \to \mathbf N(\mathbb X)$ for some underlying probability
space $(\Omega, \mathcal{F}, \mathbb{P})$. The intensity measure of a
point process is the measure on $\mathbb{X}$ given by
$B\mapsto \E[|\zeta \cap B|]$ for $B\in \B(\mathbb{X})$.

Let $\mu$ be a \cov{locally finite} non-atomic measure on $\Rd$. A Poisson
point process (PPP) on $\mathbb{X}$ with intensity measure $\mu$ is a point
process $\zeta$ such that the number of points $|\zeta \cap B|$ is
$\text{Poi}(\mu(B))$-distributed for each $B \in \B(\mathbb{X})$ and the
random variables $|\zeta \cap B_1|, \ldots, |\zeta \cap B_m|$ are independent
whenever $B_1, \ldots, B_m\in\B(\mathbb{X})$ are pairwise disjoint. We point
out that in our setting, the first property implies the second
(independence) property. In the case $\mathbb{X}=\Rd$, we call the PPP
homogeneous (or stationary) with intensity $\lambda$ if
$\mu= \lambda \text{Leb}$ with $\lambda \geq 0$ and $\text{Leb}$ the
Lebesgue measure.
In this paper we consider a homogeneous Poisson process $\eta$ with intensity 
$\lambda \geq 0$, and we write
	\eqq{ \eta=\{X_i:i\in\N\} , \label{eq:orderingofPPpoints} }
where $X_i:=\pi_i(\eta)$, $i\in\N$. 
\cov{The intensity $\lambda$ is an important parameter. As mentioned earlier, we write 
$\pla$ for a probability measure on $(\Omega, \mathcal{F})$ such
that $\eta$ is a Poisson process with this intensity. 
The probability space $(\Omega,\mathcal{F},\pla)$
is assumed to be rich enough to support 
all (independent) randomness we may need, e.g.\ marks or thinning
variables.}

\subsection{Formal construction of the RCM} \label{sec:constructionofRCM}

As some of the later statements sensitively depend on the precise construction of the model, we give a detailed formal construction. It is convenient to construct the RCM as a deterministic functional $\Gamma_\connf(\xi)$ of a suitable point process $\xi$. Following~\cite{LasZie17}, we choose $\xi$ as an \emph{independent edge-marking} of a PPP $\eta$. We then show how to extend the construction to include deterministic points and how to extend it to include thinnings. Other (equivalent) ways to construct the RCM can be found in \cite{vdBru03,MeeRoy96}. There, the RCM is introduced as a marked PPP (hence, the information about the edges of $\xi$ is encoded in the marks of the points). We point to the proof of Theorem~\ref{lem:prelimbk_inequality}, where we also require a construction in terms of a suitable marked PPP of (an approximation of) the RCM.

\paragraph{Construction as a point process in $(\Rd)^{[2]} \times [0,1]$.} 
Recall that $\eta$ denotes an $\Rd$-valued PPP of intensity $\lambda$, which can be written as~\eqref{eq:orderingofPPpoints}. Let $(\Rd)^{[2]}$ denote the space of all sets $e\subset\Rd$ containing exactly two elements. Any $e\in (\Rd)^{[2]}$ is a potential edge of the RCM. When equipped with the Hausdorff metric, this space is a Borel subset of a complete separable metric space. Let $<$ denote the strict lexicographic ordering on $\Rd$. Introduce independent random variables $U_{i,j}$, $i,j\in\N$, uniformly distributed on the unit interval $[0,1]$ such that the double sequence $(U_{i,j})$ is independent of $\eta$. We define 
	\eqq{ \xi:=\{(\{X_i,X_j\},U_{i,j}):X_i<X_j,i,j\in\N\}, \label{eq:prelim:xi_def}}
which is a point process on $(\Rd)^{[2]}\times[0,1]$. We interpret $\xi$ as a random marked graph and say that $\xi$ is an {\em independent edge-marking} of $\eta$. Note that $\eta$ can be recovered from $\xi$. The definition
of $\xi$ depends on the ordering of the points of $\eta$. The distribution of $\xi$, however, does not.
\cov{If $A\subset\R^d$ is locally finite, then we denote 
the RCM restricted to the points in $\eta\setminus A$ by $\xi[\eta\setminus A]$.}
Given an independent edge-marking $\xi$ of $\eta$, we can define the
RCM $\Gamma_\connf(\xi)$ as a deterministic functional of $\xi$, given
by its vertex and edge set
as 
\algn{
		V(\Gamma_\connf(\xi)) = \eta = \{X_i:i\in\N\},\quad
		E(\Gamma_\connf(\xi)) &= \{ \{X_i,X_j\}:X_i<X_j, U_{i,j} \leq \connf(X_i-X_j), i,j\in\N\}. \label{eq:prelim:xi_def_edgeset}}
	
The RCM $\Gamma_\connf(\zeta)$ can be defined for every point process $\zeta$ on $(\Rd)^{[2]}\times[0,1]$ with the property that $(\{x,y\},u)\in\zeta$ and $(\{x,y\},u')\in\zeta$ for some $x\ne y$ and $u,u'\in[0,1]$ implies that $u=u'$. Throughout the paper, when speaking of a graph event taking place in $\zeta$, we refer to the graph event taking place in $\Gamma_\connf(\zeta)$.

\paragraph{Adding extra points.}

For $x_1,x_2, x_3\in\Rd$, consider the random connection models driven by the point processes
\[ \eta^{x_1}:=\eta\cup\{x_1\},\quad
  \eta^{x_1,x_2}:=\eta\cup\{x_1,x_2\}, \quad
  \ch{\eta^{x_1,x_2,x_3}:=\eta\cup\{x_1,x_2,x_3\}}. \] To couple these
models in a natural way, we extend the (double) sequence
$(U_{m,n})_{m,n \geq 1}$ to a sequence $(U_{m,n})_{m,n \geq -2}$ of
independent random variables uniformly distributed on $[0,1]$,
independent of the Poisson process $\eta$. We then define the point
process \ch{$\xi^{x_1,x_2, x_3}$ on $(\Rd)^{[2]}\times[0,1]$ as
  \[ \xi^{x_1,x_2,x_3}:=\{(\{X_i,X_j\},U_{i,j}):X_i<X_j,i,j\ge
    -2\}, \] where $(X_0,X_{-1}, X_{-2}):=(x_1,x_2,x_3)$.  We now
  define the point process $\xi^{x_1,x_2}$ and $\xi^{x_1,x_3}$, resp.,
  by removing all (marked) edges incident to $x_3$ resp.\ $x_2$ from
  $\xi^{x_1,x_2,x_3}$.  } Similarly, we define the point process
$\xi^{x_1}$ by removing all (marked) edges incident to $x_2$ from
$\xi^{x_1,x_2}$ or by removing all (marked) edges incident to $x_3$
from $\xi^{x_1,x_3}$.  
\cov{In this way, the point processes 
$\xi^{x_1},\xi^{x_2},\xi^{x_3},\xi^{x_1,x_2},\xi^{x_1,x_3},\xi^{x_2,x_3}$
can be written as measurable functions of $\xi^{x_1,x_2,x_3}$,
at least if $x_1,x_2,x_3$ are distinct and do not pertain to $\eta$. 
The latter can be assumed without restricting generality. To avoid heavy
notation, we do not make this functional dependence explicit.
When talking, for instance, about the event 
$\{\conn{x_1}{x_2}{\xi^{x_1,x_2}},\conn{x_2}{x_3}{\xi^{x_2,x_3}}\}$,
this specific coupling of $\xi^{x_1,x_2}$ and $\xi^{x_2,x_3}$ will always be assumed, so that the addition of points occurs in a consistent manner.
Specifically, this means that $\xi^{x_1,x_3}$, obtained by removing $x_2$ from $\xi^{x_1,x_2,x_3}$ is a.s.\ the same as $\xi^{x_1,x_3}$ obtained by removing $x_2$ from $\xi^{x_1,x_3,x_2}$.}

\cov{Similarly we proceed by adding $m$ points $x_1,\ldots,x_m$
to obtain $\xi^{x_1,\ldots,x_m}$ and the corresponding subgraphs, where we again assume that the addition and removal of points occurs in a consistent manner. This can be achieved by letting $U_{i,j}$ be replaced by $U_{x_i,x_j}$ for $i,j\leq 0$, and $U_{i,j}$ be replaced by $U_{x_i,j}$ for $i\leq 0$ and $j\geq 1$. Note that the mapping $(\omega,x_1,\ldots,x_m)\mapsto \xi^{x_1,\ldots,x_m}(\omega)$
is (jointly) measurable. This transfers to connectivity events
as $\{\conn{x_1}{x_2}{\xi^{x_1,x_2}}\}$, for instance. (The indicator function
is jointly measurable on $\Omega\times \R^d\times\R^d$.) We will not comment
on this any further.}

\paragraph{Including thinnings.}
Let $\mathbb M=[0,1]^\N$. It will be important to work with subgraphs
of $\xi$ that are obtained via a \emph{thinning} of $\eta$ with
respect to some point process $\zeta$. We specify this in
Definition~\ref{def:LE:thinning_events}. For now, it is important that
this thinning requires extra randomness, which, given $\eta$, is
independent of the edge set $E(\xi)$. We model this by adding to every
Poisson point in $\eta$ a mark from $\mathbb M$.

To this end, let $(\Rd\times \mathbb M)^{[2]}$ denote the space of
subsets of $\Rd\times\mathbb M$ containing exactly two elements. Let
$\mathbb{U}$ denote the uniform distribution on $[0,1]$ and let
$Y_i=(Y_{i,k})_{k\in\N}$, $i\in\N$, be independent random elements of
$\mathbb M$ with distribution $\mathbb{U}^\N$, independent of
$\eta$. Assume that $\eta$, $(U_{i,j})_{i,j\in \N}$ and
$(Y_i)_{i\in\N}$ are independent. Proceeding analogously to the
definition in~\eqref{eq:prelim:xi_def} we define \eqq{ \xi:=\Big\{
  \big(\{(X_i,Y_i),(X_j,Y_j)\}, U_{i,j} \big): X_i<X_j, i,j\in\N
  \Big\}, \label{eq:prelim:xi_def_thinnings}} which is a point process
in \col{$(\Rd\times \mathbb M)^{[2]}\times [0,1]$}. The RCM is
constructed as before, that is, the vertex set is $\eta$ and the edge
set is as in~\eqref{eq:prelim:xi_def_edgeset}, ignoring the marks
$(Y_i)$. From now on, when speaking of $\xi$, we assume that $\xi$ is
given by~\eqref{eq:prelim:xi_def_thinnings}. Still, the reader might
prefer to work with the simpler version \eqref{eq:prelim:xi_def}. We
shall clearly point out when the thinning variables are required.

We can add points $x_1,x_2\in\Rd$ as follows. We let
$(U_{m,n})_{m,n \geq -1}$ as above. In addition, we take independent
random elements $Y_0,Y_{-1}$ of $\mathbb M=[0,1]^\N$ and assume that
$\eta$, $(Y_i)_{i\ge -1}$ and $(U_{m,n})_{m,n \geq -1}$ are
independent. Define
\[ 
\xi^{x_1,x_2}:=\{(\{(X_i,Y_i),(X_j,Y_j)\},U_{i,j}):X_i<X_j,i,j\ge
  -1\}, 
\] 
where $(X_0,X_{-1}):=(x_1,x_2)$. The point processes
$\xi^{x_1}$, $\xi^{x_2}$ etc.~are defined as before. 
\cov{Similarly we define $\xi^{x_1,\ldots,x_m}$ and the corresponding
subgraphs.}

\subsection{Mecke, Margulis-Russo, BK, FKG, and a differential inequality} \label{sec:bkrusso}
In this section, we state some useful equalities and inequalities that are standard either in point process theory or in percolation theory.

\paragraph{The Mecke equation.}
Our first crucial tool is a version of the  Mecke equation (see \cite[Chapter 4]{LasPen17}) for the independent edge-marking $\xi$. This fundamental equation allows us to deal with sums over points of $\eta$, which we frequently make use of. We closely follow \cite{LasZie17}. Given $m\in\N$ and a measurable function $f\colon\mathbf{N}\big((\Rd \times\mathbb M)^{[2]}\times[0,1]\big) \times \big(\Rd\big)^m \to \R_{\geq 0}$, the Mecke equation for $\xi$ states that 
	\eqq{ \E_\lambda \bigg[ \sum_{\vec x \in \eta^{(m)}} f(\xi, \vec{x})\bigg] = \lambda^m \int
				\E_\lambda\Big[ f\left(\xi^{x_1, \ldots, x_m}, \vec x\right)\Big] \dd \vec{x},  \label{eq:prelim:mecke_n} }
where $\vec x=(x_1,\ldots,x_m)$ and $\eta^{(m)}=\{(x_1,\ldots,x_m): x_i \in \eta, x_i \neq x_j \text{ for } i \neq j\}$. We only need the statement for $m\leq 3$, and in particular, we mostly use \eqref{eq:prelim:mecke_n} for $m=1$, yielding the univariate Mecke equation
	\eqq{\E_\lambda \bigg[ \sum_{x \in \eta} f(\xi, x)\bigg] = \lambda \int \E_\lambda\left[ f(\xi^x, x)\right] \dd x. \label{eq:prelim:mecke_1}}

\paragraph{Margulis-Russo formula.}
The Margulis-Russo formula is a well-known tool in (discrete) percolation theory and turns out to be necessary for us as well. Our version follows from a more general result (see \cite[Theorem 3.2]{LasZie17}). We write $\mathbf{N}:=\mathbf{N}\big((\Rd \times\mathbb M)^{[2]}\times[0,1]\big)$. Let $\Lambda\in \B(\Rd), \zeta\in\mathbf{N}$, and define
	\eqq{\zeta_\Lambda := \{(\{(x,v),(y,w)\},u)\in \zeta :\{x,y\}\subseteq\Lambda \}.  \label{eq:prelim:def:PP_restriction}}
We call $\zeta_\Lambda$ the restriction of $\zeta$ to $\Lambda$. We say that $f\colon \mathbf N\to\R$ \emph{lives} on $\Lambda$ if $f(\zeta) = f(\zeta_\Lambda)$ for every $\zeta \in\mathbf N$. Assume that there exists a bounded set $\Lambda \in \B(\Rd)$ such that $f$ lives on $\Lambda$. Moreover, assume that there exists  $\lambda_0>0$ such that $\E_{\lambda_0} [|f(\xi)|] < \infty$. Then the Margulis-Russo formula states that, for all $\lambda\leq \lambda_0$,
	\eqq{ \frac{\partial}{\partial \lambda} \E_\lambda[f(\xi)] = \int_\Lambda \E_\lambda[f(\xi^x) - f(\xi)] \dd x. \label{eq:prelim:russo}}
\ch{A corresponding formula holds when we add extra points, i.e., $\xi^{x_1,x_2}$ rather than $\xi$. In that case, $\xi^{x}$ is replaced by $\xi^{x_1,x_2,x}$}.

\paragraph{The BK inequality.}
The BK inequality is a standard tool in discrete percolation and provides a counterpart to the FKG inequality, which we discuss below and which (given existing results) turns out to be much easier to prove.  

Let us first informally describe what type of inequality we are aiming
for, described for the type of events we need it for. Assume that we
are interested in the event that there are two paths, the first
between $x_1,x_2\in\Rd$ and the second between $x_3,x_4 \in\Rd$, not
sharing any vertices. This \emph{(vertex) disjoint occurrence} is
something we think of as being less likely than the probability that
on two independent RCM graphs, one has an $x_1$-$x_2$-path and the
second has an $x_3$-$x_4$-path. Thus, if $E \circ F$ denotes the
former event, we want an inequality of the form
$\pla(E\circ F) \leq \pla(E)\pla(F)$. We now work towards making these
notions rigorous and towards proving such an inequality.

It is convenient to write
$\mathbf{N}:=\mathbf{N}((\Rd\times\mathbb{M})^{[2]}\times[0,1])$ and
to denote the $\sigma$-field on $\mathbf{N}$ (as defined in
Section~\ref{sec:pointprocesses}) by $\mathcal{N}$. For
$\Lambda\in \B(\Rd)$ and $A\in \mathbf{N}$, we define $A_\Lambda$,
the restriction of $A$ to all edges completely contained in
$\Lambda\times\mathbb{M}$, analogously
to~\eqref{eq:prelim:def:PP_restriction}. Define the \emph{cylinder
  event} $\llbracket A \rrbracket_\Lambda$ as
$\llbracket A \rrbracket_\Lambda := \{ B \in\mathbf N: B_\Lambda
= A_\Lambda \}$. We say that $E\in\mathcal{N}$ {\em lives on}
$\Lambda$ if $\mathds 1_{E}$ lives on $\Lambda$. We call a set
$E\subset \mathbf{N}$ \emph{increasing} if $B \in E$ implies
$A\in E$ for each $A\in\mathbf{N}$ with $B\subseteq A$
\ch{(recall that $B$ and $A$ are at most countably infinite
  sets)}. Let $\mathcal{R}$ denote the ring of all finite unions of
half-open rectangles with rational coordinates. For events
$E,F\in\mathcal{N}$ we define \eqq{ E \circ F := \{ A\in\mathbf{N}:
  \exists K,L \in \mathcal{R} \text{ s.t.~} K \cap L = \varnothing
  \text{ and } \llbracket A\rrbracket_K \subseteq E,
  \llbracket A\rrbracket_L \subseteq F \}.
  \label{eq:prelim:def:circ}} If $E,F$ are increasing events, then
$ E\circ F = \{A\in\mathbf{N}: \exists K,L \in \mathcal{R} \text{
  s.t.~} K \cap L = \varnothing, A_K\in E, A_L \in F \}$.
 We will show that \eqq{ \pla(\xi\in E \circ F) \leq \pla(\xi\in
  E)\,\pla(\xi\in F) \label{eq:prelim:BK} } whenever
$E,F\in\mathcal{N}$ live on a bounded set $\Lambda\in\B(\Rd)$ and are
increasing.

We need a slightly more general version of this inequality involving
independent random variables. Let $(\mathbb{X}_1,\mathcal{X}_1)$,
$(\mathbb{X}_2,\mathcal{X}_2)$ be two measurable spaces. We say that a
set $E_i\subset \mathbf{N}\times \mathbb{X}_i$ is {\em increasing} if
$\E^z_i:=\{A\in\mathbf{N}:(A,z)\in E_i\}$ is increasing for each
$z\in \mathbb{X}_i$. For increasing
$E_i\in\mathcal{N}\otimes\mathcal{X}_i$, we define \eqq{ E_1 \circ E_2
  := \{(A,z_1,z_2)\in\mathbf{N}\times\mathbb{X}_1\times\mathbb{X}_2:
  \exists K_1,K_2 \in \mathcal{R} \text{ s.t.~} K_1\cap K_2 =
  \varnothing, (A_{K_1},z_1)\in E_1,(A_{K_2},z_2)\in E_2
  \}. \label{eq:prelim:circ2} } A set
$E_i\in\mathcal{N}\otimes \mathcal{X}_i$ {\em lives on} $\Lambda$ if
$\mathds 1_{E_i}(A,z)=\mathds 1_{E_i}(A_\Lambda,z)$ for each
$(A,z)\in\mathbf{N}\times\mathbb{X}_i$.

\begin{theorem}[BK inequality]\label{lem:prelimbk_inequality}
\ch{Consider random elements $W_1,W_2$ of $\mathbb{X}_1$ and $\mathbb{X}_2$, respectively, and assume that $\xi,W_1,W_2$ are independent.}
Let $E_i\in\mathcal{N}\otimes\mathcal{X}_i$, $i\in\{1,2\}$, be increasing events that live on some bounded set $\Lambda\in\B(\Rd)$. Then $\pla((\xi,W_1,W_2)\in E_1 \circ E_2) \leq \pla((\xi,W_1)\in E_1)\pla((\xi,W_2)\in E_2)$.
\end{theorem}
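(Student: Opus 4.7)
\emph{Proof plan.} The plan is to proceed in four steps: (i) condition out the auxiliary variables $W_1,W_2$, reducing to the pure BK inequality (\ref{eq:prelim:BK}); (ii) discretize the underlying Poisson process on $\Lambda$; (iii) invoke a finite-dimensional BK inequality on a product space; and (iv) pass to the continuum limit.

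For (i), since $\xi$ is independent of $(W_1,W_2)$, the slices $E_i^{w_i}:=\{\mu\in\mathbf N:(\mu,w_i)\in E_i\}$ are increasing events in $\mathcal N$ living on $\Lambda$, and Fubini gives
\[
\pla\bigl((\xi,W_1,W_2)\in E_1\circ E_2\bigr)=\iint \pla\bigl(\xi\in E_1^{w_1}\circ E_2^{w_2}\bigr)\,\p(W_1\in dw_1)\,\p(W_2\in dw_2),
\]
so it suffices to prove (\ref{eq:prelim:BK}) for increasing events $E,F\in\mathcal N$ living on $\Lambda$.

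For (ii) and (iii), I would fix $n$ and partition $\Lambda$ into disjoint half-open rectangles $B_1^{(n)},\ldots,B_{N_n}^{(n)}\in\mathcal R$ of diameter at most $1/n$, and introduce a discretized process $\xi^{(n)}$ in which, independently across boxes, $B_i^{(n)}$ is occupied with probability $\lambda\,\mathrm{Leb}(B_i^{(n)})$ (the point being placed at the lower corner and carrying an independent $[0,1]^{\N}$-mark), together with independent $\mathrm{Unif}[0,1]$ edge variables for every pair of boxes. Then $\xi^{(n)}$ is a measurable function of finitely many independent coordinates, each of which can be encoded as an i.i.d.\ sequence of Bernoulli variables via a quantile transformation. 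The induced event $\{\xi^{(n)}\in E\}$ is coordinatewise increasing whenever $E$ is increasing, so the classical van den Berg--Kesten BK inequality (or Reimer's inequality) on this finite product space yields $\pla(\xi^{(n)}\in E_1\circ E_2)\le \pla(\xi^{(n)}\in E_1)\,\pla(\xi^{(n)}\in E_2)$; the $\circ$-operation on $\xi^{(n)}$ is compatible with the continuum definition (\ref{eq:prelim:def:circ}) because any witness rectangles $K,L\in\mathcal R$ can be taken as unions of partition boxes at scale $1/n$.

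Finally, I would let $n\to\infty$. Standard Poisson approximation shows that the coupling between $\xi_\Lambda$ and $\xi^{(n)}_\Lambda$ fails only on the event that some box receives two or more Poisson points, of probability $O(\mathrm{Leb}(\Lambda)\, n^{-d})$, so $\pla(\xi^{(n)}\in E)\to \pla(\xi\in E)$ for every increasing $E$ living on $\Lambda$. The hard part will be the analogous convergence for the composite event $E_1\circ E_2$: approximating witnesses $(K,L)$ for $\xi\in E_1\circ E_2$ by unions of partition boxes gives one inequality, while the reverse direction requires showing that on the high-probability event ``every partition box contains at most one Poisson point'' the $\circ$-structure of $\xi^{(n)}$ faithfully lifts to that of $\xi$, which again follows from the vanishing two-points-in-a-box probability. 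Combining the two convergences with the finite inequality from step (iii) and reinserting the conditioning on $(W_1,W_2)$ yields the theorem.
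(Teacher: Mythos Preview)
Your reduction in step~(i) is correct and matches the paper exactly. The strategy of discretising and then passing to the limit is also the paper's strategy, but your execution of step~(iii) has a real gap, and step~(ii) differs from the paper in a way that creates an additional difficulty in step~(iv).

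\textbf{The monotonicity claim fails.} You assert that ``the induced event $\{\xi^{(n)}\in E\}$ is coordinatewise increasing whenever $E$ is increasing''. This is not true. The coordinates of your product space include the edge marks $U_{i,j}$ and the thinning marks $Y_i$, and an event that is increasing in the inclusion order on $\mathbf N$ has no monotonicity whatsoever in these marks: changing a $U_{i,j}$ alters an element of $\xi$ rather than adding or removing one. The paper makes precisely this point in the remark following its proof: the pulled-back events ``are not increasing any more, and so we have made crucial use of the general BKR inequality''. So the classical van den Berg--Kesten inequality for increasing events does not apply, and your parenthetical ``or Reimer's inequality'' is not a fallback but the only option. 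If you commit to Reimer, you then still owe a careful check that the spatial $\circ$-operation in \eqref{eq:prelim:def:circ} is contained in the coordinate $\Box$-operation of the finite product space; this is doable but needs the observation that disjoint $K,L\in\mathcal R$ yield disjoint coordinate sets once edge variables $U_{i,j}$ are assigned to $K$ only when both boxes lie in $K$.

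\textbf{The paper discretises differently.} The paper does \emph{not} move Poisson points to box corners. It keeps the Poisson process $\eta$ intact and instead replaces the i.i.d.\ edge marks $U_{i,j}$ by box-indexed marks $U_{q(y;\varepsilon)}(x,r)$, producing a marked Poisson process $\eta^\varepsilon$ to which it applies the continuum BKR inequality of Gupta and Rao. The payoff is the exact distributional identity \eqref{eq:prelim:BK_1}: on the event $R_\varepsilon^c=\{$no box contains two Poisson points$\}$, the restricted processes $\xi_\Lambda$ and $\xi^\varepsilon_\Lambda$ have the \emph{same law}. Your corner-placement approximation does not give such an identity; you only get weak convergence of $\xi^{(n)}$ to $\xi$, and weak convergence of point processes does not automatically imply $\pla(\xi^{(n)}\in E)\to\pla(\xi\in E)$ for arbitrary measurable increasing $E$ living on $\Lambda$. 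Making your step~(iv) rigorous would require an additional continuity argument that the paper's approach sidesteps entirely.
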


Let us make some remarks. First, Theorem~\ref{lem:prelimbk_inequality}
is formulated for $\xi$ as in~\eqref{eq:prelim:xi_def_thinnings}, that
is, for a 
\cov{point process on $(\Rd\times\mathbb{M})^{[2]}\times[0,1]$
given as an edge-marking of a marked PPP.}
The proof is independent of the precise structure of $\mathbb M$ and the
probability measure on it; and so
Theorem~\ref{lem:prelimbk_inequality} still holds true if $\mathbb M$
is replaced by any complete separable metric space with a probability
measure on it.

Second, our proof relies on the BK inequality for marked PPPs proved
by Gupta and Rao in~\cite{GupRao99}. The result in \cite{GupRao99} is
phrased for marks in $[0,\infty)$, but an inspection of the proof
shows that the result is valid for rather general mark spaces, and we
use this generalization in our proof.  A BK-type inequality for
increasing events was proved by van den Berg~\cite{vdB96}.

Finally, we point out that the operation $\circ$ is commutative and associative, which allows repeated application of the inequality.

\begin{proof} We first show that it suffices to prove
  \eqref{eq:prelim:BK}. Indeed, if \eqref{eq:prelim:BK} holds, then
  \al{ \pla((\xi,W_1,W_2)\in E_1 \circ E_2) &= \int \pla(\xi\in E^{w_1}_1 \circ E^{w_2}_2)\pla((W_1,W_2)\in\dd(w_1,w_2)) \\
    & \le \iint\pla(\xi\in E^{w_1}_1)\pla(\xi\in E^{w_2}_2)\pla(W_1\in\dd w_1)\pla(W_2\in\dd w_2)\\
    & =\pla((\xi,W_1)\in E_1)\pla((\xi,W_2)\in E_2). } 

To
  prove~\eqref{eq:prelim:BK}, we use the BKR inequality proven
  in~\cite{GupRao99}. To do so, we approximate $\xi$ by functions of
  suitable independent markings of the 
\cov{Poisson process $\eta=\{X_i: i \in \N\}$. Let}
$\eta':=\{(X_i,Y_i):i\in\N\}$ be an independent marking of $\eta$
(see~\eqref{eq:prelim:xi_def_thinnings}). By the marking theorem (see,
e.g.,~\cite[Theorem 5.6]{LasPen17}), $\eta'$ is a Poisson process with
intensity measure $\lambda\text{Leb}\otimes\mathbb{U}^\N$. Let $\xi$
be given as
in~\eqref{eq:prelim:xi_def_thinnings}.

Let $\varepsilon>0$ and set $Q_z^\varepsilon :=z + [0,\varepsilon)^d$ for $z\in \varepsilon \mathbb Z^d$. Define $\mathbb{M}^\varepsilon:= [0,1]^{\varepsilon\mathbb{Z}^d}$ and let $\mathbb{U}^\varepsilon:=\mathbb{U}^{\varepsilon\mathbb{Z}^d}$, where $\mathbb{U}$ is the uniform distribution on $[0,1]$. Let $\eta^\varepsilon$ be an independent $\mathbb{U}^\varepsilon$-marking of $\eta'$. By the marking theorem~\cite[Theorem 5.6]{LasPen17}, $\eta^\varepsilon$ is a Poisson process on $\Rd\times\mathbb M\times\mathbb{M}^\varepsilon$ with intensity measure $\lambda\text{Leb}\otimes\mathbb{U}^\N\otimes \mathbb{U}^\varepsilon$. We write $\eta^\varepsilon$ in the form
	\[ \eta^\varepsilon=\{(x,r,U(x,r)):(x,r)\in\eta'\}, \]
where the $U(x,r)$ are conditionally independent given $\eta'$. Moreover, given $\eta'$ and $(x,r)\in\eta'$, $U(x,r) = (U_z(x,r))_{z \in\varepsilon\mathbb{Z}^d}$ is a sequence of independent uniform random variables on $[0,1]$ (for simplicity of notation, $U(x,r)$ does not reflect the dependence on $\varepsilon$).

We use $\eta^\varepsilon$ to approximate $\xi$ by a point process
$\xi^\varepsilon$ on
\cov{$(\Rd\times\mathbb{M})^{[2]}\times[0,1]$}
as follows. For $x \in \Rd$, let $q(x;\varepsilon)$ be the point
$z \in \varepsilon\mathbb{Z}^d$ such that $x \in Q_z^\varepsilon$. If
$(x,r),(y,s)\in \eta'$ satisfy $x<y$ (we recall that $<$ denotes
strict lexicographical order) and
$|\eta \cap Q^\varepsilon_{q(x;\varepsilon)}|=|\eta \cap
Q^\varepsilon_{q(y;\varepsilon)}| = 1$, we let
$(\{(x,r),(y,s)\},U_{q(y;\varepsilon)}(x,r))$ be a point of
$\xi^\varepsilon$.  Let
\[ R_\varepsilon :=\bigcup_{z\in\Lambda^\varepsilon}\{|\eta \cap
  Q^\varepsilon_z |\ge 2\},\] where
$\Lambda^\varepsilon :=\{z\in\varepsilon \mathbb Z^d:\Lambda\cap
Q_z^\varepsilon\ne\varnothing\}$. A simple but crucial observation is
that \eqq{ \pla(\{\xi_\Lambda\in\cdot\}\cap R^c_\varepsilon)
  =\pla(\{\xi^{\varepsilon}_\Lambda\in\cdot\}\cap
  R^c_\varepsilon). \label{eq:prelim:BK_1}} Next we note that, as
$\varepsilon \searrow 0$, \eqq{ \pla(R_\varepsilon) \le
  \text{diam}(\Lambda)^d\varepsilon^{-d} \left(1-
    \e^{-\lambda\varepsilon^d} - \lambda \varepsilon^d
    \e^{-\lambda\varepsilon^d}\right) = \mathcal
  O(\varepsilon^d), \label{eq:prelim:BK_2}} where $\text{diam}$
denotes the diameter. To exploit~\eqref{eq:prelim:BK_1}
and~\eqref{eq:prelim:BK_2}, we need to recall the BKR inequality for
$\eta^\varepsilon$. Set
$\mathbf{N}^\varepsilon:=\mathbf{N}(\Rd \times\mathbb M \times
\ch{\mathbb{M}^\varepsilon})$. For $B\in\mathbf{N}^\varepsilon$ and
$K\in \B(\Rd)$, we set
$B_K:=B\cap (K\times\mathbb{M}\times\ch{\mathbb{M}^\varepsilon})$
and
$\llbracket B\rrbracket_K:=\{A\in\mathbf{N}^\varepsilon:A_K=B_K\}$. Given
$E',F'\in \mathcal{N}(\Rd\times\mathbb{M}\times
\ch{\mathbb{M}^\varepsilon}))$, we define \eqq{ E' \Box F' := \{
  A\in\mathbf{N}^\varepsilon: \exists K,L \in \mathcal{R} \text{
    s.t.~} K \cap L = \varnothing \text{ and }
  \llbracket A \rrbracket_K \subseteq E', \llbracket A\rrbracket_L
  \subseteq F' \}. \label{eq:prelim:box_event}} If $E',F'$ live on
$\Lambda$, then we can apply the BKR-inequality of Gupta and
Rao~\cite{GupRao99} to get \eqq{ \pla(\eta^\varepsilon\in E' \Box F')
  \le \pla(\eta^\varepsilon\in E')\pla(\eta^\varepsilon\in
  F'). \label{eq:prelim:BK_3}} Let
$E,F\in\mathcal N((\Rd\times\mathbb{M})^{[2]}\times[0,1])$ be
increasing.  By \eqref{eq:prelim:BK_1} and \eqref{eq:prelim:BK_2}, we
have \eqq{ \pla(\xi\in E \circ F) \leq \pla(\{\xi\in E \circ F\}\cap
  R_\varepsilon^c) + \pla(R_\varepsilon) = \pla(\{\xi^\varepsilon \in
  E \circ F\}\cap R_\varepsilon^c) + \mathcal
  O(\varepsilon^d), \label{eq:prelim:BK_5} } We now use that
$\xi^\varepsilon=T(\eta^\varepsilon)$ for a well-defined measurable
mapping
$T\colon
\mathbf{N}(\Rd\times\mathbb{M}\times\ch{\mathbb{M}^\varepsilon})) \to
\mathbf{N}((\Rd\times\mathbb{M})^{[2]}\times[0,1])$. (Again this
notation does not reflect the dependence on $\varepsilon$.) Assume
that $\varepsilon$ is rational. We assert that \eqq{
  \{\xi^\varepsilon\in E \circ F\}\cap R_\varepsilon^c\subseteq
  \{\eta^\varepsilon\in (T^{-1}E) \Box (T^{-1}F)\}\cap
  R_\varepsilon^c.\label{eq:prelim:BK_7}} To prove this, we assume
that $R_\varepsilon^c$ holds. Assume also that there exist disjoint
$K,L\in\mathcal{R}$ (depending on $\eta^\varepsilon$) such that
$T(\eta^\varepsilon)_K \in E$ and $T(\eta^\varepsilon)_L\in F$. Let
	\[ K':=\bigcup_{z\in\Lambda^\varepsilon:|\eta \cap Q^\varepsilon_z\cap K|=1}Q^\varepsilon_z, \]
and define $L'$ similarly. Since $R_\varepsilon^c$ holds, we have that $K'\cap L'=\varnothing$ and, moreover, $T(\eta^\varepsilon)_{K'}=T(\eta^\varepsilon)_{K}$ as well as $T(\eta^\varepsilon)_{L'}=T(\eta^\varepsilon)_{L}$. By definition of $T$, for each $A\in \mathbf{N}(\Rd\times\mathbb M\times[0,1]^{\varepsilon\mathbb Z^d})$, we have that $T(A)_{K'}=T(A_{K'})$. Let $A\in \mathbf{N}(\Rd\times\mathbb M\times[0,1]^{\varepsilon\mathbb Z^d})$ be such that $A_{K'}=\eta^\varepsilon_{K'}$. Then $T(A)_{K'}=T(\eta^\varepsilon)_{K'}$. Since $T(\eta^\varepsilon)_{K'}=T(\eta^\varepsilon)_{K}$ and $E$ is increasing, we obtain that $T(A)\in E$, that is $A\in T^{-1}(E)$. It follows that $\llbracket\eta^\varepsilon\rrbracket_{K'}\subset T^{-1}(E)$. In the same way, we get $\llbracket\eta^\varepsilon\rrbracket_{L'}\subset T^{-1}(F)$. This shows that \eqref{eq:prelim:BK_7} holds.

From \eqref{eq:prelim:BK_5},~\eqref{eq:prelim:BK_7}, and the BKR inequality~\eqref{eq:prelim:BK_3}, we obtain that
	\al{ \pla(\xi\in E \circ F) - \mathcal O(\varepsilon^d) = \pla(\{\xi^\varepsilon \in E \circ F\} \cap R_\varepsilon^c)
		&\le \pla(\{\eta^\varepsilon \in (T^{-1}E) \Box (T^{-1}F)\}\cap R_\varepsilon^c)\\
		&\leq \pla(\eta^\varepsilon \in (T^{-1}E) \Box (T^{-1}F))\\
		&\le \pla(\eta^\varepsilon \in T^{-1}E)\pla(\eta^\varepsilon\in T^{-1}F) \\
		&\le \pla(\{\xi^\varepsilon \in E\}\cap R_\varepsilon^c)\pla(\{\xi^\varepsilon\in F\}\cap R_\varepsilon^c) + \mathcal O(\varepsilon^d)  \\
		&=\pla(\xi \in E)\pla(\xi \in F) + \mathcal O(\varepsilon^d).}
In the second-to-last step, we have used that we can intersect events with $R_\varepsilon^c$ at the cost of adding $\mathcal O(\varepsilon^d)$. In the last step, we have used~\eqref{eq:prelim:BK_1}. Letting $\varepsilon\searrow 0$ concludes the proof.
\end{proof}
In the above proof of Theorem~\ref{lem:prelimbk_inequality}, mind that $T^{-1}(E)$ and $T^{-1}(F)$ are not increasing any more, and so we have made crucial use of the general BKR inequality in \eqref{eq:prelim:BK_3}. 

\paragraph{An application of the BK inequality.}
We now give an example of the use of Theorem~\ref{lem:prelimbk_inequality}. Let $\xi$ be given as in~\eqref{eq:prelim:xi_def_thinnings} and let $x_1,x_2,x_3\in\Rd$. Define $E$ as the event that there is a path between $x_1$ and $x_2$ as well as a second path between $x_2$ and $x_3$ that only shares $x_2$ as common vertex with the first path. More formally, let $E:= \{ \conn{x_1}{x_2}{\xi^{x_1,x_2}}\} \circ \{ \conn{x_2}{x_3}{\xi^{x_2,x_3}} \} $. We assert that
	\eqq{ \pla (E)  \leq \tlam(x_2-x_1) \tlam(x_3-x_2). \label{eq:prelim:BK_application}}
\col{We want to apply Theorem~\ref{lem:prelimbk_inequality} for an RCM $\xi'$ with a slightly modified mark space; moreover, we need to identify $W_1$ and $W_2$.}
For each $i\in\N$, we let $M_i:=(U_{i,0},U_{0,i},U_{i,-1}, U_{-1,i},U_{i,-2},U_{-2,i})$ and define $\xi'$ by~\eqref{eq:prelim:xi_def_thinnings} with $Y_i$ replaced by $(Y_i,M_i)$. We also define $W_1:=(U_{0,-1},U_{-1,0})$ and $W_2:=(U_{-1,-2},U_{-2,-1})$. Then the RCM $\Gamma_\varphi(\xi^{x_1,x_2,x_3})$ is a (measurable) function of $(\xi',W_1,W_2)$ and $(U_{0,-2},U_{-2,0})$. Note that $(U_{0,-2},U_{-2,0})$ is not required for determining the event $E$. Let $(\Lambda_n)_{n\in\N}$ with $\Lambda_n:= [-n,n)^d$ and define $\tlam^n(v,u) := \pla(\conn{v}{u}\xi^{v,u}_{\Lambda_n})$. Then
	\[\pla\big(  \{ \conn{x_1}{x_2}{\xi^{x_1,x_2}_{\Lambda_n}}\} \circ \{ \conn{x_2}{x_3}{\xi^{x_2,x_3}_{\Lambda_n}} \} \big) \leq \tlam^n(x_1,x_2) \tlam^n(x_2,x_3) \]
for every $n\in\N$. Monotone convergence implies \eqref{eq:prelim:BK_application}.

\paragraph{The FKG inequality.}
Adapting the FKG inequality turns out to be rather straightforward. Given two increasing events $E,F$, we want that $\pla(\xi \in E \cap F) \geq \pla(\xi \in E)\pla(\xi \in F)$. Indeed, given two increasing (integrable) functions $f,g$, we have the more general statement
	\eqq{ \E_\lambda[f(\xi) g(\xi)] = \E_\lambda \big[ \E_\lambda[f(\xi)g(\xi) \mid \eta] \big] \geq \E_\lambda \big[ \E_\lambda[f(\xi)\mid \eta] \; \E_\lambda[g(\xi)\mid \eta] \big]
				\geq \E_\lambda[f(\xi)] \; \E_\lambda[g(\xi)]. \label{eq:prelim:FKG}}
The first inequality was obtained by applying FKG to the random graph conditioned to have $\eta$ as its vertex set, the second inequality by applying FKG for point processes (see, e.g.,~\cite[Thm.\ 20.4]{LasPen17}).

\paragraph{Truncation arguments and a differential inequality.}
Next, we prove elementary differentiability results as well as a differential inequality, illustrating how to put the above tools into action. Since Russo-Margulis and BK work only for events on bounded domains and we intend to use them for events of the form $\{\conn{\orig}{x}{\xi^{\orig,x}}\}$, this careful treatment is necessary. We point out that this is the only instance in this paper where we carry out these finite-size approximations in such detail. We start with the differentiability of $\tlam$. Recall that $\Lambda_n = [-n,n)^d$ and $\tau_\lambda^n(v,x) = \pla(\conn{v}{x}{\xi^{v,x}_{\Lambda_n}})$. We write $\tlam^n(x) := \tlam^n(\orig,x)$.

\col{Moreover, we want to give meaning to the event $\{\conn{x}{\Lambda_n^c}{\xi^x_{\Lambda_n}}\}$. To this end, we add a ``ghost vertex'' $g$ in the same way we added deterministic vertices, and we add an edge between $v \in \xi_{\Lambda_n}$ and $g$ with probability $1-\exp(-\int_{\Lambda_n^c} \connf(y-v) \dd y)$. We now identify $\Lambda_n^c$ with $g$.}

\begin{lemma}[Differentiability of $\tlam$]\label{lem:tlam_differentiability}
Let $x\in\Rd$ and $\varepsilon>0$ be arbitrary. Then,
\ch{\begin{enumerate}
\item[(a)] the function $\lambda\mapsto\tau_\lambda^n(x)$ is differentiable on $[0, \lambda_c-\varepsilon]$ for any $n\in\N$; 
\item[(b)] $ \tau_\lambda^n(x)$ converges to $\tlam(x)$ uniformly in $\lambda$, and $\tfrac{\dd}{\dd\lambda} \tau_\lambda^n(x)$ converges to a limit uniformly in $\lambda$ on $[0,\lambda_c-\varepsilon]$; 
\item[(c)] $\tlam(x)$ is differentiable w.r.t.~$\lambda$ on $[0,\lambda_c)$ and
	\eqq{ \lim_{n\to\infty} \frac{\dd}{\dd\lambda} \tau_\lambda^n(x) = \frac{\dd}{\dd\lambda}\tlam(x) =
							\int \pla(\conn{\orig}{x}{\xi^{\orig,y,x,}}, \nconn{\orig}{x}{\xi^{\orig,x}}) \dd y. \label{eq:prelim:tlam_derivative_limit_exchange}}
\end{enumerate}
}
\end{lemma}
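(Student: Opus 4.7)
The plan is to apply Margulis-Russo~\eqref{eq:prelim:russo} in the finite volume $\Lambda_n$ to compute $\frac{\dd}{\dd\lambda}\tau_\lambda^n(x)$, bound the resulting integrand via the BK inequality (Theorem~\ref{lem:prelimbk_inequality}) by a $\lambda$-free, integrable dominator, and then transfer to infinite volume by dominated convergence, all uniformly in $\lambda\in[0,\lambda_c-\varepsilon]$. The lemma then follows from the standard real-analysis fact that uniform convergence of differentiable $\tau_\lambda^n$ together with uniform convergence of $\frac{\dd}{\dd\lambda}\tau_\lambda^n$ yields differentiability of the limit with derivative equal to the limit of the derivatives; letting $\varepsilon\searrow 0$ extends everything to $[0,\lambda_c)$.

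First, for $n$ so large that $\orig,x\in\Lambda_n$, the indicator $f(\xi):=\mathds 1\{\conn{\orig}{x}{\xi^{\orig,x}_{\Lambda_n}}\}$ lives on the bounded set $\Lambda_n$, so Margulis-Russo gives
\[
\frac{\dd}{\dd\lambda}\tau_\lambda^n(x) = \int_{\Lambda_n}\pla\bigl(\conn{\orig}{x}{\xi^{\orig,y,x}_{\Lambda_n}},\ \nconn{\orig}{x}{\xi^{\orig,x}_{\Lambda_n}}\bigr)\dd y,
\]
using that adding a vertex can only create paths, never destroy them. On this event $y$ is pivotal for $\orig\leftrightarrow x$, which produces disjoint-in-space paths $\orig\leftrightarrow y$ and $y\leftrightarrow x$; BK (applied exactly as in the example following Theorem~\ref{lem:prelimbk_inequality}) bounds the integrand by $\tlam(y)\tlam(x-y)$, and by monotonicity of $\lambda\mapsto\tlam$ also by the $\lambda$-free function $\tau_{\lambda_c-\varepsilon}(y)\tau_{\lambda_c-\varepsilon}(x-y)$, which is integrable in $y$ since $\int\tau_{\lambda_c-\varepsilon}(z)\dd z<\infty$ thanks to $\lambda_c=\lambda_T$.

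For uniform convergence $\tau_\lambda^n(x)\to\tlam(x)$ on $[0,\lambda_c-\varepsilon]$, observe that $0\le\tlam(x)-\tau_\lambda^n(x)=\pla(\conn{\orig}{x}{\xi^{\orig,x}},\nconn{\orig}{x}{\xi^{\orig,x}_{\Lambda_n}})$: any such connecting path must pass through a vertex $z\in\eta\cap\Lambda_n^c$, and Mecke~\eqref{eq:prelim:mecke_1} combined with BK on the two disjoint pieces $\orig\leftrightarrow z$ and $z\leftrightarrow x$ yields the bound $\lambda\int_{\Lambda_n^c}\tlam(z)\tlam(x-z)\dd z\le\lambda_c\int_{\Lambda_n^c}\tau_{\lambda_c-\varepsilon}(z)\dd z\to 0$, uniformly in $\lambda$. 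For the analogous uniform convergence of the derivatives, monotone convergence of $\pla(\conn{\orig}{x}{\xi^{\orig,y,x}_{\Lambda_n}})\uparrow\pla(\conn{\orig}{x}{\xi^{\orig,y,x}})$ and of $\pla(\conn{\orig}{x}{\xi^{\orig,x}_{\Lambda_n}})\uparrow\pla(\conn{\orig}{x}{\xi^{\orig,x}})$ gives pointwise convergence of the integrand to $\pla(\conn{\orig}{x}{\xi^{\orig,y,x}},\nconn{\orig}{x}{\xi^{\orig,x}})$; combining this with the $\lambda$-free dominator $\tau_{\lambda_c-\varepsilon}(y)\tau_{\lambda_c-\varepsilon}(x-y)$ in a standard dominated-convergence argument, plus the shrinking-domain tail $\int_{\Lambda_n^c}\tau_{\lambda_c-\varepsilon}(y)\tau_{\lambda_c-\varepsilon}(x-y)\dd y\to 0$, upgrades this to uniform-in-$\lambda$ convergence of the derivative integrals.

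The main obstacle is precisely this uniformity in $\lambda$: it rests on having BK bounds that are $\lambda$-free (through monotonicity and the uniform majorant $\tau_{\lambda_c-\varepsilon}$) and on subcritical integrability $\int\tau_{\lambda_c-\varepsilon}<\infty$, i.e.\ on $\lambda_c=\lambda_T$. Once these are in place, the differentiation-under-the-limit argument delivers~\eqref{eq:prelim:tlam_derivative_limit_exchange}.
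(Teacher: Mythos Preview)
Your overall strategy matches the paper's, and the differentiability-via-uniform-convergence framework is correct. However, there is a genuine gap in the step where you ``upgrade'' to uniform-in-$\lambda$ convergence of the derivatives. Having a $\lambda$-free dominator in dominated convergence does \emph{not} yield uniform convergence in the parameter $\lambda$: dominated convergence only gives, for each fixed $\lambda$, that $\int_{\Lambda_n} p_\lambda^n(y)\,\dd y \to \int p_\lambda(y)\,\dd y$, with no control on the rate. A $\lambda$-free dominator ensures the integrals are uniformly bounded, not that they converge uniformly.

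What is missing is a $\lambda$-free bound on the \emph{integrand difference} itself. Writing the difference as
\[
p_\lambda(y)-p_\lambda^n(y)=\pla\big(\viaconn{\orig}{x}{\xi^{\orig,y,x}}{\Lambda_n^c}\text{ and through }y\big),
\]
this is an increasing event in $\lambda$, so it is bounded by its value at $\lambda_c-\varepsilon$. Now one can either (i) apply dominated convergence (in $n$) to this $\lambda$-free quantity, using that for each fixed $y$ it is at most $\p_{\lambda_c-\varepsilon}(\conn{\orig}{\Lambda_n^c}{\xi^\orig})\to 0$ and is dominated by $\tau_{\lambda_c-\varepsilon}(y)\tau_{\lambda_c-\varepsilon}(x-y)$; or (ii) proceed as the paper does, decomposing the ``through $\Lambda_n^c$ and through $y$'' event via BK into two orderings, each producing an explicit factor $\p_{\lambda_c-\varepsilon}(\conn{z}{\Lambda_n^c}{\xi^z})$ with $z\in\{\orig,x\}$ fixed, which tends to $0$ uniformly. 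Either route closes the gap; your text currently omits the monotonicity-in-$\lambda$ observation that makes it work.
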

\begin{proof}
For $S \subseteq \Rd$, let $\{\viaconn{a}{b}{\xi^{a,b}}{S} \}$ be the event that $a$ is connected to $b$ in $\xi^{a,b}$, and every path uses a vertex in $S$. Also, let $\{\conn{x}{S}{\xi^x}\}$ be the event that there is $y \in \eta^x\cap S$ that is connected to $x$ \ch{in $\xi$}. The convergence $\tau_\lambda^n(x) \to \tlam(x)$ is uniform \col{in $x\in\Rd$ and $\lambda \in [0,\lambda_c-\varepsilon]$}, as
	\al{ |\tlam(x) - \tau_\lambda^n(x)| &= \pla(\viaconn{\orig}{x}{\xi^{\orig,x}}{\Lambda_n^c}) \\
		& \leq \p_{\lambda_c-\varepsilon}(\conn{\orig}{\Lambda_n^c}{\xi^{\orig}})\xrightarrow{n\to\infty} 0 }
uniformly in $\lambda \leq \lambda_c-\varepsilon$. We further claim that
	\eqq{\frac{\dd}{\dd\lambda} \tau_\lambda^n(x) \xrightarrow{n\to\infty} f_\lambda(x) := \int \pla(\conn{\orig}{x}{\xi^{\orig,y,x,}}, \nconn{\orig}{x}{\xi^{\orig,x}}) \dd y \label{eq:prelim:tlam_derivative}}
uniformly in $\lambda$. A helpful identity in the proof of~\eqref{eq:prelim:tlam_derivative} is~\eqref{eq:ftlam_chi_relation}. It follows from the Mecke equation, as
	\eqq{ \chi(\lambda) = 1 + \E_\lambda\Big[ \sum_{x \in \eta} \mathds 1_{\{\conn{\orig}{x}{\xi^{\orig}} \}} \Big] = 1 + \lambda \int \tlam(x) \dd x, \label{eq:prelim:ftlam_chi_relation_proof}}
and it implies that $\int \tlam(x) \dd x < \infty$ for $\lambda<\lambda_c=\lambda_T$. To prove~\eqref{eq:prelim:tlam_derivative}, note that $\{ \conn{\orig}{x}{\xi^{\orig,x}_{\Lambda_n}}\}$ lives on the bounded set $\Lambda_n$, and so we can apply the Margulis-Russo formula~\eqref{eq:prelim:russo}, which gives differentiability and an explicit expression for the derivative as
	\eqq{ \frac{\dd}{\dd\lambda} \tau_\lambda^n(x) = \int_{\Lambda_n} \pla \left( \conn{\orig}{x}{\xi^{\orig,y,x}_{\Lambda_n}}, \nconn{\orig}{x}{\xi^{\orig,x}_{\Lambda_n}} \right) \dd y. 
							\label{eq:prelim:tlam_finite_derivative}}
As a consequence of~\eqref{eq:prelim:tlam_finite_derivative}, we can write
	\al{ \left\vert\frac{\dd}{\dd\lambda} \tlam^n(x) - f_\lambda(x)\right\vert &= \bigg\vert \int_{\Lambda_n} \Big( \pla(\conn{\orig}{x}{\xi^{\orig,y,x}}) -\pla(\conn{\orig}{x}{\xi^{\orig,x}}) \\
		& \qquad\qquad +\pla(\conn{\orig}{x}{\xi^{\orig,x}_{\Lambda_n}}) - \pla(\conn{\orig}{x}{\xi^{\orig,y,x}_{\Lambda_n}}) \Big) \dd y \\
		& \quad + \int_{\Lambda_n^c} \pla(\conn{\orig}{x}{\xi^{\orig,y,x,}}, \nconn{\orig}{x}{\xi^{\orig,x}}) \dd y \bigg\vert \\
		& \leq \bigg\vert \int_{\Lambda_n} \pla(\viaconn{\orig}{x}{\xi^{\orig,y,x}}{\Lambda_n^c}) - \pla(\viaconn{\orig}{x}{\xi^{\orig,x}}{\Lambda_n^c}) \dd y \bigg\vert\\
		& \quad + \int_{\Lambda_n^c} \pla(\conn{\orig}{y}{\xi^{\orig,y}} ) \dd y \\
		& \leq \int_{\Lambda_n} \pla(\viaconn{\orig}{x}{\xi^{\orig,y,x}}{\Lambda_n^c} \text{ and through } y) \dd y 
						+ \int_{\Lambda_n^c} \tau_{\lambda_c-\varepsilon}(y) \dd y .}
Now, observe that the event $\{\viaconn{\orig}{x}{\xi^{\orig,y,x}}{\Lambda_n^c} \text{ and through } y\}$ is contained in
	\al{ & \Big( \{\conn{\orig}{y}{\xi^{\orig,y}}\} \circ \{\conn{y}{\Lambda_n^c}{\xi^{y}}\} \circ \{\conn{\Lambda_n^c}{x}{\xi^{x}}\} \Big) \\
	 \cup & \Big( \{\conn{\orig}{\Lambda_n^c}{\xi^{\orig}}\} \circ \{\conn{\Lambda_n^c}{y}{\xi^{y}}\} \circ \{\conn{y}{x}{\xi^{y,x}}\} \Big). }
Applying the BK inequality together with $\int_{\Lambda_n^c} \tau_{\lambda_c-\varepsilon}(y) \dd y = o(1)$ as $n\to\infty$ gives
	\al{ \left\vert\frac{\dd}{\dd\lambda} \tlam^n(x) - f_\lambda(x)\right\vert & \leq \int_{\Lambda_n} \pla(\conn{y}{\Lambda_n^c}{\xi^y}) \\
		& \qquad \times \Big[\tlam(y)\pla(\conn{x}{\Lambda_n^c}{\xi^x}) + \tlam(x-y)\pla(\conn{\orig}{\Lambda_n^c}{\xi^{\orig}}) \Big] \dd y +o(1) \\
		& \leq 2 \max_{z\in\{\orig,x\}} \p_{\lambda_c-\varepsilon}\big(\conn{z}{\Lambda_n^c}{\xi^z}\big) \int \tau_{\lambda_c-\varepsilon}(y) \dd y + o(1) = o(1), }
as the remaining integral is again bounded and the remaining probability tends to zero uniformly in $\lambda$. 
\ch{This implies assertions (a) and (b) of the lemma. For (c) we observe that the}
 uniform convergence justifies the exchange of limit and derivative in~\eqref{eq:prelim:tlam_derivative_limit_exchange} (see, e.g.,~\cite[Thm. 7.17]{Rud76}.
\end{proof}
We close this section by deriving a useful differential inequality:
\begin{lemma}[A differential inequality for $\chi(\lambda)$] \label{lem:differentialinequalityclustersize} 
Let $\lambda<\lambda_c$. Then
	\[ \frac{\dd}{\dd \lambda} \ftlam(\orig) \leq \ftlam(\orig)^2.\]
\end{lemma}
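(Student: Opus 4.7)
The plan is to combine the explicit formula for $\tfrac{\dd}{\dd\lambda}\tlam(x)$ from Lemma~\ref{lem:tlam_differentiability} with the BK inequality from Theorem~\ref{lem:prelimbk_inequality}, and then integrate over $x$ using Fubini/Tonelli.

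First, I would start from the identity
\[
\frac{\dd}{\dd\lambda}\tlam(x) = \int \pla\bigl(\conn{\orig}{x}{\xi^{\orig,y,x}},\, \nconn{\orig}{x}{\xi^{\orig,x}}\bigr)\,\dd y
\]
given by~\eqref{eq:prelim:tlam_derivative_limit_exchange}. The event inside the integrand says precisely that the additional point $y$ is pivotal for the connection $\orig\leftrightarrow x$ in $\xi^{\orig,y,x}$. In particular, every path from $\orig$ to $x$ in $\xi^{\orig,y,x}$ must pass through $y$, so this event is contained in the disjoint occurrence
\[
\{\conn{\orig}{y}{\xi^{\orig,y}}\}\circ\{\conn{y}{x}{\xi^{y,x}}\}.
\]
A finite-volume BK argument, carried out exactly as in~\eqref{eq:prelim:BK_application} via the approximating two-point functions $\tlam^n$ and then letting $n\to\infty$ by monotone convergence, yields
\[
\pla\bigl(\conn{\orig}{x}{\xi^{\orig,y,x}},\, \nconn{\orig}{x}{\xi^{\orig,x}}\bigr) \le \tlam(y)\,\tlam(x-y).
\]

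Next, I would integrate this bound in $x$. Fix $\lambda_0<\lambda<\lambda_c$. Since $\lambda\mapsto\tlam(x)$ is non-decreasing, the integrand $\tfrac{\partial}{\partial\lambda'}\tlam(x)$ is non-negative, so Tonelli gives
\[
\ftlam(\orig)-\widehat\tau_{\lambda_0}(\orig) = \int\!\bigl(\tlam(x)-\tau_{\lambda_0}(x)\bigr)\,\dd x = \int_{\lambda_0}^{\lambda}\!\!\int \frac{\partial}{\partial\lambda'}\tau_{\lambda'}(x)\,\dd x\,\dd\lambda'.
\]
Applying the pointwise bound derived above and once more exchanging order of integration (all integrands non-negative) produces
\[
\int \frac{\partial}{\partial\lambda'}\tau_{\lambda'}(x)\,\dd x \le \int\!\!\int \tau_{\lambda'}(y)\,\tau_{\lambda'}(x-y)\,\dd y\,\dd x = \widehat\tau_{\lambda'}(\orig)^2.
\]
Since $\lambda'\mapsto\widehat\tau_{\lambda'}(\orig)^2$ is continuous on $[0,\lambda_c)$ (as $\widehat\tau_{\lambda'}(\orig)=(\chi(\lambda')-1)/\lambda'$ is continuous by~\eqref{eq:ftlam_chi_relation}), differentiating the resulting integral inequality in $\lambda$ at a point of differentiability of $\lambda\mapsto\ftlam(\orig)$ gives the claim
\[
\frac{\dd}{\dd\lambda}\ftlam(\orig)\le\ftlam(\orig)^2.
\]

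The only delicate point I anticipate is the justification that $\lambda\mapsto\ftlam(\orig)$ is actually differentiable (and not merely differentiable almost everywhere), so that the differential inequality can be read pointwise rather than only in the integrated sense above. This should follow by repeating the finite-volume analysis of Lemma~\ref{lem:tlam_differentiability}: for $\tau_\lambda^n$, the Margulis-Russo formula~\eqref{eq:prelim:tlam_finite_derivative} together with BK at finite volume gives $\tfrac{\dd}{\dd\lambda}\tau_\lambda^n(x)\le(\tlam\star\tlam)(x)$ uniformly in $n$, which is an integrable (in $x$) majorant since $\lambda<\lambda_c$. Dominated convergence then legitimately exchanges $\tfrac{\dd}{\dd\lambda}$ with $\int\!\dd x$, giving pointwise differentiability of $\ftlam(\orig)$ and closing the argument.
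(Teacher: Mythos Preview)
Your proposal is correct and follows essentially the same approach as the paper: bound the pivotal event by a $\circ$-event, apply BK (in finite volume, then pass to the limit), and exchange derivative and integral via dominated convergence using the integrable majorant $(\tlam\star\tlam)(x)$. The paper's argument is slightly more direct in that it works at finite volume from the start, bounding $\tfrac{\dd}{\dd\lambda}\tlam^n(x)\le\int\tlam^n(y)\tlam^n(y,x)\dd y$ and then invoking Leibniz' integral rule; your middle detour through the integrated Tonelli inequality and re-differentiation is unnecessary, since your final paragraph already contains everything needed to conclude pointwise.
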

\begin{proof}
First note that with~\eqref{eq:prelim:tlam_finite_derivative} (and the BK inequality), we can bound
	\eqq{ \frac{\dd}{\dd\lambda} \tau_\lambda^n(x) \leq \int \pla \left( \{\conn{\orig}{y}{\xi^{\orig,y}_{\Lambda_n}} \}\circ\{ \conn{y}{x}{\xi^{y,x}_{\Lambda_n}} \} \right) \dd y \leq \int \tlam^n(y)\tlam^n(y,x) \dd y.
					 \label{eq:prelim:tlam_finite_derivative_bound}}
We can now use Leibniz' integral rule in its measure-theoretic form to write $\tfrac{\dd}{\dd\lambda} \int \tlam(x) \dd x = \int \tfrac{\dd}{\dd\lambda} \tlam(x) \dd x$. This is justified as the integrand $\tlam$ is uniformly bounded by the integrable function $\tau_{\lambda_c-\varepsilon}$ for some small $\varepsilon\in(0,\lambda_c-\lambda)$. Applying Lemma~\ref{lem:tlam_differentiability} as well as~\eqref{eq:prelim:tlam_finite_derivative_bound}, we derive
	\[ \frac{\dd}{\dd \lambda} \ftlam(\orig) = \int \lim_{n\to\infty} \frac{\dd}{\dd\lambda} \tau_\lambda^n(x) \dd x 
				 \leq \int \tlam(y) \left( \int \tlam(x-y) \dd x \right) \dd y = \ftlam(\orig)^2. \qedhere\] \end{proof}
\section{The expansion}\label{sec:expansion}
\subsection{Preparatory definitions}
The aim of this section is to prove the expansion for $\tlam$ stated
in~\eqref{eq:laceexpansionequation}. It is one of the goals of the
subsequent sections to show that $R_{\lambda, n} \to 0$ as
$n\to\infty$ when $\lambda<\lambda_c$. The intuitive idea behind the
expansion is quite simple. Loosely speaking,
$\{\conn{\orig}{x}{\xi^{\orig,x}}\}$ is partitioned over the first
pivotal point $u \in \eta$ for this connection (if such a point
exists). That is, there is a double connection between $\orig$ and $u$
and we recover $\tlam(x-u)$, due to the event that
$\{\conn{u}{x}{\xi^{u,x}}\}$.  However, this is not quite true, and
the overcounting error made by pretending as if the double connection
event between $\orig$ and $u$ was \emph{independent} of the connection
event between $u$ and $x$ has to be subtracted. This constitutes the
first step of the expansion. The second step is to further examine
this error term, in which we recognize a similar structure, allowing
for a similar partitioning strategy again.

Making this informal strategy of proof precise requires definitions,
starting with some extended connection events:

\begin{definition}[Connectivity terminology] \label{def:LE:connection_events} Let $u,v,x \in \Rd$. 
\begin{compactitem}
\item[(1.)] We say $u$ and $x$ are \emph{$2$-connected} in $\xi$ if
  $u=x\in\eta$, if $u,x\in\eta$ and $u\sim x$, or if $u,x\in\eta$ and
  there are (at least) two paths between $u$ and $x$ that are disjoint
  in all their interior vertices; that is, there are two paths that
  only share $u$ and $x$ as common vertices. 
\cov{In this case we write  that $\dconn{u}{x}$ $\xi$. Note that
  $\{\dconn{u}{x}{\xi^{u,x}}\}$ is a well-defined event for any $u,x\in\R^d$.}

\item[(2.)] For \cov{a locally finite set $A\subset\R^d$}, we say that $u$ and $x$ are
  connected in $\xi$ \emph{off} $A$ and write
\cov{that $\offconn{u}{x}{\xi}{ A}$ if
  $\conn{u}{x}{\xi[\eta \setminus A]}$ (where we recall from Section \ref{sec:constructionofRCM} that
    $\xi[\eta\setminus A]$ is the subgraph of $\xi$ induced by $\eta\setminus A$). This
means that $u,x\in\eta$ and there exists a path between
  $u$ and $x$ in $\xi$ not using any vertices of $A$. In particular,
  this fails if $A$ contains $u$ or $x$.
Again $\{\offconn{u}{x}{\xi^{u,x}}{A}\}$ is a well-defined event for any $u,x\in\R^d$.
}
\end{compactitem}
\end{definition}

\cov{
Given $u,x\in\R^d$ we have avoided to refer to  
$\{\dconn{0}{x}{\xi^0}\}$,
$\{\conn{u}{x}{\xi}\}$
or $\{\dconn{u}{x}{\xi}\}$ as events. The reason is that
$\pla(x\in\eta)=0$ for all $x$.
Still we can apply the Mecke equation \eqref{eq:prelim:mecke_1}
to the indicator function
$\mathds 1_{\{x\in\eta,\dconn{0}{x}{\xi^0}\}}$ (in fact we have already
done so at \eqref{eq:prelim:ftlam_chi_relation_proof})
or the bivariate Mecke formula (case $n=2$ of \eqref{eq:prelim:mecke_n})
to the indicator function
$\mathds 1_{\{u,x\in\eta,\dconn{u}{x}{\xi}\}}$. After the application
of the Mecke formula we arrive at events of the type
$\{\dconn{0}{x}{\xi^{0,x}}\}$ or $\{\conn{u}{x}{\xi^{u,x}}\}$,
which make perfect sense for all $u,x\in\R^d$.
}

The next definitions introduce thinnings, a standard concept in
point process literature. Recall from
Section~\ref{sec:constructionofRCM} that every Poisson point
$X_i\in\eta$ comes with a sequence of ``thinning marks''
$(Y_{i,j})_{j\in\N}$.

\begin{definition}[Thinning events] \label{def:LE:thinning_events} Let
  $u,x \in \Rd$, and let $A \subset \Rd$ be locally finite and of
  cardinality $|A|$.
\begin{compactitem}
\item[(1.)] Set \eqq{ \bar\connf(A,x) :=\prod_{y\in
      A}(1-\connf(y-x)) \label{eq:LE:def:thinning_probability}} and
  define $\thinning{\eta}{A}$ as a
  \emph{$\bar\connf(A, \cdot)$-thinning of $\eta$} (or simply
  \emph{$A$-thinning of $\eta$}) as follows. We keep a point
  $w \in \eta$ as a point of $\thinning{\eta}{A}$ with probability
  $\bar\connf(A,w)$ independently of all other points of $\eta$. To
  make this more explicit, we use the mappings $\pi_i$, $i\in\N$,
  introduced in Section~\ref{sec:pointprocesses}. In particular,
  $(\pi_j(A))_{j \leq |A|}$ is an ordering of the points in $A$ and
  $(\pi_i(\eta))_{i\in\N}$ is an ordering of the points in $\eta$.

We keep $\pi_i(\eta) \in \eta$ as a point of $\thinning{\eta}{A}$ if
$Y_{i,j} > \connf(\pi_j(A)-\pi_i(\eta))$ for all $j\leq |A|$ (we say
that $\pi_i(\eta)$ \emph{survives} the $A$-thinning). We further
define $\thinning{\eta}{A}^x$ as a $\bar\connf(A,\cdot)$-thinning of
$\eta^x$ using the marks in $\xi^x$.
\item[(2.)] \cov{We write $\xconn{u}{x}{\xi}{A}$ if $\conn{u}{x}{\xi}$
    and $\nconn{u}{x}{\xi[\thinning{\eta}{A} \cup\{u\}]}$.  This
  means that $u,x\in\eta$ and $u$ is connected to $x$ in $\xi$, but
  this connection does not survive an $A$-thinning of
  $\eta\setminus\{u\}$.} In particular, the connection does not survive
  if $x$ is thinned out.
\item[(3.)] We define
	\eqq{ \tlam^{A}(u,x) = \pla \left(\conn{u}{x}{\xi^{u,x}[\thinning{\eta^x}{A}\cup\{u\}]} \right). \label{eq:def:LE:offconn}}
In words, $\tlam^A(u,x)$ is the probability of the event that there exists an open path between $u$ and $x$ in an RCM driven by an $A$-thinning of $\eta^x$, where the point $u$ is fixed to be present (but $x$ is not).
\end{compactitem}
\end{definition}

Let us make some remarks about
Definition~\ref{def:LE:thinning_events}. First, the definition of
$\thinning{\eta}{A}$ is the first occasion that we need the enriched
version of $\xi$ from Section~\ref{sec:constructionofRCM}. It is due
to the independence of the sequence $(Y_{i,j})_{i,j\in\N}$ that the
(conditional) probability of some point $y\in\eta$ being contained in
$\thinning{\eta}{A}$ is indeed $\bar\connf(A,y)$.

Secondly, it is due to the thinning properties of a Poisson point process that $\thinning{\eta}{A}$ has the distribution of an inhomogeneous PPP with intensity $\lambda \bar\connf(A, \cdot)$ (see, e.g.,~\cite{LasPen17}). Thirdly, $\{ \xconn{u}{x}{\xi^{u,x}}{A} \}$ is \emph{not} symmetric w.r.t.~$u$ and $x$, since $x$ can be thinned out, but $u$ can not. Lastly, note that the events considered in (2.) and (3.) of Definition~\ref{def:LE:thinning_events} are complementary in the sense that 
	\eqq{ \{ \conn{u}{x}{\xi^{u,x}}\} = \{\conn{u}{x}{\xi^{u,x}[\thinning{\eta^x}{A}\cup\{u\}]}\} \cup \{\xconn{u}{x}{\xi^{u,x}}{A}\},  \label{eq:LE:tau_thinning_split}}
and the above union is disjoint. This observation will be a crucial ingredient in the lace expansion.

\subsection{Stopping sets and cutting points}
Before deriving the expansion, we state and prove the Cutting-point Lemma (see Lemma~\ref{lem:LE:cutting_point}). This lemma is crucial in deriving an expansion and quite standard in the literature; we view it as an analogue of the Cutting-bond lemma (see~\cite[Lemma 6.4]{HeyHof17}). 

One central idea in the proof of the Cutting-point Lemma~\ref{lem:LE:cutting_point} is to use the \emph{stopping set} properties of $\mathscr C(v, \xi^v)$. We therefore start with Lemma~\ref{lem:stoppingset}, which rigorously formulates the properties we need.

To stress the dependence of $\xi$ on $\eta$, we write $\xi(\eta):=\xi$ in the statement of Lemma~\ref{lem:stoppingset} and parts of its proof, even though this notation is a bit ambiguous. First, it does not reflect the dependence of $\xi$ on the marks $U_{i,j}$. Secondly, the definition of $\xi$ depends on the ordering of the points of $\eta$. The distribution of $\xi$, however, does not depend on this ordering.

\begin{lemma}[Stopping-set lemma]\label{lem:stoppingset} Let $v\in\Rd$. Then
	\eqq{ \pla\big(\xi^v[\eta^v\setminus \C(v,\xi^v)]\in\cdot \mid \C(v,\xi^v) =A\big)	= \pla (\xi(\thinning{\eta}{A})\in\cdot)\quad 
					\text{for }\pla(\C(v,\xi^v)\in\cdot)\text{-a.e.\ $A$}. \label{eq:stopping_set_lemma}}
\end{lemma}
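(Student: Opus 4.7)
The plan is to test the claimed identity of conditional laws against bounded measurable functionals and to verify it by a direct computation in the independent edge-marking construction of Section~\ref{sec:constructionofRCM}, exploiting the mutual independence of $\eta$, the edge-marks $(U_{i,j})$, and the thinning marks $(Y_i)$. Concretely, I would show that for every bounded measurable $F$ on the graph space and $G$ on point configurations,
$$
\E_\lambda\!\big[F(\xi^v[\eta^v\setminus\C(v,\xi^v)])\,G(\C(v,\xi^v))\big]
=\E_\lambda\!\big[G(\C(v,\xi^v))\,\Phi(\C(v,\xi^v))\big],
$$
with $\Phi(A):=\E_\lambda[F(\xi(\thinning{\eta}{A}))]$; this implies \eqref{eq:stopping_set_lemma}.

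First I would decompose over the (random) cardinality $k=|\C(v,\xi^v)|$. On $\{k<\infty\}$, writing the sum over all orderings of the cluster points as a sum over $(k-1)$-tuples of distinct Poisson points and dividing by $(k-1)!$, the multivariate Mecke equation~\eqref{eq:prelim:mecke_n} converts the expectation into an integral over $(x_1,\ldots,x_{k-1})\in(\Rd)^{k-1}$ with $\eta$ replaced by $\eta^{x_1,\ldots,x_{k-1}}$; the test set $A:=\{v,x_1,\ldots,x_{k-1}\}$ is now deterministic.

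For such fixed $A$, the event $\{\C(v,\xi^{v,x_1,\ldots,x_{k-1}})=A\}$ is the intersection of two events involving disjoint blocks of independent randomness: the internal event that $\xi^{v,x_1,\ldots,x_{k-1}}[A]$ is connected (measurable with respect to the $\binom{k}{2}$ edge-marks among the points of $A$ only) and the boundary event that no edge joins $A$ to $\eta$, which conditionally on $\eta$ has probability $\prod_{y\in\eta}\bar\connf(A,y)$ by independence of the $(U_{i,j})$. Since the graph $\xi^v[\eta^v\setminus A]$ is determined by $\eta$ together with the edge-marks internal to $\eta$, it is independent of both the internal event and of the edge-marks between $A$ and $\eta$. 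A standard Laplace-functional/Poisson-thinning argument then identifies the conditional law of $\eta$ given the boundary event as an inhomogeneous PPP of intensity $\lambda\bar\connf(A,\cdot)$, which by Definition~\ref{def:LE:thinning_events}(1.) is precisely the law of $\thinning{\eta}{A}$. Thus, conditional on $\C(v,\xi^{v,x_1,\ldots,x_{k-1}})=A$, the graph $\xi^v[\eta^v\setminus A]$ has the distribution of $\xi(\thinning{\eta}{A})$.

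Re-assembling via Mecke and summing over $k\ge 1$ gives the identity on $\{|\C(v,\xi^v)|<\infty\}$; the case of an infinite cluster is recovered by restricting the construction to a bounded window $\Lambda_n\uparrow\Rd$, applying the finite case to $\C(v,\xi^v_{\Lambda_n})$, and passing to the limit using monotone convergence together with the fact that thinnings are stable under increasing limits of the thinning set. The main obstacle is the bookkeeping of the three independent sources of randomness and the Laplace-functional computation that turns the indicator of the boundary event into the intensity change $\lambda\mapsto\lambda\bar\connf(A,\cdot)$; once those are handled, assembly of the pieces is routine.
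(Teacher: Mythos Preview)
Your Mecke-plus-thinning argument for the finite-cluster case is correct and is a genuinely different route from the paper's. The paper instead explores $\C(v,\xi^v)$ recursively in graph-distance layers $\eta_n$ (vertices within distance $n$ of $v$), shows by induction that, conditionally on $(\eta_0,\ldots,\eta_n)$, the complement is an independent edge-marking of the $\eta_{n-1}$-thinned Poisson process, and then passes to the limit $n\to\infty$. Your static decomposition---fix the entire cluster at once via the multivariate Mecke equation, then observe that the boundary event ``no edge from $A$ to $\eta$'' is exactly an independent $\bar\connf(A,\cdot)$-thinning of $\eta$---is more direct on $\{|\C(v,\xi^v)|<\infty\}$ and avoids the induction entirely.

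The gap is the infinite-cluster case. Your window argument does not work as stated: conditioning on $\C(v,\xi^v_{\Lambda_n})$ is not the same as conditioning on $\C(v,\xi^v)$, and $\C(v,\xi^v_{\Lambda_n})$ is not $\sigma(\C(v,\xi^v))$-measurable, so the $\sigma$-fields do not increase in a way that lets a martingale or monotone-class argument transfer the conditional identity automatically. One can try to pass to the limit in the \emph{unconditional} identity $\E[F(\cdot)G(\C_n)]=\E[\Phi_n(\C_n)G(\C_n)]$, but this requires justifying convergence of $\Phi_n(\C_n)\to\Phi(\C)$ for infinite $\C$, i.e.\ convergence in law of the thinned edge-markings along $A_n\uparrow A$ with $|A|=\infty$, which is more than ``thinnings are stable under increasing limits''. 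The paper's layered construction avoids this entirely: each $\eta_n$ is almost surely finite regardless of whether the full cluster is, the inductive conditional law is exact at every finite stage, and the limit is a routine monotone-class argument because $\eta_n\uparrow\C(v,\xi^v)$ and $\xi^v[\eta\setminus\eta_n]\downarrow\xi^v[\eta\setminus\C(v,\xi^v)]$. If you want to keep your approach, you would either have to supply the missing convergence argument for the window limit, or replace it by an exploration closer to the paper's; restricting to subcritical $\lambda$ is not enough, since Lemma~\ref{lem:LE:cutting_point} is stated and used for all $\lambda\ge 0$.
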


Before giving a proof we explain the distributional identity \eqref{eq:stopping_set_lemma}.
On the left-hand side, we have the conditional distribution of the restriction of $\xi^v$ to the complement of the cluster $\C(v,\xi^v)$ given that $\C(v,\xi^v)=A$.
On the right-hand side, we have an independent edge-marking based on the inhomogeneous Poisson process $\thinning{\eta}{A}$.
Even though the latter is defined as an independent thinning of $\eta$ (its intensity is bounded by $\lambda$), the point process $\eta\setminus\C(v,\xi^v)$ cannot be constructed this way.
In fact, neither $\C(v,\xi^v)$ nor $\eta\setminus \C(v,\xi^v)$ is a Poisson process.

The proof is based on a recursive construction of the cluster, in increasing graph distance from the root; this is also the reason why subcriticality is not required.

Moreover, we want to point out that the following proof is for the RCM as defined in~\eqref{eq:prelim:xi_def}. The proof for the RCM with additional marks is essentially the same, just heavier on notation.

\begin{proof}[Proof of Lemma \ref{lem:stoppingset}] 

  The proof is similar to \chr{Meester et al.\ \cite[Proposition
  2]{MeePenSar97}.} Since the lemma is crucial for our paper, we give
  more details here.  We interpret $\xi^v[\C(v,\xi^v)]$ as a rooted
  graph with root $v$.  Let $\eta_0:=\{v\}$. For $n\in\N$ let $\eta_n$
  be the vertices of $\C(v,\xi^v)$ whose graph distance from the root
  is at most $n$. \ch{Writing $A_0=\eta_0=\{v\}$,} we assert that, for every $n\in\N$,
  \eqq{ \label{e3.4} \E_\lambda [f(\xi^v[\eta\setminus
    \eta_n],\ch{\eta_0,}\ldots,\eta_n)]
    =\int\mathbb{E}_\lambda[f(\xi(\thinning{\eta}{A_{n-1}}),\ch{A_0},\ldots,A_n)]
    \,\pla((\ch{\eta_0,}\ldots,\eta_n)\in\dd (\ch{A_0,}\ldots,A_n)),} 
  for all measurable non-negative functions $f$
  with suitable domain, and we recall that
  the arguments of $f$ are point processes.

\begin{figure}\centering
	\includegraphics[width=.5\textwidth]{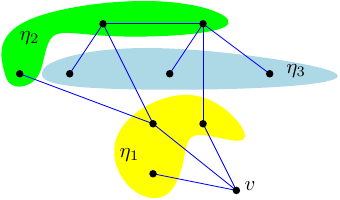}
	\caption{\ch{An example of the breadth-first-exploration as used in the proof of Lemma \ref{lem:stoppingset}.}}
\end{figure}

We prove a slightly more general version of \eqref{e3.4}, which is amenable to induction.
To do so, we need to introduce some further notation.
Given two disjoint point processes $\eta',\eta''\subset \eta^v$, we define $\xi^v[\eta',\eta'']$ as the random marked graph arising from $\xi^v$ by taking all marked edges with at least one vertex in $\eta'$ and no vertex outside of $\eta' \cup \eta''$.
Given a point process $\mu$ on $\Rd$ and a locally finite set $A\subset\Rd$, we define a random marked graph $T(\mu,A)$ as follows.
The edge set is given by $\{\{x,y\}: x \in \mu, y\in \mu \cup A\}$.
The marks are given by independent random variables, uniformly distributed on $[0,1]$ and independent of $\mu$. 

\cov{We assert for each $n\in\N$ that} 
	\begin{align}\label{e3.41}\notag
	\E_\lambda &[f(\xi^v[\eta\setminus \eta_n,\eta_n\setminus\eta_{n-1}],\ch{\eta_0},\ldots,\eta_n)]\\ 
	&=\int\mathbb{E}_\lambda[f(T(\thinning{\eta}{A_{n-1}},A_n \setminus A_{n-1}),\ch{A_0},\ldots,A_n))]
	\pla((\eta_1,\ldots,\eta_n)\in\dd (\ch{A_0},\ldots,A_n))
	\end{align}
for all measurable non-negative functions $f$ with suitable domain, which
is clearly more general than \eqref{e3.4}. This can be written as
	\begin{align}\label{e3.42}
	\mathbb{P}_\lambda(\xi^v[\eta\setminus \eta_n,\eta_n \setminus\eta_{n-1}]&\in\cdot
	\mid (\eta_0,\ldots,\eta_n)=(A_0, \ldots,A_n))
	=\pla(T(\thinning{\eta}{A_{n-1}},A_n\setminus A_{n-1})\in\cdot)
	\end{align}
for $\pla((\eta_0,\ldots,\eta_n)\in\cdot)$-a.e.\ $(A_0,\ldots,A_n)$, and \ch{where} $A_0\ch{=\eta_0} = \{v\}$.

We base the proof of \eqref{e3.42} on the following property.
Let $h\colon\Rd\to[0,\infty)$ be measurable and let $\mu$ be a Poisson process with intensity function $h$.
Further, let $A\subset\Rd$ be locally finite and consider the independent edge-marking $\tilde\xi:=\xi(\mu\cup A)$ of $\mu\cup A$.
Let $\mu^A$ be the set of points from $\mu$ that are directly connected to a point from $A$, where the connection is defined as before in terms of $\tilde\xi$ and the connection function $\connf$. 
Then we have the distributional identity
	\begin{align}\label{e3.63}
	 (\xi[\mu\setminus \mu^A,\mu^A],\mu^A)\overset{d}{=}(T(\mu',\mu''),\mu''),
	\end{align}
where $\mu'$ and $\mu''$ are independent Poisson processes with intensity functions $h(\cdot)\bar\connf(A,\cdot)$ and $h(\cdot)(1-\bar\connf(A,\cdot))$, respectively.
This follows from the marking and mapping theorems for Poisson processes (see~\cite[Theorems~5.6 and~5.1]{LasPen17}) applied to a suitably defined Poisson process $\tilde\xi$ such that $\xi(\mu\cup A)$ is (up to the marks of edges with both vertices in $A$) a deterministic function of $\tilde\xi$.
The details of this construction are left to the reader.

Applying \eqref{e3.63} with $A=\{v\}$ and $\mu=\eta$ gives \eqref{e3.42} for $n=1$. 
Suppose \eqref{e3.42} is true for some $n\in\N$ and let $A_1,\ldots,A_n$ be locally finite subsets of $\Rd$.
Applying \eqref{e3.63} with the conditional probability measure $\mathbb{P}(\cdot \mid (\eta_0,\ldots,\eta_n)=(A_0,\ldots,A_n))$ and with $\mu=\thinning{\eta}{A_{n-1}}$ as well as $A=A_n\setminus A_{n-1}$ gives \eqref{e3.42} for $n+1$.

In fact, this argument also yields that, given $(\eta_0,\ldots,\eta_n)$, the point processes $\eta\setminus\eta_{n+1}$ and $\eta_{n+1}\setminus\eta_n$ are conditionally independent Poisson processes with intensity functions $\lambda\bar\connf(\eta_n,\cdot)$ and $\lambda (1-\bar\connf(\eta_n\setminus\eta_{n-1},\cdot))\bar\connf(\eta_{n-1},\cdot)$, respectively. Since
	\begin{align*}
	1-\bar\connf(\eta_n\setminus\eta_{n-1},x)\le \sum_{w\in \eta_n\setminus\eta_{n-1}}\connf(w-x),
	\quad x\in\Rd,
	\end{align*}
it follows by induction and by the integrability of $\connf$ that the point processes $\eta_n$ are all finite almost surely.

Equation \eqref{e3.4} shows in particular that
	\begin{align}\label{e3.53}
	\E_\lambda [f(\xi^v[\eta\setminus \eta_n],\eta_n)]
	=\int\mathbb{E}_\lambda[f(\xi(\thinning{\eta}{V_{n-1}(G)}),V_n(G))]
	\pla(\xi^v[\C(v,\xi^v)]\in\dd G)
	\end{align}
for all measurable non-negative functions $f$ with suitable domain, where, for a rooted graph $G$ and $n\in\N_0$, $V_n(G)$ denotes the set of vertices of $G$ whose graph distance from the root is at most $n$.
Let $\eta_\infty=\cup_n\eta_n$ denote the vertex set $\C(v,\xi^v)$.
Note that for a bounded Borel set, we have that $|\eta_\infty \cap B|=|\eta_n \cap B|$ for all sufficiently large $n$ almost surely.
Note also that $\xi^v[\eta\setminus \eta_n]\downarrow \xi^v[\eta\setminus \eta_\infty]$ as $n\to\infty$.
Therefore, if $f(\xi^v[\eta\setminus \eta_n],\eta_n)$ is a bounded function of $|\xi^v[\eta\setminus \eta_n] \cap B_1|,\ldots,|\xi^v[\eta\setminus \eta_n] \cap B_k|$ and $|\eta_n \cap B_{k+1}|,\ldots,|\eta_n \cap B_{m}|$ for suitable measurable and bounded sets $B_1,\ldots,B_m$, the left-hand side of \eqref{e3.53} tends to $\mathbb{E}_\lambda [f(\xi^v[\eta\setminus \eta_\infty],\eta_\infty)]$ as $n\to\infty$.

For a similar reason, the integrand on the right-hand side converges for each fixed rooted (locally finite) graph $G$ to $\mathbb{E}_\lambda[f(\xi(\thinning{\eta}{V(G)}),V(G)))]$, where $V(G)$ is the vertex set of $G$. Therefore, we obtain from dominated convergence that
	\begin{align}\label{e3.44}
	\E_\lambda [f(\xi^v[\eta\setminus \eta_\infty],\eta_\infty)] 
	=\int\mathbb{E}_\lambda[f(\xi(\thinning{\eta}{A}),A)]\pla(\eta_\infty\in\dd A),
	\end{align}
first for special non-negative $f$, and then, by a monotone-class argument, for general $f$. This implies the assertion.
\end{proof}

Lemma~\ref{lem:stoppingset} is a quite general distributional identity. We will only require the following corollary:
\begin{corollary} \label{lem:thinning}
Let $v,u,x\in\Rd$ \col{be distinct}. Then, for $\pla(\C(v,\xi^v)\in \cdot)$-a.e.\ $A$,
		\[ \pla\big(\offconn{u}{x}{\xi^{u,x}}{\C(v, \xi^{v})} \mid \C(v,\xi^v) =A\big) = \pla (\conn{u}{x}{\xi^{u,x}[\thinning{\eta}{A}\cup\{u,x\}]}). \]
\end{corollary}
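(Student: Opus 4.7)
The plan is to deduce this corollary directly from the Stopping-set Lemma~\ref{lem:stoppingset} by unpacking notation and propagating the independence of the extra randomness that attaches $u$ and $x$ to the Poisson process $\eta$.

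First, I would fix $A$ in the support of $\C(v, \xi^v)$ and observe that $v \in A$, so $\eta^v \setminus A = \eta \setminus A$, and every edge of $\xi^v$ with both endpoints in $\eta \setminus A$ is already an edge of $\xi$. Therefore $\xi^v[\eta^v \setminus A] = \xi[\eta \setminus A]$, and Lemma~\ref{lem:stoppingset} specialises to the conditional identity
$$\pla\big(\xi[\eta \setminus A] \in \cdot \;\big|\; \C(v, \xi^v) = A\big) = \pla\big(\xi(\thinning{\eta}{A}) \in \cdot\big).$$

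Next, I would enlarge this identity to include the deterministic vertices $u$ and $x$. In the construction of Section~\ref{sec:constructionofRCM}, all marks used to form the edges of $\xi^{u,x}$ incident to $u$ or $x$, namely $(U_{0,j})_j$, $(U_{-1,j})_j$, $U_{0,-1}$ together with $Y_0$ and $Y_{-1}$, are independent of $\eta$ and of the marks carried by the points of $\eta$. In particular they are independent of $\xi^v$ and hence of $\C(v, \xi^v)$. Combining this with the previous step, the joint conditional law (given $\C(v, \xi^v) = A$) of $\xi[\eta \setminus A]$ together with the $u, x$-edges coincides with the joint unconditional law of $\xi(\thinning{\eta}{A})$ together with independent $u, x$-edges, that is, with the law of $\xi^{u,x}[\thinning{\eta}{A} \cup \{u, x\}]$ in which $u$ and $x$ are added deterministically to the thinned process.

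Finally, since $u, x \in \Rd$ are fixed and distinct from $v$ and $\eta$ has no atoms at fixed points, almost surely $u, x \notin \eta$ and $u, x \notin A$, so $\eta^{u,x} \setminus A = (\eta \setminus A) \cup \{u, x\}$. The event $\{\offconn{u}{x}{\xi^{u,x}}{A}\}$ is then exactly the event that $u$ and $x$ are connected in the random graph built from $\xi[\eta \setminus A]$ together with the extra $u, x$-edges; transferring this event through the joint identity of the previous step yields the required equality of probabilities.

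The principal (mild) obstacle is the bookkeeping in the middle step: one must verify that the marks used to generate the $u, x$-edges, which are \emph{not} present in $\xi^v$, are independent of every random object entering the definition of $\C(v, \xi^v)$, so that the conditional identity of Lemma~\ref{lem:stoppingset} extends jointly to include them. Everything else is immediate from the definitions.
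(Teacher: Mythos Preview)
Your proposal is correct and is precisely the argument the paper has in mind: the corollary is stated without proof because it is an immediate consequence of Lemma~\ref{lem:stoppingset} together with the independence of the edge marks attaching $u$ and $x$ to the Poisson points. One small bookkeeping remark: since the corollary involves both $\xi^{v}$ and $\xi^{u,x}$, the cleanest way to organise the indices is to work inside the common coupling $\xi^{v,u,x}$ (so $X_0=v$, $X_{-1}=u$, $X_{-2}=x$) and then observe that the marks with a negative index $\le -1$ do not enter $\xi^{v}$; your labelling $(U_{0,j})_j,(U_{-1,j})_j$ tacitly treats $\xi^{u,x}$ in isolation, but the substance of the independence claim is unaffected.
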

\begin{proof}
\cov{First note that
\begin{align*}
E:=\big\{\offconn{u}{x}{\xi^{u,x}}{\C(v, \xi^{v})}\big\}
  =\big\{\conn{u}{x}{\xi^{v,u,x}[\eta^{v,u,x}\setminus \C(v, \xi^{v})]}\big\}.
\end{align*}
Consider the graph
$\xi^{v}[\eta^{v}\setminus \C(v, \xi^{v})]$ and add two vertices $u,x$
together with independent connections between $u,x$ and
$\eta^{v}\setminus \C(v, \xi^{v})$. This gives a random graph
$\tilde\xi^{v,u,x}$. In fact, we can write in a generic way that
\begin{align*}
\tilde\xi^{v,u,x}=T(\eta^{v}\setminus \C(v, \xi^{v}),U,U'),
\end{align*}
where $U,U'$ are independent (double) sequences
of independent uniformly on $[0,1]$ distributed random variables,
independent of everything else, and $T$ is  a well-defined measurable
mapping. Since there are no connections between
$\C(v, \xi^{v})$ and $\eta^{v}\setminus \C(v, \xi^{v})$, 
we have that
\begin{align*}
\pla((\mathds 1_E,\C(v, \xi^{v}))\in\cdot)=\pla((\mathds 1_{\tilde E},\C(v, \xi^{v}))\in\cdot),
\end{align*}
where $\tilde{E}:=\big\{\conn{u}{x}{\tilde{\xi}^{v,u,x}}\big\}$.
Therefore, the left-hand side of the asserted identity equals
\begin{align*}
\iint \pla\big(\conn{u}{x}{T(\eta^{v}\setminus \C(v, \xi^{v}),\mathbf{u},\mathbf{u}')}
\mid \C(v,\xi^v)=A\big)
\,\pla(U\in \dd\mathbf{u})\,\,\pla(U'\in \dd\mathbf{u}').
\end{align*}
By Lemma \ref{lem:stoppingset}, this equals
$\pla(\conn{u}{x}{T(\eta_{\langle A\rangle},U,U')})$.
It remains to notice that $T(\eta_{\langle A\rangle},U,U')$ has the same
distribution as $\xi^{u,x}[\thinning{\eta}{A}\cup\{u,x\}]$.
}
\end{proof}

We next introduce the notion of \emph{pivotal points}. To this end,
let $\xi$ be an edge-marking of a PPP $\eta$ and let $v,u,x \in
\eta$. We say that $u\notin\{v,x\}$ is \emph{pivotal} for the
connection from $v$ to $x$ (and write $u \in \piv{v,x;\xi}$) if
\cov{$\conn{v}{x}{\xi}$ but
$\nconn{v}{x}{\xi[\eta\setminus\{u\}]}$.} Mind that, by definition,
$v$ and $x$ are never elements of $\piv{v,x;\xi}$. We list $\xi$ as an
argument after the semicolon to indicate decorations of $\xi$ with
extra points. This way, we can speak of the event
$\{ u \in \piv{v,x;\xi^{v,u,x}}\}$ for arbitrary $v,u,x\in\Rd$. We use
the same notation for events that are introduced later.

Note that $\piv{v,x;\xi} = \piv{x,v;\xi}$, but we use the notation to put emphasis on paths ``from $v$ to $x$''. A non-symmetric property of pivotal points that we use later is the fact that $\piv{v,x;\xi}$ can be ordered in the sense that there is a unique first (second, third, etc.) pivotal point that every path from $v$ to $x$ traverses first (second, third, etc.). Furthermore, for a locally finite set $A \subset\Rd$, and $v,u\in \Rd$, we define
		\eqq{E(v,u;A,\xi) := \{\xconn{v}{u}{\xi}{A} \} \cap \{\nexists w \in \piv{v,u;\xi}: \xconn{v}{w}{\xi}{A} \}. \label{eq:LE:def:E_event}}
Let us take the time to prove an elementary partitioning identity here, which will be useful at a later stage:

\begin{lemma}[Partition of connection events] \label{lem:partitioning_via_E_events}
Let $v,x\in\Rd$ and let $A \subset \Rd$ be a locally finite set. Then
	\[ \mathds 1_{\{ \xconn{v}{x}{\xi^{v,x}}{A} \}} = \mathds 1_{E(v,x;A, \xi^{v,x})} + \sum_{u \in \eta} \mathds 1_{E(v,u;A,\xi^{v,x})} \mathds 1_{\{ u \in \textsf{\textup{Piv}}(v,x;\xi^{v,x})\}}. \]
\end{lemma}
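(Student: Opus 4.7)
The identity is a pointwise statement in $\omega$; I verify it by analyzing the structure of pivotal points in each realization. The structural ingredient I would need is the \emph{transitivity of pivotals}: whenever $u \in \piv{v,x;\xi^{v,x}}$ with $\conn{v}{x}{\xi^{v,x}}$, the set $\piv{v,u;\xi^{v,x}}$ consists precisely of those pivotal points of $v \leftrightarrow x$ that precede $u$ in the canonical ordering along any $v$--$x$ path. This is a routine graph-theoretic fact (any $w \in \piv{v,u;\xi^{v,x}}$ is also pivotal for $v \leftrightarrow x$ since every $v$--$x$ path passes through $u$ and hence through $w$, and conversely a pivotal for $v \leftrightarrow x$ preceding $u$ sits on every $v$--$u$ sub-path).

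With this in hand, I would first verify LHS $\Rightarrow$ RHS, producing a unique nonzero term on the right. On the event $\{\xconn{v}{x}{\xi^{v,x}}{A}\}$, consider
\[
\mathcal U \;:=\; \bigl\{u \in \eta \cap \piv{v,x;\xi^{v,x}} : \xconn{v}{u}{\xi^{v,x}}{A}\bigr\},
\]
which is finite (pivotals for $v \leftrightarrow x$ all lie on a common finite path). If $\mathcal U = \varnothing$, then $E(v,x;A,\xi^{v,x})$ holds directly from its definition and the first indicator on the right equals $1$. Otherwise, let $u$ be the minimal element of $\mathcal U$; then $\xconn{v}{u}{\xi^{v,x}}{A}$ holds and, by transitivity combined with minimality, no $w \in \piv{v,u;\xi^{v,x}}$ can satisfy $\xconn{v}{w}{\xi^{v,x}}{A}$, so $E(v,u;A,\xi^{v,x})$ holds. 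Uniqueness follows by the same reasoning: two distinct candidates $u_1 \prec u_2$ in $\piv{v,x;\xi^{v,x}}$ would, via transitivity, force $u_1 \in \piv{v,u_2;\xi^{v,x}}$ with $\xconn{v}{u_1}{\xi^{v,x}}{A}$, violating the second clause of $E(v,u_2;A,\xi^{v,x})$; an identical argument rules out simultaneous contributions from $E(v,x;\cdot)$ and any summand.

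Conversely, for RHS $\Rightarrow$ LHS: if $E(v,x;A,\xi^{v,x})$ holds then $\xconn{v}{x}{\xi^{v,x}}{A}$ holds by definition. If instead $E(v,u;A,\xi^{v,x})$ holds with $u \in \piv{v,x;\xi^{v,x}}$, then $\conn{v}{x}{\xi^{v,x}}$ is automatic from pivotality; suppose for contradiction that $v$ is connected to $x$ in $\xi^{v,x}[\thinning{\eta^{v,x}}{A} \cup \{v\}]$. Such a path is also a $v$--$x$ path in $\xi^{v,x}$, hence traverses the pivotal $u \neq v$, forcing $u \in \thinning{\eta^{v,x}}{A}$; then the $v$--$u$ portion of this path realizes a connection in $\xi^{v,x}[\thinning{\eta^{v,x}}{A} \cup \{v\}]$, contradicting $\xconn{v}{u}{\xi^{v,x}}{A}$. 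Hence $\xconn{v}{x}{\xi^{v,x}}{A}$ holds.

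The principal obstacle is handling the interplay between pivotal removal (a purely structural graph operation, intrinsic to $\xi^{v,x}$) and $A$-thinning (which depends on external randomness through the marks $Y_{i,j}$); these two distinct notions of ``disconnecting'' must be coupled correctly, and transitivity of pivotals is exactly what makes the minimality selection of $u$ compatible with the thinning condition in the $E$-event.
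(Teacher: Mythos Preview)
Your proof is correct and follows essentially the same approach as the paper: both arguments show disjointness of the right-hand terms via the linear ordering of pivotal points, select (for $\leq$) the first pivotal $u$ with $\xconn{v}{u}{\xi^{v,x}}{A}$, and verify (for $\geq$) that any $v$--$x$ path must fail after thinning when such a $u$ exists. Your explicit formulation of the transitivity of pivotals is what the paper uses implicitly when it speaks of the ordering of pivotal points; otherwise the structure of the two arguments coincides.
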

\begin{proof}
We prove ``$\geq$'' first. We first claim that the right-hand side is a sum of indicators of mutually disjoint events. Indeed, due to the ordering of pivotal points $y$ satisfying $\{ \xconn{v}{y}{\xi^{v}}{A} \}$, the choice of $u$ as the first such pivotal point is unique, making the union over first pivotal points $u$ a disjoint one. Moreover, $E(v,x;A, \xi^{v,x})$ is the event that the set of such pivotal points is empty.

Assume now that the right-hand side takes value $1$. On the one hand, if $E(v,x;A,\xi^{v,x})$ holds, then $\{\xconn{v}{x}{\xi^{v,x}}{A} \}$ holds as well by definition. On the other hand, assume that $\xi$ contains a point $u \in \eta=V(\xi)$ such that $\xi\in E(v,u;A,\xi^{v,x})$ \col{and $u \in \piv{v,x;\xi^{v,x}}$}. Due to the pivotality of $u$, any path $\gamma$ from $v$ to $x$ must be the concatenation of two disjoint paths $\gamma_1$ and $\gamma_2$ (i.e.,~$\gamma_1$ and $\gamma_2$ share no interior vertices), where $\gamma_1$ is a path from $v$ to $u$ and $\gamma_2$ is a path from $u$ to $x$. Since $\{\xconn{v}{u}{\xi^{v,x}}{A} \}$ holds, there must be a vertex $\pi_i(\eta) \in \gamma_1$ that is thinned out. By definition, there is some $\pi_j(A)$ such that $Y_{i,j} \leq \connf(\pi_j(A)-\pi_i(\eta))$. In other words, $\pi_i(\eta)$ is deleted in an $A$-thinning of $\eta$, and so $\{ \xconn{v}{x}{\xi^{v,x}}{A} \}$ holds. Thus, ``$\geq$'' holds.

To see ``$\leq$'', assume that $\{ \xconn{v}{x}{\xi^{v,x}}{A} \}$
holds. Then either $E(v,x;A, \xi^{v,x})$ holds, or there is at least
one pivotal point $y$ satisfying \cov{$\xconn{v}{y}{\xi^{v}}{A}$}. 
Since the pivotal points can be ordered, we can pick the first
such pivotal point and call it $u$. This point $u$ then satisfies
$E(v,u;A,\xi^{v,u})$.
\end{proof}

The following lemma has an analogue in discrete models, see e.g.~\cite[Lemma 2.1]{HarSla90}. In bond percolation, it is called the ``Cutting-bond lemma''. Since Lemma~\ref{lem:stoppingset} holds for arbitrary intensity, so does Lemma~\ref{lem:LE:cutting_point}.
\begin{lemma}[Cutting-point lemma] \label{lem:LE:cutting_point}
Let $\lambda \geq 0$ and let $v,u,x \in \Rd$ with $u \neq x$ and let $A\subset \Rd$ be locally finite. Then
		\[ \E_\lambda\big[\mathds 1_{E(v,u;A,\xi^{v,u,x})} \mathds 1_{\{ u \in \textsf{\textup{Piv}}(v,x;\xi^{v,u,x})\}} \big] 
					= \E_\lambda\left[\mathds 1_{E(v,u;A,\xi^{v,u})} \cdot \tlam^{\C(v,\xi^v) }(u,x) \right].\]
Moreover,
		\[ \pla\left(\dconn{\orig}{u}{\xi^{\orig,u,x}}, u \in \textsf{\textup{Piv}}(\orig,x; \xi^{\orig,u,x})\right) = \E_\lambda\left[\mathds 1_{\{\dconn{\orig}{u}{\xi^{\orig,u}}\}} \cdot \tlam^{\C(\orig,\xi^{\orig}) }(u,x) \right]. \]
\end{lemma}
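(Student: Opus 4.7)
The plan is to condition on the cluster $\mathscr{C}_0 := \mathscr{C}(v, \xi^v)$ and decompose the joint event on the left-hand side into three conditionally independent ingredients: the local event $E(v, u; A, \xi^{v, u})$, the absence of any direct edge from $x$ to $\mathscr{C}_0$, and a $u$-to-$x$ connection in the complement of $\mathscr{C}_0$. The stopping-set identity (Corollary~\ref{lem:thinning}) will then convert the last ingredient into a $\tlam^{\mathscr{C}_0}$-factor, while the second ingredient supplies exactly the $\bar\connf$-prefactor that is hidden inside $\tlam^{\mathscr{C}_0}(u, x)$.

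Concretely, I would first verify three consequences of the event $\{u \in \piv{v, x; \xi^{v, u, x}}\}$. Removing $u$ disconnects $v$ from $x$, so $\mathscr{C}(v, \xi^{v, x}) = \mathscr{C}_0$; this is equivalent to the event $B_x := \bigcap_{w \in \mathscr{C}_0}\{x \not\sim w\}$ that $x$ has no direct edge into $\mathscr{C}_0$. Any $v$-$u$ path in $\xi^{v, u, x}$ must avoid $x$, since otherwise its $v$-to-$x$ prefix would provide a $v$-$x$ connection bypassing $u$, contradicting pivotality. Consequently $E(v, u; A, \xi^{v, u, x}) = E(v, u; A, \xi^{v, u})$, and the same reasoning gives $\{\dconn{v}{u}{\xi^{v, u, x}}\} = \{\dconn{v}{u}{\xi^{v, u}}\}$. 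By an analogous contradiction, any $u$-$x$ path in $\xi^{v, u, x}$ avoids $\mathscr{C}_0$ (a detour through $\mathscr{C}_0$ would again furnish a $v$-$x$ connection avoiding $u$); hence the $u$-$x$ connection inside $\xi^{v, u, x}$ is captured precisely by $\{\offconn{u}{x}{\xi^{u, x}}{\mathscr{C}_0}\}$. Combining these three observations yields
\[
\E_\lambda\!\left[\mathds 1_{E(v, u; A, \xi^{v, u, x})} \mathds 1_{\{u \in \piv{v, x; \xi^{v, u, x}}\}}\right]
 = \E_\lambda\!\left[\mathds 1_{E(v, u; A, \xi^{v, u})} \mathds 1_{B_x} \mathds 1_{\{\offconn{u}{x}{\xi^{u, x}}{\mathscr{C}_0}\}}\right].
\]

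I then condition on $\mathscr{C}_0 = A'$. The three indicator events depend on disjoint classes of randomness: $E(v, u; A, \xi^{v, u})$ uses only the graph structure and thinning marks inside $\mathscr{C}_0$ together with the edges from $u$ to $\mathscr{C}_0$; $B_x$ uses only the edge marks between $x$ and $\mathscr{C}_0$; and $\{\offconn{u}{x}{\xi^{u, x}}{\mathscr{C}_0}\}$ uses only the subgraph of $\xi^{u, x}$ on $(\eta \setminus \mathscr{C}_0) \cup \{u, x\}$. Hence they are conditionally independent given $\mathscr{C}_0$. Independence of the edge marks immediately gives $\pla(B_x \mid \mathscr{C}_0 = A') = \bar\connf(A', x)$, while Corollary~\ref{lem:thinning} identifies $\pla(\offconn{u}{x}{\xi^{u, x}}{\mathscr{C}_0} \mid \mathscr{C}_0 = A') = \pla(\conn{u}{x}{\xi^{u, x}[\thinning{\eta}{A'} \cup \{u, x\}]})$. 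A case split on whether $x$ survives the $A'$-thinning shows
\[
\tlam^{A'}(u, x) = \bar\connf(A', x) \cdot \pla(\conn{u}{x}{\xi^{u, x}[\thinning{\eta}{A'} \cup \{u, x\}]}),
\]
so the product of these two conditional probabilities equals $\tlam^{A'}(u, x)$. Integrating over $\mathscr{C}_0$ and pulling the $\sigma(\mathscr{C}_0)$-measurable factor $\tlam^{\mathscr{C}_0}(u,x)$ outside the conditional expectation yields the first identity. The second follows verbatim with $\{\dconn{\orig}{u}{\xi^{\orig, u}}\}$ in place of the $E$-event, since the reduction in the previous paragraph applies unchanged to $2$-connection events.

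The main obstacle lies in correctly bookkeeping the factor $\bar\connf(\mathscr{C}_0, x)$. Pivotality of $u$ silently enforces $B_x$, and this probability is exactly what separates the ``raw'' stopping-set probability of Corollary~\ref{lem:thinning} from the quantity $\tlam^{\mathscr{C}_0}(u, x)$ on the right-hand side of the lemma; overlooking $B_x$ would leave a dangling $\bar\connf$-prefactor and invalidate the identity. Beyond this, one has to verify that the three ingredients really depend on disjoint marks and points so that the conditional factorization is legitimate---but once that is in place, the result is a direct consequence of the stopping-set lemma.
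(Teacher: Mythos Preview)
Your proof is correct and follows essentially the same approach as the paper: decompose the joint event into $E(v,u;A,\xi^{v,u})\cap B_x\cap\{\offconn{u}{x}{\xi^{u,x}}{\C_0}\}$, apply the stopping-set identity (Corollary~\ref{lem:thinning}) to the off-$\C_0$ connection, and absorb the factor $\bar\connf(\C_0,x)=\pla(B_x\mid\C_0)$ into $\tlam^{\C_0}(u,x)$. The only cosmetic difference is that the paper conditions on the richer $\sigma$-field generated by $\xi'=\xi^{u,v}[\C_0\cup\{u\}]$, which makes $E$ measurable outright and reduces the factorization to a two-way conditional independence, whereas you condition on $\C_0$ directly and invoke a (correct) three-way conditional independence.
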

Before proceeding with the proof, we want to stress the fact that $\tlam^{\C(v,\xi^v) }(u,x)$ is the random variable arising from $\tlam^A(u,x)$ by replacing the fixed set $A$ by the random set $\C(v,\xi^v)$.
\begin{proof}
First, note that
		\[E(v,u;A,\xi^{v,u,x}) \cap \{ u \in \piv{v,x; \xi^{v,u,x}}\} = E(v,u;A,\xi^{v,u}) \cap \{ u \in \piv{v,x; \xi^{v,u,x}}\}. \]
In words, we can take away vertex $x$ from $\xi^{v,u,x}$ in the event $E(v,u;A, \xi^{v,u,x})$, since if $x$ was necessary (or even relevant) for the connection from $v$ to $u$, then $u$ would not be pivotal. Furthermore, abbreviating $\C=\C(v, \xi^v)$ \chr{and denoting the complement of $x\sim y$ by $x\nsim y$},
		\[ \{ u \in \piv{v,x; \xi^{v,u,x}}\} = \{\conn{v}{u}{\xi^{v,u}} \} \cap \{\offconn{u}{x}{\xi^{u,x}}{\C}\} \cap \{ x \nsim y \text{ in } \xi^{v,x} \; \forall y \in \C \} \]
$\pla$-a.s.~by the following argument: If $u$ is pivotal, then $\C$ contains all vertices connected to $v$ by a path not using $u$, and in return any path from $x$ to $v$ visits $u$ before it hits $\C$. Both these statements use that $u \notin \C$ a.s. In particular, the first two connection events on the right-hand side hold and there cannot be a direct edge from $x$ to $\C$. This proves one inclusion.
Conversely, if $u$ and $x$ are connected off $\C$, then $x$ cannot lie in $\C$. Moreover, it cannot even lie in $\mathscr C(v, \xi^{v,x})$ as this would imply the existence of an edge from $x$ to $\C$. Consequently, every path from $v$ to $x$ must pass through $u$. As $u$ is connected to $v$, this makes $u$ a pivotal point in $\xi^{v,u,x}$, proving the second inclusion.

Since $E(v,u;A,\xi^{v,u})\subseteq \{\conn{v}{u}{\xi^{v,u}} \}$,
		\al{ E(v,u;A,\xi^{v,u}) \cap & \{ u \in \piv{v,x; \xi^{v,u,x}}\} \\
				&= E(v,u;A,\xi^{v,u}) \cap \{\offconn{u}{x}{\xi^{u,x}}{\C}\}\cap \{ x \nsim y \text{ in } \xi^{v,x} \; \forall y \in \C \}. }
Conditioning on $\xi'=\xi^{u,v}[\C(v,\xi^v) \cup \{u\}]$, we see that
		\[\E_\lambda\big[\mathds 1_{E(v,u;A,\xi^{v,u,x})} \mathds 1_{\{ u \in \textsf{\textup{Piv}}(v,x;\xi^{v,u,x})\}} \big] = \E_\lambda\big[ \mathds 1_{E(v,u;A,\xi^{v,u})} 
						\E_\lambda[ \mathds 1_{\{\offconn{u}{x}{\xi^{u,x}}{\C}\}} \mathds 1_{\{ x \nsim y \text{ in } \xi^{v,x} \; \forall y \in \C \}}\mid \xi'] \big] \]
by the fact that $E(v,u;A,\xi^{v,u})$ is measurable w.r.t.~$\sigma(\xi')$ (the $\sigma$-field generated by $\xi'$). Indeed, $\xi'$ is the graph induced by $u$ together with all points that can be reached from $v$ without traversing $u$. Now, conditionally on $\xi'$, the last two indicators are independent: $\{ x \nsim y \textrm{ in } \xi^{v,x} \, \forall \, y \in \C \}$ depends only on points in $\C \subseteq V(\xi')$ and edges between $\C$ and $x$. On the other hand, $\{\offconn{u}{x}{\xi^{u,x}}{\C}\}$ depends only on points in $\eta^{u,x}\setminus\C$ and on edges between those points.

Together with the identities $\pla(x \nsim y \textrm{ in } \xi^{v,x} \, \forall \, y \in \C \mid \xi') = \bar\connf(\C, x)$ (recall~\eqref{eq:LE:def:thinning_probability}) and
	\[ \bar\connf(B, x) \cdot \pla\big(\conn{u}{x}{\xi^{u,x}[\thinning{\eta}{B} \cup \{u,x\}]} \big) = \tlam^{B}(u,x)\]
for any locally finite set $B$ (recall the definition of $\tlam^B$ in~\eqref{eq:def:LE:offconn}), this leads to
		\al{\E_\lambda\big[\mathds 1_{E(v,u;A,\xi^{v,u,x})} \mathds 1_{\{ u \in \textsf{\textup{Piv}}(v,x;\xi^{v,x})\}} \big]
				& = \E_\lambda\big[ \mathds 1_{E(v,u;A,\xi^{v,u})} \cdot \bar\connf(\C, x) \cdot \E_\lambda[\mathds 1_{\{\offconn{u}{x}{\xi^{u,x}}{\C}\}} \mid \xi'] \big] \\
				&= \E_\lambda\big[ \mathds 1_{E(v,u;A,\xi^{v,u})} \cdot \bar\connf(\C, x) \cdot \pla\left(\offconn{u}{x}{\xi^{u,x}}{\C} \mid \C\right) \big] \\
				&= \E_\lambda\big[ \mathds 1_{E(v,u;A,\xi^{v,u})} \cdot \bar\connf(\C, x) \cdot \pla(\conn{u}{x}{\xi^{u,x}[\thinning{\eta}{\C} \cup \{u,x\}] }) \big] \\
				&= \E_\lambda\big[ \mathds 1_{E(v,u;A,\xi^{v,u})} \cdot \tlam^{\C}(u,x) \big].}
In the second line, we have used that $\sigma(\xi')$ and $\sigma(\C)$ (the $\sigma$-fields generated by $\xi'$ and $\C$ respectively) differ only in the information about the status of edges between points of $\C \cup \{u\}$. Since any connection event \emph{off} $\C$ is independent of such edges, we can replace $\xi'$ by $\C$ in the conditioning to use Corollary~\ref{lem:thinning} in the third line.

The second assertion of Lemma~\ref{lem:LE:cutting_point} follows upon applying the above arguments with $E(v,u;A,\xi^{v,u})$ replaced by $\{\dconn{\orig}{u}{\xi^{\orig,u}}\}$.
\end{proof}

\subsection{The derivation of the expansion} \label{sec:LE:derivation_of_LE}
For the following definition, we introduce a sequence of independent edge-markings $(\xi_i)_{i \in\N_0}$ of respective PPPs $(\eta_i)_{i\in\N_0}$. Recall that $\int\cdots\dd \vec u_{[0,n]}$ is a sequence of integrals (over $\R^d$) over $u_0,\dots,u_n$. 
\begin{definition}[Lace-expansion coefficients] \label{def:LE:lace_expansion_coefficients}
For $n\in\N$ and $x\in\Rd$, we define
	\algn{ \Pi_\lambda^{(0)}(x) &:= \pla (\dconn{\orig}{x}{\xi^{\orig,x}}) - \connf(x), \label{eq:LE:Pi0_def} \\
			\Pi_\lambda^{(n)}(x) &:= \lambda^n \int \pla \Big( \{\dconn{\orig}{u_0}{\xi^{\orig, u_0}_{0}}\}
					\cap \bigcap_{i=1}^{n} E(u_{i-1},u_i; \C_{i-1}, \xi^{u_{i-1}, u_i}_{i}) \Big) \dd \vec u_{[0,n-1]} , \label{eq:LE:Pin_def} }
where $u_n=x$ and $\C_{i} = \C(u_{i-1}, \xi^{u_{i-1}}_{i})$ is the cluster of $u_{i-1}$ in $\xi^{u_{i-1}}_i$. Further define
	\algn{ R_{\lambda, 0} (x) &:= - \lambda \int \pla \Big( \{\dconn{\orig}{u_0}{\xi^{\orig, u_0}_0}\} \cap \{\xconn{u_0}{x}{\xi^{u_0,x}_1}{\C_0}\}  \Big) \dd u_0, \label{eq:LE:R0_def}\\
		R_{\lambda, n}(x) &:= (-\lambda)^{n+1} \int \pla \Big( \{\dconn{\orig}{u_0}{\xi^{\orig, u_0}_0}\} \cap \bigcap_{i=1}^{n} E(u_{i-1},u_i; \C_{i-1}, \xi^{u_{i-1}, u_i}_{i}) \notag \\
				& \hspace{4cm} \cap \{ \xconn{u_n}{x}{\xi^{u_n,x}_{n+1}}{\C_n} \} \Big) \dd \vec u_{[0,n]}
							 \label{eq:LE:Rn_def}.}
Additionally, define $\Pi_{\lambda, n}$ as the alternating partial sum
	\eqq{ \Pi_{\lambda, n}(x) := \sum_{m=0}^{n} (-1)^m \Pi_\lambda^{(m)}(x). \label{eq:LE:PiN_def}} 
\end{definition}
We can relate $\Pi_\lambda^{(n)}$ and $R_{\lambda,n}$ in the following way. As $\pla(\xconn{u_n}{x}{\xi^{u_n,x}_{n+1}}{A} ) \leq \tlam(x-u_n)$ for an arbitrary locally finite set $A$, we can bound
	\eqq{ |R_{\lambda,n}(x)| \leq \lambda \int \Pi_\lambda^{(n)}(u_n) \tlam(x-u_n) \dd u_n \leq \lambda \ftlam(\orig) \big( \sup_{y\in\Rd} \Pi_\lambda^{(n)}(y) \big).
			\label{eq:LE:Rn_Pin_bound}  }
Our main result of this section is the following proposition:
\begin{prop}[Lace expansion] \label{thm:laceexpansionidentity}
Let $x\in\Rd$ and $\lambda \in [0,\lambda_c)$. Then, for $n \geq 0$,
	\eqq{ \tlam(x) = \connf(x) + \Pi_{\lambda, n}(x) + \lambda \big((\connf + \Pi_{\lambda, n}) \star \tlam\big)(x) + R_{\lambda, n}(x) .\label{eq:laceexpansionidentity}}
\end{prop}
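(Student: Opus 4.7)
The plan is to establish the identity by induction on $n$, with the base case $n=0$ carrying the main conceptual work and the induction step being a repetition of the same moves applied to the remainder. For the base case, I would start from the trivial partition
\[
\tlam(x)=\pla(\dconn{\orig}{x}{\xi^{\orig,x}})+\pla(\conn{\orig}{x}{\xi^{\orig,x}},\piv{\orig,x;\xi^{\orig,x}}\ne\varnothing).
\]
The first summand equals $\connf(x)+\Pi^{(0)}_\lambda(x)$ by definition of $\Pi^{(0)}_\lambda$. In the second summand I would pick out the \emph{first} pivotal point $u$ (ordering of pivotals was discussed just before \eqref{eq:LE:def:E_event}). Because $u$ being pivotal forces $\orig$ to be doubly connected to $u$ in $\xi^{\orig,u,x}$, the sum-over-first-pivotal $\sum_{u\in\eta}$ combined with the Mecke equation \eqref{eq:prelim:mecke_1} gives a $\lambda\int\dd u$ in front of $\pla(\dconn{\orig}{u}{\xi^{\orig,u,x}},\,u\in\piv{\orig,x;\xi^{\orig,u,x}})$, which by the second part of the Cutting-point Lemma \ref{lem:LE:cutting_point} equals $\E_\lambda[\mathds 1_{\{\dconn{\orig}{u}{\xi^{\orig,u}}\}}\tlam^{\C(\orig,\xi^\orig)}(u,x)]$.

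Next I would substitute the complementary identity coming from \eqref{eq:LE:tau_thinning_split}, namely $\tlam^A(u,x)=\tlam(x-u)-\pla(\xconn{u}{x}{\tilde\xi^{u,x}}{A})$ for an independent copy $\tilde\xi$, applied with $A=\C(\orig,\xi^\orig)$ under the expectation. The $\tlam(x-u)$ piece factorises and yields $\lambda\big((\connf+\Pi^{(0)}_\lambda)\star\tlam\big)(x)$, while the $\pla(\xconn{u}{x}{}{\C_0})$ piece, rewritten as a joint probability over two independent edge-markings $\xi_0,\xi_1$, is precisely $-R_{\lambda,0}(x)$ as in \eqref{eq:LE:R0_def}. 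Since $\Pi_{\lambda,0}=\Pi^{(0)}_\lambda$, this produces \eqref{eq:laceexpansionidentity} for $n=0$.

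For the induction step, assume \eqref{eq:laceexpansionidentity} holds for some $n\ge 0$. I would expand the event $\{\xconn{u_n}{x}{\xi_{n+1}^{u_n,x}}{\C_n}\}$ appearing in $R_{\lambda,n}$ using Lemma~\ref{lem:partitioning_via_E_events} with $v=u_n$ and $A=\C_n$. The ``no further pivotal'' term contributes $E(u_n,x;\C_n,\xi_{n+1}^{u_n,x})$, which when multiplied by the accumulated chain of $E$-events and the initial double connection produces, after integration, exactly $(-1)^{n+1}\Pi^{(n+1)}_\lambda(x)$. The ``first further pivotal is $u_{n+1}$'' term, after applying Mecke~\eqref{eq:prelim:mecke_1} to the random sum $\sum_{u_{n+1}\in\eta_{n+1}}$ (thereby producing $\lambda\int\dd u_{n+1}$), then the Cutting-point Lemma~\ref{lem:LE:cutting_point} (here using the first, more general, version), and finally the complementary decomposition $\tlam^{\C_{n+1}}(u_{n+1},x)=\tlam(x-u_{n+1})-\pla(\xconn{u_{n+1}}{x}{\xi_{n+2}^{u_{n+1},x}}{\C_{n+1}})$, splits into $(-1)^{n+1}\lambda(\Pi^{(n+1)}_\lambda\star\tlam)(x)$ and $R_{\lambda,n+1}(x)$. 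Combining yields
\[
R_{\lambda,n}(x)=(-1)^{n+1}\Pi^{(n+1)}_\lambda(x)+(-1)^{n+1}\lambda\big(\Pi^{(n+1)}_\lambda\star\tlam\big)(x)+R_{\lambda,n+1}(x),
\]
which, inserted into the inductive hypothesis and using $\Pi_{\lambda,n+1}=\Pi_{\lambda,n}+(-1)^{n+1}\Pi^{(n+1)}_\lambda$, is precisely \eqref{eq:laceexpansionidentity} for $n+1$.

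The main obstacle I foresee is bookkeeping rather than conceptual: one must be precise that the sequence of independent edge-markings $(\xi_i)_{i\ge 0}$ is genuinely independent when one repeatedly invokes the Cutting-point Lemma at different ``levels'' of the expansion (the random cluster $\C_i$ on step $i$ must feed into the thinning on step $i+1$ while the deeper levels remain independent), so one should formulate the identity on an enlarged product space. A second subtlety is that the use of the Margulis--Russo and BK-based machinery behind Lemma~\ref{lem:LE:cutting_point} is for events on bounded domains, so strictly speaking one should carry out the argument inside a box $\Lambda_N$, apply the monotone convergence $\tlam^{\Lambda_N}\uparrow\tlam$ established in Lemma~\ref{lem:tlam_differentiability}, and check that $\Pi^{(n)}_\lambda$ and $R_{\lambda,n}$ are integrable — here $\lambda<\lambda_c$ is used, since the integrability of all terms is controlled by iterated convolutions of $\tlam$, which have finite integrals by $\chi(\lambda)<\infty$ together with the BK bound $\pla(\xconn{u}{x}{}{A})\le\tlam(x-u)$ and \eqref{eq:LE:Rn_Pin_bound}.
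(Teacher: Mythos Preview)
Your proposal is correct and follows essentially the same route as the paper's proof: induction on $n$, with the base case obtained by partitioning on the first pivotal point, applying Mecke and the second part of the Cutting-point Lemma, and then splitting $\tlam^{\C_0}(u,x)$ via \eqref{eq:LE:tau_thinning_split}; the induction step unfolds $R_{\lambda,n}$ by Lemma~\ref{lem:partitioning_via_E_events}, Mecke, the first part of the Cutting-point Lemma, and the same complementary split. One small correction to your list of anticipated obstacles: the Cutting-point Lemma~\ref{lem:LE:cutting_point} does not rely on Margulis--Russo or BK but on the Stopping-set Lemma~\ref{lem:stoppingset}, so no finite-volume approximation is needed there---the paper simply notes that all integrals are finite because $\lambda<\lambda_c$ implies $\ftlam(\orig)<\infty$.
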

\begin{proof}
The proof is by induction over $n$. After the base case (first step), we prove the case $n=1$ (second step). The case for general $n$ is analogous, but with heavier notation, and is only sketched (third step).

\underline{First step, $n=0$.} Using~\eqref{eq:LE:Pi0_def} in Definition~\ref{def:LE:lace_expansion_coefficients}, we observe that
		\eqq{\tlam(x) = \connf(x) + \Pi_\lambda^{(0)}(x) + \pla(\conn{\orig}{x}{\xi^{\orig,x}}, \ndconn{\orig}{x}{\xi^{\orig,x}}). \label{eq:lace_expansion_first_step_1}}
The event in the last term of the sum enforces the existence of a (first) pivotal point, and so, similarly to Lemma~\ref{lem:partitioning_via_E_events}, we can partition
	\eqq{ \mathds 1_{\{\conn{\orig}{x}{\xi^{\orig,x}}\} \cap \{\ndconn{\orig}{x}{\xi^{\orig,x}}\}} = \sum_{u \in \eta} \mathds 1_{\{\dconn{\orig}{u}{\xi^{\orig,x}} \} \cap 
									\{u \in \piv{\orig,x;\xi^{\orig,x}} \}}.	\label{eq:lace_expansion_pivotal_identity}}
We set $\C_0=\C(\orig,\xi^{\orig})$. Taking probabilities, we can use the Mecke formula~\eqref{eq:prelim:mecke_1} and then the Cutting-point Lemma~\ref{lem:LE:cutting_point} to rewrite
		\algn{ \pla(\conn{\orig}{x}{\xi^{\orig,x}}, \ndconn{\orig}{x}{\xi^{\orig,x}}) 
				&= \lambda \int \pla\left(\dconn{\orig}{u}{\xi^{\orig,u,x}}, u \in \textsf{\textup{Piv}}(\orig,x; \xi^{\orig,u,x})\right) \dd u \notag\\
				&= \lambda \int \E_\lambda\Big[ \mathds 1_{\{\dconn{\orig}{u}{\xi^{\orig,u}}\}} \tlam^{\C_0}(u,x) \Big] \dd u\label{eq:lace_expansion_first_key_step}.}
To deal with $\tlam^{\C_0}(u,x)$ in~\eqref{eq:lace_expansion_first_key_step}, note that taking probabilities in~\eqref{eq:LE:tau_thinning_split} gives
		\eqq{ \tlam^{A}(u,x) = \tlam(x-u) - \pla \big( \xconn{u}{x}{\xi^{u,x}_1}{A} \big) \label{eq:LE:incl_excl_probabilities}}
for a locally finite set $A$\ch{, where, by construction, $\xi^{u,x}_1$ is independent of $\xi^{\orig,x}=\xi_0^{\orig,x}$.} We can substitute~\eqref{eq:LE:incl_excl_probabilities} into~\eqref{eq:lace_expansion_first_key_step} with the fixed set $A=\C_0$. Inserting this back into~\eqref{eq:lace_expansion_first_step_1}, 
and using the independence of $\xi_0$ and $\xi_1$, 
we can express $\tlam$ as
		\algn{\tlam(x) &= \connf(x) + \Pi_\lambda^{(0)}(x) +\lambda \int \E_\lambda \left[ \mathds 1_{\{\dconn{\orig}{u}{\xi^{\orig,u}}\}}\right] \tlam(x-u) \dd u \notag\\
					& \quad - \lambda \int \E_\lambda\Big[ \mathds 1_{\{\dconn{\orig}{u}{\xi^{\orig,u}_0}\}} \mathds 1_{\{\xconn{u}{x}{\xi^{u,x}_1}{\C_0}\}} \Big] \dd u \label{eq:LE:first_step_error_term}\\
					&= \connf(x) + \Pi_\lambda^{(0)}(x) +\lambda \int \left(\connf(u) + \Pi_\lambda^{(0)}(u)\right) \tlam(x-u) \dd u + R_{\lambda, 0}(x), \label{eq:LE:first_key_step_B}}
using the definition of $R_{\lambda,0}$ in~\eqref{eq:LE:R0_def}. This proves~\eqref{eq:laceexpansionidentity} for $n=0$. Note that all appearing integrals in~\eqref{eq:LE:first_key_step_B} are finite (as the integrands are bounded by $\tlam$), and so the rewriting via~\eqref{eq:LE:incl_excl_probabilities} is justified. 

\underline{Second step, $n=1$.}
We consider the second indicator in~\eqref{eq:LE:first_step_error_term} and its probability, regarding $\C_0$ as a fixed set. Thanks to Lemma~\ref{lem:partitioning_via_E_events}, and recalling that $\C_1=\C(u, \xi_1^{u}) $, we have, \col{for any locally finite $B \subset \Rd$,}
	\algn{\pla \left( \xconn{u}{x}{\xi^{u, x}_1}{B} \right) &= \pla \big(E(u,x;B, \xi^{u,x}_1)\big)
				+ \E_\lambda\Big[\sum_{u_1 \in \eta_1} \mathds 1_{E(u,u_1;B, \xi^{u,x}_1)} \mathds 1_{\{u_1 \in \piv{u,x;\xi^{u,x}_1}\}} \Big] \notag\\
		&= \pla \left(E(u,x;B, \xi^{u,x}_1)\right) + \lambda \int \E_\lambda \big[ \mathds 1_{E(u,u_1;B, \xi^{u,u_1,x}_1)} \mathds 1_{\{u_1 \in \piv{u,x;\xi^{u,u_1,x}_1}\}} \big] \dd u_1 \notag\\
		&= \pla \left(E(u,x;B, \xi^{u,x}_1)\right) + \lambda \int \E_\lambda \left[ \mathds 1_{E(u,u_1;B, \xi^{u, u_1}_1) } \cdot \tlam^{\C_1}(u_1, x) \right] \dd u_1, \label{eq:LE_step2_keyidentities}}
where we have again employed Mecke's formula~\eqref{eq:prelim:mecke_1} and the Cutting-point Lemma~\ref{lem:LE:cutting_point}. Again, we apply~\eqref{eq:LE:incl_excl_probabilities} with $A=\C_1$ to~\eqref{eq:LE_step2_keyidentities}, which gives
	\algn{\E_\lambda \Big[ \mathds 1_{E(u,u_1;B, \xi^{u, u_1}_1) } \cdot \tlam^{\C_1}(u_1, x) \Big] &= \pla(E(u,u_1;B, \xi^{u, u_1}_1)) \tlam(x-u_1) \notag\\
		& \quad - \E_\lambda\left[ \mathds 1_{E(u,u_1;B, \xi^{u, u_1}_1)} \mathds 1_{\{\xconn{u_1}{x}{\xi^{u_1,x}_2}{\C_1}\}} \right] . \label{eq:throughconnidentity}}
We now insert~\eqref{eq:LE_step2_keyidentities} with $u=u_0$ as well as the set $B=\C_0$ into the expansion identity~\eqref{eq:LE:first_step_error_term}. Recalling the definition of $\Pi_\lambda^{(n)}$ in~\eqref{eq:LE:Pin_def}, we can extract $\Pi_\lambda^{(1)}$ and apply~\eqref{eq:throughconnidentity} to perform the next step of the expansion, yielding
	\al{\tlam(x) &= \connf(x) + \Pi_\lambda^{(0)}(x)-\Pi_\lambda^{(1)}(x) + \lambda \int \left(\connf(u) + \Pi_\lambda^{(0)}(u)\right) \tlam(x-u) \dd u\\
		& \quad - \lambda \int \tlam(x-u_1) \cdot \lambda\int \E_\lambda \left[ \mathds 1_{\{\dconn{\orig}{u_0}{\xi^{\orig,u_0}_0}\}} \mathds 1_{E(u_0,u_1;\C_0, \xi^{u_0, u_1}_1)} \right]\dd u_0\dd u_1 \\
		& \quad + \lambda^2 \int \E_\lambda \left[\mathds 1_{\{\dconn{\orig}{u_0}{\xi^{\orig,u_0}_0} \}} \int
				\left[ \mathds 1_{E(u_0,u_1;\C_0, \xi^{u_0, u_1}_1)} \mathds 1_{\{\xconn{u_1}{x}{\xi^{u_1,x}_2}{\C_1}\}} \right] \dd u_1 \right] \dd u_0 \\
		& = \connf(x) + \Pi_\lambda^{(0)}(x)-\Pi_\lambda^{(1)}(x) +\lambda \int \left(\connf(u) + \Pi_\lambda^{(0)}(u) - \Pi_\lambda^{(1)}(u)\right) \tlam(x-u) \dd u + R_{\lambda, 1}(x). }
This proves~\eqref{eq:laceexpansionidentity} for $n=1$. Again, we point out that the appearing integrals are finite since $\lambda<\lambda_c$. 

\underline{Third step, general $n$.} For general $n\geq 1$, we can repeat the arguments for $n=1$ and obtain
	\al{ \pla\big(\xconn{u_n& }{x}{\xi^{u_n,x}_{n+1}}{\C_n}\big) = \pla\big(E(u_n,x;\C_n, \xi^{u_n,x}_{n+1})\big) \\
		& \quad + \lambda \int \tlam(x-u_{n+1}) \pla\big(E(u_n,u_{n+1};\C_n, \xi^{u_n,u_{n+1}}_{n+1})\big) \dd u_{n+1} \\
		& \quad - \lambda\int \E_\lambda \Big[ \mathds 1_{E(u_n,u_{n+1};\C_n, \xi^{u_n,u_{n+1}}_{n+1})} \mathds 1_{\{\xconn{u_{n+1}}{x}{\xi^{u_{n+1},x}_{n+2}}{\C_{n+1}}\}} \Big] \dd u_{n+1}. }
Plugging this into $R_{\lambda, n}(x)$, the first term yields $\Pi_\lambda^{(n+1)}(x)$, the second one yields $\lambda (\Pi_\lambda^{(n+1)} \star \tlam)(x)$, and the last one yields $R_{\lambda, n+1}(x)$. By induction, this proves the claim.
\end{proof}

\section{Diagrammatic bounds} \label{sec:diagrammaticbounds}
\subsection{Warm-up: Motivation and bounds for $n=0$} \label{sec:DB:warmup}

The aim of this section is to bound the lace-expansion coefficients $\Pi_\lambda^{(n)}$, which we have identified in the previous section. The bounds will be formulated in terms of somewhat simpler quantities, so-called diagrams. To this end, we first interpret the integrand in $\Pi_\lambda^{(n)}$ as the probability of an event contained in some connection event, which we can illustrate pictorially. In the next step, these connection events are decomposed by heavy use of the BK inequality into diagrams, which will turn out to be easier to analyze (this analysis is performed in Section~\ref{sec:bootstrapanalysis}). A diagram is an integral over a product of two-point and connection functions. Its diagrammatic representation is illustrated in Figure~\ref{fig:Psidiagrams} and used heavily in the analysis in the later parts of this section.

To illustrate the idea of this lengthy procedure, we first \chr{demonstrate the proofs for} $n=0$. Since $\Pi_\lambda^{(0)}$ is fairly simple, this has the advantage of giving a rather compact overview of what we execute at length for general $n$ afterwards. 

The main results of this section are Propositions~\ref{prop-bounds-LA-coefficients-unweighted} and \ref{thm:PsiDiag_Bound_Derangement}. The former gives bounds on $\widehat\Pi_\lambda^{(n)}(k)$, the latter gives related bounds on $\widehat\Pi_\lambda^{(n)}(\orig) - \widehat\Pi_\lambda^{(n)}(k)$, which turn out to be important in Section~\ref{sec:bootstrapanalysis}. In preparation of the latter bounds, we state Lemma~\ref{lem:cosinesplitlemma}. Note that if $f(x) = f(-x)$, then
	\eqq{\widehat{f}(k) = \int f(x) \e^{\i k \cdot x} \dd x = \int f(x) \cos(k \cdot x) \dd x. \label{eq:fouriersymmetry}}
Consequently, $|\widehat \Pi_\lambda^{(n)}(\orig)-\widehat \Pi_\lambda^{(n)}(k)|=\int [1-\cos(k\cdot x)] \Pi_\lambda^{(n)}(x) \dd x$. The following lemma is well known in the lace-expansion literature and allows to decompose factors of the form $[1-\cos(k\cdot x)]$. 
\begin{lemma}[Split of cosines, \cite{FitHof16}, Lemma 2.13] \label{lem:cosinesplitlemma}
Let $t_i \in \R$ for $i=1, \ldots, m$ and $t = \sum_{i=1}^{m} t_i$. Then
	\[1-\cos (t) \leq m \sum_{i=1}^{m} [1- \cos(t_i)]. \]
\end{lemma}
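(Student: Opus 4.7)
The plan is to reduce the inequality to a statement about $\sin(t/2)$ via the half-angle identity $1-\cos\theta = 2\sin^2(\theta/2)$, and then recover the factor $m$ only at the very end through Cauchy--Schwarz.

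First I would record the elementary trigonometric bound
\begin{equation*}
|\sin(a+b)| \;\leq\; |\sin a|\,|\cos b| + |\cos a|\,|\sin b| \;\leq\; |\sin a| + |\sin b|,
\end{equation*}
which follows from the angle-addition formula together with $|\cos|\leq 1$. Iterating this over the decomposition $t/2 = \sum_{i=1}^{m} t_i/2$ yields the subadditivity estimate
\begin{equation*}
\bigl|\sin(t/2)\bigr| \;\leq\; \sum_{i=1}^{m}\bigl|\sin(t_i/2)\bigr|.
\end{equation*}

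Squaring both sides and applying the Cauchy--Schwarz inequality $\bigl(\sum_{i=1}^m a_i\bigr)^2 \leq m \sum_{i=1}^m a_i^2$ to the right-hand side gives $\sin^2(t/2) \leq m\sum_{i=1}^{m}\sin^2(t_i/2)$. Multiplying by $2$ and applying $2\sin^2(\theta/2)=1-\cos\theta$ on both the left and each summand on the right closes the argument.

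The main point is that there is no serious obstacle, but one has to resist the naive induction on $m$ based on writing $1-\cos(s+t_m)$ via the angle-addition formula: every such step introduces a cross term $\sin s\,\sin t_m$ that one can only control by an AM--GM bound, and the resulting multiplicative constants degrade geometrically rather than linearly. Working at the level of $|\sin(\cdot)|$ (which is subadditive in absolute value) and introducing the factor $m$ only through Cauchy--Schwarz at the very end is what produces the sharp linear dependence on $m$ asserted by the lemma.
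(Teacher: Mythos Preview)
Your argument is correct. The paper does not give its own proof of this lemma; it simply quotes the result from \cite{FitHof16}, so there is nothing to compare against in the present paper. Your route via the half-angle identity, the subadditivity $|\sin(a+b)|\le|\sin a|+|\sin b|$, and Cauchy--Schwarz is the standard proof and matches the argument in the cited reference.
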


The next definition and observation are to be seen as an intermezzo, as they are not necessary at this point. In fact, Definition~\ref{def:tillambubble} will not be of importance until Section~\ref{sec:bootstrapanalysis}. We state it here nonetheless to prove a basic relation to $\tlam$, which illustrates some key ideas recurring in many of the proofs to follow:

\begin{definition}[One-step connection probability] \label{def:tillambubble} For $x\in\Rd$, we define $\tillam(x) := \connf(x) + \lambda (\connf \star\tlam)(x)$.
\end{definition}

\begin{observation}[Relation between $\tlam$ and $\tillam$] \label{obs:tildebounds}
Let $x \in \Rd$. Then $\tlam(x) \leq \tillam(x)$.
\end{observation}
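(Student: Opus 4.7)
The plan is to carry out, at this baby level, the Mecke + BK + finite-volume scheme highlighted at the end of Section~\ref{sec:bkrusso}. First I would split off the direct edge:
\[ \tlam(x) = \connf(x) + \pla\bigl(\{\conn{\orig}{x}{\xi^{\orig,x}}\}\cap\{\orig \not\sim x\}\bigr). \]
On the second event there is a witnessing path of length at least two, and I would pick the Poisson neighbor $y\in\eta$ of $\orig$ on that path; since the path is simple, its tail from $y$ to $x$ does not revisit $\orig$. This gives disjoint witnesses for the increasing events $\{\orig\sim y\}$ (witnessed by the edge $\{\orig,y\}$) and $A(y,x):=\{\text{there is a path from }y\text{ to }x\text{ in }\xi^{\orig,y,x}\text{ avoiding the vertex }\orig\}$ (witnessed by the edges of the tail). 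Hence
\[ \{\conn{\orig}{x}{\xi^{\orig,x}}\}\cap\{\orig\not\sim x\} \;\subseteq\; \bigcup_{y\in\eta}\bigl(\{\orig\sim y\}\circ A(y,x)\bigr). \]

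Next I would take $\pla$-probabilities, apply the Mecke equation~\eqref{eq:prelim:mecke_1}, and bound the resulting integrand via Theorem~\ref{lem:prelimbk_inequality}. As in the sample computation leading to~\eqref{eq:prelim:BK_application}, BK requires a finite-volume cutoff: I would first restrict $\xi$ to $\Lambda_n=[-n,n)^d$, where both events live on a bounded set, apply BK there, and then let $n\to\infty$ via monotone convergence. This produces
\[ \pla\bigl(\{\conn{\orig}{x}{\xi^{\orig,x}}\}\cap\{\orig\not\sim x\}\bigr) \leq \lambda\int \connf(y)\,\pla(A(y,x))\,\dd y. \]

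Finally I would identify $\pla(A(y,x))=\tlam(x-y)$: since $A(y,x)$ depends only on those edges of $\xi^{\orig,y,x}$ that are not incident to $\orig$, and since these edges have the same joint law as the corresponding edges of $\xi^{y,x}$, translation invariance gives $\pla(\conn{y}{x}{\xi^{y,x}})=\tlam(x-y)$. Assembling everything yields $\tlam(x)\le\connf(x)+\lambda(\connf\star\tlam)(x)=\tillam(x)$.

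The main obstacle is choosing the right increasing second event in the BK application. A naive attempt that uses $\{\conn{y}{x}{\xi^{\orig,y,x}}\}$ in place of $A(y,x)$ would give only $\connf(y)\,\pla(\conn{y}{x}{\xi^{\orig,y,x}})$, which is strictly larger than $\connf(y)\tlam(x-y)$ because the added vertex $\orig$ may itself facilitate a $y$-to-$x$ connection (and neither FKG nor monotonicity in the edge $\{\orig,y\}$ cures this). The key observation is that the first-neighbor decomposition automatically furnishes a witness avoiding $\orig$, so one may restrict attention to the slightly smaller increasing event $A(y,x)$ whose probability is exactly $\tlam(x-y)$; once this is in place the remaining truncation and limit argument is routine.
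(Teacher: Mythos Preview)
Your proposal is correct and follows the same route as the paper: split off the direct edge, pick a first Poisson neighbor $y$, and use that the $y$-to-$x$ connection can be taken to avoid $\orig$ so that its probability is exactly $\tlam(x-y)$. One simplification: the paper notes (in the remark after the proof) that $\{\orig\sim y\text{ in }\xi^{\orig,y}\}$ and $\{\conn{y}{x}{\xi^{y,x}}\}$ are in fact \emph{independent}, since the second event does not involve $\orig$ at all; invoking independence directly makes the BK inequality (and hence the finite-volume truncation) unnecessary here.
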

\begin{proof}
By combining Mecke's formula and the BK inequality, we obtain
	\al{ \tlam(x) & \leq  \connf(x) + \E_\lambda\Big[\sum_{y \in \eta} \mathds 1_{\{\orig \sim y \text{ in } \xi^{\orig} \} \cap \{\conn{y}{x}{\xi^{x}}\}} \Big] \\
		& = \connf(x) + \lambda \int \pla\big(\{\orig \sim y \text{ in } \xi^{\orig,y} \} \cap \{\conn{y}{x}{\xi^{x,y}}\}\big) \dd y \\
		& \leq \connf(x) +\lambda \int \connf(y) \tlam(x-y) \dd y = \tillam(x). \qedhere} 
\end{proof}
\col{In the last inequality, we have also used that the two intersected events are independent}. This is due to the fact that $\orig\notin \eta^{x,y}$ a.s. Whenever the first event is not a direct adjacency however (but instead also a connection event), \col{we need to use the BK inequality instead}.

We define two quantities that are of relevance for the following proposition, as well as later on in Section~\ref{sec:diagrammaticbounds_disp}:
\begin{definition}[Basic displacement functions]
The Fourier quantities $\connf_k$ and $\tklam$ are defined as 
	\eqn{
	\label{Delta-k-functions-def}
	\connf_k(x) = [1-\cos(k\cdot x)] \connf(x)
	\qquad
	\text{and} 
	\qquad
	\tklam(x) = [1-\cos(k\cdot x)]\tlam(x).
	}
\end{definition}
The following proposition deals with $\Pi_\lambda^{(0)}$ and its Fourier transform:
\begin{prop}[Bounds for $n=0$] \label{thm:DB:Pi0_bounds}
Let $k\in\Rd$ \col{and let $\lambda\in[0,\lambda_c)$}. Then
	\al{|\widehat\Pi_\lambda^{(0)}(k)| & \leq \lambda^2(\connf^{\star 2}\star \tlam^{\star 2})(\orig), \\
		|\widehat\Pi_\lambda^{(0)}(\orig)-\widehat\Pi_\lambda^{(0)}(k)| &\leq \lambda^2 \Big((\connf_k\star\connf \star\tlam^{\star 2})(\orig) + (\connf^{\star 2} \star \tklam\star\tlam)(\orig) \Big) .}
\end{prop}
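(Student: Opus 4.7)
My plan is to bound $\Pi_\lambda^{(0)}(x)$ pointwise by a ``two-paths'' diagram and then translate to Fourier space. For $x\ne\orig$, the event $\{\dconn{\orig}{x}{\xi^{\orig,x}}\}$ decomposes as $\{\orig\sim x\}\cup\{$two vertex-disjoint paths from $\orig$ to $x\}$, so subtracting $\pla(\orig\sim x)=\connf(x)$ yields $\Pi_\lambda^{(0)}(x)\le\pla(\text{two vertex-disjoint paths})$. Each such path has a second vertex $y_i\in\eta$ adjacent to $\orig$, and vertex-disjointness forces $y_1\ne y_2$. I would apply the two-point Mecke equation~\eqref{eq:prelim:mecke_n} with $m=2$ to replace $y_1,y_2$ by deterministic points (absorbing a factor $1/2$ from the ordering ambiguity), then use the BK inequality (Theorem~\ref{lem:prelimbk_inequality}, in the general form illustrated by the application at~\eqref{eq:prelim:BK_application}) to decouple the four increasing events $\{\orig\sim y_1\}$, $\{\orig\sim y_2\}$, $\{\conn{y_1}{x}{}\}$, $\{\conn{y_2}{x}{}\}$. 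The first two are edge events governed by independent marks, contributing $\connf(y_1)\connf(y_2)$, while the remaining disjoint occurrence of the two connection events yields $\tlam(x-y_1)\tlam(x-y_2)$. This gives
\[
\Pi_\lambda^{(0)}(x)\le\lambda^2\iint\connf(y_1)\connf(y_2)\tlam(x-y_1)\tlam(x-y_2)\,\dd y_1\,\dd y_2.
\]

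For the first Fourier bound, since $\Pi_\lambda^{(0)}\ge0$, $|\widehat\Pi_\lambda^{(0)}(k)|\le\widehat\Pi_\lambda^{(0)}(\orig)$. Integrating the pointwise bound in $x$ and using the identities $\int\tlam(x-y_1)\tlam(x-y_2)\,\dd x=\tlam^{\star 2}(y_1-y_2)$ and then $\iint\connf(y_1)\connf(y_2)\tlam^{\star 2}(y_1-y_2)\,\dd y_1\,\dd y_2=(\connf^{\star 2}\star\tlam^{\star 2})(\orig)$, which both rely on the symmetry of $\tlam$ and $\connf$, delivers the first bound. For the displacement bound, by~\eqref{eq:fouriersymmetry} (applicable since $\Pi_\lambda^{(0)}$ is symmetric),
\[
|\widehat\Pi_\lambda^{(0)}(\orig)-\widehat\Pi_\lambda^{(0)}(k)|=\int[1-\cos(k\cdot x)]\,\Pi_\lambda^{(0)}(x)\,\dd x.
\]
I would substitute the pointwise bound, decompose $k\cdot x=k\cdot y_1+k\cdot(x-y_1)$, and invoke Lemma~\ref{lem:cosinesplitlemma} with $m=2$ to split $1-\cos(k\cdot x)\le 2[1-\cos(k\cdot y_1)]+2[1-\cos(k\cdot(x-y_1))]$. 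The first piece absorbs into $\connf(y_1)$ to produce $\connf_k(y_1)$, yielding $(\connf_k\star\connf\star\tlam^{\star 2})(\orig)$; the second absorbs into $\tlam(x-y_1)$ to produce $\tklam(x-y_1)$, yielding $(\connf^{\star 2}\star\tklam\star\tlam)(\orig)$. The factor $2$ from the cosine split cancels the $1/2$ from the pointwise bound.

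The main obstacle lies in the BK step for the two-paths decomposition: both $\{\orig\sim y_1\}$ and $\{\orig\sim y_2\}$ involve the common vertex $\orig$, so the strict spatial disjoint-witness formulation of $\circ$ in~\eqref{eq:prelim:def:circ} does not apply directly to the four events as stated. As in the application at~\eqref{eq:prelim:BK_application}, one has to package the marks of edges incident to deterministic points into a modified mark space (or into the auxiliary variables $W_1,W_2$ of Theorem~\ref{lem:prelimbk_inequality}) before the BK machinery can be invoked. Once this bookkeeping is settled, the remaining manipulations are routine convolutions and symmetry calculations.
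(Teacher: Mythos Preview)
Your proposal is correct and follows essentially the same route as the paper: the pointwise bound $\Pi_\lambda^{(0)}(x)\le\tfrac12\lambda^2(\connf\star\tlam)(x)^2$ via Mecke plus BK, then $|\widehat\Pi_\lambda^{(0)}(k)|\le\widehat\Pi_\lambda^{(0)}(\orig)$ for the first inequality and the cosine split $x=y_1+(x-y_1)$ for the second. Two small remarks: your displayed pointwise bound should carry the factor $\tfrac12$ you mention in the text (otherwise the cancellation you invoke at the end fails and the second bound is off by a factor of~2), and the paper handles the BK bookkeeping you flag by grouping the events as $(\{\orig\sim y\}\cap\{\conn{y}{x}{}\})\circ(\{\orig\sim z\}\cap\{\conn{z}{x}{}\})$ and then using independence (not BK) of $\{\orig\sim y\}$ from $\{\conn{y}{x}{\xi^{y,x}}\}$, since $\orig\notin\eta^{y,x}$ a.s.
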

\begin{proof}
We note that for the event $\{ \dconn{\orig}{x}{\xi^{\orig,x}} \}$ to hold, either there is a direct edge between $\orig$ and $x$, or there are vertices $y,z$ in $\eta$ that are direct neighbors of the origin and have respective disjoint paths to $x$ that both do not contain the origin. Hence, by the multivariate Mecke equation~\eqref{eq:prelim:mecke_n},
	\al{\pla( \dconn{\orig}{x}{\xi^{\orig,x}}) &\leq \connf(x) + \tfrac 12 \E_\lambda\Big[ \sum_{(y,z) \in \eta^{(2)}} \mathds 1_{(\{\orig\sim y \text{ in } \xi^{\orig}\}
			\cap \{\conn{y}{x}{\xi^x}\}) \circ (\{\orig \sim z \text{ in } \xi^{\orig}\} \cap \{ \conn{z}{x}{\xi^x} \})} \Big] \\
		& = \connf(x) + \tfrac 12 \lambda^2 \iint \pla\big( (\{\orig\sim y \text{ in } \xi^{\orig, y}\} \cap \{\conn{y}{x}{\xi^{x,y}}\}) \\
		& \hspace{3.5cm} \circ (\{\orig \sim z \text{ in } \xi^{\orig, z}\} \cap \{ \conn{z}{x}{\xi^{x,z}} \}) \big) \dd y \dd z. }
After applying the BK inequality to the above probability, the integral factors, and so
	\algn{\pla( \dconn{\orig}{x}{\xi^{\orig,x}}) & \leq \connf(x) + \tfrac 12 \lambda^2 \Big( \int \pla( \{\orig\sim y \text{ in } \xi^{\orig,y}\}
						 \cap \{\conn{y}{x}{\xi^{y,x}}\}) \dd y \Big)^2 \notag \\
		& = \connf(x) + \tfrac 12 \lambda^2 (\connf \star \tlam)(x)^2. \label{eq:DB:Pi0_dconn_bound}}
Thus, recalling that $\Pi_\lambda^{(0)}(x) = \pla(\dconn{\orig}{x}{\xi^{\orig,x}})-\connf(x) \geq 0$ and dropping the factor $\tfrac 12$,
	\algn{ |\widehat\Pi_\lambda^{(0)}(k)| &= \left\vert \int \cos(k \cdot x) \Pi_\lambda^{(0)}(x) \dd x \right\vert \leq \int \Pi_\lambda^{(0)}(x) \dd x
					 \leq \lambda^2 \int (\connf\star\tlam)(x)^2 \dd x \notag \\
			& = \lambda^2 \int (\connf\star\tlam)(x) (\connf\star\tlam)(-x) \dd x = \lambda^2 (\connf^{\star 2} \star\tlam^{\star 2})(\orig)\label{eq:DB:Pi0_diag_bd}}
using symmetry of $\connf$ and $\tlam$ as well as commutativity of the convolution. For the second bound of Proposition~\ref{thm:DB:Pi0_bounds}, we apply~\eqref{eq:DB:Pi0_dconn_bound} and obtain
	\eqq{\widehat\Pi_\lambda^{(0)}(\orig)-\widehat\Pi_\lambda^{(0)}(k) = \int [1-\cos(k\cdot x)] \Pi_\lambda^{(0)}(x) \dd x 
				\leq \frac{\lambda^2}{2} \int (\connf \star \tlam)(x) \int [1-\cos(k\cdot x)] \connf(y)\tlam(x-y) \dd y \dd x. 	\label{eq:DB:Pi0_disp}}
We call the factor $[1-\cos(k\cdot x)]$ a {\em displacement factor}. Writing $x=y + (x-y)$, the Cosine-split Lemma~\ref{lem:cosinesplitlemma} allows us to distribute it over the factors $\connf$ and $\tlam$ as
	\[ \int [1-\cos(k\cdot x)] \connf(y) \tlam(x-y) \dd y \leq 2\Big[ (\connf_k\star\tlam)(x) + (\connf\star\tklam)(x)\Big]. \]
Substituting this back into~\eqref{eq:DB:Pi0_disp} gives the desired result.
\end{proof}

\subsection{Statement of bounds on the lace-expansion coefficients}
\label{sec-statement-bounds-LA-coefficients}
\ch{In this section, we state bounds on the lace-expansion coefficients $\Pi_\lambda^{(n)}$ for $n\geq 1$. The main results in this section are Propositions \ref{prop-bounds-LA-coefficients-unweighted}, \ref{thm:PsiDiag_Bound_Derangement_N1} and \ref{thm:PsiDiag_Bound_Derangement} below. We defer the proofs of these propositions, which are technically challenging yet similar in spirit to the proofs for $n=0$ in Section \ref{sec:DB:warmup}, to Section \ref{sec:bounds-lace-expansion} below.}

\ch{Recall the two-point function triangles $\trilam(x)= \lambda^2 \tlam^{\star 3}(x)$ and $\trilam = \sup_x \trilam(x)$, as defined in \eqref{triangle-defs}.  We will rely on several related two-point functions and triangle diagrams:}

\ch{\begin{definition}[Modified two-point function and triangles] \label{def:bubble}
Define
	\eqn{
	\label{tau-dot-def}
	\tlamo(x) := \delta_{x,\orig} + \lambda\tlam(x),
	}
and 
	\eqn{
	\label{triangle-dots-def}
	\trilamo(x) = \lambda(\tlamo\star\tlam\star\tlam)(x)\qquad \text{and} \qquad \trilamoo(x) = (\tlamo\star\tlamo\star\tlam)(x).
	}
\chr{Further, define}
	\eqn{
	\trilamo = \sup_{x \in \Rd} \trilamo(x), \qquad \trilamoo = \sup_{x \in \Rd} \trilamoo(x), \qquad \trilame = \sup_{x \in\Rd: |x| \geq \varepsilon} \trilamo(x),
	}
and
	\eqn{
	\ballep = \Big( \lambda \int \mathds 1_{\{| z| < \varepsilon\}} \dd z \Big)^{1/2}.
	}
In terms of the above quantities, we set
	\eqn{
	U_\lambda = 3 \trilamoo\trilamo, \qquad \col{U_\lambda^{(\varepsilon)} = 5 \trilamoo \big(\trilam + \trilame + \ballep + (\ballep)^2 \big)} ,
				 \qquad \bar U_\lambda = 2(1+ U_\lambda)U_\lambda^{(\varepsilon)}.
	}
\end{definition}}
\ch{We remark that $\trilam \leq \trilamo \leq \trilamoo$. Moreover, as $\tlam(\orig) = 1$, we have $\trilamoo \geq 1$.}

\ch{In terms of the above quantities, we can bound $\widehat\Pi_\lambda^{(n)}(\orig)=\int \Pi_\lambda^{(n)}(x) \dd x$ as follows:}

\ch{\begin{prop}[Bound on lace-expansion coefficients for $n\geq 1$]
\label{prop-bounds-LA-coefficients-unweighted}
Let $n \geq 1$ and $\lambda\in [0,\lambda_c)$. Then
\eqn{
\int \Pi_\lambda^{(n)}(x) \dd x \leq \lambda^n 2(2\trilamo+\lambda+1) \big(U_\lambda \wedge \bar U_\lambda \big)^{n}.
}
\end{prop}}
\medskip

\ch{Proposition \ref{prop-bounds-LA-coefficients-unweighted} is the extension to $n\geq 1$ of the bound on $\int \Pi_\lambda^{(0)}(x) \dd x$ in Proposition \ref{thm:DB:Pi0_bounds}. We also need an extension to $n\geq 1$ of the bound on $|\widehat\Pi_\lambda^{(0)}(\orig)-\widehat\Pi_\lambda^{(0)}(k)|$ in Proposition \ref{thm:DB:Pi0_bounds}, which we refer to as {\em displacement bounds}. 
The main results on such displacement bounds are Propositions~\ref{thm:PsiDiag_Bound_Derangement_N1} and~\ref{thm:PsiDiag_Bound_Derangement} below. To state them, we need another definition:
\begin{definition}[Further displacement bound quantities] 
\label{def:displacement_quantities} 
Let $x,k\in\Rd$. Recall from \eqref{Delta-k-functions-def} that $\tklam(x) = [1-\cos(k\cdot x)] \tlam(x)$. We define the displacement bubble as
	\eqn{
	W_\lambda(x;k) = \lambda (\tklam \star\tlam)(x), \qquad W_\lambda(k) = \sup_{x\in\Rd} W_\lambda(x;k).
	}
Furthermore, let $H_\lambda(k) := \sup_{a, b} H_\lambda(a,b;k)$, where
	\eqn{
	H_\lambda(a,b;k) := \lambda^5 \int \tlam(z) \tklam(u-z)\tlam(t-u) \tlam(t-z) \tlam(w-t)\tlam(a-w) \tlam(x+b-w) \tlam(x-u) \dd (t,w,z,u,x).
	}
\end{definition}}

\ch{Propositions~\ref{thm:PsiDiag_Bound_Derangement_N1} and~\ref{thm:PsiDiag_Bound_Derangement} provide bounds on $ \lambda| \widehat\Pi_\lambda^{(n)}(\orig) - \widehat\Pi_\lambda^{(n)}(k)| =\lambda\int [1-\cos(k\cdot x)] \Pi_\lambda^{(n)}(x) \dd x$ for $n=1$ and $n\geq 2$, respectively:
\begin{prop}[Displacement bound for $n=1$] \label{thm:PsiDiag_Bound_Derangement_N1}
For $k\in\Rd$,
	\algn{ \lambda\int[1-\cos (k \cdot x)] \Pi_\lambda^{(1)}(x) \dd x &\leq 31 (1+\lambda) (U_\lambda \wedge \bar U_\lambda) W_\lambda(k) 
						+ 2\lambda^3 \big[(\tklam\star\tlam\star\connf^{\star 2})(\orig) + (\connf_k\star\tlam^{\star 2}\star\connf)(\orig) \big] \nonumber\\
		& \qquad + \min\Big\{\lambda [1-\fconnf(k)] \trilamo, \big(\lambda [1-\fconnf(k)] \trilame + 4 W_\lambda(k) \big(\ballep\big)^2 \big) \Big\}. }
\end{prop}
\begin{prop}[Displacement bounds on lace-expansion coefficient for $n\geq 2$] 
\label{thm:PsiDiag_Bound_Derangement}
For $n\geq 2$ and $k\in\Rd$,
	\algn{ \lambda\int [1- & \cos (k \cdot x)] \Pi_\lambda^{(n)}(x) \dd x \notag \\
		& \leq 60 (n+1) \big(\lambda+(\trilamoo)^2 \big)^2 \Big[ W_\lambda(k) \big(U_\lambda\big)^{1\vee (n-2)} \trilamoo(1+ U_\lambda) + \big(U_\lambda\big)^{n-2} H_\lambda(k) \Big].
						\label{eq:DB:disp:thm_U_lambda}}
Moreover,
	\algn{ \lambda \int [1-& \cos (k \cdot x)] \Pi_\lambda^{(n)}(x) \dd x \notag \\
		& \leq 60 (n+1) \big(\lambda+(\trilamoo)^2 \big)^2 \Big[ W_\lambda(k) \big( \bar U_\lambda \big)^{1 \vee (n-2)} \Big( \trilamoo + U_\lambda + \bar U_\lambda\Big)
						+ \big( \bar U_\lambda \big)^{n-2} H_\lambda(k) \Big] . \label{eq:DB:disp:thm_U_lambda_eps}}
\end{prop}}

\ch{Proposition~\ref{thm:PsiDiag_Bound_Derangement} is the main result on displacement bounds, and its result is also valid for $n=1$. Proposition~\ref{thm:PsiDiag_Bound_Derangement_N1}, however, gives an essential improvement to Proposition~\ref{thm:PsiDiag_Bound_Derangement} for $n=1$ that will later allow us to show that also for $n=1$, the displacement bounds are sufficiently small under the conditions stated in (H1), (H2), or (H3).
}

\chr{We close this section with bounds on $\Pi_\lambda^{(n)}(x)$, that are important to define $\Pi_{\lambda_c}$ in Section~\ref{sec:bootstrapanalysis}. 
\begin{corollary}\label{cor:DB:Pi_x_bounds}
Let $n\geq 1$. Then
	\[ \sup_{x\in\Rd} \Pi_\lambda^{(n)}(x) \leq 2(2\trilamo+\lambda+1)^2 \big(U_\lambda \wedge\bar U_\lambda \big)^{n-1}.\]
\end{corollary}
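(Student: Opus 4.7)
The plan is to bound $\Pi_\lambda^{(n)}(x)$ pointwise in $x$ by peeling off the final segment of the diagrammatic bound in Proposition~\ref{thm:DB:Pi_bound_Psi}, and reusing Proposition~\ref{thm:Psi_Diag_bound} on the remainder. For $n\ge 1$, regrouping the initial segment $\psi_0$ together with the $n-1$ interior factors $\psi$ into $\Psi^{(n-1)}$ (as defined in~\eqref{def:Psi_N}) rewrites the bound of Proposition~\ref{thm:DB:Pi_bound_Psi} as
$$\Pi_\lambda^{(n)}(x) \ \le\ \lambda^n \iint \Psi^{(n-1)}(w,u)\,\Bigl(\int \psi_n(w,u,t,z,x)\dd t \dd z\Bigr) \dd w \dd u.$$
Since $\sup_x \iint F(w,u,x)\dd w\dd u \le \iint \sup_x F(w,u,x)\dd w\dd u$ for any nonnegative $F$, it suffices to control $\sigma_n(w,u):=\sup_{x}\int \psi_n(w,u,t,z,x)\dd t\dd z$ uniformly in $(w,u)$.

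The key calculation is to show $\sup_{w,u}\sigma_n(w,u)\le \trilamo+1$. The $\psi_n^{(2)}$-contribution collapses through its two Dirac factors to $\tlam(x-u)\tlam(x-w)$, whose supremum over $x$ is at most $\tlam(\orig)^2=1$. For $\psi_n^{(1)}$, the endpoint $x$ enters only through the two $\tlam$-factors $\tlam(z-x)$ and $\tlam(x-t)$, both bounded by $1$; discarding them yields the $x$-free upper bound
$$\int \psi_n^{(1)}(w,u,t,z,x)\dd t\dd z \ \le\ \lambda \iint \tlamo(t-u)\tlam(z-w)\tlam(t-z)\dd t\dd z,$$
and after the shifts $t\mapsto t+u$, $z\mapsto z+w$ and use of the symmetry of $\tlam$ and $\tlamo$, this integral is recognized as $\lambda(\tlam\star\tlamo\star\tlam)(u-w)=\trilamo(u-w)\le \trilamo$.

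Combining these estimates with Proposition~\ref{thm:Psi_Diag_bound} applied at level $n-1$, namely $\lambda^n\iint \Psi^{(n-1)}(w,u)\dd w\dd u \le 2(2\trilamo+\lambda+1)(U_\lambda\wedge \bar U_\lambda)^{n-1}$, and using the trivial inequality $\trilamo+1\le 2\trilamo+\lambda+1$, produces the asserted bound $2(2\trilamo+\lambda+1)^2(U_\lambda\wedge \bar U_\lambda)^{n-1}$. The argument is largely mechanical; the only step that warrants a moment's thought is noticing that in the $\psi_n^{(1)}$-segment the endpoint $x$ is a degree-two vertex whose two incident $\tlam$-lines can simultaneously be replaced by $1$, so that the entire $x$-dependence of the last segment can be uniformly discarded at the cost of a single factor of $\trilamo$.
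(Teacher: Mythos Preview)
Your proof is correct and follows essentially the same approach as the paper: peel off the final segment $\psi_n$, bound $\int\psi_n(w,u,t,z,x)\dd t\dd z\le \trilamo+1$ uniformly in $w,u,x$ (via $\psi_n^{(2)}\le 1$ and $\psi_n^{(1)}\le \trilamo(u-w)$ after dropping the two $\tlam$-lines through $x$), and then apply Proposition~\ref{thm:Psi_Diag_bound} at level $n-1$. The paper's proof is just a terse version of exactly this argument.
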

This corollary will follow straightforwardly from the proof of Proposition \ref{thm:Psi_Diag_bound}, we give its proof at the end of Section \ref{sec:diagrammaticbounds_no_disp}. 
}

\section{Bootstrap analysis}\label{sec:bootstrapanalysis}

So far we have worked with a general symmetric connection
function satisfying \eqref{eq:phisummability}. In this section (starting
with Section \ref{sec:consequences_of_BS_bounds}) and the next
we shall use the assumptions made on $\connf$ in (H1), (H2), or (H3).

\subsection{Re-scaling of the intensity measure} \label{sec:rescaling}
\col{To increase the readability and avoid cluttering the technical proofs, we assume throughout Section~\ref{sec:bootstrapanalysis} (as well as in the appendix) that}
	\eqq{ \phiint  = \int\connf(x) \dd x = 1 \label{eq:BA:phi_normalized}.}
A re-scaling argument similar to~\cite[Section 2.2]{MeeRoy96} justifies this without losing generality. Let us briefly elaborate on this: We can scale $\Rd$ by a factor of $\phiint^{-1/d}$ (that is, the new unit radius ball is the previous ball of radius $\phiint^{1/d}$) and what we obtain has the distribution of an RCM model with parameters
	\[ \lambda^* = \E_\lambda\big[\big\vert\eta \cap [0, \phiint^{1/d}]^d \big\vert\big] = \lambda \phiint, 	\quad \connf^*(x) = \connf\left(x \phiint^{1/d} \right), 
	\quad \fconnf^*(k) = \fconnf\left(k/\phiint^{1/d}\right).
	\]
For this new model,
	\[ \int \connf^*(x) \dd x = \phiint^{-1} \int \connf(y) \dd y = 1. \]
By the re-scaling, $\{\conn{\orig}{x}{\xi^{\orig,x}}\}$ becomes $\{ \conn{\orig}{x / \phiint^{1/d}}{\xi^{\orig,x/\phiint^{1/d}}} \}$, we also have
	\[\tlam(x) = \tau^*_{\lambda^*} \left(x / \phiint^{1/d}\right), \]
where $\tau^*_\mu$ is the two-point function in the RCM governed by the connection function $\connf^*$. Clearly, $\lambda_c \phiint=\lambda_c^*$. Another short computation shows that $\trilam(x) = (\lambda^*)^2 \big(\tau_{\lambda^*}^* \star \tau_{\lambda^*}^* \star \tau_{\lambda^*}^* \big)\big(x/\phiint^{1/d}\big)$, so the triangle condition holds in the original model precisely when it holds in the re-scaled model.

As an example, $\fgmu(k) = (1-\mu\fconnf(k))^{-1}$ under the normalization assumption in \eqref{eq:BA:phi_normalized}.

\subsection{Introduction of the bootstrap functions}\label{sec:bootstrap_functions}
The analysis of Section~\ref{sec:bootstrapanalysis} follows the arguments in the paper by Heydenreich et al.~\cite{HeyHofSak08}, adapting them to the continuum setting. Some parts follow the presentation given there almost verbatim. We define
	\[\mulam :=  \col{1 - \frac{1}{\ftlam(\orig)}}\]
for $\lambda \geq 0$. 
Note that $\ftlam(\orig)$ is increasing in $\lambda$ and $\widehat\tau_0(\orig)=1$. Furthermore, as $\lambda \nearrow \lambda_c$, we have $\ftlam(\orig) \nearrow \infty$ and so $\mulam \nearrow 1$. In summary, $\mulam\in[0,1]$. Setting
	\[\Delta_k a(l) := a(l-k) + a(l+k) - 2 a(l)\]
to be the discretized second derivative of a function $a\colon \Rd \to \mathbb C$, we are in a position to define $f := f_1 \vee f_2 \vee f_3$ with
	\eqq{ f_1(\lambda) := \lambda, \qquad f_2(\lambda) := \sup_{k \in \Rd} \frac{|\widehat\tau_\lambda(k)|}{\fgmu(k)}, 
				\qquad f_3(\lambda) := \sup_{k,l \in \Rd} \frac{|\Delta_k \ftlam(l)|}{\widehat U_{\mulam}(k,l)}, \label{defeq:fbootstrapfunction}}
where we recall~\eqref{eq:def:greens_fct} and~\eqref{eq:def:greens_fct_fourier} for the Green's function $\greensmu$. Moreover, $\widehat U_{\mulam}$ is defined as
	\[\widehat U_{\mulam}(k,l):= 84 [1-\fconnf(k)] \Big(\fgmu(l-k)\fgmu(l)+\fgmu(l)\fgmu(l+k)+\fgmu(l-k)\fgmu(l+k) \Big)\]
and will serve as an upper bound on $\Delta_k \fgmu(l)$ by Lemma~\ref{lem:discrete-derivative-bound} below. This function $\widehat U_{\mulam}$ has nothing to do with the functions $U_\lambda, \bar U_\lambda$ from Section~\ref{sec:diagrammaticbounds}.

Let us point out that we show $f(\lambda) < \infty$ for all $\lambda\in[0,\lambda_c)$ as part of the proof of Proposition~\ref{thm:bootstrapargument}. In particular, we show that $f(0)\leq 2$ and that for $\lambda<\lambda_c$, $f_1$, $f_2$, $f_3$ are differentiable on $[0,\lambda]$ with uniformly bounded derivative.

Let us now explain the introduction of $\Delta_k$. The crucial observation (see Lemma~\ref{lem:discrete-derivative-bound} (i)) is that
	\[\ftklam (l) = - \tfrac 12 \Delta_k \ftlam(l), \]
where we recall that $\tklam(x)= [1-\cos(k\cdot x)] \tlam(x)$ is defined in Definition~\ref{def:displacement_quantities} and appears in $W_\lambda(k)$. This is why we are interested in a bound on $f_3$. We next state a lemma that collects some simple facts about the discretized second derivative, relates it to quantities of interest and states an important bound. The proof of Lemma~\ref{lem:discrete-derivative-bound} can be found in the literature~\cite{BorChaHofSlaSpe05,Sla06} for the discrete setting and carries over verbatim. 
\begin{lemma}[Bounds on the discretized second derivative,~\protect{\cite[Lemma 5.7]{Sla06}}] \label{lem:discrete-derivative-bound}
Let $a \colon \Rd\to\R$ be a measurable, symmetric and integrable function. Let $a_k(x) := [1-\cos (k \cdot x)] a(x)$. Then for $k,l\in\Rd$, 
\begin{itemize}
\item[(i)] $\Delta_k \widehat a(l) = -2 \widehat a_k(l)$ and
\item[(ii)] $| \Delta_k \widehat a(l) | \leq 2 (\widehat{|a|}(\orig) - \widehat{|a|}(k))$ and in particular, using (i), $\fconnf_k(l) \leq 1-\fconnf(k)$. 
\item[(iii)] For $\widehat A(k) = (1-\widehat a(k))^{-1}$,
	\[ |\Delta_k \widehat A(l) | \leq \big[ \widehat{|a|}(\orig) - \widehat{|a|} (k) \big] \Big( \big(\widehat A(l-k) + \widehat A(l+k) \big) \widehat A(l) 
		 + 8 \widehat A(l-k)\widehat A(l)\widehat A(l+k) \big(\widehat{|a|}(\orig) - \widehat{|a|}(l) \big) \Big). \]
In particular, for $\mu\in[0,1]$, 
	\[|\Delta_k \fgreensmu(l) | \leq [1-\fconnf(k)] \Big( \fgreensmu(l) \fgreensmu(l+k) + \fgreensmu(l)\fgreensmu(l-k) + 8 \fgreensmu(l-k)\fgreensmu(l+k) \Big) \leq \widehat U_{\mu}(k,l).\]
\end{itemize}
\end{lemma}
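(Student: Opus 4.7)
The proof breaks into three pieces corresponding to the three claims. For (i), I would compute directly with the definition of the Fourier transform: expanding
\[
\widehat a(l-k) + \widehat a(l+k) - 2\widehat a(l) = \int a(x) \e^{\i l\cdot x}\bigl(\e^{-\i k\cdot x} + \e^{\i k\cdot x} - 2\bigr)\dd x
\]
and using $\e^{\i k\cdot x} + \e^{-\i k\cdot x} - 2 = -2[1-\cos(k\cdot x)]$ yields $\Delta_k \widehat a(l) = -2 \widehat{a_k}(l)$. Part (ii) is then immediate from (i): since $1-\cos(k\cdot x) \geq 0$, we have $|\widehat{a_k}(l)| \leq \int |a(x)|[1-\cos(k\cdot x)]\dd x = \widehat{|a|}(\orig) - \widehat{|a|}(k)$, and the displayed consequence $\fconnf_k(l) \leq 1-\fconnf(k)$ follows by taking $a=\connf$ and $l=\orig$.

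For part (iii), the plan is to first derive an exact algebraic identity for $\Delta_k \widehat A(l)$. Writing $u = \widehat a(l)$, $v = \widehat a(l-k)$, $w = \widehat a(l+k)$ and noting that
\[
\widehat A(l-k) - \widehat A(l) = (v-u)\widehat A(l)\widehat A(l-k), \qquad \widehat A(l+k) - \widehat A(l) = (w-u)\widehat A(l)\widehat A(l+k),
\]
I add the two, symmetrize using $(v-u)\widehat A(l-k) + (w-u)\widehat A(l+k) = \tfrac{1}{2}\Delta_k \widehat a(l)(\widehat A(l-k)+\widehat A(l+k)) + \tfrac{1}{2}(w-v)^{2}\,\widehat A(l-k)\widehat A(l+k)$, and obtain
\[
\Delta_k \widehat A(l) = \tfrac{1}{2}\,\widehat A(l)\,\Delta_k \widehat a(l)\bigl(\widehat A(l-k) + \widehat A(l+k)\bigr) + \tfrac{1}{2}\,\widehat A(l-k)\widehat A(l)\widehat A(l+k)\,(w-v)^{2}.
\]
Part (ii) handles the first term with the factor $2(\widehat{|a|}(\orig) - \widehat{|a|}(k))$.

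The main obstacle is a sharp bound on $(w-v)^{2}$, namely $(w-v)^{2} \leq 16\bigl(\widehat{|a|}(\orig)-\widehat{|a|}(k)\bigr)\bigl(\widehat{|a|}(\orig)-\widehat{|a|}(l)\bigr)$. Here the symmetry hypothesis on $a$ is essential. I would write
\[
w - v = 2\i\int a(x)\sin(k\cdot x)\,\e^{\i l\cdot x}\dd x,
\]
then exploit that $a$ is even to discard odd-function contributions: $\int a(x)\sin(k\cdot x)\dd x = 0$ and $\int a(x)[\cos(l\cdot x)-1]\sin(k\cdot x)\dd x = 0$, leaving the purely imaginary expression $w-v = -2\int a(x)\sin(k\cdot x)\sin(l\cdot x)\dd x$. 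Applying $|\sin\alpha| \leq \sqrt{2}\sqrt{1-\cos\alpha}$ and Cauchy–Schwarz to $\int |a(x)|\sqrt{1-\cos(k\cdot x)}\sqrt{1-\cos(l\cdot x)}\dd x$ yields the factor $16$ bound.

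To finish, I specialize to $a = \mu\connf$ so that $\widehat A = \fgreensmu$, $\widehat{|a|}(\orig) - \widehat{|a|}(k) = \mu[1-\fconnf(k)] \leq 1-\fconnf(k)$, and (using $\mu \leq 1$) $\mu[1-\fconnf(l)] \leq 1/\fgreensmu(l)$. The last inequality cancels the factor $\fgreensmu(l)$ in the second term, leaving $8\,\fgreensmu(l-k)\fgreensmu(l+k)[1-\fconnf(k)]$. Summing with the bound on the first term and comparing to the definition of $\widehat U_{\mu}(k,l)$ (whose coefficient $84$ comfortably exceeds $1+1+8 = 10$) gives the final inequality.
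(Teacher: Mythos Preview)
Your argument is correct and is precisely the standard proof; the paper does not supply its own proof of this lemma but instead cites \cite{BorChaHofSlaSpe05,Sla06}, noting that the discrete-setting argument ``carries over verbatim.'' One tiny slip: in part (ii) you write that the consequence $\fconnf_k(l) \leq 1-\fconnf(k)$ follows by ``taking $a=\connf$ and $l=\orig$,'' but of course the claim is for all $l$, and it follows simply by taking $a=\connf$ (so that $|a|=\connf$) in the general bound of (ii) combined with (i).
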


The remainder of Section~\ref{sec:bootstrapanalysis} is organized as follows. In Section~\ref{sec:consequences_of_BS_bounds}, we prove among other things that the lace expansion converges for each fixed $\lambda\in[0,\lambda_c)$ provided that $\beta$ is sufficiently small (recall that this means $d$ large for (H1) and $L$ large for (H2), (H3)). Moreover, we prove that under the additional assumption $f\leq 3$ on $[0,\lambda_c)$, the smallness of $\beta$ required for the convergence of the lace expansion does not depend on $\lambda$. To do so, we derive several bounds on triangles and related quantities in terms of the function $f$. In Section~\ref{sec:forbidden_region_argument}, we prove that $f(0)\leq 2$ and that $f$ is continuous on $[0,\lambda_c)$. We then use the results obtained in Section~\ref{sec:consequences_of_BS_bounds} to show that in fact $f \leq 2$ on $[0,\lambda_c)$ whenever $\beta$ is sufficiently small. This in turn implies that the bounds obtained in Section~\ref{sec:consequences_of_BS_bounds} under the additional assumption $f \leq 3$ are true. In percolation theory, this (at first glance circular) argument is known as the bootstrap argument. From there, the main theorems follow with only little extra work.

\subsection{Consequences of the bootstrap bounds} \label{sec:consequences_of_BS_bounds}
We state Proposition~\ref{thm:convergenceoflaceexpansion}, which
proves bounds on the lace-expansion coefficients for fixed $\lambda$
and consequently shows that the lace-expansion
identity~\eqref{eq:laceexpansionidentity} becomes the Ornstein-Zernike
equation in the limit $n\to\infty$. 
Our proofs heavily rely on Propositions~\ref{thm:randomwalkestimates} 
and \ref{thm:randomwalkestimates2} in Appendix A, and therefore on the assumptions
made on $\connf$ in (H1), (H2), or (H3).

\begin{prop}[Convergence of the lace expansion and OZE] \label{thm:convergenceoflaceexpansion} 
\
Let $\lambda \in [0, \lambda_c)$ and $d>3(\alpha \wedge 2)$. 
Then there exists a constant $\const=c(f(\lambda))>0$ with $x\mapsto c(x)$ increasing such that
	\algn{ 
	\int \Pi_{\lambda}^{(n)}(x) \dd x \leq (\const \beta)^{n\vee 1}, &\qquad \int [1-\cos(k\cdot x)] \Pi_{\lambda}^{(n)}(x) \dd x \leq c [\fconnf(\orig)-\fconnf(k)] (\const \beta)^{n\vee 1}, 	
	\label{eq:BA:convLE_intPi_bds-prel}\\
	\sup_{x\in\Rd} & \Pi_{\lambda}^{(n)}(x) \leq (\const \beta)^{n\vee 1}. \label{eq:BA:convLE_Pi_bd}
	}
\end{prop}

We remark that $\fconnf(\orig)=1$ if $\phiint=1$. An immediate consequence of the proposition is the following:

\begin{corollary}[Uniform convergence of the lace expansion] \label{cor:convergenceoflaceexpansion_uniform}
\ch{Let $\lambda \in [0, \lambda_c)$ and assume that $f(\lambda)<3$.} Let \ch{$d>6$} be sufficiently large under (H1) and let $d>3(\alpha \wedge 2)$ and $L$ be sufficiently large under (H2) and (H3). Then there is $c$ (independent of $\lambda$ and, for (H1), also independent of $d$) such that
	\algn{ 
	\int \sum_{n \geq 0} \Pi_{\lambda}^{(n)}(x) \dd x \leq c \beta, &\qquad \int [1-\cos(k\cdot x)] \sum_{n \geq 0}\Pi_{\lambda}^{(n)}(x) \dd x \leq c [\fconnf(\orig)-\fconnf(k)] \beta, \label{eq:BA:convLE_intPi_bds}\\
			\sup_{x\in\Rd} & \sum_{n \geq 0} \Pi_{\lambda}^{(n)}(x) \leq c, \label{eq:BA:convLE_Pi_bd}}
and
	\eqq{ \sup_{x\in\Rd} |R_{\lambda, n}(x)|  \leq \lambda \ftlam(\orig) (c\beta)^n . \label{eq:BA:convLE_R_bd}}
Furthermore, the limit $\Pi_\lambda := \lim_{n\to\infty} \Pi_{\lambda,n}$ exists and is an integrable function with Fourier transform $\fpilam(k) = \lim_{n\to\infty} \widehat\Pi_{\lambda, n}(k)$ for $k\in\Rd$. Lastly, $\tlam$ satisfies the Ornstein-Zernike equation, taking the form
	\eqq{ \tlam = \connf + \Pi_\lambda + \lambda \big( (\connf+\Pi_\lambda) \star \tlam). \label{eq:LE_identity_OZE}}
\end{corollary}

We formulate our diagrammatic bounds in Lemmas \ref{lem:triangle-bubble-consequence-bound}-\ref{lem:H-G-consequence-bounds}, which rely on Lemma~\ref{lem:boundsonV}. All of these lemmas also deal with constants $\const$ and $c$. As in the statement of Proposition~\ref{thm:convergenceoflaceexpansion}, they are independent of $d$ for (H1). Proposition~\ref{thm:convergenceoflaceexpansion} will be a rather direct consequence of these lemmas.

We recall that $\tillam(x)=\connf(x) + \lambda(\connf\star\tlam)(x), \tklam(x) = [1-\cos(k \cdot x)] \tlam(x)$, $\connf_k(x)= [1-\cos(k\cdot x)] \connf(x)$, and we furthermore set $\tilklam(x) := [1-\cos(k\cdot x)] \tillam(x)$. With this, for $a,k\in\Rd, m,n\in\N_0$ and $\lambda \geq 0$, we define
	\al{ V_\lambda^{(m,n)}(a) := \lambda^{m+n-1} (\connf^{\star m} \star \tlam^{\star n})(a) & , \qquad W_\lambda^{(m,n)}(a;k) := \lambda^{m+n} (\tklam \star \connf^{\star m} \star \tlam^{\star n})(a), \\
						\widetilde W_\lambda^{(m,n)}(a;k) &:= \lambda^{m+n} (\connf_k \star \connf^{\star m} \star \tlam^{\star n})(a). }
Note that $W_\lambda^{(0,1)}(a;k) = W_\lambda(a;k)$, with $W_\lambda$ from Definition~\ref{def:displacement_quantities}.

\begin{lemma}[Bounds on $V_\lambda, W_\lambda, \widetilde W_\lambda$] \label{lem:boundsonV}
Let $\lambda \in [0, \lambda_c)$ and let $m,n\in\N_0$ be such that $n\leq 3$ and $m+n \geq 2$. Let $\ch{d>2n}$ under (H1) and $d>(\alpha \wedge 2)n$ under (H2) and (H3). Then there is $\const=\col{c(f(\lambda),m,n)}$, \ch{where $x\mapsto c(x,m,n)$ is increasing}, such that
	\al{ \sup_{a \in\Rd} V_\lambda^{(m,n)}(a) & \leq \begin{cases} \const \beta^{((m+n) \wedge 3) - 2} &\mbox{under (H1)}, \\ \const\beta &\mbox{under (H2), (H3)}, \end{cases} \\
			\sup_{a \in\Rd} \widetilde W_\lambda^{(m,n)}(a;k) & \leq \const [1-\fconnf(k)] \beta^{((m+n) \wedge 3) - 2} . }
If furthermore $n\leq 1$, then 
	\[ \sup_{a \in\Rd} W_\lambda^{(m,n)}(a;k) \leq \const [1-\fconnf(k)] \beta^{((m+n) \wedge 3) - 2} . \]
\end{lemma}
\ch{
\begin{remark}[The role of $c(f(\lambda),m,n)$]
{\rm We will see that $c(f(\lambda),m,n)$ takes the form of a constant depending on $m$ and $n$ times a power of $f(\lambda)$, where the power also depends on $m$ and $n$.\\
We remark that Lemma~\ref{lem:boundsonV} produces a bound in terms of $\beta$ as soon as $m+n \geq 3$.}\hfill$\diamondsuit$
\end{remark}
}
\begin{proof}
We start by observing that, for all $m\in\N_0$ and $n\in\N$,
	\eqq{ V_\lambda^{(m,n)} \leq V_\lambda^{(m+1,n-1)} + V_\lambda^{(m+1,n)} \leq 2 \max_{l \in\{n-1,n\}} V_\lambda^{(m+1,l)}, \label{eq:V112bound}}
which is a direct consequence of $\tlam \leq \connf+\lambda(\connf\star\tlam)$ \col{(i.e., Observation~\ref{obs:tildebounds})}. Analogous bounds hold for $W_\lambda^{(m,n)}$ and $\widetilde W_\lambda^{(m,n)}$. What~\eqref{eq:V112bound} means is that we can ``increase $m$ by possibly decreasing $n$''. We can therefore assume $m \geq 2$ without loss of generality (i.e.,~replace $m$ by $m+n$). The bound for $V_\lambda$ follows from the Fourier inverse formula, as
	\[V_\lambda^{(m,n)}(a) = \lambda^{m+n-1} \int \e^{-\i a \cdot l} \fconnf(l)^{m} \ftlam(l)^n \frac{\dd l}{(2\pi)^d}
					 \leq f(\lambda)^{m+n-1} \int |\fconnf(l)|^m |\ftlam(l)|^n \frac{\dd l}{(2\pi)^d}, \]
where we used that $\lambda\leq f(\lambda)$. We next apply the bound $|\ftlam(l)| \leq f_2(\lambda) \fgmu(l)$, yielding
	\eqq{ V_\lambda^{(m,n)}(a) \leq f(\lambda)^{m+2n-1} \int |\fconnf(l)|^m \fgmu(l)^n \frac{\dd l}{(2\pi)^d} = f(\lambda)^{m+2n-1} \int \frac{|\fconnf(l)|^{m}}{[1- \mulam\fconnf(l)]^n } \frac{\dd l}{(2\pi)^d}.
				\label{eq:Vmn_bound}}
Applying Proposition~\ref{thm:randomwalkestimates} gives the claim. 
Next, note that
	\[\widetilde W_\lambda^{(m,n)}(a;k) \leq \lambda^{m+n} \int |\fconnf_k(l)| |\fconnf(l)|^{m} |\ftlam(l)|^n \frac{\dd l}{(2\pi)^d}
				\leq f(\lambda)^{m+2n} [1- \fconnf(k)] \int |\fconnf(l)|^{m} \fgmu(l)^n \frac{\dd l}{(2\pi)^d} \]
due to Lemma~\ref{lem:discrete-derivative-bound}(ii). Since this is the same bound as in~\eqref{eq:Vmn_bound}, we can apply Proposition~\ref{thm:randomwalkestimates} again. Lastly,
	\al{W_\lambda^{(m,n)}(a;k) &\leq 84 f(\lambda)^{m+2n} [1-\fconnf(k)] \int |\fconnf(l)|^m \fgmu(l)^{n+1} \Big( \fgmu(l-k) + \fgmu(l+k) \Big) \frac{\dd l}{(2\pi)^d} \\
		& \quad + 84 f(\lambda)^{m+2n} [1-\fconnf(k)] \int |\fconnf(l)|^m \fgmu(l)^n \fgmu(l-k)\fgmu(l+k) \frac{\dd l}{(2\pi)^d}. }
Both of these terms can be bounded using Proposition~\ref{thm:randomwalkestimates2} \col{(if $n=0$ in the second line, we can multiply the integrand with $\fgmu(l)$ at the cost of a factor of $2$)}. Recall that we have replaced $m$ by $m+n$, which yields the exponents in the statement of Lemma~\ref{lem:boundsonV}.
\end{proof}

The following lemma applies Lemma~\ref{lem:boundsonV} to deduce bounds on several triangle quantities. It is crucial in the sense that it gives a bound on $\trilam$. We later prove that this bound is uniform in $\lambda$, implying the triangle condition.

\begin{lemma}[Bounds on triangles]\label{lem:triangle-bubble-consequence-bound}
Let $\lambda \in [0, \lambda_c)$ and let \ch{$d>6$} under (H1) and $d>3(\alpha \wedge 2)$ under (H2) and (H3). Let further $\varepsilon$ be given as in (H1.2). Then there is $\const=c(f(\lambda))$, \ch{where $x\mapsto c(x)$ is increasing}, such that
	\[\trilam \leq \const \beta, \qquad \trilame \leq \const \beta, \qquad \trilamoo \leq \const, \qquad \trilamo \leq \begin{cases}
			\const & \mbox{under (H1),} \\ \const \beta & \mbox{under (H2), (H3).}	\end{cases}\]
\end{lemma}
\begin{proof}
Observe that 
	\al{\trilam(x) & \leq \lambda^2 (\tlam\star\tillam\star\tillam)(x) = \lambda^2 \Big(\tlam\star \big(\connf+\lambda(\connf\star\tlam)\big)\star \big(\connf+\lambda(\connf\star\tlam)\big) \Big)(x)\\
				&= V_\lambda^{(2,1)}(x) + 2 V_\lambda^{(2,2)}(x) + V_\lambda^{(2,3)}(x) \leq \const \beta }
by Lemma~\ref{lem:boundsonV}. Similarly, we get $\trilamo(x) = \ch{\trilam(x)} + \lambda \tlam^{\star 2}(x) \leq \trilam + V_\lambda^{(2,0)}(x) + 2 V_\lambda^{(2,1)}(x) + V_\lambda^{(2,2)}(x)$. Applying Lemma~\ref{lem:boundsonV} again, this is bounded by $\const$ under (H1), and by $\const \beta$ under (H2), (H3). The bound $\trilamoo \leq 1+ \trilamo$ gives $\trilamoo \leq \const$. 
We use Observation~\ref{obs:tildebounds} twice and finally $\connf(x)\le\tlam(f)$ to get 
\begin{align*}
	\trilamo(x) &=\lambda(\tillam\star\tlam\star\tlam)(x)
	=\lambda (\tlam\star\tlam)(x)+\trilam(x)\\
	&\le\lambda (\tlam\star\connf)(x)+\lambda^2 (\connf\star\tlam\star\tlam)(x)+\trilam(x)\\
	&\le\lambda (\connf\star\connf)(x)+2\lambda^2 (\connf\star\tlam\star\tlam)(x)+\trilam(x)
	\le \lambda (\connf\star\connf)(x)+3\trilam(x),
\end{align*}
and thus
	\[ \trilame \leq \const\beta + \sup_{x:|x| \geq \varepsilon} \lambda (\connf\star\connf)(x) \leq \const' \beta\]
since $\connf\le1$ satisfies (H1.2) under (H1).
\end{proof}

\begin{lemma}[Bound on $W_\lambda$]\label{lem_W-consequence-bound}
Let $\lambda \in [0, \lambda_c)$ and let \ch{$d>6$} under (H1) and $d>3(\alpha \wedge 2)$ under (H2) and (H3). Then there is $\const=c(f(\lambda))$, \ch{where $x\mapsto c(x)$ is increasing}, such that
	\[W_\lambda(k) \leq \const [1-\fconnf(k)].\]
\end{lemma}

\begin{proof}
Recall the definition of $W_\lambda(k)$ in Definition~\ref{def:displacement_quantities}. By Observation~\ref{obs:tildebounds} and Lemma~\ref{lem:cosinesplitlemma}, we obtain
	\al{ W_\lambda(k) & \leq \sup_x \lambda \int [1-\cos(k\cdot y)] \big(\connf(y) + \lambda(\connf\star\tlam)(y)\big) \tillam(x-y) \dd y \\
		& \leq \sup_x \lambda \Big( \big[ \connf_k + 2\lambda \connf_k\star \tlam + 2 \lambda \connf \star\tklam \big] \star \big[\connf + \lambda (\connf\star\tlam)\big]\Big)(x) \\
		& = \sup_x \Big( \widetilde W_\lambda^{(1,0)}(x;k) + 3 \widetilde W_\lambda^{(1,1)}(x;k) + 2 \widetilde W_\lambda^{(1,2)}(x;k) + 2 W_\lambda^{(2,0)}(x;k) + 2 W_\lambda^{(2,1)}(x;k) \Big).}
We note that all summands except $\widetilde W_\lambda^{(1,0)}(x;k)$ are bounded by $\const[1-\fconnf(k)]$ by Lemma~\ref{lem:boundsonV}. The statement now follows from Lemma~\ref{lem:boundsonV} together with
	\[\widetilde W_\lambda^{(1,0)}(x;k) = \lambda \int \connf_k(y) \connf(x-y) \dd y \leq \lambda \int \connf_k(y) \dd y = f_1(\lambda) [1- \fconnf(k)].\qedhere\]
\end{proof}

The next lemma deals with the required extra treatment of the diagrams $\Pi_\lambda^{(0)}$ and $\Pi_\lambda^{(1)}$ with an added displacement:

\begin{lemma}[Displacement bounds on $\Pi_\lambda^{(0)}$ and $\Pi_\lambda^{(1)}$]\label{lem_Pi1-consequence-bound}
Let $\lambda \in [0, \lambda_c)$ and let \ch{$d>6$} under (H1) and $d>3(\alpha \wedge 2)$ under (H2) and (H3). Let further $i\in\{0,1\}$. Then there is $\const=c(f(\lambda))$, \ch{where $x\mapsto c(x)$ is increasing}, such that
	\[\lambda \int [1-\cos(k \cdot x)] \Pi_\lambda^{(i)}(x) \dd x \leq \const \beta [1-\fconnf(k)].\]
\end{lemma}
\begin{proof}
Let first $i=0$. With the bound \eqref{eq:DB:Pi0_diag_bd} and the Cosine-split Lemma~\ref{lem:cosinesplitlemma}, we get
	\al{\lambda \int [1-\cos(k \cdot x)] \Pi_\lambda^{(0)}(x) \dd x &\leq \lambda^3 \int [1-\cos(k\cdot x)] (\connf\star\tlam)(x)^2 \dd x \\
		& \leq 2 \widetilde W_\lambda^{(1,2)}(\orig;k) + 2 W_\lambda^{(2,1)}(\orig;k) \leq \const \beta [1-\fconnf(k)] }
by Lemma~\ref{lem:boundsonV}.
Let now $i=1$. Recalling the bound in Proposition~\ref{thm:PsiDiag_Bound_Derangement_N1}, the claimed result follows from noting that the two appearing convolutions are $W_\lambda^{(2,1)}$ and $\widetilde W_\lambda^{(1,2)}$.
\end{proof}

\begin{lemma}[Bound on $H_\lambda$] \label{lem:H-G-consequence-bounds}
Let $\lambda \in [0, \lambda_c)$ and let \ch{$d>6$} under (H1) and $d>3(\alpha \wedge 2)$ under (H2) and (H3). Then there is $\const=c(f(\lambda))$, \ch{where $x\mapsto c(x)$ is increasing}, such that
	\[H_\lambda(k) \leq \const \beta [1- \fconnf(k)].\]
\end{lemma}
\begin{proof}
\begin{figure}
	\centering
 \includegraphics{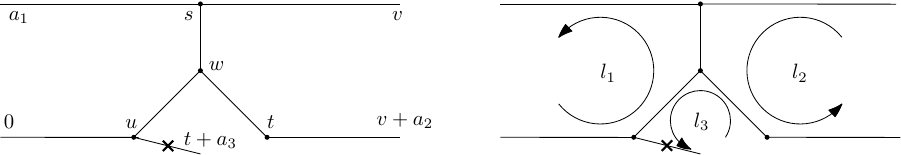}
	\caption{The diagram $H'_\lambda(a_1,a_2,a_3;k)$ and the schematic Fourier diagram $\widehat H'_\lambda(l_1,l_2,l_3;k)$.}
	\label{fig:Hdashdiagram}
\end{figure}
We define $\dtilklam (x) := \connf_k(x) + 2\lambda [ (\connf_k \star \tlam)(x) + (\connf \star \tilklam)(x)]$ and note that $\tilklam \leq \dtilklam$ by the Cosine-split Lemma~\ref{lem:cosinesplitlemma}. With this, by setting
	\al{ H'_\lambda(a_1,a_2,a_3;k) := \int & \tillam(s-a_1) \tillam(v-s) \tillam(s-w) \tillam(u) \tillam(w-u) \\
				& \quad \times \tillam(t-w) \tillam(v+a_2-t) \dtilklam(t+a_3-u) \dd(s,t,u,v,w), }
we have the bound $H_\lambda(k) \leq f(\lambda)^5\sup_{a_1, a_2} H'_\lambda(a_1, a_2, \orig; k)$. The Fourier inversion formula yields
	\eqq{ H'_\lambda(a_1, a_2, \orig;k) = \int \e^{-\i l_1 \cdot a_1}\e^{-\i l_2 \cdot a_2} \widehat H'_\lambda (l_1, l_2, l_3;k) \frac{\dd (l_1, l_2, l_3)}{(2 \pi)^{3d}}. \label{eq:Hdashidentity}}
Using that $b_1 = (b_1-s) + (s-w) + (w-u) + u, \ b_2 = (v+b_2-t) + (t-w) + (w-s) + (s-v)$ and $b_3 = (t+b_3-u) + (u-w) + (w-t)$, with appropriate substitution, this leads to
	\al{ \widehat H'_\lambda(l_1, l_2, l_3;k) &= \int \e^{\i l_1 \cdot b_1}\e^{\i l_2 \cdot b_2}\e^{\i l_3 \cdot b_3} H'_\lambda(b_1, b_2, b_3; k) \dd (b_1,b_2,b_3) \\
			&= \int \e^{\i l_1 \cdot u} \tillam(u) \e^{\i l_1 \cdot (b_1-s)} \tillam(b_1-s) \e^{\i l_2 \cdot (s-v)} \tillam(s-v) \e^{\i l_2 \cdot (v+b_2-t)} \tillam(v+b_2-t) \\
			& \qquad \times \e^{\i l_3 \cdot (t+b_3-u)} \dtilklam(t+b_3-u) \e^{\i (l_1-l_2) \cdot (s-w)} \tillam(s-w) \e^{\i (l_1-l_3) \cdot (w-u)} \tillam(w-u) \\
			& \qquad \times \e^{\i (l_2-l_3) \cdot (t-w)} \tillam(t-w) \dd (b_1,b_2,b_3,s,t,u,v,w) \\
			& = \ftillam(l_1)^2 \ftillam(l_2)^2 \fdtilklam(l_3) \ftillam(l_1-l_2) \ftillam(l_1-l_3) \ftillam(l_2-l_3), }
where we point to Figure~\ref{fig:Hdashdiagram} for an interpretation of the variables $l_i$ as cycles. Since $2 \fgreensmu \geq 1$,
	\[ |\ftillam(l)| \leq |\fconnf(l)| + \lambda |\fconnf(l)| |\ftlam(l)| \leq 3 f(\lambda)^2 |\fconnf(l)| \fgmu(l). \label{eq:ftillambound}\]
Similarly, with Lemma~\ref{lem:discrete-derivative-bound}(ii),
	\[|\fdtilklam(l_3)|\leq 4 f(\lambda)^2 [1-\fconnf(k)] \fgmu(l_3) + 2 f(\lambda)^2 | \fconnf(l_3)| \; \widehat U_{\mulam}(k,l_3).\] 
We can now go back and plug the above bounds into \eqref{eq:Hdashidentity} to obtain
	\algn{ H_\lambda(k) \leq\; &4 \times 3^7 f(\lambda)^{16} [1-\fconnf(k)] \int \fconnf(l_1)^2 \fgmu(l_1)^{2} \fconnf(l_2)^2 \fgmu(l_2)^{2} \fgmu(l_3) \notag\\
		& \qquad \times (|\fconnf| \cdot \fgmu)(l_1-l_2) (|\fconnf| \cdot \fgmu)(l_3-l_1) (|\fconnf| \cdot \fgmu)(l_3-l_2) \frac{\dd (l_1, l_2, l_3)}{(2\pi)^{3d}} \label{eq:Hdiagramfourierboundterm_a}\\
		& + 336 \times 3^7 f(\lambda)^{14} [1-\fconnf(k)] \int \fconnf(l_1)^2 \fgmu(l_1)^{2} \fconnf(l_2)^2 \fgmu(l_2)^{2}| \fconnf(l_3)| \notag\\
		& \qquad \times \Big[ \fgmu(l_3) \big( \fgmu(l_3+k) + \fgmu(l_3-k) \big) +\fgmu(l_3+k)\fgmu(l_3-k)\Big] \notag\\
		& \qquad \times (|\fconnf| \cdot \fgmu)(l_1-l_2) (|\fconnf| \cdot \fgmu)(l_1-l_3)
					(|\fconnf| \cdot \fgmu)(l_2-l_3) \frac{\dd (l_1, l_2, l_3)}{(2\pi)^{3d}}. 	\label{eq:Hdiagramfourierboundterm_b} }
Hence, $H_\lambda(k)$ is bounded by the sum of the terms~\eqref{eq:Hdiagramfourierboundterm_a} and~\eqref{eq:Hdiagramfourierboundterm_b}. The latter is itself a sum of two terms: By~\eqref{eq:Hdiagramfourierboundterm_b}(i) we refer to the term in~\eqref{eq:Hdiagramfourierboundterm_b} containing $\fgmu(l_3+k) + \fgmu(l_3-k)$, and by~\eqref{eq:Hdiagramfourierboundterm_b}(ii) we refer to the one containing $\fgmu(l_3+k)\fgmu(l_3-k)$. Hence, we have to bound the three terms~\eqref{eq:Hdiagramfourierboundterm_a},~\eqref{eq:Hdiagramfourierboundterm_b}(i),~\eqref{eq:Hdiagramfourierboundterm_b}(ii). We start with the term~\eqref{eq:Hdiagramfourierboundterm_a}, apply H\"older's inequality and bound the integral by
	\algn{&\left( \int (\fconnf(l_1)^2 \fgmu(l_1)^{3} (|\fconnf|\cdot \fgmu)(l_2)^{3} \fgmu(l_3) (|\fconnf|\cdot \fgmu)(l_3-l_1)^{1/2} (|\fconnf| \cdot \fgmu)(l_3-l_2)^{3/2} 
							 \; \frac{\dd (l_1, l_2, l_3)}{(2\pi)^{3d}} \right)^{2/3} \notag \\ 
		& \quad \times \left( \int \fconnf(l_1)^2 \fgmu(l_3) (|\fconnf| \cdot \fgmu)(l_1-l_2)^3  (\fconnf \cdot \fgmu)(l_3-l_1)^2 \; \frac{\dd (l_1, l_2, l_3)}{(2\pi)^{3d}} \right)^{1/3}.
						\label{eq:Hdiagramfourierboundterm_a_bound} }
The second integral in~\eqref{eq:Hdiagramfourierboundterm_a_bound} is simpler to deal with: We substitute $l_3'=l_3-l_1$ to then bound $\fgmu(l_3'+l_1) \leq \fgmu(l_3'+l_1) + \fgmu(l_3'-l_1)$ and use Proposition~\ref{thm:randomwalkestimates2} to resolve the integral over $l_3'$. We then resolve the integral over $l_2$ to obtain a factor $C \beta$ and note that the remaining integral over $l_1$ is bounded by 1.

To deal with the first integral in~\eqref{eq:Hdiagramfourierboundterm_a_bound}, we first consider the integral over $l_3$ and use H\"older's inequality to get
	\al{ \sup_{l_1, l_2} & \int (|\fconnf| \cdot \fgmu)(l_3-l_2)^{3/2} \fgmu(l_3) (|\fconnf|\cdot \fgmu)(l_1-l_3)^{1/2} \; \frac{\dd l_3}{(2\pi)^{d}} \\
		& \leq \sup_{l_1, l_2} \left( \int \fconnf(l_3')^2 \fgmu(l_3')^2 \fgmu(l_3'+l_2) \; \frac{\dd l_3'}{(2\pi)^{d}} \right)^{3/4}
					\left( \int \fconnf(l_3'')^2 \fgmu(l_3'')^2 \fgmu(l_3''+l_1) \; \frac{\dd l_3''}{(2\pi)^{d}} \right)^{1/4}, }
where we have substituted $l_3' = l_3-l_2$ and $l_3''=l_3-l_1$. Again, we use that $\fgmu$ is nonnegative and bound $\fgmu(l_3'+l_2) \leq \fgmu(l_3'+l_2) + \fgmu(l_3'-l_2)$ in the first integral and $\fgmu(l_3''+l_1) \leq \fgmu(l_3''+l_1) + \fgmu(l_3''-l_1)$ in the second. Proposition~\ref{thm:randomwalkestimates2} then completes the bounds. The remaining integral over $l_1$ and $l_2$ is handled straightforwardly, the latter yielding a factor of $C \beta$.

To bound the integral in~\eqref{eq:Hdiagramfourierboundterm_b}(i), let $\widehat D_{\mulam, k}(l) = \fgmu(l-k) + \fgmu(l+k)$. We apply the Cauchy-Schwarz inequality to bound the term from above by
	\al{&\left( \int \big[ \fconnf(l_1)^4 \fgmu(l_1)^{3} \big] \big[ (\fconnf\cdot \fgmu)(l_2-l_1)^{2} \fgmu(l_2) \big]\big[ \fconnf(l_3)^2 \fgmu(l_3)\widehat D_{\mulam,k}(l_3)^2 \big] 
						\; \frac{\dd (l_1, l_2, l_3)}{(2\pi)^{3d}} \right)^{1/2} \\ 
		& \quad \times \left( \int \big[(\fconnf \cdot \fgmu)(l_1-l_3)^2 \fgmu(l_1) \big] \big[ \fconnf(l_2)^4 \fgmu(l_2)^3\big] \big[(\fconnf\cdot\fgmu) (l_3-l_2)^2 \fgmu(l_3)  \big]
						\; \frac{\dd (l_1, l_2, l_3)}{(2\pi)^{3d}} \right)^{1/2}, }
which is easily decomposed as indicated by the square brackets. For the integral in~\eqref{eq:Hdiagramfourierboundterm_b}(ii), we use Cauchy-Schwarz to obtain a bound of the form
	\al{&\left( \int \big[ \fconnf(l_1)^4 \fgmu(l_1)^{3} \big] \big[ (\fconnf\cdot \fgmu)(l_2-l_3)^{2} \fgmu(l_2) \big]\big[ \fconnf(l_3)^2 \fgmu(l_3-k)^2 \fgmu(l_3+k) \big] 
						\; \frac{\dd (l_1, l_2, l_3)}{(2\pi)^{3d}} \right)^{1/2} \\ 
		& \quad \times \left( \int \big[(\fconnf \cdot \fgmu)(l_1-l_2)^2 \fgmu(l_1) \big] \big[ \fconnf(l_2)^4 \fgmu(l_2)^3\big] \big[(\fconnf \cdot\fgmu)(l_3-l_1)^2 \fgmu(l_3+k) \big]
						\; \frac{\dd (l_1, l_2, l_3)}{(2\pi)^{3d}} \right)^{1/2}. }
To resolve the integral over $l_3$ in the first factor, we use $\fgmu \geq 0$ to bound
	\eqq{ \int \fconnf(l_3)^2 \fgmu(l_3-k)^2 \fgmu(l_3+k) \frac{\dd l_3}{(2\pi)^d} \leq \int \fconnf(l_3)^2 \widehat D_{\mulam,k}(l_3)^3 \frac{\dd l_3}{(2\pi)^d} , \label{eq:BA:H_diagram_RW_estimates_extension}}
which is bounded by Proposition~\ref{thm:randomwalkestimates2}. The integral over $l_3$ in the second factor is handled similarly; the integrals over $l_1$ and $l_2$ can be handled in exactly the same way as in the bound on~\eqref{eq:Hdiagramfourierboundterm_b}(i).
\end{proof}

\begin{proof}[Proof of Proposition~\ref{thm:convergenceoflaceexpansion} \ch{and Corollary~\ref{cor:convergenceoflaceexpansion_uniform}}]
We first recall from Section~\ref{sec:diagrammaticbounds} and Propositions~\ch{\ref{prop-bounds-LA-coefficients-unweighted}} and~\ref{thm:PsiDiag_Bound_Derangement}, as well as Corollary~\ref{cor:DB:Pi_x_bounds}, that $\Pi_\lambda^{(n)}$, $\int \Pi_\lambda^{(n)}(x) \dd x$, and $\int [1-\cos(k \cdot x)] \Pi_\lambda^{(n)} (x) \dd x$ are bounded in terms of $\trilam, \trilamo, \trilamoo, W_\lambda(k)$, and $H_\lambda(k)$ for $n \geq 0$ (with an extra term for $\int[1-\cos(k\cdot x)]\Pi_\lambda^{(1)}(x) \dd x$ not of this form but addressed in Lemma~\ref{lem_Pi1-consequence-bound}). Recalling these bounds and combining them with the four lemmas just proved gives
	\al{\lambda\int \Pi_\lambda^{(n)}(x) \dd x \leq (\const' \beta)^{n \vee 1}, &\qquad \lambda\int [1-\cos (k\cdot x)] \Pi_\lambda^{(n)}(x) \dd x \leq [1-\fconnf(k)] (\const' \beta)^{(n-1) \vee 1}, \\
		 \Pi_\lambda^{(n)}(x) & \leq \const' (\const' \beta)^{(n-1)}, }
for some $\const' = c'(f(\lambda))$, \ch{where $x\mapsto c'(x)$ is increasing. This proves Proposition~\ref{thm:convergenceoflaceexpansion}.} 

\ch{For Corollary~\ref{cor:convergenceoflaceexpansion_uniform}, we assume $f(\lambda)<3$. Thus,} if $\const' \beta < 1$,
	\[\lambda \int \sum_{m=0}^{n}\Pi_{\lambda}^{(m)}(x)  \dd x = \sum_{m=0}^{n} \lambda \int \Pi_\lambda^{(m)}(x) \dd x \leq \sum_{m=0}^{\infty} (\const' \beta)^{m \vee 1} = \const' \beta (1 + (1-\const' \beta)^{-1}).\]
If $\const' \beta < 1/2$, then we can choose $\const = 4\const'$. The other two bounds follow similarly. 

Note that, by dominated convergence, this implies that the limit $\Pi_\lambda$ is well defined, and so is its Fourier transform. We are left to deal with $R_{\lambda, n}(x)$. Recalling the bound~\eqref{eq:LE:Rn_Pin_bound} and combining it with the bound on $\Pi_\lambda^{(n)}$ from Corollary~\ref{cor:DB:Pi_x_bounds} implies
	\[ \sup_{x\in\Rd} |R_{\lambda, n}(x)| \leq \lambda \ftlam(\orig) \sup_{x\in\Rd} |\Pi_\lambda^{(n)}(x)| \leq f(\lambda) \ftlam(\orig) (\const \beta)^n. \]
As $\ftlam(\orig)$ is finite for $\lambda<\lambda_c$, the right-hand side vanishes as $n\to\infty$ for sufficiently small $\beta$. As a consequence, $R_{\lambda,n} \to 0$ uniformly in $x$, which proves~\eqref{eq:LE_identity_OZE} for $\lambda<\lambda_c$.
\end{proof}

\subsection{The bootstrap argument}\label{sec:forbidden_region_argument}
The missing piece to prove our main theorems is Proposition~\ref{thm:bootstrapargument}, proving that the function $f=f_1 \vee f_2 \vee f_3$ defined in~\eqref{defeq:fbootstrapfunction} is continuous on $[0,\lambda_c)$, bounded by $2$ at $0$ and that $f \leq 3$ implies $f \leq 2$.

\begin{prop}[The forbidden-region argument] \label{thm:bootstrapargument}
The following three statements are true:
\begin{enumerate}
\item The function $f$ satisfies $f(0) \leq 2$.
\item The function $f$ is continuous on $[0, \lambda_c)$.
\item Moreover, $f(\lambda) \notin (2,3]$ for all $\lambda \in [0,\lambda_c)$ provided that $d> 3(\alpha \wedge 2)$ and $\beta \ll 1$ (i.e., $d$ sufficiently large for (H1) and $L$ sufficiently large for (H2), (H3)).
\end{enumerate}
Consequently, $f(\lambda) \leq 2$ holds uniformly in $\lambda<\lambda_c$ for $d>3(\alpha \wedge 2)$ and $\beta \ll 1$.
\end{prop}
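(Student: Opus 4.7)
\textbf{Part (1): The initial value $f(0)\le 2$.} At $\lambda=0$, the Poisson process $\eta$ is almost surely empty, so $\tau_0(x)=\connf(x)$ and $\widehat\tau_0(k)=\fconnf(k)$. Under the normalization \eqref{eq:BA:phi_normalized}, $\widehat\tau_0(\orig)=1$, hence $\mu_0=0$ and $\widehat G_{\mu_0}\equiv 1$. Thus $f_1(0)=0$, $f_2(0)=\sup_k |\fconnf(k)|\le\fconnf(\orig)=1$, and by Lemma~\ref{lem:discrete-derivative-bound}(ii),
\[
    |\Delta_k\fconnf(l)| \le 2[1-\fconnf(k)],
    \qquad
    \widehat U_0(k,l)=3\cdot 84[1-\fconnf(k)],
\]
so $f_3(0)\le 1/126$. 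Altogether $f(0)\le 1\le 2$.

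\textbf{Part (2): Continuity on $[0,\lambda_c)$.} Continuity of $f_1$ is immediate. Fix $\lambda^\ast<\lambda_c$ and $\varepsilon>0$ with $\lambda^\ast+\varepsilon<\lambda_c$. By Lemma~\ref{lem:tlam_differentiability} and its proof, $\tlam(x)$ is continuous in $\lambda$ on $[0,\lambda^\ast]$ and is uniformly dominated by the integrable function $\tau_{\lambda^\ast+\varepsilon}$. Dominated convergence gives joint continuity of $(\lambda,k)\mapsto\widehat\tau_\lambda(k)$, and the Riemann-Lebesgue lemma yields $|\ftlam(k)|\to 0$ and $\fconnf(k)\to 0$ (hence $\fgmu(k)\to 1$) as $|k|\to\infty$, uniformly in $\lambda\in[0,\lambda^\ast]$. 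Consequently the sup in $f_2$ is effectively attained on a compact $k$-set, and $f_2$ is continuous. For $f_3$, the same uniform-in-$\lambda$ decay yields that the relevant sup in $(k,l)$ is effectively on a compact set once we exclude small $|k|$ via Lemma~\ref{lem:discrete-derivative-bound}(i)-(ii), which gives $|\Delta_k\ftlam(l)|/\widehat U_{\mulam}(k,l)\to 0$ as $|k|\to 0$. Joint continuity of $(\lambda,k,l)\mapsto\Delta_k\ftlam(l)$ then promotes to continuity of $f_3$.

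\textbf{Part (3): The forbidden-region step.} Assume $f(\lambda)\le 3$ on $[0,\lambda_c)$. By Corollary~\ref{cor:convergenceoflaceexpansion_uniform}, the Ornstein–Zernike identity \eqref{eq:LE_identity_OZE} holds and there exists $c$, independent of $\lambda$, such that
\begin{equation*}
    |\widehat\Pi_\lambda(k)|\le c\beta,
    \qquad
    |\widehat\Pi_\lambda(\orig)-\widehat\Pi_\lambda(k)|\le c\beta[1-\fconnf(k)].
\end{equation*}
For $f_1$: taking $k=\orig$ in the Fourier OZE gives $\widehat\tau_\lambda(\orig)=(1+\widehat\Pi_\lambda(\orig))/(1-\lambda(1+\widehat\Pi_\lambda(\orig)))$; since $\widehat\tau_\lambda(\orig)\nearrow\infty$ at $\lambda_c$, we obtain $\lambda_c=1/(1+\widehat\Pi_{\lambda_c}(\orig))\le 1/(1-c\beta)\le 1+c'\beta\le 2$ for $\beta$ small, hence $f_1\le 2$.

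For $f_2$: using $\fgmu(k)^{-1}=1-\mulam\fconnf(k)$ and the OZE,
\begin{equation*}
    \frac{\ftlam(k)}{\fgmu(k)}
    = \frac{(\fconnf(k)+\widehat\Pi_\lambda(k))(1-\mulam\fconnf(k))}{1-\lambda(\fconnf(k)+\widehat\Pi_\lambda(k))}.
\end{equation*}
Writing $1-\lambda(\fconnf(k)+\widehat\Pi_\lambda(k))=[1-\mulam\fconnf(k)]+[\lambda\fpilam(\orig)-\lambda\widehat\Pi_\lambda(k)]\fconnf(k)+(\mulam-\lambda(1+\fpilam(\orig)))\fconnf(k)$ and observing that the definition of $\mulam$ forces the last bracket to vanish at $k=\orig$, one sees that both denominator and numerator differ from $(1-\mulam\fconnf(k))(1+\widehat\Pi_\lambda(\orig))$ by terms of size $O(\beta)[1-\fconnf(k)]$ that can be absorbed into a factor $(1+C\beta)$; the ratio is therefore bounded by $1+C\beta\le 2$.

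For $f_3$: applying the discrete Leibniz rule to $\ftlam=(\fconnf+\fpilam)\cdot\widehat F_\lambda$ with $\widehat F_\lambda(k)=(1-\lambda(\fconnf(k)+\widehat\Pi_\lambda(k)))^{-1}$, and using Lemma~\ref{lem:discrete-derivative-bound}(iii) with the estimates $|\Delta_k\fpilam(l)|\le 2c\beta[1-\fconnf(k)]$ (from the displacement bound \eqref{eq:BA:convLE_intPi_bds} and part (i) of Lemma~\ref{lem:discrete-derivative-bound}) together with the analogous bound for $\fconnf$, each of the three terms appearing in the expansion of $\Delta_k\ftlam(l)$ is bounded by a sum of products of two factors of $\fgmu$ shifted by $\pm k$ times $[1-\fconnf(k)](1+C\beta)$. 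Matching these against the definition of $\widehat U_{\mulam}(k,l)$ (which contains all three such products, each with prefactor $84$) yields $|\Delta_k\ftlam(l)|/\widehat U_{\mulam}(k,l)\le 1+C\beta\le 2$ for $\beta$ small.

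\textbf{Conclusion.} Part (3) shows $f$ avoids the interval $(2,3]$, so $f([0,\lambda_c))\subset [0,2]\cup(3,\infty)$. By continuity (Part (2)) and $f(0)\le 2$ (Part (1)), $f$ cannot reach the component $(3,\infty)$; hence $f\le 2$ uniformly on $[0,\lambda_c)$.

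\textbf{Main obstacle.} The genuinely delicate step is the $f_3$ bound: matching the three discrete-derivative terms cleanly against $\widehat U_{\mulam}(k,l)$ requires careful bookkeeping of all cosine splits, and relies crucially on the quantitative displacement estimate $\int[1-\cos(k\cdot x)]\Pi_\lambda(x)dx\le c\beta[1-\fconnf(k)]$ from Lemmas~\ref{lem_Pi1-consequence-bound}–\ref{lem:H-G-consequence-bounds}. The continuity step, while conceptually standard, also requires care because the Fourier domain is non-compact; uniform-in-$\lambda$ Riemann–Lebesgue decay is what makes the sups in $f_2$ and $f_3$ effectively compact.
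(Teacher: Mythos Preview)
Your overall architecture matches the paper's, and Part~(1) is identical. However, there are two genuine gaps.

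\textbf{Part (2), continuity of $f_3$.} Your Riemann--Lebesgue/compactification argument does not close. You claim $|\Delta_k\ftlam(l)|/\widehat U_{\mulam}(k,l)\to 0$ as $|k|\to 0$, but both numerator and denominator vanish linearly in $[1-\fconnf(k)]$, so this is a $0/0$ limit that Lemma~\ref{lem:discrete-derivative-bound}(ii) alone does not resolve. The paper avoids this entirely: it fixes $\rho>0$, restricts to $\lambda\in[0,\lambda_c-\rho]$, and shows that $\lambda\mapsto \ftlam(k)/\fgmu(k)$ and $\lambda\mapsto \Delta_k\ftlam(l)/\widehat U_{\mulam}(k,l)$ have $\lambda$-derivatives bounded \emph{uniformly in $k,l$}, using Lemma~\ref{lem:differentialinequalityclustersize} and the bound $\tfrac{\dd}{\dd\mu}\fgreensmu\le\fgreensmu^2$. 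Equicontinuity of the family then gives continuity of the supremum without any compactness in $(k,l)$.

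\textbf{Part (3), bounds on $f_2$ and $f_3$.} Your algebraic decomposition of $1-\lambda(\fconnf(k)+\fpilam(k))$ is incorrect (expand your right-hand side), and your claim that the \emph{numerator} $(\fconnf(k)+\fpilam(k))(1-\mulam\fconnf(k))$ differs from $(1-\mulam\fconnf(k))(1+\fpilam(\orig))$ by $O(\beta)[1-\fconnf(k)]$ is false: the difference contains $-(1-\fconnf(k))(1-\mulam\fconnf(k))$, which is $O(1)$. What \emph{is} true---and is in fact cleaner than the paper's $A_\rho$-split---is that the denominator alone satisfies $1-\lambda\widehat a(k)=(1-\mulam\fconnf(k))(1+\fpilam(\orig))+O(\beta)[1-\fconnf(k)]$, whence $1-\lambda\widehat a(k)=(1-\mulam\fconnf(k))(1+O(\beta))$; then $|\ftlam(k)|/\fgmu(k)=|\fconnf(k)+\fpilam(k)|/(1+O(\beta))\le 1+O(\beta)$ directly. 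For $f_3$, your sketch omits the key step: the three-term identity $\Delta_k\ftlam(l)=\text{(I)}+\text{(II)}+\text{(III)}$ produces factors $(1-\lambda\widehat a(l+\sigma k))^{-1}$ that must be compared to $\fgmu(l+\sigma k)$. The paper isolates this as an auxiliary quantity $f_6(\lambda)=\sup_k(1-\mulam\fconnf(k))/|1-\lambda\widehat a(k)|$ and proves $f_6\le 2$ before attacking $f_3$; you need exactly this bound (it is equivalent to the denominator estimate above), and it should be made explicit.
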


\begin{proof}
We show that $(1.)$-$(3.)$ hold for the functions $f_1, f_2, f_3$ separately. The result then follows for $f$ itself.

\paragraph{(1.) Bound for $\lambda=0$.} Trivially, $f_1(0)=0$. Note that $\mu_0=0$, and so $\widehat G_{\mu_0} \equiv 1$. Also, $\tau_0 = \connf$ and so $\widehat\tau_0 = \fconnf$. From this we infer $f_2(\lambda) \leq 1$. Lastly, by Lemma~\ref{lem:discrete-derivative-bound}(ii),
	\[ f_3(0) = \sup_{k,l} \frac{|\Delta_k \fconnf(l)|}{252[1-\fconnf(k)]} \leq \frac{1}{126}.\]

\paragraph{(2.) Continuity of the bootstrap function.} The continuity of $f_1$ is obvious. The same idea used in the discrete $\mathbb Z^d$ case is used to handle the other two bootstrap functions. More precisely, we make use of a known result, formulated by Slade~\cite[Lemma 5.13]{Sla06}. The idea is summarized as follows:
\begin{compactitem}
\item We want to prove the continuity of the supremum of a family $(h_\alpha)_{\alpha \in B}$ of functions ($\alpha$ is either $k$ or the tuple $(k,l)$, and $B$ is either $\Rd$ or $(\Rd)^2$).
\item To this end, we fix an arbitrary $\rho>0$ and show that $(h_\alpha)$ is equicontinuous on $[0,\lambda_c-\rho]$, i.e. that for $\varepsilon>0$ there exists $\delta>0$ such that $|s-t|<\delta$ implies $|h_\alpha(s) - h_\alpha(t)| \leq \varepsilon$ uniformly in $\alpha\in B$.
\item We prove this equicontinuity by taking derivatives with respect to $\lambda$ and bounding this derivative uniformly in $\alpha$ on $[0,\lambda_c-\rho]$.
\item Furthermore, we prove that $(h_\alpha)_{\alpha\in B}$ is uniformly bounded on $[0,\lambda_c-\rho]$.
\item This implies that $t \mapsto \sup_{\alpha\in B} h_\alpha(t)$ is continuous on $[0, \lambda_c - \rho]$. As $\rho$ was arbitrary, we get the desired continuity on $[0,\lambda_c)$.
\end{compactitem}
A first important observation is that we actually have to deal with the supremum of a family $(|h_\alpha|)_{\alpha\in B}$ of functions, which might cause headaches when taking derivatives. However, as an immediate consequence of the reverse triangle inequality, the equicontinuity of $(h_\alpha)_{\alpha\in B}$ implies the equicontinuity of $(|h_\alpha|)_{\alpha\in B}$, as $\big||h_\alpha(x+t)| - |h_\alpha(x)|\big| \leq |h_\alpha(x+t)-h_\alpha(x)|$.

We start with $f_2$ and consider
	\[ \frac{\dd}{\dd\lambda} \frac{\ftlam(k)}{\fgmu (k)} = \frac{1}{\fgmu(k)^2} \left[ \fgmu(k) \frac{\dd \ftlam(k)}{\dd\lambda}
					- \ftlam(k) \left.\frac{\dd \fgreensmu(k)}{\dd\mu}\right\vert_{\mu=\mulam} \times \frac{\dd\mulam}{\dd\lambda} \right],	\]
which we treat by bounding every appearing term. With 2$\fgmu(k) \geq 1$, we start by noting
	\[ \frac 12 \leq \frac{1}{1- \mulam \fconnf(k)} = \fgmu(k) \leq \fgmu(\orig) = \ftlam(\orig) \leq \widehat \tau_{\lambda_c - \rho} (\orig) = \frac{\chi(\lambda_c- \rho) - 1}{\lambda_c - \rho},\]
where the last term is finite. The finiteness of $\ftlam(\orig)$ turns out to be helpful several times, as it also bounds $|\ftlam(k)| \leq \ftlam(\orig)$ uniformly in $k$. The derivative of the two-point function satisfies
	\[ \left\vert \frac{\dd}{\dd\lambda} \ftlam(k) \right\vert = \left\vert \int \e^{\i k\cdot x} \frac{\dd}{\dd\lambda} \tlam(x) \dd x \right\vert \leq \int \frac{\dd}{\dd\lambda} \tlam(x) \dd x
				= \frac{\dd}{\dd\lambda} \int \tlam(x) \dd x = \frac{\dd}{\dd\lambda} \ftlam(\orig) \leq \ftlam(\orig)^2.\]
The exchange of derivative and integral is justified as the integrand $\tlam$ is bounded uniformly in $\lambda$ by the integrable function $\tau_{\lambda_c-\rho}$. The last bound is Lemma~\ref{lem:differentialinequalityclustersize}, and so we make use of $\ftlam(\orig)$ being finite again.

By definition of $\fgreensmu$ (recall~\eqref{eq:def:greens_fct_fourier}), $|\tfrac{\dd}{\dd\mu} \fgreensmu(k) | \leq \fgreensmu(k)^2 \leq \fgreensmu(\orig)^2$ which, for $\mu=\mulam$, equals $\ftlam(\orig)^2$. Lastly, $\tfrac{\dd}{\dd\lambda} \mulam = \tfrac{\dd}{\dd\lambda}\ftlam(\orig)/ \ftlam(\orig)^2\leq 1$ by Lemma~\ref{lem:differentialinequalityclustersize}.

This proves the continuity of $f_2$. It is not hard to see that $f_3$ can be treated in a similar way:
	\[ \frac{\dd}{\dd\lambda} \frac{\Delta_k \ftlam(l)}{\widehat U_{\mulam}(k,l)} = \frac{1}{\widehat U_{\mulam}(k,l)^2} \left[\widehat U_{\mulam}(k,l) \frac{\dd \Delta_k\ftlam(l)}{\dd\lambda}
				- \Delta_k \ftlam(l) \left. \frac{\dd \widehat U_{\mu}(k,l)}{\dd\mu} \right\vert_{\mu=\mulam} \times \frac{\dd\mulam}{\dd\lambda} \right]. \]
Recalling the definitions of $\Delta_k \ftlam(l)$ and $\widehat U_{\mulam}(k,l)$, similar bounds as used for $f_2$ can be applied.

\paragraph{(3.) The forbidden region.}

To show the claim, we assume that \col{$f(\lambda) \leq 3$ for $\lambda \in [0,\lambda_c)$ and  show that this implies $f (\lambda)\leq 2$}. The assumption $f(\lambda) \leq 3$ allows us to apply \col{Corollary~\ref{cor:convergenceoflaceexpansion_uniform}}.

Throughout this whole part, we use $M$ and $\tilde M$ to denote constants whose exact value may change from line to line. We stress that they are independent of $d$ for (H1) and independent of $L$ for (H2), (H3). We start by setting $a := \connf + \Pi_\lambda$, and hence
	\[\widehat a(k) = \fconnf(k) + \fpilam(k).\]
By \ch{Corollary~\ref{cor:convergenceoflaceexpansion_uniform}}, $\ftlam$ takes the form $\ftlam(k) = \widehat a(k) / (1-\lambda \widehat a(k))$, and thus
	\eqq{ \mulam = \lambda + \frac{\fpilam(\orig)}{\widehat a(\orig)}. \label{eq:mulamBSidentity}}
Also, we will frequently use that $|\fpilam(k)| \leq M\beta$ uniformly in $k$.

Consider first $f_1$. Applying \ch{Corollary~\ref{cor:convergenceoflaceexpansion_uniform}} to~\eqref{eq:ftlam_chi_relation} gives $\chi(\lambda) = (1-\lambda \widehat a(\orig))^{-1}$, and so
	\eqq{ \lambda = \frac{1 - \chi(\lambda)^{-1}}{1+\fpilam(\orig)} \leq (1 - \chi(\lambda)^{-1}) (1+M\beta) \leq 1 + M\beta. \label{eq:lambda_c_asymptotics}}
Using $1 \geq \chi(\lambda)^{-1} \searrow 0$ for $\lambda \nearrow \lambda_c$ implies the required bound and, moreover, $\lambda = 1 + \mathcal O(\beta)$ for $\lambda\nearrow \lambda_c$. What we have used here, and will frequently use in this section, is that when we are confronted with an expression of the form $(1-\widehat g(k))^{-1}$, where $|\widehat g(k)| \leq M\beta$, we can choose $\beta$ small enough so that $M \beta <1$ and there exists a constant $\tilde M$ such that
	\[ 0 \leq \frac{1}{1-\widehat g(k)} \leq \frac{1}{1- M\beta} = \sum_{l \geq 0} (M\beta)^l \leq 1 + \tilde M \beta. \]
To deal with $f_2$, we use the ``split'' $f_2=f_4 \vee f_5$, where we introduce $A_\rho := \{k \in \Rd: |\fconnf(k)| \leq \rho \}$ and set
	\[f_4(\lambda) := \sup_{k \in A_\rho} \frac{|\ftlam(k)|}{\fgmu (k)}, \qquad f_5(\lambda) := \sup_{k \in A_\rho^c} \frac{|\ftlam(k)|}{\fgmu (k)}. \]
The precise value of $\rho$ does not matter, but for practical purposes, we set it to be $\rho=\ch{\tfrac{1}{4}}$. Now, for $k\in A_\rho$, we see that
	\[\frac{|\ftlam(k)|}{\fgmu(k)} = |\widehat a(k)| \cdot \frac{(1-\mulam \fconnf(k))}{1-\lambda \widehat a(k)} 
				\leq (\rho + M \beta) \cdot \frac{1+\rho}{1- (1+M\beta) (\rho + M\beta)} \leq 1+\tilde M \beta. \]
Hence, $f_4(\lambda) \leq 1+M\beta$. We turn to $f_5$ and consequently to $k\in A_\rho^c$. We define 
	\[\widehat N(k) = \widehat a(k) / \widehat a(\orig), \qquad \widehat F(k) = (1-\lambda \widehat a(k))/ \widehat a(\orig), \qquad \widehat Q(k) = (1+\fpilam(k)) / \widehat a(\orig), \]
so that $\ftlam(k) = \widehat N(k) / \widehat F(k)$. Rearranging gets us to
	\algn{\frac{\ftlam(k)}{\fconnf(k) \fgmu(k)} &= \widehat N(k) \frac{1-\mulam\fconnf(k)}{\fconnf(k) \widehat F(k)} = \widehat Q(k) + \widehat N(k) \frac{1-\mulam\fconnf(k)
					- \frac{\widehat Q(k)}{\widehat N(k)}\fconnf(k)\widehat F(k)}{\fconnf(k) \widehat F(k)} \notag \\ \label{eq:f5rearranging}
		&= \widehat Q(k) + \tfrac{\widehat N(k)}{\widehat F(k)} \fconnf(k)^{-1} \left[1-\mulam\fconnf(k) - \tfrac{\fconnf(k) \widehat Q(k)}{\widehat N(k)}\widehat F(k) \right].}
The extracted term $\widehat Q(k)$ satisfies $|\widehat Q(k)| \leq 1 + M\beta$. We further observe that
	\[\frac{\fconnf(k) \widehat Q(k)}{\widehat N(k)} = \frac{\fconnf(k) (1+ \fpilam(k))}{\fconnf(k) + \fpilam(k)} = 1 - \frac{[1-\fconnf(k)] \fpilam(k)}{\fconnf(k) + \fpilam(k)} =: 1- \widehat b(k). \]
Recalling identity~\eqref{eq:mulamBSidentity} for $\mulam$, we can rewrite the quantity $[1- \mulam \fconnf(k)- (1-\widehat b(k)) \widehat F(k)]$, appearing in \eqref{eq:f5rearranging}, as
	\al{ \frac{1+\fpilam(\orig) - \big[\lambda + \fpilam(\orig) + \lambda\fpilam(\orig)\big] \fconnf(k) - 1 + \lambda(\fconnf(k)+\fpilam(k)) + \widehat b(k)(1-\lambda\widehat a(k))}{1+\fpilam(\orig)} \\
		= \frac{ [1-\fconnf(k)]\left(\fpilam(\orig) + \lambda \fpilam(\orig)\right) + \lambda[\fpilam(k)-\fpilam(\orig)] + \widehat b(k)(1-\lambda\widehat a(k))}{1+\fpilam(\orig)}.}
Noting that $|\fpilam(\orig)-\fpilam(k)| \leq M [1-\fconnf(k)] \beta$ by \ch{Corollary~\ref{cor:convergenceoflaceexpansion_uniform}}, the first three terms are bounded by $M [1-\fconnf(k)] \beta$ for some constant $M$. Using~\eqref{eq:lambda_c_asymptotics}, the last term is
	\al{\left\vert\frac{\widehat b(k)(1-\lambda \widehat a(k))}{1+\fpilam(\orig)} \right\vert &= \left\vert \frac{1-\fconnf(k)}{1+\fpilam(\orig)} \cdot \frac{\fpilam(k)}{\fconnf(k) + \fpilam(k)}
				- \frac{\lambda[1-\fconnf(k)] \fpilam(k)}{1+\fpilam(\orig)} \right\vert \\
		& = [1-\fconnf(k)] \frac{|\fpilam(k)|}{|1+\fpilam(\orig)|}\left\vert \frac{1}{\fconnf(k) + \fpilam(k)} -\lambda \right\vert \\
		&\leq [1-\fconnf(k)] (1+M\beta) |\fpilam(k)| \Big( 2\rho^{-1} + \lambda (1+M\beta) \Big) \\& \leq \tilde M [1-\fconnf(k)] \beta.	}
Again, we require $\beta$ to be small; in this case we want that $|\fpilam(k)| \leq M\beta < \rho/2$. Putting these just acquired bounds back into \eqref{eq:f5rearranging}, we can find constants $M, \tilde M$ such that
	\algn{\left\vert\frac{\ftlam(k)}{\fconnf(k) \fgmu(k)} \right\vert &\leq | \widehat Q(k)| + M\beta \Big\vert\frac{\widehat N(k)}{\widehat F(k)}\Big\vert \left\vert \frac{1-\fconnf(k)}{\fconnf(k)} \right\vert \notag \\
		& \leq 1+ M\beta + 2 M\beta (1 + \tilde M \beta) \frac{1}{|\fconnf(k)|} \left\vert \frac{\ftlam(k)}{\fgmu(k)} \right\vert \leq 1+ M\beta + 2 \cdot 3 \cdot M \rho^{-1} \beta (1+\tilde M \beta)) \notag \\
		& \leq 1 + \bar M \beta, \label{eq:bootstrap:f_5_improvement} }
for some constant $\bar M$. Note that we have used the bound $[1-\fconnf(k)] \leq 2[1-\mulam \fconnf(k)]$ to get from $\widehat G_1(k)^{-1}$ to $\fgmu(k)^{-1}$. As $|\ftlam(k)|/\fgmu(k) \leq | \ftlam(k) / (\fconnf(k) \fgmu(k))|$, this concludes the improvement of $f_5$.

Before we treat $f_3$, we introduce $f_6$ given by
	\[f_6(\lambda) := \sup_{k\in\Rd} \frac{1-\mulam \fconnf(k)}{|1- \lambda (\fconnf(k)+\fpilam(k))|}. \]
We show that $f(\lambda) \leq 3$ implies $f_6(\lambda) \leq 2$. To do so, consider first $k\in A_\rho$ and choose $\beta$ small enough so that $1- \lambda(\rho + |\fpilam(k)|) >0$ (note that $\lambda \vee (1+|\fpilam(k)|) \leq 1+M\beta$). We then have
	\eqq{ \frac{1-\mulam \fconnf(k)}{1-\lambda \widehat a(k)} \leq \frac{1+\rho}{1- (1+M\beta) (\rho+M\beta)} \leq 2, \label{eq:bootstrap_f_6_first_bound}}
for $\rho=1/4$ and $\beta$ sufficiently small. Now, when $k \in A_\rho^c$, we have
	\eqq{\frac{1-\mulam \fconnf(k)}{1-\lambda \widehat a(k)} = \left\vert \frac{\ftlam(k)}{\fconnf(k) \fgmu(k)} \right\vert \cdot \left \vert \frac{\fconnf(k)}{\widehat a(k)} \right\vert
					\leq (1 + M \beta) \left\vert 1- \frac{\fpilam(k)}{\widehat a(k)} \right\vert \leq (1+M\beta) \left(1+\frac{M\beta}{\rho-M \beta} \right), \label{eq:bootstrap_f_6_second_bound} }
which is bounded by $1+\tilde M \beta$. Note that for the first bound in~\eqref{eq:bootstrap_f_6_second_bound}, we used the estimate established in~\eqref{eq:bootstrap:f_5_improvement}, which is stronger than a bound on $f_5$. In conclusion, \eqref{eq:bootstrap_f_6_second_bound} together with~\eqref{eq:bootstrap_f_6_first_bound} shows $f_6 (\lambda) \leq 2$. We are now equipped to improve the bound on $f_3$. As a first step, elementary calculations give the identity
	\[\Delta_k \ftlam(l) = \underbrace{\frac{\Delta_k\widehat a(l)}{1- \lambda \widehat a(l)}}_{\text{(I)}}
				+ \sum_{\sigma= \pm 1} \underbrace{\frac{\lambda \big(\widehat a(l + \sigma k) - \widehat a(l)\big)^2}{\big(1-\lambda\widehat a(l)\big)\big(1- \lambda \widehat a(l+\sigma k)\big)}}_{\text{(II)}}
				+ \underbrace{\widehat a(l) \Delta_k \left( \frac{1}{1-\lambda \widehat a(l)}\right)}_{\text{(III)}}. \]
We bound each of the three terms separately. We note that by Lemma~\ref{lem:discrete-derivative-bound} and \ch{Corollary~\ref{cor:convergenceoflaceexpansion_uniform}}, we have $|\Delta_k \fpilam(l)| \leq |\fpilam(\orig)-\fpilam(k)| \leq [1-\fconnf(k)] M\beta$. With this in mind,
	\al{ |\text{(I)}| &= \left\vert \Delta_k \widehat a(l)\right\vert \cdot \left\vert \frac{1-\mulam \fconnf(l)}{1-\lambda \widehat a(l)} \right\vert \cdot \fgmu(l) 
						\leq 2 \fgmu(l) \Big\vert \Delta_k \fconnf(l) + \Delta_k \fpilam(l) \Big\vert \\
		& \leq 2 \fgmu(l) \Big\vert 1-\fconnf(k) + [1-\fconnf(k)] M\beta \Big\vert = 2 (1+ M\beta) [1-\fconnf(k)] \fgmu(l)\\
		& \leq 4 (1+\tilde M \beta) [1-\fconnf(k)] \fgmu(l) \fgmu(l+k), }
where we have used the improved bound on $f_6$ and $2\fgmu(l+k) \geq 1$. This type of bound will be sufficient for our purposes, and we aim for similar bounds on (II) and (III). Recalling that $\partial_k^\pm g(l) = g(l\pm k)-g(l)$, we are interested in $\partial_k^\pm \fconnf(l)$ and $\partial_k^\pm \fpilam(l)$ to deal with (II). Note that for $g$ with $g(x)=g(-x)$, we have
	\algn{ |\partial_k^\pm \widehat g(l)| &= \left\vert \int \e^{\i l \cdot x} (\e^{\pm \i k \cdot x} - 1) g(x) \dd x \right\vert \leq \int \left\vert \e^{\pm \i k \cdot x} -1\right\vert \cdot |g(x)| \dd x \notag\\
		& \leq \int \Big( [1 -\cos(k\cdot x)] + |\sin (k \cdot x)| \Big) |g(x)| \dd x . \label{eq:discretepartialsbounds}}
Now, with the help of the Cauchy-Schwarz inequality and \eqref{eq:discretepartialsbounds},
	\al{|\partial_k^\pm \fconnf(l)| & \leq \left( \int \connf(x) \dd x \right)^{1/2} \left( \int \sin (k \cdot x)^2 \connf(x) \dd x \right)^{1/2} + \int [1-\cos(k \cdot x)] \connf(x) \dd x \\
		&= 1 \cdot \left( \int [1-\cos (k \cdot x)^2] \connf(x) \dd x \right)^{1/2} + [1-\fconnf(k)] \\
		&\leq 2 \left( \int [1-\cos (k \cdot x)] \connf(x) \dd x \right)^{1/2} + [1-\fconnf(k)] \\
		&= 2 [1-\fconnf(k)]^{1/2} + [1-\fconnf(k)] \leq 4 [1-\fconnf(k)]^{1/2}.}
Similarly,
	\al{|\partial_k^\pm \fpilam(l)| & \leq \left( \int |\Pi_\lambda(x)| \dd x \right)^{1/2} \left( 2\int [1-\cos(k \cdot x)] |\Pi_\lambda(x)| \dd x \right)^{1/2} + \int [1-\cos(k \cdot x)] |\Pi_\lambda(x)| \dd x \\
		& \leq (M \beta)^{1/2} (2M [1-\fconnf(k)] \beta)^{1/2} + M [1-\fconnf(k)] \beta \\
		& \leq \tilde M [1-\fconnf(k)]^{1/2} \beta. }
We deal with the denominator in (II) by noting that, for $\sigma\in\{-1,0,1\}$,
	\eqq{\frac{1}{1-\lambda \widehat a(l + \sigma k)} = \frac{1-\mulam \fconnf(l+\sigma k)}{1-\lambda \widehat a(l + \sigma k)} \fgmu(l+\sigma k) \leq 2 \fgmu(l+\sigma k), \label{eq:f6application} }
employing the improved bound on $f_6$ again. In summary,
	\al{\text{(II)} & \leq (1+M\beta) \Big((4 + M\beta) [1-\fconnf(k)]^{1/2} \Big)^2 4 \fgmu(l) \fgmu(l \pm k) \\
		& \leq 64 (1+\tilde M \beta) [1-\fconnf(k)] \fgmu(l) \fgmu(l \pm k). }
Turning to (III), we note that $|\widehat a(l)| \leq 1+M\beta$. We treat the second factor with Lemma~\ref{lem:discrete-derivative-bound}, and so we recall that we can recycle the bound observed in \eqref{eq:f6application} to get $(1-\lambda\widehat a(l))^{-1} \leq (1+M \beta) \fgmu (l)$. We furthermore obtain the bound
	\al{ \lambda \big( \widehat{|a|}(\orig) - \widehat{|a|}(k) \big) &= \lambda \int [1-\cos(k\cdot x)] |\connf(x) + \Pi_\lambda(x)| \dd x \\
		& \leq (1 + M \beta) [1-\fconnf(k)]. }
Substituting this into Lemma~\ref{lem:discrete-derivative-bound}, we obtain
	\al{\Delta_k \frac{1}{1-\lambda \widehat a(l)} &\leq (1 + M \beta)^3 \Big( \fgmu (l-k) + \fgmu (l+k) \Big) \fgmu(l) [1-\fconnf(k)] \\
		& \quad + 8 (1+ M\beta)^5 \fgmu(l-k) \fgmu(l) \fgmu(l+k) [1-\fconnf(l)] \cdot [1-\fconnf(k)] \\
		& \leq 16 (1+M\beta) [1 -\fconnf(k)] \times \Big( \fgmu(l-k) \fgmu(l) + \fgmu(l) \fgmu(l+k) + \fgmu(l-k)\fgmu(l+k) \Big). }
Putting everything together, we are done, as $|\Delta_k \ftlam(l)| \leq (1 + M \beta) \widehat U_{\mulam}(k,l)$. This finishes the proof of Proposition~\ref{thm:bootstrapargument}.
\end{proof}

\section{Proof of the main theorems} \label{sec:maintheorems}

Proposition \ref{thm:bootstrapargument} implies that $f(\lambda)\leq 2$ for all $\lambda\in[0,\lambda_c)$, and hence Lemma~\ref{lem:triangle-bubble-consequence-bound} gives that  $\trilam \leq C\beta$ uniformly in $\lambda$. These bounds are uniform in $\lambda$, so that
monotone convergence implies the triangle condition. 

For continuity of $\theta$, assume that both $\orig$ and $x$ are in the (a.s.~unique) infinite component. This implies $\orig \longleftrightarrow x$, and so $0 \leq \theta(\lambda_c)^2 \leq \tau_{\lambda_c}(x)$ for all $x\in\Rd$ via the FKG inequality~\eqref{eq:prelim:FKG}. But $\tau_{\lambda_c}(x) \to 0$ for almost all $|x| \to \infty$ due to the triangle condition (a little extra effort shows that this holds for \emph{all} $x$), which implies that $\theta(\lambda_c)=0$.

Proposition~\ref{thm:convergenceoflaceexpansion} \col{and Corollary~\ref{cor:convergenceoflaceexpansion_uniform}} allow us to extend the Ornstein-Zernike equation to $\lambda_c$:
\begin{corollary}[The OZE at the critical point] \label{lem:convergence_of_lace_expansion_corollary_lambda_c}
The Ornstein-Zernike equation~\eqref{eq:LE_identity_OZE} extends to $\lambda_c$. In particular, the limit $\Pi_{\lambda_c}^{(n)} = \lim_{\lambda\nearrow \lambda_c} \Pi_\lambda^{(n)}$ exists for every $n\in\N_0$, and so does $\Pi_{\lambda_c} = \sum_{n \geq 0} (-1)^n \Pi_{\lambda_c}^{(n)}$.
\end{corollary}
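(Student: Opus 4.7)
The plan is to pass to the limit $\lambda \nearrow \lambda_c$ in the OZE~\eqref{eq:LE_identity_OZE}. Once Proposition~\ref{thm:bootstrapargument} gives $f \leq 2$ on $[0,\lambda_c)$, Corollary~\ref{cor:convergenceoflaceexpansion_uniform} together with Lemma~\ref{lem:triangle-bubble-consequence-bound} and Corollary~\ref{cor:DB:Pi_x_bounds} yield bounds of the form $\sup_x \Pi_\lambda^{(n)}(x) \leq c(c\beta)^{n-1}$ for $n\geq 1$ and $\int \Pi_\lambda^{(n)}(x) \dd x \leq c(c\beta)^{n\vee 1}$, both uniform in $\lambda \in [0,\lambda_c)$ and geometrically small in $n$. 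The three ingredients I will need are: (i) existence of the pointwise limit $\Pi_{\lambda_c}^{(n)}(x) = \lim_{\lambda\nearrow\lambda_c}\Pi_\lambda^{(n)}(x)$ for each fixed $n$ and $x$; (ii) convergence of the alternating series $\Pi_\lambda \to \Pi_{\lambda_c}$ pointwise and in $L^1$; and (iii) continuity of the convolution appearing in the OZE.

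For (i), I would realize the probability in~\eqref{eq:LE:Pin_def} as a bounded expectation on the enriched probability space of Section~\ref{sec:constructionofRCM}, with the family $(\eta_\lambda)_{\lambda\leq\lambda_c}$ coupled monotonically. A bounded-box truncation in the spirit of the proof of Lemma~\ref{lem:tlam_differentiability} renders the finite-volume probability continuous (in fact real-analytic) in $\lambda$, while the truncation error is dominated, through the BK inequality and Lemma~\ref{lem:triangle-bubble-consequence-bound}, by connectivity-tail terms that are uniformly small in $\lambda \in [0,\lambda_c]$. Taking first $\lambda \nearrow \lambda_c$ and then the box to infinity defines $\Pi_{\lambda_c}^{(n)}(x)$, with the uniform bound transferring by Fatou's lemma. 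Combining (i) with the geometric pointwise and integral bounds above and invoking dominated convergence on the alternating series delivers (ii). For (iii), I would split $\lambda\big((\connf+\Pi_\lambda)\star\tlam\big) - \lambda_c\big((\connf+\Pi_{\lambda_c})\star\tau_{\lambda_c}\big)$ into two pieces: one controlled by the $L^1$-convergence $\connf+\Pi_\lambda\to\connf+\Pi_{\lambda_c}$ together with $\tlam\leq 1$, the other handled by pointwise convergence $\tlam(x-y)\to\tau_{\lambda_c}(x-y)$ with integrable envelope $|\connf+\Pi_{\lambda_c}|(y)$; both pieces vanish, and passing to the limit in~\eqref{eq:LE_identity_OZE} then yields the equation at $\lambda_c$.

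The main obstacle is step (i): the $E$-events inside~\eqref{eq:LE:Pin_def} combine an increasing connection requirement with a decreasing thinning failure (Definition~\ref{def:LE:thinning_events}), hence they are non-monotone in $\lambda$ and left-continuity at $\lambda_c$ cannot be read off from monotone convergence alone. The truncation-plus-tail-bound strategy sketched above is the natural adaptation of the argument used for $\tlam$ itself in Lemma~\ref{lem:tlam_differentiability}, where the tail control now uses the uniform triangle bound in place of the finite expected cluster size available there.
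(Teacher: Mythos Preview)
Your outline is viable and shares the same architecture as the paper's proof (pointwise convergence of $\Pi_\lambda^{(n)}(x)$, then dominated convergence in $n$, then pass to the limit in the OZE), but the mechanism you propose for step~(i) differs from the one actually used, and the ingredient you cite for it is not quite the right one.

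The paper does \emph{not} truncate to a box. Instead it realises the intensity-$\lambda_c$ process as the superposition of an intensity-$\lambda_m$ process and an independent intensity-$(\lambda_c-\lambda_m)$ process, on each of the $n+1$ layers. The key observation is that, conditionally on the event that every cluster $\C(u_{i-1},\xi_i^{u_{i-1}})$ is identical in the two configurations, the whole event $A^{(n)}$ in~\eqref{eq:LE:Pin_def} coincides for $\lambda_m$ and $\lambda_c$; thus $|\p_{\lambda_c}(A^{(n)})-\p_{\lambda_m}(A^{(n)})|\le (n+1)\,\p(\C_{0,m}\neq\tilde\C_{0,m})$. That last probability tends to $0$ because $\theta(\lambda_c)=0$ (established at the very start of Section~\ref{sec:maintheorems}). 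Dominated convergence over $\vec u$ is then supplied by observing that the diagrammatic bound in Proposition~\ref{thm:DB:Pi_bound_Psi} already controls the \emph{integrand} $\lambda^n\pla(A^{(n)})$ pointwise in $(\vec u,x)$ by a function $h_{\lambda}^{(n)}$ which is monotone in $\lambda$ and, by Corollary~\ref{cor:DB:Pi_x_bounds}, integrable at $\lambda_c$.

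Your box-truncation route can be made to work, but two points need sharpening. First, the tail control you need is not delivered by Lemma~\ref{lem:triangle-bubble-consequence-bound} as such: what makes the truncation error vanish uniformly on $[0,\lambda_c]$ is $\theta(\lambda_c)=0$ (a.s.\ finiteness of the critical cluster), which is a \emph{consequence} of the triangle condition via FKG but is the statement you must actually invoke. Second, because $\Pi_\lambda^{(n)}(x)$ is an integral over $\vec u\in(\Rd)^n$, a box truncation must also cut the $\vec u$-domain; to show that the far-away contribution is small uniformly in $\lambda\le\lambda_c$ you again need an integrable envelope on the \emph{integrand}, i.e.\ exactly the pointwise-in-$\vec u$ bound $h_{\lambda_c}^{(n)}$ that the paper extracts from Proposition~\ref{thm:DB:Pi_bound_Psi}. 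With these two inputs made explicit your argument goes through; the superposition coupling simply avoids the box entirely and handles the nested dependence on $\C_{i-1}$ in one stroke.
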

\chr{The proof uses diagrammatic bounds formulated in Proposition \ref{thm:DB:Pi_bound_Psi}, and is postponed to Section \ref{sec:DB:bounding_events}.}

\begin{proof}[Proof of Theorem~\ref{thm:maintheorem}]
We have established the triangle condition, it therefore remains to prove the infrared bound. 
Let $\lambda<\lambda_c$ first. We reuse the notation $a=\connf + \Pi_\lambda$. In Fourier space, Proposition~\ref{thm:convergenceoflaceexpansion} gives
	\eqq{ \ftlam(k) \big(1-\lambda (\fconnf(k) + \fpilam(k) )\big) = \fconnf(k) + \fpilam(k). \label{eq:MT:LE_fourier_identity}}
We claim that $1-\lambda(\fconnf(k) + \fpilam(k)) >0$ for all $k$. To this end, assume first that there exists $k\in\Rd$ with $\hat a(k)=\fconnf(k) + \fpilam(k) = \lambda^{-1}$. As
	\[ |\ftlam(k)| \leq \ftlam(\orig) < \infty, \]
the left-hand side of~\eqref{eq:MT:LE_fourier_identity} vanishes, and so also the right-hand side must satisfy $\widehat a(k)=0$, which directly contradicts the assumption $\widehat a(k) = \lambda^{-1}$.

However, for $k=\orig$, the right-hand side of~\eqref{eq:MT:LE_fourier_identity} is positive for sufficiently small $\beta$ (it is at least $\phiint-C \beta$); since also $\ftlam(\orig) > 0$, we must have $1-\lambda(\fconnf(\orig) + \fpilam(\orig)) >0$. From this, the continuity of the Fourier transform implies that there cannot be a $k$ with $1-\lambda(\fconnf(k) + \fpilam(k)) <0$, proving the claim.

We now divide by $\chr{1-\lambda \widehat a(k)}$ in~\eqref{eq:MT:LE_fourier_identity} to obtain \col{
	\algn{ \lambda|\ftlam(k)| &= \frac{\lambda |\widehat a(k)|}{1-\lambda \widehat a(k)} 
			= \frac{\lambda |\fconnf(k)+\fpilam(k)|}
						{\underbrace{1-\lambda(\fconnf(\orig)+\fpilam(\orig))}_{>0} + \lambda[\fconnf(\orig)-\fconnf(k)] + \lambda[\fpilam(\orig) - \fpilam(k)]} \notag\\
		& \leq \frac{|\fconnf(k)+\fpilam(k)|}{[\fconnf(\orig)-\fconnf(k)] - |\fpilam(\orig) - \fpilam(k)|} 
						\leq \frac{|\fconnf(k)|+\mathcal O(\beta)}{[\fconnf(\orig)-\fconnf(k)] (1+\mathcal O(\beta))}
						= \frac{|\fconnf(k)|+\mathcal O(\beta)}{[\fconnf(\orig)-\fconnf(k)]},  \label{eq:MT:infrared_bd} }}
\col{using the bound~\eqref{eq:BA:convLE_intPi_bds} for $\fpilam(\orig) - \fpilam(k)$}. This proves the infra-red bound for $\lambda<\lambda_c$.

Let now $\lambda=\lambda_c$ and $k \neq \orig$. Note that $\widehat a(\orig) = \lambda_c^{-1}$. For contradiction, assume that $\widehat a(k) = \lambda_c^{-1}$ as well. We can write
	\al{0=1-\lambda_c(\connf(k) + \widehat\Pi_{\lambda_c}(k)) &= \underbrace{1-\lambda_c \col{(\fconnf(\orig)+\widehat\Pi_{\lambda_c}(\orig))}}_{=0}
					 + \lambda_c [\fconnf(\orig)-\fconnf(k)] + \lambda_c [\widehat\Pi_{\lambda_c}(\orig) - \widehat\Pi_{\lambda_c}(k) ] \\
		&= \lambda_c [\fconnf(\orig)-\fconnf(k)] (1 + \mathcal O(\beta)),}
using the bound~\eqref{eq:BA:convLE_intPi_bds} for $\widehat\Pi_{\lambda_c}(\orig) - \widehat\Pi_{\lambda_c}(k)$. But as \col{$\fconnf(\orig)-\fconnf(k) >0$} for $k \neq \orig$, this yields a contradiction.

With Corollary~\ref{lem:convergence_of_lace_expansion_corollary_lambda_c}, we get identity~\eqref{eq:MT:LE_fourier_identity} for $\lambda_c$, and by the above argument, we can again divide by $(1-\lambda_c\widehat a(k))$ and obtain the same bound as in~\eqref{eq:MT:infrared_bd}.
\end{proof}

\begin{proof}[Proof of Theorem~\ref{thm:mainthmcorollaries}]
At the beginning of Section \ref{sec:maintheorems}, we have proven that $\theta(\lambda_c)=0$. 
Mind that the asymptotic behavior of $\lambda_c$ was noted in~\eqref{eq:lambda_c_asymptotics} and the line below. Identity~\eqref{eq:lambda_c_identity_fpilam} follows from the lace-expansion identity~\eqref{eq:MT:LE_fourier_identity} for $k=\orig$, keeping in mind that $\ftlam(\orig)$ diverges for $\lambda \nearrow \lambda_c$ (this was already used in the proof of Theorem~\ref{thm:maintheorem}).

To prove $\gamma \geq 1$, we rely on the work done in Lemmas~\ref{lem:tlam_differentiability} and~\ref{lem:differentialinequalityclustersize}. We start by proving $\widehat\tau_{\lambda_c}(\orig) = \infty$. Recall that $\tlam^n(x,y) = \pla(\conn{x}{y}{\xi^{x,y}_{\Lambda_n}})$ and define
	\[\chi^n(\lambda) := \sup_{x \in \Lambda_n} \int \tlam^n(x,y) \dd y \qquad \text{for } \lambda \geq 0.\]
Mind that this is \emph{not} the expected size of the largest cluster in $\Lambda_n$. We claim that $1/\chi^n(\lambda)$ is an equicontinuous sequence with $\chi^n(\lambda) \to \ftlam(\orig)$ for every $\lambda \geq 0$. From this, we get continuity of $1/\ftlam(\orig)$.

As $\tlam^n(x,y) \nearrow \tlam(x-y)$, we get $\chi^n(\lambda) \nearrow \ftlam(\orig)$ by monotone convergence and thus $1/\chi^n(\lambda) \to 1/\ftlam(\orig)$. For the equicontinuity, we show that $1/\chi^n(\lambda)$ has uniformly bounded derivative. First, note that the same arguments used in Lemma~\ref{lem:differentialinequalityclustersize} show that, uniformly in $x$,
	\eqq{ \frac{\dd}{\dd\lambda} \int \tlam^n(x,y)\dd y \leq \chi^n(\lambda)^2. \label{eq:mainthms:chi_n_differential_inequality}}
We want to relate~\eqref{eq:mainthms:chi_n_differential_inequality} to $\tfrac{\dd}{\dd\lambda} \chi^n(\lambda)$. Given $\lambda$ and $\epsilon$, let $v_{\lambda,\epsilon} \in \Lambda_n$ be a point such that
	\[\int \tlam^n(v_{\lambda,\epsilon}, y) \dd y \geq \chi^n(\lambda) - \epsilon^2.\]
The exact choice of $v_{\lambda,\epsilon}$ (as it is not unique) does not matter. This gives
	\algn{ \chi^n(\lambda+\epsilon) - \chi^n(\lambda) & \leq \int \big(\tau^n_{\lambda+\epsilon} (v_{\lambda+\epsilon,\epsilon}, y) - \tlam^n(v_{\lambda+\epsilon,\epsilon}, y)\big) \dd y
							 + \epsilon^2 \notag\\
		& \leq \sup_{v\in\Lambda_n} \int \big( \tau^n_{\lambda+\epsilon} (v, y) - \tlam^n(v, y) \big) \dd y + \epsilon^2. \label{eq:mainthms:chi_n_maximizer_bound}}
Similarly to Lemma~\ref{lem:tlam_differentiability}, we can show that $\lambda \mapsto\chi^n(\lambda)$ is continuous and almost everywhere differentiable. We divide~\eqref{eq:mainthms:chi_n_maximizer_bound} by $\epsilon$ and let $\epsilon\searrow 0$. We claim that
	\eqq{ \frac{\dd}{\dd\lambda} \chi^n(\lambda) \leq \sup_{v\in\Lambda_n} \int \frac{\dd}{\dd\lambda} \tlam^n(v,y)\dd y \leq \chi^n(\lambda)^2. \label{eq:mainthms:chi_n_derivative}}
The exchange of limit and supremum is justified since the integral in~\eqref{eq:mainthms:chi_n_maximizer_bound} converges to $\tfrac{\dd}{\dd\lambda} \int \tlam^n(v,y)\dd y$ uniformly in $v$, and then~\eqref{eq:mainthms:chi_n_derivative} follows by employing the Leibniz integral rule and~\eqref{eq:mainthms:chi_n_differential_inequality} for the second bound. Rearranging, this yields
	\[ \frac{\dd}{\dd\lambda} \frac{1}{\chi^n(\lambda)} \geq -1.\]
In summary, the functions $\lambda\mapsto 1/\chi^n(\lambda)$ form a non-increasing family with uniformly bounded derivative, which gives us equicontinuity. Since also $1/ \chi^n(\lambda) \to 1/\ftlam(\orig)$ pointwise, this limit is also continuous and, since it attains zero for every $\lambda>\lambda_c$, we have $1/\widehat\tau_{\lambda_c}(\orig)=0$.

We can now integrate the inequality $\tfrac{\dd}{\dd\lambda}\ftlam(\orig)^{-1} \geq -1$ between $\lambda$ and $\lambda_c$, so that
	\eqq{\frac{1}{\widehat\tau_{\lambda_c}(\orig)} - \frac{1}{\ftlam(\orig)} = -\frac{1}{\ftlam(\orig)} \geq -(\lambda_c-\lambda). \label{eq:gamma_lower_bound}}
Hence, $\ftlam(\orig) \geq (\lambda_c-\lambda)^{-1}$ and, with~\eqref{eq:ftlam_chi_relation}, we obtain $\chi(\lambda) \geq \lambda (\lambda_c-\lambda)^{-1}$. This shows $\gamma \geq 1$.

To prove $\gamma \leq 1$, let $\lambda\in(0,\lambda_c)$. We have to repeat some of the calculations from the diagrammatic bounds. Note that
	\[\pla(u \in \piv{\orig,x;\xi^{\orig,x}}) = \E_\lambda\big[\mathds 1_{\{\conn{\orig}{u}{\xi^{\orig,u}}\}} \tlam^{\C(\orig,\xi^{\orig})}(u,x) \big] \]
follows in the same manner from Lemma~\ref{lem:stoppingset} as Lemma~\ref{lem:LE:cutting_point}. Applying Lemma~\ref{lem:tlam_differentiability} and then~\eqref{eq:LE:tau_thinning_split} gives
	\algn{\frac{\dd}{\dd\lambda} \ftlam(\orig) &= \int \frac{\dd}{\dd\lambda} \tlam(x) \dd x = \iint \pla\big(u \in \piv{\orig,x;\xi^{\orig,u,x}}\big) \dd u \dd x \notag\\
		&= \iint \E_\lambda[ \mathds 1_{\{\conn{\orig}{u}{\xi^{\orig,u}}\}} \tlam(x-u)] \dd u \dd x \notag\\
		& \qquad - \iint \E_\lambda\left[ \mathds 1_{\{\conn{\orig}{u}{\xi_0^{\orig,u}}\}} \mathds 1_{\{\xconn{u}{x}{\xi_1^{u,x}}{\C(\orig,\xi_0^{\orig})}\}}\right] \dd u \dd x. 
					\label{eq:MT:ftlam_identity}}
The first integral on the right-hand side of~\eqref{eq:MT:ftlam_identity} is $\ftlam(\orig)^2$. 
\chr{
A bound on the second integral is formulated as a separate lemma, whose proof is deferred to Section \ref{sec:bounds-lace-expansion}: 
\begin{lemma}\label{lem:GammaIntBound}
For $u,x\in\Rd$ and $\lambda\ge0$,  
\[\iint \E_\lambda\left[ \mathds 1_{\{\conn{\orig}{u}{\xi_0^{\orig,u}}\}} \mathds 1_{\{\xconn{u}{x}{\xi_1^{u,x}}{\C(\orig,\xi_0^{\orig})}\}}\right] \dd u \dd x 
	\le \lambda^{-2} \trilam(\orig) \chi(\lambda)^2.
	\]
\end{lemma}
}
In summary, $\tfrac{\dd}{\dd\lambda} \ftlam(\orig) \geq \ftlam(\orig)^2 - \lambda^{-2} \trilam \chi(\lambda)^2$. Rearranging yields
	\[ \frac{\dd}{\dd\lambda} \frac{1}{\ftlam(\orig)} \leq -1 +  \trilam \frac{\lambda^{-2} \chi(\lambda)^2}{\ftlam(\orig)^2} \leq -1 + \trilam(1+\lambda^{-1} + \lambda^{-2}) \leq -1/2.  \]
In this, the last inequality holds when the triangle $\trilam$ is small enough (guaranteed by Theorem~\ref{thm:maintheorem}) and $\lambda>0$. We hence get a lower bound counterpart to~\eqref{eq:mainthms:chi_n_derivative} and can integrate as in~\eqref{eq:gamma_lower_bound} to get $\gamma\leq 1$.
\end{proof}

\section{Bounds on the lace expansion} 
\label{sec:bounds-lace-expansion}
\ch{In this section, we complete the proof of the bounds on the lace expansion, as stated in Propositions \ref{prop-bounds-LA-coefficients-unweighted}, \ref{thm:PsiDiag_Bound_Derangement_N1} and \ref{thm:PsiDiag_Bound_Derangement} in Section \ref{sec-statement-bounds-LA-coefficients}. This section is organised as follows. In Section \ref{sec:DB:bounding_events}, we formulate bounding events for the lace-expansion coefficients, and use these in combination with the BK-inequality to derive bounding diagrams on them. A bounding diagram is an appropriate integral over products of two-point functions and related quantities. \chr{At the end of this subsection, we give the proof of Lemma \ref{lem:convergence_of_lace_expansion_corollary_lambda_c}.} 
In Section \ref{sec:diagrammaticbounds_no_disp}, we considerably simplify these bounding diagrams by relating them to triangles as in Proposition \ref{prop-bounds-LA-coefficients-unweighted}, the proof of which then easily follows. \chr{There we also give the proof of Corollary~\ref{cor:DB:Pi_x_bounds}.} 
In Section \ref{sec:diagrammaticbounds_disp}, we extend the analysis in Section \ref{sec:diagrammaticbounds_no_disp} to deal with {\em weighted} diagrams and use it to prove the displacement bounds in Propositions \ref{thm:PsiDiag_Bound_Derangement_N1} and \ref{thm:PsiDiag_Bound_Derangement}.}

\subsection{Bounding events for the lace-expansion coefficients} \label{sec:DB:bounding_events}
The aim of this section is to take the first step into finding simple bounds on the lace-expansion coefficients $\Pi_\lambda^{(n)}$. We start by stating the central result of this section, Proposition~\ref{thm:DB:Pi_bound_Psi}, while the remainder of the section is concerned with its proof.

For the proof, we first introduce certain bounding events in Definition~\ref{def:Fdiagrams} that allow for a simple pictorial representation and that bound the $E$ events. As a second step, we bound these events by large products of two-point functions (through heavy use of the BK inequality), constituting the bound of Proposition~\ref{thm:DB:Pi_bound_Psi}. We continue to simplify this bound in Section~\ref{sec:diagrammaticbounds_no_disp}. 

Definition~\ref{def:DB:psi_functions} introduces the quantities in terms of which the bound of Proposition~\ref{thm:DB:Pi_bound_Psi} is formulated. It also introduces Dirac delta functions. We stress that in this paper, we use them primarily for convenient and more compact notation and to increase readability. In particular, they appear when applying the Mecke equation~\eqref{eq:prelim:mecke_1} to obtain
	\begin{equation}
	\E \big[ \sum_{y\in\eta^u} f(y, \xi^u) \big] = \int (\lambda+\delta_{y,u}) \E_\lambda [ f(y,\xi^{u,y})] \dd y, 
	\end{equation} 
and so the factor $\lambda+\delta_{y,u}$ encodes a case distinction of whether point $y$ coincides with $u$ or not. 

\begin{definition}[The $\psi$ functions] \label{def:DB:psi_functions}
Let $w,x,y,z\in\Rd$. Recall that $\tlamo(x) := \delta_{x,\orig} + \lambda\tlam(x)$ from \eqref{tau-dot-def}. Moreover, let
	\al{ \parl(x,y) := \tlamo(x) \tlam(y), & \quad \triangle(x,y,z) := \tlam(x-y) \tlam(y-z) \tlam(z-x), \\
		 \Box(w,x,y,z) &:=\tlam(w-x) \tlam(x-y) \tlam(y-z) \tlam(z-w).}
We define
	\al{&\psi_0^{(1)}(w,u) := \lambda\triangle (\orig,w, u), \quad \psi_0^{(2)}(w,u):=\lambda \delta_{w,\orig} \int \triangle (\orig,t,u) \dd t, \quad
						\psi_0^{(3)}(w,u):= \connf(u) \delta_{w,\orig}, \\
		&\psi_n^{(1)} (a,b,t,z,x) := \lambda \parl(t-b, z-a) \triangle(t,z,x), \quad \psi_n^{(2)}(a,b,t,z,x) := \delta_{t,z} \delta_{z,x}\tlam(t-b)\tlam(z-a),\\
		&\psi^{(1)}(a,b,t,w,z,u) := \lambda^2 \Box(t,w,u,z) \parl(t-b,z-a), \\
		&\psi^{(2)}(a,b,t,w,z,u) := \lambda \triangle(t,z,u) \tlamo(t-w) \parl(w-b,z-a), \\
		&\psi^{(3)}(a,b,t,w,z,u) := \delta_{z,u} \delta_{t,z} \tlam(t-w) \parl(w-b,z-a), }
and set $\psi_0 := \psi_0^{(1)}+\psi_0^{(2)}+\psi_0^{(3)}, \psi_n := \psi_n^{(1)} + \psi_n^{(2)}, \psi:= \psi^{(1)}+\psi^{(2)}+\psi^{(3)}$.
\end{definition}

See Figure~\ref{fig:Psidiagrams} for a pictorial representation of the functions $\psi^{(j)}$.

\begin{figure}[t]
 \includegraphics{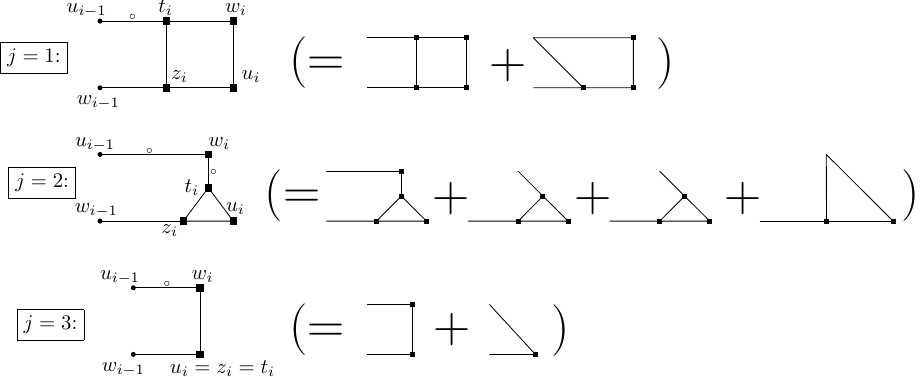}
	\caption{Diagrammatic representation of segment $i$, and hence of the functions $\psi^{(j)}$. Factors $\tlam$ are represented by lines, factors $\tlamo$ are represented by lines endowed with a `$\circ$'. The points $t_i, w_i, z_i, u_i$ (the ones labeled by index $i$) are depicted as squares---in the later decomposition of the full diagram into segments, these are the ones integrated over when bounding segment $i$. The small diagrams in brackets indicate the form that the diagrams take when expanding the two terms constituting $\tlamo$ (i.e.~when writing out all possible collapses). }
	\label{fig:Psidiagrams}
\end{figure}

\begin{prop}[Bound in terms of $\psi$ functions] \label{thm:DB:Pi_bound_Psi}
Let $n \geq 1, x\in\Rd$ and $\lambda\in [0,\lambda_c)$. Then
\[ \Pi_\lambda^{(n)}(x) \leq \lambda^n \int \psi_0(w_0,u_0) \Big(
  \prod_{i=1}^{n-1} \psi(\vec v_i) \Big) \psi_n(w_{n-1},u_{n-1}, t_n,
  z_n,x) \dd\big( (\vec w, \vec u)_{[0,n-1]}, (\vec t, \vec z)_{[1,n]}
  \big), \] where $\vec v_i = (w_{i-1},u_{i-1},t_i,w_i,z_i,u_i)$.
\end{prop}

The proof is given after Lemma \ref{lem:DB:bounds_E_by_F}.  Throughout
the paper, we use $\vec v_i$ as an abbreviation for various vectors of
elements in $\R^d$ and related expressions.

Recall that the edge-markings in~\eqref{eq:LE:Pin_def} are
independent, a fact that is heavily used in the
following. Unfortunately, the event taking place on graph $i$ is not
quite independent of the event taking place on graph $i-1$. However, a
little restructuring together with appropriate bounding events enables
us to guarantee such an independence. With the next steps, we achieve
two things: On the one hand, we bound the $E$ events by simpler ones
(see Definition~\ref{def:Fdiagrams} and
Lemma~\ref{lem:DB:bounds_E_by_F}), and on the other, we exploit the
independence structure.

We start by introducing a ``thinning connection'', defined for
edge-markings of sets of points (which may not be PPPs). To this end,
recall Definition \ref{def:LE:thinning_events} for the definition of a
thinning of a discrete set $A$.

\begin{definition}[Thinning connection]
  \ \label{def:DB:extended_thinning_events} Let $\xi_1, \xi_2$ be two
  independent edge-markings of two locally finite sets $A_1, A_2$
  (hence, $\xi_i=\xi_i(A_i)$). For $x,y\in\Rd$, we write
$\sqconn{x}{y}{(\xi_1, \xi_2)}$ if $x \in A_1$, $y\in A_2$
and $y\notin (A_2)_{\thinn{\C(x, \xi_1)}}$.
\end{definition}

Given $\C(x, \xi_1)$, which is determined by $\xi_1$,
$\{\sqconn{x}{y}{(\xi_1,\xi_2)} \}$ is just a thinning event in
$\xi_2$. On the other hand, given the thinning marks of $y$,
$\{\sqconn{x}{y}{(\xi_1,\xi_2)}\}$ is just a connection event in
$\xi_1$, as $x$ must be connected to some vertex $z$ in $\xi_1$ that
``thins out'' $y$.

We will apply Definition~\ref{def:DB:extended_thinning_events} only for pairs $(\xi_i, \xi_{i+1})$ from the sequence $(\xi_i)_{i \in \N_0}$ used in the definition of the lace-expansion coefficients.  

The next definition should be regarded as an extension of the disjoint
occurrence event to multiple connection events that may overlap in
their endpoints (similar to the application of the BK inequality
in~\eqref{eq:prelim:BK_application}), as well as to events involving
`$\sqarrow$' (living on two RCMs):

\begin{definition}[Multiple disjoint connection events] \label{def:DB:multi_circ} Let $m\in\N$ and
  $\vec x, \vec y \in (\Rd)^m$. \cov{We say that
  $\bigcirc_m^\leftrightarrow((x_j, y_j)_{1 \leq j \leq m}; \xi)$ occurs if there exist $m$ paths such
    $\conn{x_j}{y_j}{\xi}$ for every $j \in[m]$} 
  with the additional requirement that every point in $\eta$ is the
  interior vertex of at most one of the $m$ paths and none of the $m$
  paths contains an interior vertex in the set
  $\{x_j: j\in[m]\} \cup \{y_j: j\in [m]\}$.

  \ch{Moreover, let $\xi_1, \xi_2$ be two independent RCMs.}
\cov{We say
that    $\bigcirc_m^\sqarrow( (x_j,y_j)_{1 \leq j \leq m}; (\xi_1,\xi_2))$
occurs if}
\begin{itemize}
\item[(a)]
\cov{  $\bigcirc_{m-1}^\leftrightarrow((x_j,y_j)_{1 \leq j <m};\xi_1)$
  occurs and there exists paths connecting $x_j$ and $y_j$} for
  $j\in[m-1]$ that satisfy the requirements listed in the definition
  of $\bigcirc_{m-1}^\leftrightarrow((x_j,y_j)_{1 \leq j <m};\xi_1)$,
  and satisfy the additional requirement that they do not use $x_{m}$
  or $y_{m}$ as interior vertices;
\item[(b)]
\cov{  $\sqconn{x_{m}}{y_{m}}{(\xi_1[V(\xi_1)\setminus \{x_i, y_i\}_{1
      \leq i <m}],\xi_2)}\}$} occurs in such a way that at least one
  point $z$ in $\xi_1$ that is responsible for thinning out $y_m$ is
  connected to $x_m$ by a path $\gamma$ so that $z$, as well as all
  interior vertices of $\gamma$, are not contained in any path of the
  $\bigcirc_{m-1}^\leftrightarrow((x_j,y_j)_{1 \leq j <m};\xi_1)$
  event.
\end{itemize}
\ch{See Figure \ref{fig:DisjointConnection} for an example.}  
\begin{figure}[t]\centering
	\includegraphics[width=.5\textwidth]{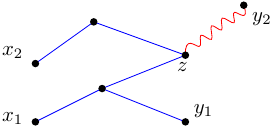}
	\caption{\ch{An example of the event
            $\bigcirc_2^\sqarrow( (x_1,y_1),(x_2,y_2);
            (\xi_1,\xi_2))$. The blue lines indicate edges in
            $\xi_1$. The wiggled line indicates that $z$ (a vertex in
            $\xi_1$) is in a $\xi_2$-thinning of $y_2$.}}
	\label{fig:DisjointConnection}
\end{figure}
\end{definition}

\cov{We shall work with the preceding definitions similarly as
explained after Definition \ref{def:LE:connection_events}.}

We only
require the events $\bigcirc_m^\leftrightarrow$ and
$\bigcirc_m^\sqarrow$ for distinct points $\vec x, \vec y$. We
moreover remark that
\cov{$\bigcirc_1^\leftrightarrow((x,y);\xi)$ means that $\conn{x}{y}{\xi}$ and
$\bigcirc_1^\sqarrow( (x,y); (\xi_1,\xi_2))$ means
that $\sqconn{x}{y}{(\xi_1,\xi_2)}$.} Furthermore, for distinct points
$u,v,x,y$, almost surely,
\[ \bigcirc_2^\leftrightarrow((u,v),(x,y);\xi^{u,v,x,y}) =
  \{\conn{u}{v}{\xi^{u,v}}\} \circ \{\conn{x}{y}{\xi^{x,y}}\}, \] and
so on. Crucially, $\bigcirc_m^\leftrightarrow$ is still amenable to
the use of the BK inequality (again, see the proof
of~\eqref{eq:prelim:BK_application}). In contrast, the thinning
connection as defined in
Definition~\ref{def:DB:extended_thinning_events} is not an increasing
event. \col{The reason for this is that adding points changes the
  ordering of points as given by~\eqref{eq:orderingofPPpoints} and
  thus the thinning variables}. As we would like to use the BK
inequality on $\bigcirc_m^\sqarrow$ later on, the following
observation gives an important identity for $\bigcirc_m^\sqarrow$:

\begin{lemma}[Relating $\bigcirc_m^\sqarrow$ and
  $\bigcirc_m^\leftrightarrow$] \label{lem:DB:squigarrow_tlam_equality}
  Let $m \in\N$ and $\vec x, \vec y \in (\Rd)^m$. Let $\xi_1, \xi_2$
  be two independent RCMs. Then
\[\pla\big( \bigcirc_m^\sqarrow\big((x_j,y_j)_{1 \leq j \leq m}; (\xi_1^{\vec x_{[1,m]}, \vec y_{[1,m-1]}}, \xi_2^{y_m}) 
\big) \big)
= \pla \big( \bigcirc_m^\leftrightarrow \big((x_j,y_j)_{1\leq j \leq m}; \xi_1^{(\vec x, \vec y)_{[1,m]}} \big)
          \big).\]
\end{lemma}
\begin{proof} 
\cov{Let us denote the event on the left-hand side by $E$ and
that on the right-hand side by $F$. 
The assertion follows once we have shown that
\begin{align}\label{e453}
\pla\big(E\mid \xi_1^{\vec x_{[1,m]}, \vec y_{[1,m-1]}} \big)
=\pla\big(F\mid \xi_1^{\vec x_{[1,m]}, \vec y_{[1,m-1]}} \big).
\end{align}
}

\cov{
In the remainder of the proof we fix $\xi_1^{\vec x_{[1,m]}, \vec y_{[1,m-1]}}$.
Let us call a point $x\in\eta_1\cup\{x_m\}$ {\em good} if
either $x=x_m$ or $F$ occurs with $x$ in place of $y_m$.
Otherwise we say that $x$ is {\em bad}.
If $x$ is good, then adding the edge 
$\{x,y_m\}$ to $\xi_1^{\vec x_{[1,m]}, \vec y_{[1,m-1]}}$
triggers the event $F$ to hold. Note that
adding one or more edges between $y_m$ and bad points cannot lead to the occurrence of $F$.
Let $A$ be the set of all good points. 
By the independence assumptions
of the RCM we then have that
\begin{align}\label{e758}
\pla\big(F\mid \xi_1^{\vec x_{[1,m]}, \vec y_{[1,m-1]}} \big)=1-\bar\connf(A, y_m).
\end{align}
On the other hand, if a point $x\in A$ thins out $y_m$,
then the event $E$ occurs. By definition of the thinning operation
we obtain \eqref{e758} with $E$ in place of $F$.
Hence \eqref{e453} follows and the proof is complete.}
\end{proof}

\chr{As an application of Lemma \ref{lem:DB:squigarrow_tlam_equality}, we now prove Lemma \ref{lem:GammaIntBound}.}
\begin{proof}[Proof of Lemma \ref{lem:GammaIntBound}]
We start by bounding the left-hand side of the asserted bound using as  
	\algn{ \iint& \E_\lambda\left[ \mathds 1_{\{\conn{\orig}{u}{\xi_0^{\orig,u}}\}} \mathds 1_{\{\xconn{u}{x}{\xi_1^{u,x}}{\C(\orig,\xi_0^{\orig})}\}}\right] \dd u \dd x\nonumber\\
	&=\E_\lambda \Big[ \mathds 1_{\{\conn{\orig}{u}{\xi_0^{\orig,u}}\}} \sum_{y \in \eta_1^x} \mathds 1_{\{\sqconn{\orig}{y}{(\xi_0^\orig, \xi_1^{x})} \}}
					\mathds 1_{\{\conn{u}{y}{\xi_1^u}\}\circ \{\conn{y}{x}{\xi_1^x}\}} \Big] \notag\\
		&= \E_\lambda \big[ \mathds 1_{\{\conn{\orig}{u}{\xi_0^{\orig,u}}\}} \mathds 1_{\{\sqconn{\orig}{x}{(\xi_0^\orig, \xi_1^{x})}\}} \big] \tlam(x-u) \notag\\
		& \quad + \lambda \int \E_\lambda \big[ \mathds 1_{\{\conn{\orig}{u}{\xi_0^{\orig,u}}\}} \mathds 1_{\{\sqconn{\orig}{y}{(\xi_0^\orig, \xi_1^{x,y})}\}} \big]
						 \pla \big( \{ \conn{u}{y}{\xi^{u,y}} \} \circ \{ \conn{y}{x}{\xi^{y,x}}\} \big) \dd y \notag\\
		& \leq \int \E_\lambda \big[ \mathds 1_{\{\conn{\orig}{u}{\xi_0^{\orig,u}}\}} \mathds 1_{\{\sqconn{\orig}{y}{(\xi_0^\orig, \xi_1^{y})}\}} \big]
						 \tlam(y-u) \tlamo(x-y) \dd y. \label{eq:MT:through_conn_bound}}
Note that
	\eqq{ \mathds 1_{\{\conn{\orig}{u}{\xi_0^{\orig,u}}\}} \mathds 1_{\{\sqconn{\orig}{y}{(\xi_0^\orig, \xi_1^{y})}\}} \leq \sum_{a \in \eta_0^\orig} 
					\mathds 1_{\bigcirc^\sqarrow_3 ((\orig, a), (a,u), (a,y); (\xi_0^{\orig,u}, \xi_1^y))},  \label{eq:MT:sconn_eta1_treegraph_bound}}
where we recall $\bigcirc_3^\sqarrow$ from Definition~\ref{def:DB:multi_circ}. 
sWe get that \eqref{eq:MT:through_conn_bound} is bounded above by
	\al{ \int \Big( \delta_{a,\orig} & \pla \big( \bigcirc^\sqarrow_2 ((\orig,u), (\orig,y); (\xi_0^{\orig,u}, \xi_1^y))\big)
				+ \lambda \pla \big( \bigcirc^\sqarrow_3 ((\orig, a), (a,u), (a,y); (\xi_0^{\orig,a,u}, \xi_1^y)) \big) \Big) \\
			& \qquad \times \tlam(y-u) \tlamo(x-y) \dd (a,u,x,y) \\
			\leq & \int \tlamo(a) \tlam(u-a) \tlam(y-a) \tlam(y-u) \tlamo(x-y) \dd(a,y,u,x) 
			=  \lambda^{-2} \trilam(\orig) \chi(\lambda)^2}
using Lemma~\ref{lem:DB:squigarrow_tlam_equality} and then the BK inequality. 
\end{proof}

We next define the events that will be used to bound the $E$ events: 
\begin{definition}[Bounding $F$ events] \label{def:Fdiagrams}
Let $\xi_1, \xi_2$ be two independent edge-markings, and let $n \geq 1$ and $a,b,t,w,z,u \in\Rd$. Define
	\al{F^{(1)}_0 (a, w, u, b; (\xi_1, \xi_2)) &:= \{a \nsim u \text{ in } \xi_1\} \cap \bigcirc_4^\sqarrow\big((a,u),(a,w),(u,w),(w,b);(\xi_1,\xi_2)\big), \\
		F^{(2)}_0 (a,w, u, b; (\xi_1,\xi_2)) &:= \{w=a\} \cap \{a\sim u \text{ in } \xi_1\} \cap \{ \sqconn{w}{b}{(\xi_1\setminus\{u\}, \xi_2)} \} , \\
		F_n (a,t,z,u; \xi) &:= \{|\{t, z , u\}| \neq 2 \} \cap \bigcirc_4^\leftrightarrow \big((a,t),(t,z),(t,u),(z,u); \xi \big), \\
		F^{(1)} (a,t,w,z,u,b; (\xi_1,\xi_2)) &:= \{|\{t,w,z,u\}|=4\} \cap \bigcirc_6^\sqarrow \big( (a,t),(t,z),(z,u),(t,w),(w,u),(w,b);(\xi_1,\xi_2) \big), \\
		F^{(2)} (a,t,w,z,u,b; (\xi_1,\xi_2)) &:= \{w \notin\{u,z\}, |\{t, z, u \}| \neq 2\} \\
			& \hspace{2.5cm} \cap \bigcirc_6^\sqarrow \big( (a,w),(w,t),(t,u),(t,z),(z,u),(w,b);(\xi_1,\xi_2) \big).}
In addition, let $F_0 = F_0^{(1)} \cup F_0^{(2)}$. 
\end{definition}
Figure~\ref{fig:Fdiagrams} illustrates the diagrammatic events $F_0, F^{(1)}, F^{(2)}$, and $F_n$. We say that a diagrammatic event \emph{collapses} when a subset of the arguments coincides. These collapses of points turn out to be a recurring source of trouble in this section. An example of a collapse is $z=t=u$ in the event $F^{(2)}$.
\begin{figure}
	\centering
 \includegraphics{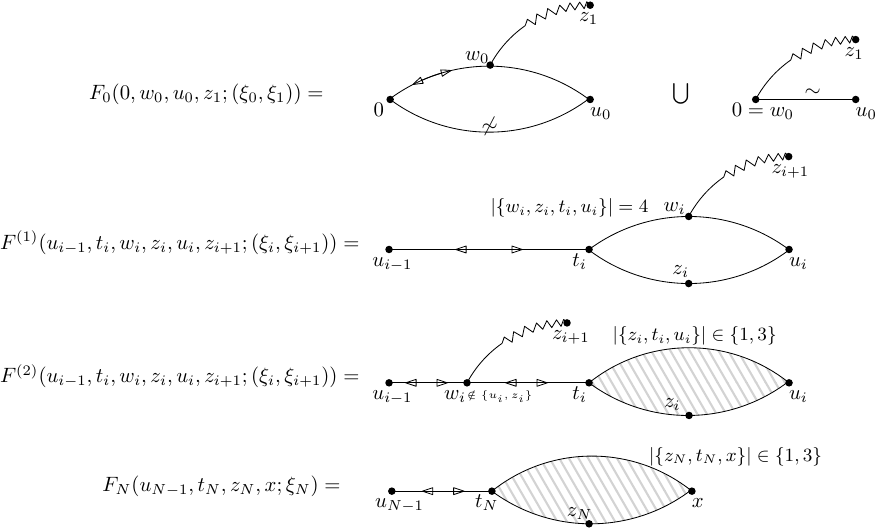}
	\caption{The full diagrammatic events. The line with a `$\sim$' symbol represents a direct edge. A line with a `$\nsim$' symbol indicates that this may \emph{not} be a direct edge. A partially squiggly lines represent the event $\{w_i \sqarrow z_{i+1} \text{ in }(\xi_i,\xi_{i+1})\}$, taking place on both $\xi_i$ and $\xi_{i+1}$. 
Arrows on a line indicate that the two endpoints of that line may coincide. The hatched area may collapse into a single point altogether.}
	\label{fig:Fdiagrams}
\end{figure}

The next lemma bounds the events $E$ by the simpler $F$ events. Recall the sequence $(\xi_i)_{i\in\N_0}$ from Section~\ref{sec:LE:derivation_of_LE} and recall that it denotes a sequence of independent RCMs. We denote the respective underlying PPPs by $\eta_i$ for $i\in\N_0$.
\begin{lemma}[Bounds in terms of $F$ events] \label{lem:DB:bounds_E_by_F}
Let $n\geq 1$ and let $u_0, \ldots, u_n=x \in \Rd$ be distinct points. Write $\C_i= \C(u_{i-1}, \xi_i^{u_{i-1}}), \xi'_i = \xi_i^{u_{i-1}, u_i}$, where $u_{-1}=\orig$. Then
	\al{ & \mathds 1_{\{ \dconn{\orig}{u_0}{ \xi'_0} \}} \prod_{i=1}^{n} \mathds 1_{E (u_{i-1}, u_i; \C_{i-1}, \xi'_i)}  \\
		& \qquad \leq \sum_{\vec z_{[1,n]}: z_i \in \eta_i^{u_i}} \Big( \sum_{w_0 \in \eta_0^{\orig}} \mathds 1_{F_0 (\orig,w_0, u_0, z_1; (\xi'_0,\xi'_1))} \Big)
						\Big( \sum_{t_n \in \eta_n^{u_{n-1}, x}} \mathds 1_{F_n(u_{n-1}, t_n, z_n, x;\xi'_n)} \Big) \\
		& \qquad \quad \times \prod_{i=1}^{n-1} \Big( \sum_{\substack{t_i \in \eta_i^{u_{i-1}}, \\ w_i \in \eta_i}} \mathds 1_{F^{(1)}(u_{i-1}, t_i,w_i, z_i, u_i, z_{i+1}; (\xi'_i, \xi'_{i+1}))}
		 				+ \sum_{\substack{w_i \in \eta_i^{u_{i-1}}, \\ t_i \in \eta_i^{w_i,u_i}}} \mathds 1_{F^{(2)}(u_{i-1}, t_i,w_i, z_i, u_i, z_{i+1};(\xi'_i, \xi'_{i+1}))} \Big). }
\end{lemma}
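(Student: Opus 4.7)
The argument is a set-theoretic decomposition: for any realization of the left-hand event I identify random points $w_0, (t_i)_{1\le i\le n}, (w_i)_{1\le i\le n-1}, (z_i)_{1\le i\le n}$ in the relevant Poisson processes for which the matching $F$-event on the right holds, so that summing over these points (allowing overcounting, which is harmless for an inequality) yields the bound. The roles are: $t_i$ is the branching point of a double connection to $u_i$ inside $\xi'_i$; $z_i\in\xi'_i$ is the vertex killed by the $\C_{i-1}$-thinning; and $w_i\in\C(u_{i-1},\xi'_i)$ is the source which will cause the $\C_i$-thinning of $z_{i+1}$ in the next segment (no $w_n$ is needed for the last segment, and one takes $w_0=\orig$ in the degenerate case $\orig\sim u_0$).

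I first treat the factor $\mathds 1_{\{\dconn{\orig}{u_0}{\xi'_0}\}}$, which splits into the direct-edge case $\orig\sim u_0$ (giving $F_0^{(2)}$) and the case of two vertex-disjoint paths between $\orig$ and $u_0$; in the latter any interior vertex $w_0$ of one such path splits the double connection into three paths with pairwise disjoint interiors among $\orig,w_0,u_0$, which is precisely the $\bigcirc_3^\leftrightarrow$-structure in $F_0^{(1)}$. For each intermediate factor $\mathds 1_{E(u_{i-1},u_i;\C_{i-1},\xi'_i)}$ I let $t_i$ be the last pivotal vertex on the $u_{i-1}\to u_i$ connection (or $t_i=u_{i-1}$ if none exists). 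The second clause of~\eqref{eq:LE:def:E_event} forbids any pivotal vertex from being separated by the $\C_{i-1}$-thinning, so the killed vertex $z_i$ must lie strictly on one of the two vertex-disjoint paths from $t_i$ to $u_i$---otherwise it would itself be pivotal and the connection $u_{i-1}\to z_i$ would already fail to survive the thinning. I then pick $w_i$ either on the other branch $t_i\to u_i$ (yielding the $F^{(1)}$ geometry) or on the stem $u_{i-1}\to t_i$ (yielding the $F^{(2)}$ geometry). The $\C_{i-1}$-thinning of $z_i$ itself is provided by the $w_{i-1}\sqarrow z_i$ arrow inside the $F$-event of segment $i-1$: the $u_{i-2}\to w_{i-1}$ connection that sits inside that event forces $\C(w_{i-1},\xi'_{i-1})=\C_{i-1}$, so that arrow indeed encodes the required thinning. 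The last segment is handled identically but without choosing $w_n$, producing $F_n$.

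The hard part is the case analysis of \emph{collapses} of the identified points---e.g.\ $t_i=u_{i-1}$ when there are no pivotal vertices in $\xi'_i$, $z_i$ coinciding with $u_i$ along a length-one branch, or $w_i$ merging with $t_i$ on the stem. These degeneracies are what the constraints $|\{t,w,z,u\}|=4$ in $F^{(1)}$, $w\notin\{u,z\}$ and $|\{t,z,u\}|\ne 2$ in $F^{(2)}$, $|\{t,z,u\}|\ne 2$ in $F_n$, and the direct-edge dichotomy between $F_0^{(1)}$ and $F_0^{(2)}$ are designed to regulate. Verifying that every realized configuration is absorbed by at least one template---without being double-excluded by the indicator constraints and while keeping the bookkeeping of ``which graph does which vertex live on'' straight---is the main combinatorial content of the proof.
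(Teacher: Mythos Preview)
Your overall plan is right --- identify $t_i$ as the last pivotal point for $u_{i-1}\to u_i$, $z_i$ as a thinned-out vertex on one of the two $t_i$-to-$u_i$ branches, and $w_i$ as the vertex from which the $\sqarrow$ to $z_{i+1}$ emanates --- and this matches the paper's approach. But there is a genuine gap in how you construct $w_i$ and justify the $\sqarrow$ part of the $F$ events.

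The $\bigcirc_m^\sqarrow$ event (Definition~\ref{def:DB:multi_circ}) requires more than $w_i\in\C_i$ and ``some point of $\C_i$ thins $z_{i+1}$'': it demands that the path from $w_i$ to the thinning-responsible point be \emph{vertex-disjoint} from all the paths witnessing the $\bigcirc_{m-1}^\leftrightarrow$ part. You write ``I then pick $w_i$ either on the other branch $t_i\to u_i$ \dots\ or on the stem'', but you do not say what determines this choice, nor why such a $w_i$ admits a disjoint path to the thinning vertex. The paper resolves this by \emph{first} fixing a path $\gamma$ in $\xi_i^{u_{i-1}}$ from $u_{i-1}$ to the vertex responsible for thinning $z_{i+1}$, and then \emph{defining} $w_i$ as the last vertex $\gamma$ shares with $\tilde\gamma\cup\gamma'\cup\gamma''$. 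The tail of $\gamma$ beyond $w_i$ is then automatically disjoint from the structure, and the location of $w_i$ (on the stem $\tilde\gamma$ versus on a branch $\gamma'$ or $\gamma''$) is what \emph{determines} whether $F^{(2)}$ or $F^{(1)}$ applies; in the $F^{(1)}$ case, $z_i$ is chosen afterward on the branch \emph{not} containing $w_i$. Your order of choices (pick $z_i$ first, then $w_i$ ``on the other branch or on the stem'') can produce configurations where the path from $w_i$ to the thinning vertex re-enters the branch carrying $z_i$, and then the disjointness in $\bigcirc^\sqarrow$ fails.

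Two related points. First, your justification ``$\C(w_{i-1},\xi'_{i-1})=\C_{i-1}$'' is not correct as written (the graphs $\xi'_{i-1}=\xi_{i-1}^{u_{i-2},u_{i-1}}$ and $\xi_{i-1}^{u_{i-2}}$ differ by the added point $u_{i-1}$), and in any case cluster equality is not what is needed --- what is needed is the disjoint path to the thinning vertex, which is exactly what the last-shared-point construction supplies. Second, $F_0^{(1)}$ is a $\bigcirc_4^\sqarrow$ event, not $\bigcirc_3^\leftrightarrow$: the choice of $w_0$ must again be the last shared point with the path $\gamma$ to the thinning vertex for $z_1$, not ``any interior vertex of one such path''. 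Once you incorporate this last-shared-point mechanism for $w_0$ and the $w_i$, the rest of your outline (including the handling of the collapses $t_i=u_{i-1}$, $z_i=u_i$, etc.) goes through as in the paper.
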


\begin{proof}
We first prove the following assertion:
	\algn{ \mathds 1_{E (u_{n-1}, x; \C_{n-1}, \xi'_n)} \leq \sum_{z_n \in \eta_n^{x}} \sum_{t_n \in \eta_n^{u_{n-1}, x}}  \mathds 1_{F_n(u_{n-1}, t_n, z_n, x; \xi'_n)} 
							\mathds 1_{\{\sqconn{u_{n-2}}{z_n}{(\xi_{n-1}^{u_{n-2}}, \xi'_n)}\}}. \label{eq:DB:bounds_E_by_F_N_step}}
Recall the definition of $E (u_{n-1}, x; \C_{n-1}, \xi'_n)$ in~\eqref{eq:LE:def:E_event} and, specifically, recall Definition~\ref{def:LE:connection_events}(2). The \col{event in the left-hand side of}~\eqref{eq:DB:bounds_E_by_F_N_step} is contained in the event that $u_{n-1}$ is connected to $x$, but this connection breaks down after a $\C_{n-1}$-thinning of $\eta_n^{x}$. We distinguish two cases under which this can happen:

\underline{Case (a):} In this case, $x$ is thinned out and $E (u_{n-1}, x; \C_{n-1}, \xi'_n)$ \col{is contained in}
	\[\{ \conn{u_{n-1}}{x}{\xi'_n} \} \cap \{\sqconn{u_{n-2}}{x}{(\xi_{n-1}^{u_{n-2}}, \xi'_n)}\}=F_n(u_{n-1}, x,x,x;\xi'_n) \cap \{\sqconn{u_{n-2}}{x}{(\xi_{n-1}^{u_{n-2}}, \xi'_n)}\} .\]

\underline{Case (b):} In this case, $x$ is not thinned out. Now, the occurrence of $E$ \col{implies that there is} at least one interior point on the path between $u_{n-1}$ and $x$ that is thinned out in a $\C_{n-1}$-thinning. We claim that we can pick one such point to be $z_n$ and satisfy the bound in~\eqref{eq:DB:bounds_E_by_F_N_step}.

Let $t_n$ be the last pivotal point in $\piv{u_{n-1}, x; \xi'_n}$ (again, we use that $\piv{u_{n-1}, x; \xi'_n}$ can be ordered in the direction from $u_{n-1}$ to $x$) and set $t_n=u_{n-1}$ when $\piv{u_{n-1}, x; \xi'_n}=\varnothing$. By definition of $t_n$ as last pivotal point, we have $\{ \dconn{t_n}{x}{\xi_n^{x}}\}$.

Moreover, the second part in the definition of the event $E (u_{n-1}, x; \C_{n-1}, \xi'_n)$ (recall~\eqref{eq:LE:def:E_event}) forces all of these paths from $t_n$ to $x$ to break down after a $\C_{n-1}$-thinning, while $t_n$ itself cannot be thinned out. Hence, there is a thinned-out point on each path between $t_n$ and $x$. We can pick any of them to be the point $z_n$ in the $F_n$ event in~\eqref{eq:DB:bounds_E_by_F_N_step}. This proves~\eqref{eq:DB:bounds_E_by_F_N_step}.

Abbreviating $\vec v_i=(u_{i-1}, t_i,w_i, z_i, u_i, z_{i+1})$ for $i \in [n-1]$, we further assert that, for $z_{i+1} \in \eta^{u_{i+1}}_{i+1}$,
	\algn{ & \mathds 1_{E (u_{i-1}, u_i; \C_{i-1}, \xi'_i)} \mathds 1_{\{\sqconn{u_{i-1}}{z_{i+1}}{(\xi_i^{u_{i-1}}, \xi'_{i+1})}\}} \notag\\
		  & \qquad \leq \sum_{z_i \in \eta_i^{u_i}} \mathds 1_{\{\sqconn{u_{i-2}}{z_i}{(\xi_{i-1}^{u_{i-2}}, \xi'_i)}\}}
					\Big( \sum_{\substack{t_i \in \eta_i^{u_{i-1}}, \\ w_i \in \eta_i}} \mathds 1_{F^{(1)}(\vec v_i; (\xi'_i, \xi'_{i+1}))}
					+ \sum_{\substack{w_i \in \eta_i^{u_{i-1}}, \\ t_i \in \eta_i^{w_i,u_i}}} \mathds 1_{F^{(2)}(\vec v_i; (\xi'_i, \xi'_{i+1}))} \Big). \label{eq:DB:bounds_E_by_F_inductive_step}}
Recall how the thinning events in Definition~\ref{def:LE:thinning_events} were introduced via the mappings $(\pi_j)$ from~\eqref{eq:orderingofPPpoints}. If $z_{i+1}=\pi_j(\eta_{i+1}^{u_{i+1}})$ and the second event on the left-hand side of~\eqref{eq:DB:bounds_E_by_F_inductive_step} occurs, then there must be a point $\pi_l(\eta_i^{u_{i-1}})\in\C(u_{i-1}, \xi_i^{u_{i-1}})$ such that $Y_{j,l} \leq \connf(\pi_l(\eta_i^{u_{i-1}})-\pi_j(\eta_{i+1}^{u_{i+1}}))$, where $(Y_{j,l})_{l\in\N}$ are the thinning variables associated to $z_{i+1}$. Informally speaking, $\pi_l(\eta_i^{u_{i-1}})$ is responsible for thinning out $z_{i+1}$. Let $\gamma$ denote a path in $\xi_i^{u_{i-1}}$ from $u_{i-1}$ to $\pi_l(\eta_i^{u_{i-1}})$.

Turning to the event $E$, again we start by considering the case $u_i \notin (\eta_i)_{\thinn{\C_{i-1}}}$, i.e.~the case where $u_i$ is thinned out. In this case, the event $E$ \col{implies}
	\[ \{\conn{u_{i-1}}{u_i}{\xi'_i} \} \cap \{\sqconn{u_{i-2}}{u_i}{(\xi_{i-1}^{u_{i-2}}, \xi'_i)}\}. \]
Letting $w_i$ be the last point $\gamma$ shares with the path from $u_{i-1}$ to $u_i$ (where $w_i=u_{i-1}$ is possible), we obtain $F^{(2)}(\vec v_i;(\xi'_i, \xi'_{i+1}))$ for $t_i=z_i=u_i$.

In the case where $u_i \in (\eta_i)_{\thinn{\C_{i-1}}}$, we set $t_i$ to be last pivotal point for the connection between $u_{i-1}$ and $u_i$ (if there is no pivotal point, set $t_i=u_{i-1}$). By definition of $E$, there is a path $\tilde \gamma$ between $u_{i-1}$ and $t_i$ (possibly of length $0$) and there must be two disjoint $t_i$-$u_i$-paths in $\xi^{u_i}$ (call them $\gamma'$ and $\gamma''$), both of length at least two and both containing an interior point that is thinned out.

Let $w_i$ be the last point $\gamma$ shares with $\tilde \gamma \cup \gamma' \cup \gamma''$. If $w_i$ lies on $\tilde \gamma$, we pick a thinned-out point on $\gamma'$ and call it $z_i$ to obtain the event $F^{(2)}$. If $w_i$ lies on $\gamma'$ or $\gamma''$, we pick a thinned-out point from the respective other path ($\gamma''$ or $\gamma'$), call it $z_i$, and obtain the event $F^{(1)}$. This proves~\eqref{eq:DB:bounds_E_by_F_inductive_step}.

We can now recursively bound the events in Lemma~\ref{lem:DB:bounds_E_by_F} (from $n$ to $1$), and, setting $\vec v_0=(\orig,w_0, u_0, z_1;\linebreak (\xi'_0, \xi_1'))$, it remains to prove that
	\eqq{ \mathds 1_{\{\dconn{\orig}{u_0}{\xi'_0}\}} \mathds 1_{\{\sqconn{\orig}{z_1}{(\xi_0^{\orig}, \xi'_1)} \}}
			 \leq \sum_{w_0 \in \eta^{\orig}_0} \mathds 1_{F_0 (\vec v_0)}. \label{eq:DB:bounds_E_by_F_base}}
Again, there must be a point $\pi_l(\eta_0^\orig) \in \C_0$ that is responsible for thinning $z_1$ out. Let $\gamma$ be a path in $\xi_0^\orig$ from $\orig$ to $\pi_l(\eta_0^\orig)$.

Moreover, we can partition the event $\{\dconn{\orig}{u_0}{\xi^{\orig,u_0}}\}_0$ as follows: When $\orig \nsim u_0$, there are two disjoint paths ($\gamma'$ and $\gamma''$ say) from $\orig$ to $u_0$, both of length at least $2$. On the other hand, when $\orig \sim u_0$, we consider $\gamma'$ and $\gamma''$ to be the degenerate paths containing only the origin $\orig$.

Let $w_0$ be the last vertex $\gamma$ shares with $\gamma'$ or $\gamma''$ (thus, $w_0=\orig$ is possible). Requiring the three paths $\gamma', \gamma'', \gamma$ to be present, and respecting the two cases of the double connection between $\orig$ and $u_0$, results precisely in $F_0(\vec v_0)$, proving~\eqref{eq:DB:bounds_E_by_F_base} and therefore Lemma~\ref{lem:DB:bounds_E_by_F}.\end{proof}

\begin{proof}[Proof of Proposition~\ref{thm:DB:Pi_bound_Psi}]
We use Lemma~\ref{lem:DB:bounds_E_by_F} to give a bound on $\Pi_\lambda^{(n)}(x)$. It involves sums over random points on each of the $n+1$ configurations $\xi_0, \ldots, \xi_n$. In the following, we intend to apply the Mecke formula to deal with these sums. In particular, we use the Mecke formula~\eqref{eq:prelim:mecke_1} on $\xi_0$ and the multivariate Mecke formula~\eqref{eq:prelim:mecke_n} on $\xi_i$ for $1 \leq i \leq n$. The $F$ events in the indicators imply that some of the ``extra point'' coincidences vanish (for example, under $z_n=x$, application of the Mecke formula for the sum over $t_n$ produces a term where $t_n\neq x = z_n$ a.s., but this term vanishes due to the restrictions in $F_n$). Taking this into consideration, recalling the definition of $\Pi_\lambda^{(n)}$ in~\eqref{eq:LE:PiN_def}, and abbreviating $\vec v_i=(u_{i-1}, t_i,w_i, z_i, u_i, z_{i+1})$, gives
	\algn{\Pi_\lambda^{(n)}(x) & \leq \lambda^n \int \E_\lambda \bigg[ \sum_{z_n\in \eta_n^x} \sum_{t_n \in \eta_n^{u_{n-1},x}} \mathds 1_{F_n(u_{n-1}, t_n, z_n, x;\xi'_n)} \notag\\
		& \qquad \times \prod_{i=1}^{n-1} \sum_{z_i \in \eta_i^{u_i}} \Big( \sum_{t_i \in \eta_i^{u_{i-1}}} \sum_{w_i \in\eta_i} \mathds 1_{F^{(1)}(\vec v_i;(\xi'_i, \xi'_{i+1}))} 
				 + \sum_{w_i \in \eta_i^{u_{i-1}}} \sum_{t_i \in \eta_i^{w_i,u_i}} \mathds 1_{F^{(2)}(\vec v_i;(\xi'_i, \xi'_{i+1}))} \Big) \notag\\
		& \qquad \times \sum_{w_0 \in \eta_0^\orig} \mathds 1_{F_0 (\orig,w_0, u_0, z_1;(\xi'_0;\xi'_1))} \bigg] \dd \vec u_{[0,n-1]} \notag\\
		&= \lambda^n \int \E_\lambda\bigg[ (\lambda + \delta_{w_0,\orig}) \mathds 1_{F_0 (\orig,w_0,u_0, z_1;(\xi''_0;\xi''_1))}
					\prod_{i=1}^{n-1} \Big( \lambda^2 (\lambda+\delta_{t_i, u_{i-1}})  \mathds 1_{F^{(1)}(\vec v_i;(\xi_i'', \xi''_{i+1}))}  \notag \\
		& \qquad \qquad + (\lambda(\lambda+\delta_{t_i,w_i}) + \delta_{z_i,u_i} \delta_{t_i, u_i}) (\lambda+\delta_{w_i,u_{i-1}}) \mathds 1_{F^{(2)}(\vec v_i;(\xi''_i, \xi''_{i+1}))} \Big) \notag \\
		& \qquad\quad \times (\lambda (\lambda+\delta_{t_n,u_{n-1}})+\delta_{z_n,x}\delta_{t_n,x}) \mathds 1_{ F_n(u_{n-1}, t_n, z_n, x;\xi''_n)} \bigg]
						\dd \big((\vec u, \vec w)_{[0,n-1]}, (\vec z, \vec t)_{[1,n]} \big), \label{eq:DB:F_bound_mecke}}
where we set $\xi''_0:= \xi_0^{\orig, w_0, u_0}, \xi''_i:= \xi_i^{u_{i-1}, t_i, w_i, z_i, u_i}$, and $\xi''_n := \xi_n^{u_{n-1}, t_n, z_n, x}$.

To simplify~\eqref{eq:DB:F_bound_mecke}, we exploit the independence of the $\xi_i$. Note that for every $i$, there are four events that depend on $\xi''_i$, namely $F^{(j)}(\vec v_i;(\xi_i'', \xi''_{i+1}))$ and $F^{(j)}(\vec v_{i-1};(\xi_{i-1}'', \xi''_{i}))$ (for $j=1,2$, respectively). The latter two are thinning events that depend on $\xi''_i$ only through $Y(z_i)$, the thinning mark associated to the deterministic point $z_i$ (see the remark after the definition of the thinning connection in Definition~\ref{def:DB:extended_thinning_events}). The former two are connection events in $\xi''_i$, and they are independent of $Y(z_i)$. As a consequence, the expectation on the right-hand side of~\eqref{eq:DB:F_bound_mecke} factorizes, and so
	\algn{\Pi_\lambda^{(n)}(x) & \leq \lambda^n \int \E_\lambda \Big[ (\lambda + \delta_{w_0,\orig}) \mathds 1_{F_0 (\orig,w_0,u_0, z_1;(\xi''_0;\xi''_1))} \Big]
					 \prod_{i=1}^{n-1} \E_\lambda \Big[ \lambda^2 (\lambda+\delta_{t_i, u_{i-1}})  \mathds 1_{F^{(1)}(\vec v_i;(\xi_i'', \xi''_{i+1}))} \notag \\
		& \qquad \qquad + \big(\lambda(\lambda+\delta_{w_i,t_i}) + \delta_{z_i,u_i} \delta_{t_i, u_i}\big)(\lambda+\delta_{w_i,u_{i-1}}) \mathds 1_{F^{(2)}(\vec v_i;(\xi''_i, \xi''_{i+1}))} \Big] \notag \\
		& \qquad \quad \times \E_\lambda \Big[ \big(\lambda(\lambda+\delta_{t_n,u_{n-1}})+\delta_{z_n,x}\delta_{t_n,x}\big) \mathds 1_{ F_n(u_{n-1}, t_n, z_n, x;\xi''_n)} \Big]
						\dd \big((\vec u, \vec w)_{[0,n-1]}, (\vec z, \vec t)_{[1,n]} \big) \notag\\
		& = \lambda^n \int (\lambda + \delta_{w_0,\orig}) \pla \big(  F_0 (\orig,w_0,u_0, z_1;(\xi''_0;\xi''_1)) \big)
					 \prod_{i=1}^{n-1} \Big[ \lambda^2 (\lambda+\delta_{t_i, u_{i-1}}) \pla \big( F^{(1)}(\vec v_i;(\xi_i'', \xi''_{i+1}))) \notag \\
		& \qquad \qquad + \big(\lambda(\lambda+\delta_{w_i,t_i}) + \delta_{z_i,u_i} \delta_{t_i, u_i}\big)(\lambda+\delta_{w_i,u_{i-1}}) \pla\big( F^{(2)}(\vec v_i;(\xi''_i, \xi''_{i+1}))\big) \Big] \notag \\
		& \qquad \quad \times \big(\lambda(\lambda+\delta_{t_n,u_{n-1}})+\delta_{z_n,x}\delta_{t_n,x}\big) \pla \big(F_n(u_{n-1}, t_n, z_n, x;\xi''_n) \big)
						\dd \big((\vec u, \vec w)_{[0,n-1]}, (\vec z, \vec t)_{[1,n]} \big). \label{eq:DB:F_bound_independence} } 
The goal is to bound the appearing events using the BK inequality, and thus bound $\Pi_\lambda^{(n)}(x)$ in terms of so-called diagrams (integrals of large products of two-point functions that are conveniently organized). Figure~\ref{fig:Fdiagrams} already suggests how to decompose the probability of the respective events via the BK inequality. Note that, abbreviating $\vec v_n=(u_{n-1}, t_n,z_n,x)$, we can directly use the BK inequality to obtain
	\algn{ & \int (\lambda(\lambda+\delta_{t_n,u_{n-1}})+\delta_{z_n,x}\delta_{t_n,x}) \pla(F_n(\vec v_n;\xi''_n)) \dd t_n \notag\\
		 \leq & \int \underbrace{\lambda \tlamo(t_n-u_{n-1}) \triangle (t_n, z_n,x)}_{=:\phi_n^{(1)}(\vec v_n)}
		 			 + \underbrace{\delta_{t_n, z_n}\delta_{z_n,x} \tlam(x-u_{n-1})}_{=:\phi_n^{(2)}(\vec v_n)} \dd t_n. \label{eq:DB:F_bound_N_BK}}
To decompose the other factors, we first make use of Lemma~\ref{lem:DB:squigarrow_tlam_equality} to deal with the $\bigcirc_m^\sqarrow$ event, and then proceed as in~\eqref{eq:DB:F_bound_N_BK} by applying the BK inequality. For $1 \leq i < n$, we recall that $\vec v_i = (u_{i-1}, t_i, w_i, z_i, u_i, z_{i+1})$ and bound, using Lemma~\ref{lem:DB:squigarrow_tlam_equality},
	\algn{ & \int \lambda^2 (\lambda+\delta_{t_i, u_{i-1}}) \pla\big(F^{(1)}(\vec v_i;(\xi_i'', \xi''_{i+1}))\big)  \dd (t_i, w_i)  \notag\\
		= & \int \lambda^2 (\lambda+\delta_{t_i, u_{i-1}}) \pla \big( \bigcirc_6^\leftrightarrow \big( (u_{i-1},t_i),(t_i,z_i),(z_i,u_i),(t_i,w_i),(w_i,u_i),(w_i,z_{i+1});\xi''_i \big) \big) \dd (t_i, w_i) \notag\\
		\leq & \int \underbrace{\lambda^2 \parl(t_i-u_{i-1}, z_{i+1}-w_i) \Box(t_i, w_i, u_i, z_i)}_{=:\phi^{(1)}(\vec v_i)} \dd (t_i, w_i), \label{eq:DB:F_bound_phi1}}
as well as
	\algn{ & \int (\lambda(\lambda+\delta_{w_i,t_i}) + \delta_{z_i,u_i} \delta_{t_i, u_i})(\lambda+\delta_{w_i,u_{i-1}}) \pla\big( F^{(2)}(\vec v_i;(\xi''_i, \xi''_{i+1}))\big)\big)  \dd (t_i,w_i)  \notag\\
		= & \int \Big[ \lambda(\lambda+\delta_{w_i,t_i})(\lambda+\delta_{w_i,u_{i-1}})
							\pla\big( \bigcirc_6^\leftrightarrow \big( (u_{i-1},w_i),(w_i,t_i),(t_i,u_i),(t_i,z_i),(z_i,u_i),(w_i,z_{i+1});\xi''_i \big) \notag\\
		& \qquad + \delta_{z_i,u_i} \delta_{t_i, u_i}(\lambda+\delta_{w_i,u_{i-1}}) \pla \big( \bigcirc_3^\leftrightarrow \big( (u_{i-1},w_i),(w_i,u_i),(w_i,z_{i+1});\xi''_i \big)\big) \Big] \dd (t_i, w_i) \notag\\
		 \leq & \int \Big[ \underbrace{\lambda \triangle(t_i,z_i,u_i) \tlamo(t_i-w_i) \parl(w_i-u_{i-1}, z_{i+1}-w_i) }_{=:\phi^{(2)}(\vec v_i)} \notag\\
		& \qquad + \underbrace{\delta_{z_i,u_i} \delta_{t_i,u_i} \tlam(t_i-w_i) \parl(w_i-u_{i-1}, z_{i+1}-w_i) }_{=:\phi^{(3)}(\vec v_i)} \Big] \dd(t_i,w_i). \label{eq:DB:F_bound_phi23}}
Analogously, with $\vec v_0= (\orig,w_0,u_0,z_1)$,
	\algn{ & \int (\lambda + \delta_{w_0,\orig}) \pla \big( F_0 (\orig,w_0,u_0, z_1;(\xi''_0,\xi''_1)) \big) \dd w_0 \notag\\
		=& \int (\lambda + \delta_{w_0,\orig}) \pla\big(\{\orig \nsim u_0 \text{ in } \xi_0''\}\cap \bigcirc_4^\leftrightarrow \big( (\orig, u_0), (\orig,w_0),(u_0,w_0),(w_0,z_{1});\xi''_0 \big)\big)
							+ \delta_{w_0,\orig} \connf(u_0) \tlam(z_1) \dd w_0  \notag\\
		\leq & \int \underbrace{\lambda \triangle(\orig,w_0,u_0) \tlam(z_1-w_0)}_{=: \phi_0^{(1)}(\vec v_0)}
							+ \delta_{w_0,\orig} (\tlam(u_0)-\connf(u_0)) \tlam(u_0) \tlam(z_1) + \underbrace{\delta_{w_0,\orig} \connf(u_0) \tlam(z_1)}_{=: \phi_0^{(3)}(\vec v_0)} \dd w_0 \notag\\
		\leq & \int \Big[ \phi_0^{(1)}(\vec v_0) + \underbrace{\delta_{w_0,\orig} \int \col{\triangle(\orig,t_0,u_0)} \dd t_0 \tlam(z_1)}_{=:\phi_0^{(2)}(\vec v_0)}
							+ \phi_0^{(3)}(\vec v_0) \Big] \dd w_0. \label{eq:DB_F_bound_phi0}}
Writing $\phi_0 := \sum_{j=1}^{3}\phi_0^{(j)}, \phi:= \sum_{j=1}^{3} \phi^{(j)}$, and $\phi_n := \sum_{j=1}^{2} \phi_n^{(j)}$, we can substitute these new bounds into~\eqref{eq:DB:F_bound_independence} and obtain
	\eqq{ \Pi_\lambda^{(n)}(x) \leq \lambda^n \int \phi_0(\vec v_0) \Big( \prod_{i=1}^{n-1} \phi(\vec v_i) \Big) \phi_n(\vec v_n) 
							\dd \big((\vec u, \vec w)_{[0,n-1]}, (\vec z, \vec t)_{[1,n]} \big). \label{eq:DB:F_bound_phi} }
The proof is completed with the observation that
	\[ \tlam(z_i-w_{i-1}) \phi(u_{i-1},t_i,w_i,z_i,u_i,z_{i+1}) = \tlam(z_{i+1}-w_i) \psi(w_{i-1},u_{i-1},t_i,w_i,z_i,u_i), \]
as well as $\phi_0(\orig,w_0,u_0,z_1)=\tlam(z_1-w_0) \psi_0(w_0,u_0)$ and $\tlam(z_n-w_{n-1}) \phi_n(u_{n-1},t_n,z_n,x) = \linebreak \psi_n(w_{n-1},u_{n-1},t_n,z_n,x)$ and a telescoping product identity.
\end{proof}

Proposition~\ref{thm:DB:Pi_bound_Psi} gives a bound on $\Pi_\lambda^{(n)}$ in terms of a diagram, which is, to be more accurate, itself a sum of $2 \cdot 3^{n}$ diagrams, i.e.,
	\eqq{ \Pi_\lambda^{(n)}(x) \leq \lambda^n \sum_{\vec j_{[0,n]}} \int \psi_0^{(j_0)} \left( \prod_{i=1}^{n-1} \psi^{(j_i)}\right) \psi_n^{(j_n)} 
								\dd\big( (\vec w, \vec u)_{[0,n-1]}, (\vec t, \vec z)_{[1,n]} \big), \label{eq:DB:piboundpsi_vectorj}}
where the sum is over all vectors $\vec j$ with $1\leq j_i \leq 3$ for $0 \leq i < n$ and $j_n \in\{1,2\}$, and where the arguments of the $\psi$ functions were omitted. If we were to expand every factor of $\tlamo$ (regarding it as a sum of two terms), then $\psi^{(1)}$ and $\psi^{(3)}$ turn into a sum of two terms each, whereas $\psi^{(2)}$ turns into a sum of four terms (similarly, $\psi_n^{(1)}$ turns into a sum of two terms). In that sense, there are eight types of interior segments. We point to Figure~\ref{fig:Psidiagrams} for an illustration of the $\psi$ functions and these eight types.

In Section~\ref{sec:diagrammaticbounds_no_disp}, we want to give an inductive bound on $\int \Pi_\lambda^{(n)}(x) \dd x$. To this end, define the function $\Psi^{(n)}$, which is almost identical to the bound obtained by Proposition~\ref{thm:DB:Pi_bound_Psi}, but better suited for the induction performed in Section~\ref{sec:diagrammaticbounds_no_disp}. Define
	\eqq{ \Psi^{(n)} (w_n, u_n) := \int \psi_0 (w_0,u_0) \prod_{i=1}^{n} \psi (w_{i-1}, u_{i-1}, t_i, w_i, z_i, u_i) \dd\big( (\vec w, \vec u)_{[0,n-1]}, (\vec t, \vec z)_{[1,n]} \big). \label{def:Psi_N}}
Note that $\Psi^{(n)}$ is similar to the bound in Proposition~\ref{thm:DB:Pi_bound_Psi}, but with $\psi_n$ replaced by $\psi$. Since
	\[ 	\psi_n(w_{n-1},u_{n-1}, t_n, z_n, x) \leq \sum_{j \in \{2,3\}} \int \psi^{(j)} (w_{n-1},u_{n-1}, t_n, w_n, z_n,x) \dd w_n , \]
we arrive at the new bound
	\eqq{ \int \Pi_\lambda^{(n)}(x) \dd x \leq \lambda^n \int \Psi^{(n-1)}(w,u) \psi_n(w,u,t,z,x) \dd(w,u,t,z,x) \leq \lambda^{n} \iint \Psi^{(n)}(w,u) \dd w \dd u. \label{eq:Pi_Psi_Diag_bound}}
In the following two sections, we heavily rely on the bound obtained in~\eqref{eq:Pi_Psi_Diag_bound}.
\chr{Before that, we show how a slight adaptation of the proof of Proposition~\ref{thm:DB:Pi_bound_Psi} proves Lemma~\ref{lem:convergence_of_lace_expansion_corollary_lambda_c}:}

\begin{proof}[Proof of Lemma \ref{lem:convergence_of_lace_expansion_corollary_lambda_c}]
We can define $\Pi_{\lambda_c}^{(n)}$ for $n\in\N_0$ by extending the definitions in~\eqref{eq:LE:Pi0_def} and~\eqref{eq:LE:Pin_def} to $\lambda_c$. We claim that $\Pi_\lambda^{(n)} \to \Pi_{\lambda_c}^{(n)}$ for every $n\in\N_0$ as $\lambda \nearrow \lambda_c$. 

In order to prove this, let $(\lambda_m)_{m\in\N}$ be an increasing sequence with $\lambda_m \nearrow \lambda_c$. We write~\eqref{eq:LE:Pi0_def} and~\eqref{eq:LE:Pin_def} as $\Pi_\lambda^{(n)}(x) = \lambda^n \int \pla(A^{(n)}) \dd \vec u$. Define
	\[ h_\lambda^{(n)}(x, \vec u) := \lambda^n \int \psi_0(w_0,u_0) \Big( \prod_{i=1}^{n-1} \psi(\vec v_i) \Big)
						 \psi_n(w_{n-1},u_{n-1}, t_n, z_n,x) \dd\big( \vec w_{[0,n-1]}, (\vec t, \vec z)_{[1,n]} \big) \]
with $\vec v_i = (w_{i-1}, u_{i-1}, t_i, w_i, z_i, u_i)$. By Proposition~\ref{thm:DB:Pi_bound_Psi}, we have $\Pi_\lambda^{(n)}(x) \leq \int h_\lambda^{(n)}(x, \vec u) \dd\vec u$. Moreover, Corollary~\ref{cor:DB:Pi_x_bounds} and \col{Corollary~\ref{thm:convergenceoflaceexpansion}} show that
	\eqq{ \col{\int h_\lambda^{(n)}(x,\vec u) \dd\vec u \leq C (C \beta)^{n-1} \label{eq:MT:dominating_h_function}}}
for all $\lambda<\lambda_c$, and $C$ is independent of $\lambda$. As $\lambda\mapsto h_\lambda^{(n)}$ is increasing, $h_{\lambda_c}^{(n)}$ is also integrable. A close inspection of the proof of Proposition~\ref{thm:DB:Pi_bound_Psi} shows that we may follow the steps in the proof with $x$ and $\vec u$ as fixed arguments (i.e.,~we do not integrate over the points $\vec u$) to get $\lambda^n\pla(A^{(n)}) \leq h_{\lambda_c}^{(n)}$ for all $\lambda \leq \lambda_c$. Hence, by dominated convergence, it suffices to show $|\p_{\lambda_c}(A^{(n)}) - \p_{\lambda_m}(A^{(n)})| \to 0$ as $m\to\infty$.

Recall that the event $A^{(n)}$ takes place on $n+1$ independent RCMs. For $0 \leq i \leq n$, let $(\eta_{i,m})_{m\in\N}$ and $(\tilde\eta_{i,m})_{m\in\N}$ be sequences of PPPs of intensities $\lambda_m, \tilde \lambda_m:= \lambda_c-\lambda_m$; and for fixed $m$, let those $2n+2$ PPPs be independent. Hence, the superposition of the two PPPs $\eta_{i,m}$ and $\tilde\eta_{i,m}$ forms a PPP of intensity $\lambda_c$. We can moreover couple the marks determining edge connections, so that $\tilde\xi_{i,m}' = \xi_i^{u_{i-1},u_i} (\eta_{i,m}')$ (where $\eta_{i,m}'= \eta_{i,m}^{u_{i-1}, u_i}$) forms an RCM of intensity $\lambda_m$ and $\xi_{i,m}' = \xi_i^{u_{i-1},u_i} (\eta_{i,m}' + \tilde\eta_{i,m})$ forms an RCM of intensity $\lambda_c$ with the further property that $\tilde\xi_{i,m}' = \xi_{i,m}'[\eta_{i,m}']$ (recall that $\xi(\eta)$ is the RCM with underlying PPP $\eta$). We now write
	\algn{\big|\p_{\lambda_c}(A^{(n)}) - \p_{\lambda_m}(A^{(n)})\big| &=  \Big| \E \Big[ \mathds 1_{\{\dconn{\orig}{u_0}{\xi_{0,m}'}\}} \prod_{i=1}^{n} \mathds 1_{E(u_{i-1},u_i;\xi_{i,m}')} \notag \\
			& \hspace{2cm}- \mathds 1_{\{\dconn{\orig}{u_0}{\tilde\xi_{0,m}'}\}} \prod_{i=1}^{n} \mathds 1_{E(u_{i-1},u_i;\tilde\xi_{i,m}')} \Big] \Big| , 
				 \label{eq:MT:Pilam_crit_conv_first_bound}}
where $\E$ denotes the joint probability measure. In order to bound~\eqref{eq:MT:Pilam_crit_conv_first_bound}, we define $\C_{i,m}=\C(u_{i-1}, \linebreak \xi_i^{u_{i-1}}(\eta_{i,m}^{u_{i-1}} + \tilde\eta_{i,m}))$ as well as $\tilde\C_{i,m} = \C(u_{i-1}, \xi_i^{u_{i-1}}(\eta_{i,m}^{u_{i-1}}))$ for $0 \leq i \leq n$ and claim that
	\eqq{ \p (\C_{i,m} \neq \tilde\C_{i,m}) \xrightarrow{m\to\infty} 0.  \label{eq:MT:cluster_crit_conv}}
Note that $\theta(\lambda_c)=0$ has been proven at the beginning of Section \ref{sec:maintheorems}, so we may use that $\C_{i,m}$ is finite almost surely. Next, let $(\tilde \Lambda_m)_{m\in\N}$ be an increasing sequence of subsets of $\Rd$ exhausting $\Rd$ and satisfying $\tilde \lambda_m \text{Leb}(\tilde\Lambda_m)  \to 0$ as $m\to\infty$. Then 
	\al{ \p (\C_{i,m} \neq \tilde\C_{i,m}) &\leq \p\big( \tilde\eta_{i,m} \cap \tilde\Lambda_m \neq \varnothing \big) + \p_{\lambda_c} (\conn{\orig}{\tilde\Lambda_m^c}{\xi^\orig_{i,m}}) \\
		&= 1-\e^{-\tilde \lambda_m \text{Leb}(\tilde\Lambda_m) } + \p_{\lambda_c} (\conn{\orig}{\tilde\Lambda_m^c}{\xi^\orig_{i,m}}) \xrightarrow{m\to\infty} 0, }
proving~\eqref{eq:MT:cluster_crit_conv}. We now proceed to bound~\eqref{eq:MT:Pilam_crit_conv_first_bound} by observing that conditionally on $\C_{i,m}=\tilde\C_{i,m}$ for all $0 \leq i \leq n$, the event $A^{(n)}$ occurs in $(\xi'_{i,m})_{i=0}^n$ if and only if it occurs in $(\tilde\xi'_{i,m})_{i=0}^n$. Consequently,
	\[ \big|\p_{\lambda_c}(A^{(n)}) - \p_{\lambda_m}(A^{(n)})\big| \leq \p\big(\exists \; i\in \{0,\ldots, n\}: \C_{i,m} \neq \tilde\C_{i,m} \big) \leq (n+1) \p(\C_{0,m} \neq \tilde\C_{0,m}) \xrightarrow{m\to\infty} 0. \]
This proves that $|\p_{\lambda_c}(A^{(n)}) - \p_{\lambda_m}(A^{(n)})| \to 0$ as $m \to\infty$.

\col{The functions $\int h_\lambda^{(n)}(\cdot ,\vec u) \dd \vec u$ serve as dominating functions for $\Pi_\lambda^{(n)}(\cdot)$, and they are summable in $n$  by~\eqref{eq:MT:dominating_h_function} for sufficiently small $C\beta$. Hence, $\sum_{n \geq 0} (-1)^n \Pi_{\lambda_c}^{(n)} =: \Pi_{\lambda_c}$ converges absolutely and satisfies $\Pi_{\lambda_c} = \lim_{\lambda\nearrow \lambda_c} \Pi_{\lambda}$ by dominated convergence.}

Moreover, $\Pi_{\lambda_c}$ is integrable by the uniform bounds in~\eqref{eq:BA:convLE_intPi_bds} \col{of Corollary~\ref{cor:convergenceoflaceexpansion_uniform}}. Hence, we can take the limit $\lambda \nearrow \lambda_c$ in~\eqref{eq:LE_identity_OZE}, extending it to $\lambda_c$ with $\Pi_{\lambda_c}$ as defined above.
\end{proof}

\subsection{Diagrammatic bounds on the lace-expansion coefficients} 
\label{sec:diagrammaticbounds_no_disp}

\ch{Having obtained the bound~\eqref{eq:Pi_Psi_Diag_bound} is a good start, but this bound is still a highly involved integral. The aim of this section is to decompose $\Psi^{(n)}$ into much simpler objects, namely triangles $\trilam$ and the related quantities as introduced in Definition~\ref{def:bubble} in Section \ref{sec-statement-bounds-LA-coefficients}. The central result of this section is the following proposition:}
\begin{prop}[Bounds on $\Psi^{(n)}$ for general $n$] \label{thm:Psi_Diag_bound}
Let $n \geq 0$. Then
	\[ \lambda^{n+1} \iint \Psi^{(n)}(w,u) \dd w \dd u \leq 2(2\trilamo+\lambda+1) \big(U_\lambda \wedge \bar U_\lambda \big)^{n}.\]
\end{prop}

\noindent
\ch{{\it Proof of Proposition \ref{prop-bounds-LA-coefficients-unweighted}.} Combined with~\eqref{eq:Pi_Psi_Diag_bound}, Proposition \ref{thm:Psi_Diag_bound} immediately proves Proposition \ref{prop-bounds-LA-coefficients-unweighted}.}
\qed
\medskip

Before working towards the proof of Proposition~\ref{thm:Psi_Diag_bound}, let us motivate it. Similarly to discrete percolation, we want to bound $\int \Pi_\lambda^{(n)}(x) \dd x$ in terms of $\trilam$ and $\trilamo$. In turn, we hope to prove that the latter two quantities become small as $\beta$ becomes small. To see our motivation for introducing the $\varepsilon$-triangle, consider for a moment the Poisson blob model, $\connf= \mathds 1_{\unitball}$, as a representative of the finite-variance model (H1). We have no hope here of $\trilamo$ becoming small, as
	\[ \lambda^{-1}\trilamo(\orig) \geq (\tlam\star\tlam)(\orig) \geq (\connf\star\connf)(\orig) = 1.\]
This issue arises \emph{only} for the finite-variance model, and most prominently for the Poisson blob model---under (H2) and (H3), we later prove that $\trilamo$ is small whenever $\beta$ is small. However, as it turns out, we are able to prove that $\trilame$ becomes small (as $\beta$ becomes small) for some $\varepsilon$ (namely, the one assumed to exist under assumption (H1.2)). On the other hand, it is clear that for any $\varepsilon$ smaller than the radius of the unit volume ball (i.e., $\varepsilon < r_d = \pi^{-1/2} \Gamma(\tfrac d2 + 1)^{1/d}$), we have $\ballep \xrightarrow{d\to\infty} 0$.

Proposition~\ref{thm:Psi_Diag_bound} implies a bound which avoids $\varepsilon$ completely \ch{(as $U_\lambda \wedge \bar U_\lambda\le U_\lambda$)}, and which is substantially easier to prove. This bound suffices for the connection functions of (H2) and (H3). Additionally, we have a bound containing $\trilame$ and $\ballep$, which is necessary for (H1). We prove Proposition~\ref{thm:Psi_Diag_bound} without specifying $\varepsilon$ (we do this later). However, $\varepsilon$ should be thought of as an arbitrary, but small enough, value (smaller than $r_d$ suffices). 

As a first step, we introduce some related quantities, which will be of help not only in the proof of Proposition~\ref{thm:Psi_Diag_bound}, but also in Section~\ref{sec:diagrammaticbounds_disp} below. We define
	\al{\breve\Psi^{(n)}(w_0,u_0, w_n,u_n) &= \sum_{\vec j_{[1,n]} \in [3]^n} \int \prod_{i=1}^{n} \psi^{(j_i)}(w_{i-1}, u_{i-1}, t_i, w_i, z_i, u_i)
					\dd\big( (\vec w, \vec u)_{[1,n-1]}, (\vec t, \vec z)_{[1,n]} \big), \\
		\breve\Psi^{(n, \geq \varepsilon)}(w_0,u_0, w_n,u_n) &=	\mathds 1_{\{|w_0-u_0| \geq \varepsilon\}} \breve\Psi^{(n)}(w_0,u_0, w_n, u_n), \\
		\breve\Psi^{(n, < \varepsilon)}(w_0,u_0, w_n,u_n) &= \mathds 1_{\{|w_n-u_n| < \varepsilon\}} \breve\Psi^{(n)}(w_0,u_0, w_n, u_n).}
The following lemma, providing some bounds on the quantities just introduced, will be at the heart of the proof of Proposition~\ref{thm:Psi_Diag_bound}:
\begin{lemma}[Bound on $\breve\Psi^{(n)}$ diagrams] \label{lem:DB:breve_psi_bounds}
Let $n\geq 1$ and $\varepsilon>0$. Then
	\al{ \sup_{a,b\in\Rd} \lambda^n \iint \breve\Psi^{(n)}(a,b,x,y) \dd x \dd y & \leq \min\Big\{ \big(U_\lambda\big)^n, U_\lambda \big(\bar U_\lambda\big)^{n-1} \Big\}, \\
		\max_{\bullet \in \{<\varepsilon, \geq \varepsilon\} }\sup_{a,b\in\Rd} \lambda^n \iint \breve\Psi^{(n, \bullet )} (a,b,x,y) \dd x \dd y & \leq \big(\bar U_\lambda\big)^{n-1} U_\lambda^{(\varepsilon)}, \\
		\sup_{a,b\in\Rd} \lambda^n \iint \mathds 1_{\{|a-b| \geq \varepsilon\}}\breve\Psi^{(n,<\varepsilon)}(a,b,x,y) \dd x \dd y & \leq \big(\bar U_\lambda\big)^{n-1} \big( U_\lambda^{(\varepsilon)} \big)^{2}. }
\end{lemma}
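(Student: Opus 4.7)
The plan is induction on $n$, using the identity
\[
\breve\Psi^{(n)}(a,b,x,y)=\sum_{j\in[3]}\int\psi^{(j)}(a,b,t,w,z,u)\,\breve\Psi^{(n-1)}(w,u,x,y)\,\dd t\,\dd w\,\dd z\,\dd u\qquad(n\ge 2),
\]
which follows by splitting off the first $\psi$-factor in the definition of $\breve\Psi^{(n)}$.

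For the base case $n=1$, I would compute $\lambda\iint\psi^{(j)}(a,b,t,x,z,y)\,\dd t\,\dd z\,\dd x\,\dd y$ for each $j\in\{1,2,3\}$. For $\psi^{(1)}$ (a square), integrating $x,y$ first reduces the expression to $\lambda^2\iint \tlam^{\star 3}(t-z)\tlam(z-t)\tlamo(t-b)\tlam(z-a)\,\dd t\,\dd z$; extracting $\sup_s \lambda^2\tlam^{\star 3}(s) = \trilam$ and integrating the rest identifies the remaining factor as $\trilamo(b-a)/\lambda$, yielding at most $\trilamoo\trilamo$. For $\psi^{(2)}$ (triangle plus dangling edge) the analogous reduction gives $\trilam\,\trilamoo$, and for $\psi^{(3)}$ the two Dirac deltas collapse $t=z=u$ to produce $\trilamo(b-a)\le\trilamoo\trilamo$ (using $\trilamoo\ge 1$). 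Summing the three contributions gives $\lambda\iint\breve\Psi^{(1)}(a,b,x,y)\,\dd x\,\dd y\le 3\trilamoo\trilamo = U_\lambda$ uniformly in $(a,b)$. The inductive step for $\lambda^n\iint\breve\Psi^{(n)}\le U_\lambda^n$ is then immediate: take the supremum over $(w,u)$ of the inner $\breve\Psi^{(n-1)}$-integral (which is $\le U_\lambda^{n-1}$ by IH), pull it out of the $(t,w,z,u)$-integral, and apply the $n=1$ estimate.

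The alternative bound $U_\lambda\bar U_\lambda^{n-1}$ and the indicator-variant bounds require a finer one-step estimate. Here I would split each $\psi$-factor according to whether the end-to-end displacement $|w_i-u_i|$ is $\ge\varepsilon$ or $<\varepsilon$: in the first case, $\trilamo$-factors involving the constrained displacement are replaced by $\trilame$; in the second case, integrals of the form $\int_{|\cdot|<\varepsilon}\tlam$ are bounded by $\ballep$ via Cauchy--Schwarz. Combined with the expansion $\tlamo=\delta_{\cdot,\orig}+\lambda\tlam$ inside every $\parl$-factor (which spawns the collapsed sub-diagrams responsible for the factor $1+U_\lambda$), this yields exactly $\bar U_\lambda=2(1+U_\lambda)U_\lambda^{(\varepsilon)}$ per interior step. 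For $\breve\Psi^{(n,\ge\varepsilon)}$ (resp.\ $\breve\Psi^{(n,<\varepsilon)}$) the indicator attaches to the first (resp.\ last) $\psi$-factor, and the corresponding base-case estimate replaces $\trilamo$ by $\trilame$ (plus small-ball corrections controlled by $\ballep$) at the constrained edge, producing the sharper one-step bound $U_\lambda^{(\varepsilon)}$; the IH then supplies $\bar U_\lambda^{n-1}$ for the remaining factors. The two-sided bound with both indicators combines these, yielding $(U_\lambda^{(\varepsilon)})^2\bar U_\lambda^{n-1}$.

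The main obstacle is the one-step $\varepsilon$-estimate: each of the three $\psi^{(j)}$ diagrams, after unfolding all $\tlamo=\delta_{\cdot,\orig}+\lambda\tlam$ and all $\parl$-factors, splits into several topologically distinct sub-diagrams, and each sub-diagram must be matched against exactly one of the five terms comprising $U_\lambda^{(\varepsilon)}=5\trilamoo(\trilam+\trilame+\ballep+(\ballep)^2)$. Tracking which edge is constrained to have displacement $\ge\varepsilon$ versus $<\varepsilon$ in each sub-diagram, and explaining why precisely two of the five terms carry factors of $\ballep$ (from squared small-ball integrals), is the main combinatorial bookkeeping challenge.
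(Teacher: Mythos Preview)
Your proposal follows the paper's approach: induction on $n$, with the base case treating each $\psi^{(j)}$ separately and the recursive identity peeling off one $\psi$-factor per step. Two points need correction, however. First, the factor $(1+U_\lambda)$ in $\bar U_\lambda=2(1+U_\lambda)U_\lambda^{(\varepsilon)}$ is \emph{not} produced by the $\tlamo$ expansion; it is inductive slack. The paper's inductive step for the $\bar U_\lambda$-bound on $\breve\Psi^{(n)}$ splits according to whether the intermediate pair $(s,t)$ satisfies $|s-t|\ge\varepsilon$ or $|s-t|<\varepsilon$, producing $U_\lambda\cdot(\bar U_\lambda)^{n-2}U_\lambda^{(\varepsilon)}+U_\lambda^{(\varepsilon)}\cdot U_\lambda(\bar U_\lambda)^{n-2}$, and the $(1+U_\lambda)$ absorbs the factor of $2$ arising from this sum. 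Second, and relatedly, your claim that ``the IH then supplies $\bar U_\lambda^{n-1}$ for the remaining factors'' is too optimistic: the IH on the plain $\breve\Psi^{(n-1)}$ gives only $U_\lambda(\bar U_\lambda)^{n-2}$, and $U_\lambda\le\bar U_\lambda$ is not available in general. The induction has to carry all four inequalities simultaneously, with the $\ge\varepsilon$/$<\varepsilon$ case split performed at \emph{each} inductive step on the intermediate pair; this is precisely why the third inequality (with both indicators present) is part of the induction hypothesis and not merely a final statement. Your identification of the one-step $\varepsilon$-estimates as the main bookkeeping is correct; the paper carries them out by explicit pictorial bounds, using only $\tlam\le 1$ on the short edge followed by $\lambda\int\mathds 1_{\{|\cdot|<\varepsilon\}}=(\ballep)^2$---no Cauchy--Schwarz is needed.
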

\begin{proof}
The proof is by induction on $n$. The induction hypothesis is that the three inequalities in Lemma~\ref{lem:DB:breve_psi_bounds} hold for $n-1$.

\underline{Base case, bound on $\breve\Psi^{(1)}$.} Let $n=1$. By translation invariance,
	\eqq{ \sup_{a,b} \lambda \int \psi^{(j)}(a,b,t,w,z,u) \dd(t,w,z,u) = \sup_{a} \lambda \int \psi^{(j)}(\orig,a,t,w,z,u) \dd(t,w,z,u) \label{eq:DB:breve_Psi_induction_base} }
for $j=1,2,3$. Starting with $j=1$, the integral on the right-hand side of~\eqref{eq:DB:breve_Psi_induction_base} is equal to
	\al{ \lambda^3 \iint \tlam(z) & \tlam(t-z) \tlamo(a-t) \Big( \iint \tlam(u-z) \tlam(w-u) \tlam(t-w) \dd u \dd w \Big) \dd z \dd t \\
			& = \lambda \iint \tlam(z) \tlam(t-z) \tlamo(a-t) \trilam(t-z) \dd z \dd t \\
			& \leq \trilam \trilamo(a) \leq \trilamoo \trilamo,}
as $\trilam \leq \trilamoo$. For $j=2$, we substitute $y'=y-u$ for $y \in\{t,w,z\}$, and we can bound~\eqref{eq:DB:breve_Psi_induction_base} by
	\al{ \lambda^2 & \int \tlam(z) \tlam(u-z) \tlam(t-u) \tlam(t-z) \tlamo(w-t) \tlamo(a-w) \dd (t,w,z,u) \\
			& = \lambda^2 \iint \tlam(z') \tlam(t') \tlam(t'-z') \Big( \iint \tlam(z'+u) \tlamo(a-w'-u) \tlamo(w'-t') \dd w' \dd u \Big) \dd z' \dd t' \\
			& = \lambda^2 \iint \tlam(z') \tlam(t') \tlam(t'-z') \trilamoo(a+z'-t') \dd z' \dd t' \\
			& \leq \trilamoo \trilam(\orig) \leq \trilamoo \trilamo.}
For $j=3$, the integral in~\eqref{eq:DB:breve_Psi_induction_base} is $\trilamo(a) \leq \trilamo \leq \trilamoo\trilamo$. In total, this gives the claimed bound for $n=1$.

We show how we represent bounds of the above form in pictorial format by repeating the above bounds for $j=1,2$. This is redundant at this point, but as the pictorial bounds are more accessible as well as more efficient, this will make later bounds easier to read. For $j=1$, letting $\vec v=(t,w,z,u)$, the above bound is executed pictorially as
	\[ \lambda \sup_{a} \int\psi^{(1)}(\orig,a,\vec v) \dd\vec v = \lambda^3 \sup_{{\color{darkorange}{\bullet}}} \int \mathrel{\raisebox{-0.25 cm}{\includegraphics{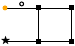}}}
			 \ \leq \lambda^3 \sup_{\textcolor{darkorange}{\bullet}} \Big( \int \mathrel{\raisebox{-0.25 cm}{\includegraphics{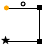}}}
			 	\Big( \sup_{\textcolor{lblue}{\bullet}, \textcolor{green}{\bullet}} \int \mathrel{\raisebox{-0.25 cm}{\includegraphics{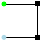}}} \Big)\Big) \leq \trilamo \trilam. \]
Let us explain the above line in more detail. As was the case in Figure~\ref{fig:Psidiagrams}, factors of $\tlam$ become lines, factors of $\tlamo$ become lines with a `$\circ$', points integrated over become black squares, and points over which we take the supremum become colored disks. The color indicates the location of the point in the diagram (however, using different colors is not essential). We note that the factor $\tlam(z)$ is interpreted as a line between $z$ and the origin $\orig$. We denote the origin either by `$\bigstar$' or by putting no symbol at all. To avoid cluttering the diagrams, we only use the `$\bigstar$' symbol for the origin to highlight changes in position due to substitutions.

To give the pictorial bound for $j=2$, we have to represent the performed substitution. Note that after the substitution, the variable $u$ appears in the two factors $\tlamo(w'+u-a)$ and $\tlam(z'+u)$. We interpret this as a line between $-u$ and $z'$ as well as a line between $a-u$ and $w'$. In this sense, the two lines do not meet in $u$, but they have endpoints that are a constant vector $a$ apart. We represent this as
	\[ \int \tlamo(\textcolor{blue}{w'}+u-\textcolor{altviolet}{a}) \tlam(\textcolor{darkgreen}{z'}+u) \dd u = \int \parl(\textcolor{blue}{w'}+u-\textcolor{altviolet}{a},\textcolor{darkgreen}{z'}+u) \dd u
					 = \int \mathrel{\raisebox{-0.25 cm}{\includegraphics{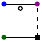}}}.\]
In other words, we represent the pair of points $u$ and $u-a$ with a dashed line. One endpoint of this dashed line will always be a square (representing $u$ in our case), the other a colored disk. With this notation, the pictorial bound for $j=2$ is
	\al{ \lambda \sup_{a} \int\psi^{(2)}(\orig,a,\vec v) \dd\vec v & = \lambda^3 \sup_{\textcolor{darkorange}{\bullet}} \int \mathrel{\raisebox{-0.25 cm}{\includegraphics{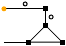}}}
			 \ = \lambda^3 \sup_{\textcolor{darkorange}{\bullet}} \int \mathrel{\raisebox{-0.25 cm}{\includegraphics{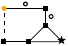}}} \\
			 & \leq \lambda^3 \int \Big( \Big( \sup_{\textcolor{lblue}{\bullet},\textcolor{green}{\bullet},\textcolor{darkorange}{\bullet}} 
			 	\int \mathrel{\raisebox{-0.25 cm}{\includegraphics{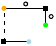}}} \Big)
			 	\mathrel{\raisebox{-0.25 cm}{\includegraphics{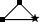}}} \Big) \leq \trilamoo \trilam. }
Note that the origin moved (from lower left to lower right) after the substitution.

\underline{Base case, bound on $\breve\Psi^{(1,\geq\varepsilon)}$.} To deal with $\breve\Psi^{(1,\geq \varepsilon)}$, we again have to bound the three types. For $j=1,2$, we can drop the indicator and recycle the bounds obtained on~\eqref{eq:DB:breve_Psi_induction_base}. For $j=3$, we observe that
	\[ \lambda \int \mathds 1_{\{|a| \geq \varepsilon\}} \psi^{(3)}(\orig,a,t,w,z,u) \dd(t,w,z,u) = \mathds 1_{\{|a| \geq \varepsilon\}}\trilamo(a) \leq \trilame,\]
which gives the desired bound.

\underline{Base case, bound on $\breve\Psi^{(1,<\varepsilon)}$.} \col{We show that we can bound $\breve\Psi^{(1,<\varepsilon)}$ by either $U_\lambda^{(\varepsilon)}$ or $(U_\lambda^{(\varepsilon)})^2$ respectively, giving} the desired second and third inequality of Lemma~\ref{lem:DB:breve_psi_bounds} and thus concluding the base case. The first bound on $\breve\Psi^{(1,<\varepsilon)}$ (the one for $j=1$) is given pictorially as
	\algn{ \lambda \sup_{a} \int & \mathds 1_{\{|w-u| < \varepsilon\}}\psi^{(1)}(\orig,a,t,w,z,u) \dd(\vec v) = \lambda^3 \sup_{\textcolor{darkorange}{\bullet}} 
			\int \mathrel{\raisebox{-0.25 cm}{\includegraphics{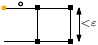}}} \notag \\
		& \leq \lambda^4 \sup_{\textcolor{darkorange}{\bullet}} \int \mathrel{\raisebox{-0.25 cm}{\includegraphics{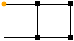}}}
			 + \lambda^3 \sup_{\textcolor{darkorange}{\bullet}} \int \mathrel{\raisebox{-0.25 cm}{\includegraphics{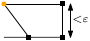}}}
			\ \leq \trilam^2 + \lambda^3 \sup_{\textcolor{darkorange}{\bullet}} \int \mathrel{\raisebox{-0.25 cm}{\includegraphics{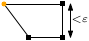}}}. \label{eq:DB:breve_psi_1_2_eps}}
An arrow with a `$< \varepsilon$' denotes the indicator of the two endpoints being less than $\varepsilon$ apart. The disappearing line in the last bound means that we have applied the (rough) bound $\tlam\leq 1$. We investigate the second term in the bound of~\eqref{eq:DB:breve_psi_1_2_eps} and write
	\[\tlame(x) := \mathds 1_{\{|x| < \varepsilon\}} \tlam(x).\]
In the subsequent lines, the substitutions $w'=w-u$ and $z'=z-u$ give
	\al{\lambda^3 \sup_{\textcolor{darkorange}{\bullet}} \int \mathrel{\raisebox{-0.25 cm}{\includegraphics{Psi_5__first_bound_eps.pdf}}}
			& = \lambda^3 \sup_a \int \tlam(a-z)\tlam(z-u) \tlame(w-u)\tlam(w-a) \dd(w,z,u) \\
		& = \lambda^3 \sup_a \int \tlame(w') \Big( \iint \tlam(u+w'-a) \tlam(a-u-z') \tlam(z') \dd u \dd z' \Big) \dd w' \\
		& = \lambda \int \tlame(w') \trilam(w') \dd w' \leq \big(\ballep\big)^2 \trilam.}
It is here that we see why $\ballep$ was defined with a square root: It allows us to extract two factors of $\ballep$ in the above. For $j=2$, the bounds are
	\al{ \lambda \sup_{a} \int & \mathds 1_{\{|w-u| < \varepsilon\}}\psi^{(2)}(\orig,a,t,w,z,u) \dd(\vec v) = \lambda^2 \sup_{\textcolor{darkorange}{\bullet}} 
			\int \mathrel{\raisebox{-0.25 cm}{\includegraphics{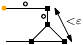}}} \\
		& = \lambda^3 \sup_{\textcolor{darkorange}{\bullet}} \int \mathrel{\raisebox{-0.25 cm}{\includegraphics{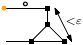}}}
			+ \lambda^2 \sup_{\textcolor{darkorange}{\bullet}} \int \mathrel{\raisebox{-0.25 cm}{\includegraphics{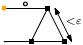}}} \\
		& \leq \lambda^4 \sup_{\textcolor{darkorange}{\bullet}} \int \mathrel{\raisebox{-0.25 cm}{\includegraphics{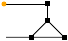}}}
			+ \lambda^3 \sup_{\textcolor{darkorange}{\bullet}} \int \mathrel{\raisebox{-0.25 cm}{\includegraphics{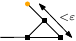}}}
			+ \lambda \sup_{\textcolor{darkorange}{\bullet}} \Big( \int \mathrel{\raisebox{-0.25 cm}{\includegraphics{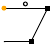}}}
				\Big( \lambda \sup_{\textcolor{lblue}{\bullet}} \int \mathrel{\raisebox{-0.25 cm}{\includegraphics{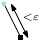}}} \Big)\Big) \\
		& \leq \trilam^2 + \lambda^3 \sup_{\textcolor{darkorange}{\bullet}} \int \mathrel{\raisebox{-0.25 cm}{\includegraphics{Psi_2__second_collapse_eps.pdf}}}
			\ + \trilamo \big(\ballep \big)^2 . }
To deal with the middle term, we see that, uniformly in $a\in\Rd$,
	\[\lambda^3 \int \mathrel{\raisebox{-0.25 cm}{\includegraphics{Psi_2__second_collapse_eps.pdf}}} \ \leq
		\lambda^3 \int \mathrel{\raisebox{-0.25 cm}{\includegraphics{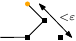}}} \ =
		\lambda^3 \int \tlam(z) \tlam(t-z) \tlam(a-t) \Big( \int \mathds 1_{\{|a-u| < \varepsilon\}} \dd u\Big) \dd z \dd t \leq \trilam \big( \ballep \big)^2. \]
Finally, the contribution of $j=3$ is bounded by
	\al{ \lambda \sup_{a} \int & \mathds 1_{\{|w-u| < \varepsilon\}}\psi^{(3)}(\orig,a,t,w,z,u) \dd\vec v = \lambda \sup_{\textcolor{darkorange}{\bullet}} 
			\int \mathrel{\raisebox{-0.25 cm}{\includegraphics{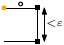}}} \\
		& \leq \lambda^2 \sup_{\textcolor{darkorange}{\bullet}} \int \mathrel{\raisebox{-0.25 cm}{\includegraphics{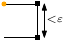}}}
			+ \lambda \sup_{\textcolor{darkorange}{\bullet}} \int \mathrel{\raisebox{-0.25 cm}{\includegraphics{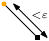}}} 
			\ \leq \lambda^2 \sup_{\textcolor{darkorange}{\bullet}} \int \mathrel{\raisebox{-0.25 cm}{\includegraphics{Psi_7__no_collapse_eps.pdf}}} + \big( \ballep \big)^2. }
We next bound the first term in the above. With the change of variables $z'=z-w$,
	\algn{\lambda^2 \int \mathrel{\raisebox{-0.25 cm}{\includegraphics{Psi_7__no_collapse_eps.pdf}}} 
			\ &= \lambda^2 \iint \tlam(z) \tlame(w-z) \tlam(a-w) \dd z \dd w \notag \\
		& = \lambda \int \tlame(z') \Big(\lambda\int \tlam(z'+w)\tlam(a-w) \dd w \Big) \dd z' \leq \trilamo \big( \ballep \big)^2, \label{eq:DB:triangle_with_epsilon_ballep_bound}}
as $\lambda(\tlam\star\tlam)(x) \leq \trilamo(x)$. Summing these contributions, $\lambda \iint \breve\Psi^{(1, <\varepsilon)} (a,b,x,y) \dd x \dd y$ is bounded by
	\[ 2 \trilam^2 + 2 \trilam \big(\ballep\big)^2 + 2 \trilamo \big(\ballep\big)^2 + \big(\ballep\big)^2 \leq 2 \trilam^2 + (4 \trilamo+1) \big(\ballep\big)^2
				 \leq \col{ U_\lambda^{(\varepsilon)} \wedge \big( U_\lambda^{(\varepsilon)} \big)^2},\]
as required. This concludes the base case.

\underline{Inductive step, $n>1$.} Let now $n>1$ and assume that the lemma is true for $n-1$. Then
	\[ \lambda^n \iint \breve\Psi^{(n)}(a,b,x,y) \dd x \dd y = \lambda \iint \breve\Psi^{(1)}(a,b,s,t) \Big( \lambda^{n-1} \iint \breve\Psi^{(n-1)}(s,t,x,y) \dd x \dd y \Big) \dd s \dd t \leq \big( U_\lambda\big)^n. \] 
For the second bound, a case distinction between $|s-t| \geq \varepsilon$ and $|s-t| <\varepsilon$ gives
	\al{\lambda^n \iint \breve\Psi^{(n)}(a,b,x,y) & \dd x \dd y = \lambda \iint \breve\Psi^{(1)}(a,b,s,t) \Big( \lambda^{n-1} \iint \breve\Psi^{(n-1, \geq\varepsilon)}(s,t,x,y) \dd x \dd y \Big) \dd s \dd t \\
			& \quad + \lambda \iint \breve\Psi^{(1,<\varepsilon)}(a,b,s,t) \Big( \lambda^{n-1} \iint \breve\Psi^{(n-1)}(s,t,x,y) \dd x \dd y \Big) \dd s \dd t \\
			& \leq U_\lambda (2(1+U_\lambda))^{n-2} \big(U_\lambda^{(\varepsilon)}\big)^{n-1} + U_\lambda^{(\varepsilon)} U_\lambda \big((2(1+U_\lambda)) U_\lambda^{(\varepsilon)} \big)^{n-2}\\
			& = 2 U_\lambda (2(1+U_\lambda))^{n-2} \big(U_\lambda^{(\varepsilon)}\big)^{n-1}.}
The same case distinction for $\breve\Psi^{(n,\geq\varepsilon)}$ yields
	 \al{\lambda^n \iint \breve\Psi^{(n,\geq\varepsilon)}(a,b,x,y) & \dd x \dd y = \lambda \iint \breve\Psi^{(1,\geq\varepsilon)}(a,b,s,t) 
	 					\Big( \lambda^{n-1} \iint \breve\Psi^{(n-1,\geq\varepsilon)}(s,t,x,y) \dd x \dd y \Big) \dd s \dd t \\
	 		& \quad + \lambda \iint \mathds 1_{\{|a-b| \geq \varepsilon\}}\breve\Psi^{(1,<\varepsilon)}(a,b,s,t) \Big( \lambda^{n-1} \iint \breve\Psi^{(n-1)}(s,t,x,y) \dd x \dd y \Big) \dd s \dd t \\
	 		& \leq U_\lambda^{(\varepsilon)} (2(1+U_\lambda))^{n-2} \big(U_\lambda^{(\varepsilon)}\big)^{n-1} + \big( U_\lambda^{(\varepsilon)}\big)^2 U_\lambda \big((2(1+U_\lambda)) U_\lambda^{(\varepsilon)} \big)^{n-2},}
which is at most $(2(1+U_\lambda))^{n-1} \big(U_\lambda^{(\varepsilon)}\big)^{n}$. The bounds for $\breve\Psi^{(n,<\varepsilon)}$ follow similarly. Having initiated and advanced the induction hypothesis, the claim follows by induction.
\end{proof}
\begin{proof}[Proof of Proposition~\ref{thm:Psi_Diag_bound}]
For $n=0$,
	\eqq{ \lambda\iint \Psi^{(0)}(w,u) \dd w \dd u = \sum_{j=1}^{3} \lambda \iint \psi_0^{(j)}(\orig,w,u) \dd w \dd u = 2\trilam(\orig) + \lambda \int \connf(u) \dd u \leq 2\trilam + \lambda, \label{eq:DB:prop_N0_base}}
which is certainly bounded by $2\trilamo+\lambda+1$. Let now $n \geq 1$ and note that
	\[\lambda^{n+1} \iint \Psi^{(n)}(x,y) \dd x \dd y = \lambda^{n+1} \iint \Psi^{(0)}(w,u) \Big( \iint \breve\Psi^{(n)}(w,u,x,y) \dd x \dd y \Big) \dd w \dd u. \]
At this stage, we can employ the bound on $\breve\Psi^{(n)}$ from Lemma~\ref{lem:DB:breve_psi_bounds} to obtain the bound in terms of $U_\lambda$ (without $U_\lambda^{(\varepsilon)}$). To obtain the second bound, we continue and observe that
	\al{ \lambda^{n+1} \iint \Psi^{(n)}(x,y) \dd x \dd y & \leq \Big( \lambda \iint \Psi^{(0)}(w,u) \dd w \dd u \Big) \Big( \sup_{w,u} \lambda^n \iint \breve\Psi^{(n,\geq\varepsilon)}(w,u,x,y) \dd x \dd y \Big) \\
			& \quad + \Big( \lambda \iint \mathds 1_{\{|w-u| < \varepsilon\}}\Psi^{(0)}(w,u) \dd w \dd u \Big) \Big( \sup_{w,u} \lambda^n \iint \breve\Psi^{(n)}(w,u,x,y) \dd x \dd y \Big) \\
			& \leq (2\trilam+\lambda)\big(\bar U_\lambda\big)^n \\
			& \quad + \Big( \lambda \iint \mathds 1_{\{|w-u| < \varepsilon\}}\Psi^{(0)}(w,u) \dd w \dd u \Big) U_\lambda \big(\bar U_\lambda\big)^{n-1}. }
To finish the proof, we need a bound similar to~\eqref{eq:DB:prop_N0_base} with the extra indicator $\mathds 1_{\{|w-u| < \varepsilon\}}$, i.e.~we still are confronted with a sum of three terms. The one for $j=3$ is directly bounded by $(\ballep)^2$. For the two terms $j=1,2$, we proceed similarly to~\eqref{eq:DB:triangle_with_epsilon_ballep_bound} (setting $a=\orig$). This results in the bound
	\[ \lambda^2 \iint \tlam^{(\varepsilon)}(z) \tlam(z-y) \tlam(y) \dd z \dd y \leq \trilamo \big(\ballep\big)^2.\]
Thus, \col{using that $\ballep \leq 1$,}
	\al{ \lambda^{n+1} \iint \Psi^{(n)}(x,y) \dd x \dd y &\leq \big(\bar U_\lambda\big)^{n-1} 
					\Big( (2\trilam+\lambda) \bar U_\lambda +  U_\lambda (2\trilamo + 1) \big(\ballep\big)^2 \Big) \\
		& \leq 2 (2\trilamo + \lambda + 1) \big(\bar U_\lambda\big)^{n}. \qedhere}
\end{proof}

We finally present the proof of Corollary~\ref{cor:DB:Pi_x_bounds}, which is a consequence of  Proposition~\ref{thm:Psi_Diag_bound}. 
\begin{proof}[Proof of Corollary \ref{cor:DB:Pi_x_bounds}]
Let $x\in\Rd$. Note that writing the statement of Proposition~\ref{thm:DB:Pi_bound_Psi} in terms of $\Psi^{(n)}$ (defined in~\eqref{def:Psi_N}) gives
	\al{ \Pi_\lambda^{(n)}(x) &\leq \lambda^n \iint \Psi^{(n-1)}(w,u)\Big( \int \psi_n(w,u,t,z,x) \dd(t,z) \Big) \dd (w,u) \\
		& \leq \big( \trilamo + 1 \big) \lambda^n \iint \Psi^{(n-1)}(w,u) \dd (w,u). }
Applying Proposition~\ref{thm:Psi_Diag_bound} implies the statement.
\end{proof}

\subsection{Diagrammatic bounds with displacement} 
\label{sec:diagrammaticbounds_disp}
In this section, we prove Propositions~\ref{thm:PsiDiag_Bound_Derangement_N1} and \ref{thm:PsiDiag_Bound_Derangement}. Recall the definitions of $W_\lambda(a;k)$ and $H_\lambda(a,b;k)$ in Definition \ref{def:displacement_quantities}. In terms of pictorial diagrams as introduced in Section \ref{sec:diagrammaticbounds_no_disp}, we can represent $W_\lambda(a;k)$ and $H_\lambda(a,b;k)$ as
	\[ W_\lambda(\textcolor{altviolet}{a};k) = \lambda \int \mathrel{\raisebox{-0.25 cm}{\includegraphics{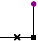}}} \qquad
		\text{and } H_\lambda(\textcolor{altviolet}{a},\textcolor{blue}{b};k) = 
						\lambda^5 \int \mathrel{\raisebox{-0.25 cm}{\includegraphics{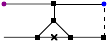}}}. \]
Since $W_\lambda(a;k) = \lambda\int \tklam(y)\tlam(a-y)\dd y$, the line carrying the factor $[1-\cos(k\cdot y)]$, which is the one representing $\tklam(y)$, is marked with a `$\times$'. This once more illustrates the power of the pictorial diagrams.
\medskip

We first prove Proposition~\ref{thm:PsiDiag_Bound_Derangement}, after which we sharpen the bound obtained for $n=1$ as stated in Proposition~\ref{thm:PsiDiag_Bound_Derangement_N1}.
The proof of Proposition~\ref{thm:PsiDiag_Bound_Derangement} needs another preparatory lemma (and definition), which we now state. We introduce $\bar\Psi^{(0)}$ and $\bar\Psi^{(n)}$ for $n \geq 1$, which are similar to $\Psi^{(n)}$, as
	\al{ \col{\bar \Psi^{(0)} (w_0,z_0)} &\col{:= \lambda \triangle(w_0,z_0,\orig) +  \delta_{\orig, w_0} \delta_{\orig, z_0},} \\
		\bar\Psi^{(n)}(w_n, z_n) &:= \int \phi_n(u_1,w_0,z_0,\orig) \\
			& \times \prod_{i=1}^{n-1} \Big( \phi^{(1)}(u_{i+1},w_i,t_i,z_i,u_i,z_{i-1}) + \sum_{j=2}^{3} \phi^{(j)}(u_{i+1},t_i,w_i,z_i,u_i,z_{i-1}) \Big) \\
			& \times \bigg( \lambda^2 \Box(w_n,t_n,u_n,z_n) \tlam(z_{n-1}-t_n) + \lambda \triangle(t_n,z_n,u_n) \parl(t_n-w_n, z_{n-1}-w_n) \\
			& \quad + \delta_{z_n,u_n}\delta_{t_n,z_n} \tlam(t_n-w_n) \tlam(z_{n-1}-w_n) \bigg) \dd \big((\vec w, \vec z)_{[0,n-1]}, (\vec t, \vec u)_{[1,n]} \big). }
Much like $\Psi^{(n)}$, $\bar\Psi^{(n)}$ is a product over ``segments'', and these segments (mostly) are a sum of three terms each. The three terms of the $\Psi^{(n)}$ segments and the $\bar\Psi^{(n)}$ segments are quite similar in nature---see the proof sketch of Lemma~\ref{lem:DB:bar_Psi_bounds} below. We stress the fact that in $\phi^{(1)}$, the labels of the points $w_i$ and $t_i$ are swapped. We also define $\bar\Psi^{(n,<\varepsilon)}(w,z) := \bar\Psi^{(n)}(w,z) \mathds 1_{\{|w-z| < \varepsilon\}}$. The following lemma is very much in the spirit of Lemma~\ref{lem:DB:breve_psi_bounds} and Proposition~\ref{thm:Psi_Diag_bound}:
\begin{lemma} \label{lem:DB:bar_Psi_bounds}
For $n \geq 0$,
	\al{ \lambda^{n+1} \iint \bar\Psi^{(n)}(w,z) \dd w \dd z & \leq (\trilam+\lambda) \big(U_\lambda \wedge \bar U_\lambda \big)^{n}, \\
			\lambda^{n+1} \iint \Big(\bar\Psi^{(n, <\varepsilon)}(w,z) - \mathds 1_{\{n=0\}}\,\delta_{0,z}\delta_{0,w}\Big) \dd w \dd z & \leq 
			\red{\bar U_\lambda^{n+1}. }}
\end{lemma}

The details of the proof of Lemma \ref{lem:DB:bar_Psi_bounds} are omitted, as they are analogous to those of Proposition~\ref{thm:Psi_Diag_bound}. \red{However, we sketch how to obtain the bounds. Note that for the first bound,} the factor $\trilam + \lambda$ stems from the base case \red{$\bar\Psi_0$} and is not identical to the one in Proposition~\ref{thm:Psi_Diag_bound} \red{(since $\Psi_0$ is different)}. We give a pictorial sketch of why we obtain the same bounds in the inductive step. We note that $\Psi^{(n)}$ consists of $\Psi^{(0)}$ times $n$ factors of $\psi$, which is a sum of three terms represented pictorially as
	\[ \int \Big( \mathrel{\raisebox{-0.25 cm}{\includegraphics{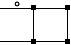}}}
		+\mathrel{\raisebox{-0.25 cm}{\includegraphics{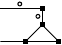}}} 
		+ \mathrel{\raisebox{-0.25 cm}{\includegraphics{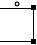}}} \Big). \]
On the other hand, $\bar\Psi^{(n)}$ consists of $\bar\Psi^{(0)}$ times $n$ factors of the form
	\[ \int \Big( \mathrel{\raisebox{-0.25 cm}{\includegraphics{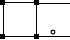}}}
		+\mathrel{\raisebox{-0.25 cm}{\includegraphics{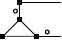}}} 
		+ \mathrel{\raisebox{-0.25 cm}{\includegraphics{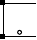}}} \Big). \]
The pictures are identical for $j=1,3$ and almost identical for $j=2$, and, most importantly, they can be bounded in the exact same way. \red{Regarding the second bound in Lemma \ref{lem:DB:bar_Psi_bounds}, note that for $n=0$, the left-hand side is equal to $\trilam$ ($\le \bar U_\lambda$) and immediately gives the base case. The inductive step then follows as in the first bound.}

\begin{proof}[Proof of Proposition~\ref{thm:PsiDiag_Bound_Derangement}]
What makes this proof more cumbersome is the displacement factor $[1-\cos(k \cdot x)]$. In making use of Lemma~\ref{lem:cosinesplitlemma}, we would like to ``split it up'' and distribute it over the single segments of the diagram. The $(n+1)$st segment diagram $\phi_0(\vec v_0) \big(\prod_{i=1}^{n-1} \phi(\vec v_i) \big) \phi_n(\vec v_n)$ that we have derived in~~\eqref{eq:DB:F_bound_phi} contains a product of factors of $\tlam$, which sit along the ``top'' of the diagram and whose arguments sum up to $x$. Hence, we can rewrite $x = \sum_{i=0}^{n} d_i$, where the term $d_i$ is the displacement which falls on the $i$-th segment along the top. We note that we can replace ``top'' by ``bottom'' in the previous sentences, and that in our later depictions of diagrams, we often use the bottom to carry the displacement.

Since the even-indexed segments appear in the diagram in the way displayed in Figure~\ref{fig:Psidiagrams} and the odd-indexed ones appear upside down (of course, this is just a matter of perspective), the displacement $d_i$ depends on the parity of $i$ and might be of the form $d_i =w_i-u_{i-1}$ or $d_i= u_i-w_{i-1}$ for $i\in \{1,\ldots, n\}$ (where $u_n=w_n=x$), whereas $d_0=w_0$, since we can fix the orientation of the first segment. Depending on the particular type of segment $i$, other forms are possible (for degenerate segments, $d_i$ might collapse to $\orig$ altogether). A key step is the inequality
	\[ 1-\cos(k \cdot x) \leq (n+1) \sum_{i=0}^{n} [1-\cos(k \cdot d_i)] \]
due to the Cosine-split Lemma~\ref{lem:cosinesplitlemma}, which allows us to split the diagram into a sum of $(n+1)$ diagrams, each of which only contains a local displacement $d_i$. We can thus hope to use the bounds on $\Psi$ and $\bar\Psi$ provided by Proposition~\ref{thm:Psi_Diag_bound} and Lemma~\ref{lem:DB:bar_Psi_bounds} for all but one segment.

Before we get to actual bounds, we may have to split $d_i$ once more into a sum of two terms for $i>0$, so that each of these terms appears as an argument of some factor $\tlam$. This strengthens the hope of obtaining a bound on $\widehat\Pi_\lambda^{(n)}(\orig)-\widehat\Pi_\lambda^{(n)}(k)$ in terms of a sum of $(n+1)$ terms, each looking rather similar to the bound we have obtained for $\widehat\Pi_\lambda^{(n)}(\orig)$---that is, $n$ out of the $(n+1)$ segments are bounded by known quantities and one designated factor contains $W_\lambda(k)$ (or the related $H_\lambda(k)$ diagram). For $\vec v_i=(w_{i-1}, u_{i-1}, t_i, w_i, z_i, u_i)$ to be specified below, we define
	\al{ \psi^{(4)}(\vec v_i) &= \tlam(w_i-u_{i-1})\tlam(u_i-w_i)\tlam(w_{i-1}-u_i) \delta_{z_i,u_i} \delta_{t_i,u_i}, \\
			\psi^{(5)}(\vec v_i) &= \tlam(u_i-u_{i-1})\tlam(w_{i-1}-u_i) \delta_{w_i,u_{i-1}}\delta_{z_i,u_i} \delta_{t_i,u_i}, }
so that $\psi^{(3)} = \psi^{(4)} + \psi^{(5)}$. The reason to split $\psi^{(3)}$ up further is to single out $\psi^{(5)}$, which will need some special treatment at a later stage of the proof. For $j \in \{1,2,4,5\}$ and $i\notin \{0,n\}$, we aim to bound
	\eqq{ \lambda^{n+1} \int \psi_0(\vec v_0) \Big( \prod_{l=1}^{i-1} \psi(\vec v_l)\Big) [1-\cos(k\cdot d_i)] \psi^{(j)}(\vec v_i)
				 \prod_{l=i+1}^{n} \psi(\vec v_l) \dd \big((\vec t, \vec z)_{[1,n]},(\vec w, \vec u)_{[0,n-1]}, x\big)
							 \label{eq:DB:disp:Proof_Goal},}
where we write $\vec v_0=(w_0,u_0)$ and $\vec v_l=(w_{l-1}, u_{l-1}, t_l, w_l, z_l, u_l)$ (with $u_n=x$) for $l\in\{1,\ldots, n\}$. 
For $i=0$ and $i=n$, the quantities analogous to~\eqref{eq:DB:disp:Proof_Goal} that we need to bound are
	\algn{ \lambda^{n+1} \int [1-\cos(k\cdot d_0)] \psi_0(\vec v_0) \prod_{l=1}^{n} \psi(\vec v_l) \dd \big((\vec t, \vec z)_{[1,n]},(\vec w, \vec u)_{[0,n-1]}, x\big)
							 \label{eq:DB:disp:Proof_Goal_i0}, \\
			 \lambda^{n+1} \int \psi_0(\vec v_0) \Big( \prod_{l=1}^{n-1} \psi(\vec v_l)\Big) [1-\cos(k\cdot d_n)] \psi_n(\vec v_n) \dd \big((\vec t, \vec z)_{[1,n]},(\vec w, \vec u)_{[0,n-1]}, x\big)
							 \label{eq:DB:disp:Proof_Goal_in}. }
Our proof proceeds as follows. We devise a strategy of proof which gives good enough bounds for all $n\geq 2$, all $j$ and all displacements $d_i$---except when $n=2$ and $j=5$. In a second step, we consider this special scenario separately. We divide the general proof into three cases: 
\begin{compactitem}
\item[(a)] The displacement is on the first segment, i.e. $i = 0$.
\item[(b)] The displacement is on the last segment, i.e. $i = n$.
\item[(c)] The displacement is on an interior segment, i.e.~$0 < i< n$.
\end{compactitem}

\underline{Case (a):} The only option for $d_0$ is $d_0=w_0$, and so $j=1$ is the only contributing case (otherwise $d_0=\orig$ and thus $[1-\cos(k\cdot d_0)] =0$). Next, note that we can rewrite~\eqref{eq:DB:disp:Proof_Goal_i0} as
	\begin{align} & \lambda^{n+1} \int [1-\cos(k\cdot w)] \psi_0^{(1)}(\orig,w,u) \parl(t+x-u,z+x-w) \bar\Psi^{(n-1)}(t,z) \dd(w,u,t,z,x) \notag\\
			\leq & (\lambda + \trilam) \big( U_\lambda\wedge\bar U_\lambda \big)^{n-1}
					\times \sup_{a\in\Rd} \int \lambda [1-\cos(k\cdot (w-x))] \psi_0^{(1)}(x,w,u) \parl(u,w-a) \dd (w,u,x), \label{eq:DB:disp:i=0}\end{align}
where the bound is by virtue of Lemma~\ref{lem:DB:bar_Psi_bounds}. Here we also see the need to introduce $\bar\Psi^{(n)}$. The integral in the right-hand side of~\eqref{eq:DB:disp:i=0} is
	\eqq{ \lambda^2 \int \Big( \int \tklam(w-x) \tlam(u-x) \dd x \Big) \tlam(w-u) \tlamo(u) \tlam(a-w) \dd(w,u) \leq W_\lambda(k) \trilamo(a). \label{eq:DB:disp:i=0_bound}}
As previously, we show how we represent this bound pictorially. We can bound~\eqref{eq:DB:disp:i=0_bound} as
	\[ \lambda^2 \int \mathrel{\raisebox{-0.25 cm}{\includegraphics{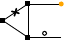}}} \ \leq 
			\lambda \int \Big( \Big( \sup_{\textcolor{green}{\bullet}, \textcolor{lblue}{\bullet}} \lambda \int \mathrel{\raisebox{-0.25 cm}{\includegraphics{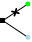}}} \Big)
					\mathrel{\raisebox{-0.25 cm}{\includegraphics{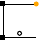}}} \Big) \leq W_\lambda(k) \trilamo. \]

\underline{Case (b):} We turn to $i=n$ and note that, depending on the parity of $n$, either $d_n=x-w_{n-1}$ or $d_n= x-u_{n-1}$. Suppose first that $d_n=x-w_{n-1}$. We can write~\eqref{eq:DB:disp:Proof_Goal_in} as
	\al{\lambda^{n+1} \int & [1-\cos(k\cdot (x-w))] \Psi^{(n-1)}(w,u) \psi_n(w,u,t,z,x) \dd(w,u,t,z,x) \\
			& \leq 2(2\trilamo + \lambda+1) \big(U_\lambda\wedge \bar U_\lambda\big)^{n-1} \times \lambda \sup_{a\in\Rd} \int [1-\cos(k\cdot x)] \big( \psi_n^{(1)} + \psi_n^{(2)} \big) (\orig,a,t,z,x) \dd (t,z,x),}
where the bound is by Proposition~\ref{thm:Psi_Diag_bound}. Tending to $\psi_n^{(1)}$ first, we use the Cosine-split Lemma~\ref{lem:cosinesplitlemma} to write $x=(x-z)+z$, which yields
	\eqan{\lambda \int [1-\cos(k & \cdot x)] \psi_n^{(1)}(\orig,a,t,z,x) \dd (t,z,x) = \lambda^2\int \mathrel{\raisebox{-0.25 cm}{\includegraphics{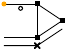}}} 
	\label{bound-with-explicit-triangle}\\
			& \leq 2 \lambda^2 \Big[\int \mathrel{\raisebox{-0.25 cm}{\includegraphics{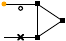}}}
						\ + \int \mathrel{\raisebox{-0.25 cm}{\includegraphics{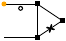}}} \Big] \nn\\
			& \leq 2 \lambda^2 \bigg[ \int \mathrel{\raisebox{-0.25 cm}{\includegraphics{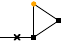}}}
						\ + \lambda \int \mathrel{\raisebox{-0.25 cm}{\includegraphics{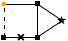}}}
						\ + \int \Big( \mathrel{\raisebox{-0.25 cm}{\includegraphics{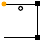}}}
						\Big( \sup_{\textcolor{green}{\bullet}, \textcolor{lblue}{\bullet}} \int \mathrel{\raisebox{-0.25 cm}{\includegraphics{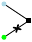}}} \Big)\Big) \bigg] \nn\\
			& \leq 2\lambda^2 \int \Big( \mathrel{\raisebox{-0.25 cm}{\includegraphics{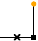}}}
						\Big( \sup_{\textcolor{green}{\bullet}, \textcolor{lblue}{\bullet}} \int \mathrel{\raisebox{-0.25 cm}{\includegraphics{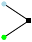}}} \Big)\Big)
						 + 2\lambda^3 \int \Big(\Big( \sup_{\textcolor{lblue}{\bullet}, \textcolor{green}{\bullet}}
								 \int \mathrel{\raisebox{-0.25 cm}{\includegraphics{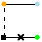}}} \Big) 
						 \mathrel{\raisebox{-0.25 cm}{\includegraphics{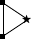}}}\Big) + 2 \trilamo W_\lambda(k) \nn\\
			& \leq 2 \Big( \trilamo W_\lambda(k) + \trilam W_\lambda(k) + \trilamo W_\lambda(k) \Big) \leq 6 \trilamo W_\lambda(k), \nn}
where we recall the usage of the `$\star$' symbol for the origin after substitution. In the above, note that
	\[ \int \tklam(\textcolor{darkgreen}{b_1}+x) \tlam(\textcolor{blue}{b_2}+x-\textcolor{altviolet}{a}) \dd x 
				= \int \mathrel{\raisebox{-0.25 cm}{\includegraphics{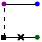}}} \ = W_\lambda(b_2-b_1-a;k).\]
For $\psi_n^{(2)}$,
	\[ \lambda \int [1-\cos(k \cdot x)] \psi_n^{(2)}(\orig,a,t,z,x) \dd (t,z,x) = \lambda\int \mathrel{\raisebox{-0.25 cm}{\includegraphics{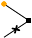}}}
				\ = W_\lambda(a;k) \leq W_\lambda(k).\]
The bounds for $d_n = x-u_{n-1}$ are the same due to symmetry. In total, noting that $\trilam \leq 2 \trilamo + 1 \leq (\trilamoo)^2$, cases (a) and (b) contribute at most
	\[ 2\big(\lambda+(\trilamoo)^2 \big) \big(U_\lambda\wedge \bar U_\lambda\big)^{n-1} W_\lambda(k) \big( \trilamo + 6 \trilamo + 1\big)
				\leq 16 \big(\lambda+(\trilamoo)^2 \big) \big(U_\lambda\wedge \bar U_\lambda\big)^{n-1} W_\lambda(k) \trilamoo. \]

\underline{Case (c):} Let $i \in \{1, \ldots, n-1\}$. We rewrite~\eqref{eq:DB:disp:Proof_Goal} as
	\algn{\lambda^{n+1} \int \Psi^{(i-1)}(w_{i-1},u_{i-1}) [1-\cos(k\cdot d_i)] \psi^{(j)}(\vec v_i) & \parl(b_1+x-u_i,b_2+x-w_i) \notag\\
				& \quad \times \bar\Psi^{(n-i-1)}(b_1,b_2) \dd (\vec v_i,\vec b_{[1,2]},x), \label{eq:DB:disp:case_b} }
where $\vec v_i = (w_{i-1}, u_{i-1}, t_i, w_i, z_i, u_i)$ and $d_i \in \{w_i-u_{i-1}, u_i-w_{i-1}\}$. In the next lines, we drop the subscript $i$, set $\vec v=(\orig,a,t,w,z,u,b_1,b_2,x;d,k),$ $\vec y=(t,w,z,u,x)$, $d\in\{w-a,u\}$, and write
	\eqn{
	\label{tilde-psi-j-bound}
	\tilde\psi_k^{(j)}(\vec v) = [1-\cos(k\cdot d)] \psi^{(j)}(\orig,a,t,w,z,u) \parl(b_1+x-u,b_2+x-w).
	}
Employing our bounds on $\Psi^{(i-1)}$ in Proposition~\ref{thm:Psi_Diag_bound} and on $\bar\Psi^{(n-i-1)}$  in  Lemma~\ref{lem:DB:bar_Psi_bounds}, and noting that $(1+\trilam)\wedge(1+2\trilamo)\le (\trilamoo)^2$, we see that~\eqref{eq:DB:disp:case_b} is bounded by
\begin{align} \label{newid-a} 
\lambda^{n+1} \int \Psi^{(i-1)} & (a_1,a_2) \bigg[ \sup_a \int \tilde\psi_k^{(j)}(\vec v)\, \bar\Psi^{(n-i-1)}(b_1,b_2) \dd (\vec y, \vec b_{[1,2]}) \bigg] \dd \vec a_{[1,2]}  \\
		\leq 2 \big(\lambda+& (\trilamoo)^2\big)^2 \big(U_\lambda\wedge\bar U_\lambda\big)^{n-2} \sup_{a,b_1,b_2} \int \lambda \tilde\psi_k^{(j)}(\vec v) \dd \vec y,  \nonumber 
\end{align}
and this is the key to the proof of \eqref{eq:DB:disp:thm_U_lambda}. 
Similarly, when we combine the bounds on $\Psi^{(i-1)}$ in Proposition~\ref{thm:Psi_Diag_bound} and 
both bounds in Lemma~\ref{lem:DB:bar_Psi_bounds}, we see that~\eqref{eq:DB:disp:case_b} is also bounded by 
\begin{align} 
\label{psi-j-integral-three-terms} 
\lambda^{n+1} \int \Psi^{(i-1)} & (a_1,a_2) \bigg[ \sup_a \int \tilde\psi_k^{(j)}(\vec v)\Big(\mathds 1_{\{|b_1-b_2|\geq\varepsilon\}} \bar\Psi^{(n-i-1)}(b_1,b_2) \\
		& \qquad\qquad\qquad\qquad\qquad\qquad + \bar\Psi^{(n-i-1,<\varepsilon)}(b_1,b_2) \Big) \dd (\vec y, \vec b_{[1,2]}) \bigg] \dd \vec a_{[1,2]} \nonumber \\
		\leq 2 \big(\lambda+& (\trilamoo)^2\big)^2 \bar U_\lambda^{n-2} \sup_{a,b_1,b_2} \int \lambda \tilde\psi_k^{(j)}(\vec v) 
		\big( \mathds 1_{\{|b_1-b_2| \geq \varepsilon \}} + \bar U_\lambda + \mathds 1_{\{i=n-1\}}\,\delta_{0,b_1}\delta_{0,b_2}\big) \dd \vec y,   \nonumber 
\end{align} 
which, together with \eqref{newid-a} is the key to \eqref{eq:DB:disp:thm_U_lambda_eps}. 

In the sequel we prove the upper bounds
\eqn{\label{eq:jIntBd}
	\sup_{a,b_1,b_2} \int \lambda \tilde\psi_k^{(j)}(\vec v) \dd \vec y
	\le
	\begin{cases}
		2 W_\lambda(k) \big(\trilam\trilamoo+\trilam\trilamo+\trilamo\trilamo\big)&\text{for~} j=1;\\
		W_\lambda(k)\big(4\trilamo+8\trilam\trilamoo+2\trilam\trilamo\big)+2H_\lambda(k) \quad&\text{for~} j=2;\\
		W_\lambda(k)\trilamo&\text{for~} j=4;\\
		W_\lambda(k)\trilamoo&\text{for~} j=5,
	\end{cases}	
}
as well as 
\eqn{\label{eq:jIntIndicBd}
	\sup_{a,b_1,b_2} \int \lambda \tilde\psi_k^{(j)}(\vec v) 
		\mathds 1_{\{|b_1-b_2| \geq \varepsilon\}} \dd \vec y
	\le
	\begin{cases}
		2 W_\lambda(k) \big(\trilam\trilamoo+\trilam\trilamo+\trilamo\trilame\big)&\text{for~} j=1;\\
		W_\lambda(k)\big(4\trilame+8\trilam\trilamoo+2\trilam\trilamo\big)+2H_\lambda(k) &\text{for~} j=2;\\
		W_\lambda(k)\trilamo&\text{for~} j=4;\\
		W_\lambda(k)\trilamoo&\text{for~} j=5.
	\end{cases}	
}
The main difference between \eqref{eq:jIntBd} and \eqref{eq:jIntIndicBd} lies in the operator $\trilame$ appearing for $j=1$ and $j=2$ in \eqref{eq:jIntIndicBd}, which is changed to $\trilamo$ in \eqref{eq:jIntBd}. 

We next explain how \eqref{eq:jIntBd} and \eqref{eq:jIntIndicBd} can be used to derive \eqref{eq:DB:disp:thm_U_lambda} and \eqref{eq:DB:disp:thm_U_lambda_eps}. 
For this, we recall from Section \ref{sec-statement-bounds-LA-coefficients}  the bounds $\trilam\le\trilamo\le\trilamoo$, $1\le\trilamoo$ and $\trilame\le\trilamo\le U_\lambda/3$ as well as $\trilame\le\bar U_\lambda/3$ and finally $\trilam\trilame\le\trilam\trilamo\le\trilam\trilamoo\le(U_\lambda\wedge\bar U_\lambda)/3$. 
It might further be seen that 
	$$\trilamo\le (\trilamoo(1+U_\lambda))\wedge(\trilamoo + U_\lambda + \bar U_\lambda).$$   
These bounds enable us to deduce from \eqref{eq:jIntBd} that 
\eqn{\label{eq:IntPsiJBd1}
	\sum_{j=1,2,4,5}\sup_{a,b_1,b_2} \int \lambda \tilde\psi_k^{(j)}(\vec v) \dd \vec y
	\le 30\big(W_\lambda(k)\trilamoo(1+U_\lambda)+H_\lambda(k)\big).
}
We now combine \eqref{eq:IntPsiJBd1} with \eqref{newid-a} to establish \eqref{eq:DB:disp:thm_U_lambda} for $n\ge3$ (recall that the factor $n+1$ arises from the Cosine-split Lemma \ref{lem:cosinesplitlemma}).  For $n=2$ we need to extract an additional factor $U_\lambda$ on the right-hand side of \eqref{eq:IntPsiJBd1}. The factor is extractable from the given bounds in \eqref{eq:jIntBd} for $j=1,2,4$ since $\trilam\le\trilamo\le U_\lambda$, and we show separately (at the end of the proof) via a direct argument for $n=2$ and $j=5$ that 
\eqn{ \label{eq:j5n2bd}\lambda^{3} \int \psi_0(\vec v_0) [1-\cos(k\cdot d_1)] \psi^{(5)}(\vec v_1)
				 \psi(\vec v_2) \dd \big((\vec t, \vec z)_{[1,2]},(\vec w, \vec u)_{[0,1]}, x\big)
	\leq 3 W_\lambda(k) (U_\lambda \wedge \bar U_\lambda).
}
This establishes \eqref{eq:DB:disp:thm_U_lambda} subject to \eqref{eq:jIntBd} and \eqref{eq:j5n2bd}. 

In order to show \eqref{eq:DB:disp:thm_U_lambda_eps}, it is sufficient to establish that 
\begin{align}
	&	\sum_{j=1,2,4,5}\sup_{a,b_1,b_2} \int \lambda \tilde\psi_k^{(j)}(\vec v) 
		\big( \mathds 1_{\{|b_1-b_2| \geq \varepsilon \}} + \bar U_\lambda + \mathds 1_{\{i=n-1\}}\,\delta_{0,b_1}\delta_{0,b_2}\big) \dd \vec y \nonumber \\
	&\hskip10em \le 30\big(W_\lambda(k)\trilamoo(\trilamoo+U_\lambda+\bar U_\lambda)\bar U_\lambda^{{\mathds 1} \{n=2\}}+H_\lambda(k)\big).
\end{align}
We consider the three summands appearing in the integral on the right-hand side separately. 
The term involving the indicator $\mathds 1_{\{|b_1-b_2| \geq \varepsilon \}}$ we get as before, but now with \eqref{eq:jIntIndicBd} rather than \eqref{eq:jIntBd} (again together with \eqref{eq:j5n2bd} for the case $n=2$ and $j=5$). 
For the remaining two terms, we apply \eqref{eq:jIntBd} directly (and ignore the indicators in the last summand for $n\ge3$). 
However, for $n=2$ and the term involving the indicators $\mathds 1_{\{i=n-1\}}\,\delta_{0,b_1}\delta_{0,b_2}$, we need a special argument to extract the additional factor $\bar U_\lambda$, and we give this argument at the end of the proof. This establishes \eqref{eq:DB:disp:thm_U_lambda_eps} subject to \eqref{eq:jIntBd}, \eqref {eq:jIntIndicBd}, \eqref{eq:j5n2bd}, and the special cases that will be handled below.

\paragraph{\bf Proof of \eqref{eq:jIntBd} and \eqref{eq:jIntIndicBd}.}
We will mostly focus on the proof of \eqref{eq:jIntIndicBd}, as \eqref{eq:jIntBd} will then follow rather straightforwardly. 
First, we note that
	\[ 
	\lambda^2 \sup_{a,b_1,b_2} \int \tilde\psi_k^{(j)} (\vec v) \dd \vec y = \lambda^2 \sup_{a,b} \int [1-\cos(k\cdot d)] \psi^{(j)}(\orig,a,t,w,z,u) \parl(x-u,b+x-w) \dd \vec y.
	\]
We now turn to the particular values for $j$ and $d$. Starting with $j=1$, recall that $d\in\{u,w-a\}$. Again, we turn to pictorial bounds. In the following lines, we use an arrow together with `$ \geq \varepsilon$' to represent indicators of the form $\mathds 1_{\{|\cdot| \geq \varepsilon\}}$.

Setting $\vec v =(\orig,a,t,w,z,u,b,\orig,x;u,k)$ and $\vec y=(t,w,z,u,x)$, the bound for $d=u$ can be obtained as
	\begin{align} \lambda \int \tilde\psi_k^{(1)} (\vec v) & \mathds 1_{\{|b| \geq \varepsilon\}} \dd \vec y = 
				\lambda^3 \int [1-\cos(k\cdot u)] \tlam(z) \tlamo(a-t) \Box(z,t,w,u) \notag \\ & \qquad \times \tlamo(x-u) \tlam(b+x-w) \mathds 1_{\{|b| \geq \varepsilon\}} \dd\vec y \notag \\
		& = \lambda^3 \int \mathrel{\raisebox{-0.25 cm}{\includegraphics{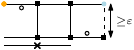}}} 
				\ \leq 2\lambda^3 \int \mathrel{\raisebox{-0.25 cm}{\includegraphics{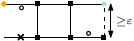}}} 
					+ 2\lambda^3 \int \mathrel{\raisebox{-0.25 cm}{\includegraphics{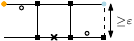}}} \notag \\
		& \leq 2\lambda^3 \int \mathrel{\raisebox{-0.25 cm}{\includegraphics{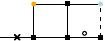}}}
				+ 2 \lambda^4 \int \mathrel{\raisebox{-0.25 cm}{\includegraphics{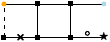}}}
				+ 2 \lambda^3 \int \Big( \mathrel{\raisebox{-0.25 cm}{\includegraphics{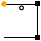}}}
					\Big( \sup_{\textcolor{green}{\bullet}} \int \mathrel{\raisebox{-0.25 cm}{\includegraphics{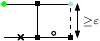}}} \Big) \Big), \label{eq:DP_bounds_psi_1}
	\end{align}
where we have used the Cosine-split Lemma~\ref{lem:cosinesplitlemma}. The first summand in the right-hand side of~\eqref{eq:DP_bounds_psi_1} is bounded via
	\[ \lambda^3 \int \mathrel{\raisebox{-0.25 cm}{\includegraphics{Disp_i_1__split_left_collapse.pdf}}} 
			\ \leq \lambda \int \Big( \mathrel{\raisebox{-0.25 cm}{\includegraphics{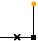}}}
				\Big( \sup_{\textcolor{green}{\bullet}, \textcolor{violet}{\bullet}} \int \mathrel{\raisebox{-0.25 cm}{\includegraphics{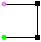}}}
				\Big( \sup_{\textcolor{green}{\bullet}, \textcolor{violet}{\bullet}} \int \mathrel{\raisebox{-0.25 cm}{\includegraphics{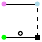}}} \Big)\Big)\Big)
			\leq W_\lambda(k) \trilam \trilamoo. \]
The second summand in the r.h.s.~of~\eqref{eq:DP_bounds_psi_1} is bounded by
	\[ \lambda^4 \int \mathrel{\raisebox{-0.25 cm}{\includegraphics{Disp_i_1__split_left_subst.pdf}}} 
			\ \leq \lambda^4 \int \Big( \Big( \sup_{\textcolor{green}{\bullet}, \textcolor{violet}{\bullet}} 
						\int \mathrel{\raisebox{-0.25 cm}{\includegraphics{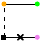}}} \Big)
						\mathrel{\raisebox{-0.25 cm}{\includegraphics{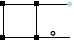}}} \Big) \leq W_\lambda(k) \trilam\trilamo. \]
In the third summand in the r.h.s.~of~\eqref{eq:DP_bounds_psi_1}, we obtain the bound
	\eqan{\lambda^3 \int \Big( \mathrel{\raisebox{-0.25 cm}{\includegraphics{Disp_i_1__split_right_bound1.pdf}}}
					\Big( \sup_{\textcolor{green}{\bullet}} \int \mathrel{\raisebox{-0.25 cm}{\includegraphics{Disp_i_1__split_right_bound2.pdf}}} \Big) \Big)
			& \leq \trilamo \sup_{\textcolor{green}{\bullet}} \lambda^2 \int \mathrel{\raisebox{-0.25 cm}{\includegraphics{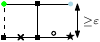}}}\label{bound-j=1-vep}\\
			& \leq \trilamo \sup_{\textcolor{green}{\bullet}} \lambda^2 \int \Big( \sup_{\textcolor{darkorange}{\bullet}, \textcolor{violet}{\bullet}}
							\int \mathrel{\raisebox{-0.25 cm}{\includegraphics{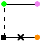}}} \Big)
				\mathrel{\raisebox{-0.25 cm}{\includegraphics{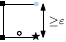}}} \Big) \leq \trilamo W_\lambda(k) \trilame .
				\nn}
Note that this third summand is the only one where we obtain a bound in terms of $\trilame$. 
\red{For the corresponding expression in \eqref{eq:jIntBd}, where we do not have the indicator $\mathds 1_{\{|b| \geq \varepsilon\}}$ available, we simply replace $\trilame$ in \eqref{bound-j=1-vep} by $\trilamo$ (and the same adaptation shall be used numerous times below).} 
Again, the displacement for $d=w-a$ yields the same upper bound due to symmetry. The contribution of~\eqref{eq:DP_bounds_psi_1} is therefore bounded by $6 W_\lambda(k) (U_\lambda\wedge\bar U_\lambda)$.

We turn to $j=2$ and see that, similarly to~\eqref{eq:DP_bounds_psi_1},
	\begin{align} \lambda \int \tilde\psi_k^{(2)} (\vec v) \mathds 1_{\{|b| \geq \varepsilon\}} \dd \vec y &= 
				\lambda^2 \int [1-\cos(k\cdot u)] \tlam(z) \tlamo(a-w) \tlamo(t-w) \triangle(z,t,u) \notag \\ &\qquad \times \tlamo(x-u) \tlam(b+x-w) \mathds 1_{\{|b| \geq \varepsilon\}} \dd\vec y \notag \\
			& = \lambda^2 \int \mathrel{\raisebox{-0.25 cm}{\includegraphics{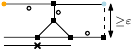}}}
					\ = \lambda^2 \bigg[ \int \mathrel{\raisebox{-0.25 cm}{\includegraphics{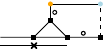}}}
					 \ + \lambda \int \mathrel{\raisebox{-0.25 cm}{\includegraphics{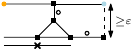}}} \bigg] \notag \\
			& \leq \lambda^2 \bigg[ \int \mathrel{\raisebox{-0.25 cm}{\includegraphics{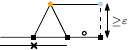}}}
					+ \lambda \int \mathrel{\raisebox{-0.25 cm}{\includegraphics{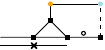}}} \bigg] \label{eq:DB:disp:psi_2_coll} \\
			& \qquad + 2\lambda^3 \bigg[ \int \mathrel{\raisebox{-0.25 cm}{\includegraphics{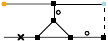}}}
					+ \int \mathrel{\raisebox{-0.25 cm}{\includegraphics{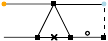}}}
					+ \lambda \int \mathrel{\raisebox{-0.25 cm}{\includegraphics{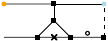}}} \bigg]. \label{eq:DB:disp:psi_2_no_coll}
	\end{align}
We investigate the five bounding diagrams separately. The first summand in~\eqref{eq:DB:disp:psi_2_coll} is
	\eqn{
	\label{bound-j=2-vep}
	\lambda^2 \int \mathrel{\raisebox{-0.25 cm}{\includegraphics{Disp_i_2__collapse_1_coll_2.pdf}}}
		\ \leq \lambda^2 \int \mathrel{\raisebox{-0.25 cm}{\includegraphics{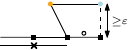}}}
		\ \leq \lambda^2 \int \Big( \Big(\sup_{\textcolor{green}{\bullet}} \int \mathrel{\raisebox{-0.25 cm}{\includegraphics{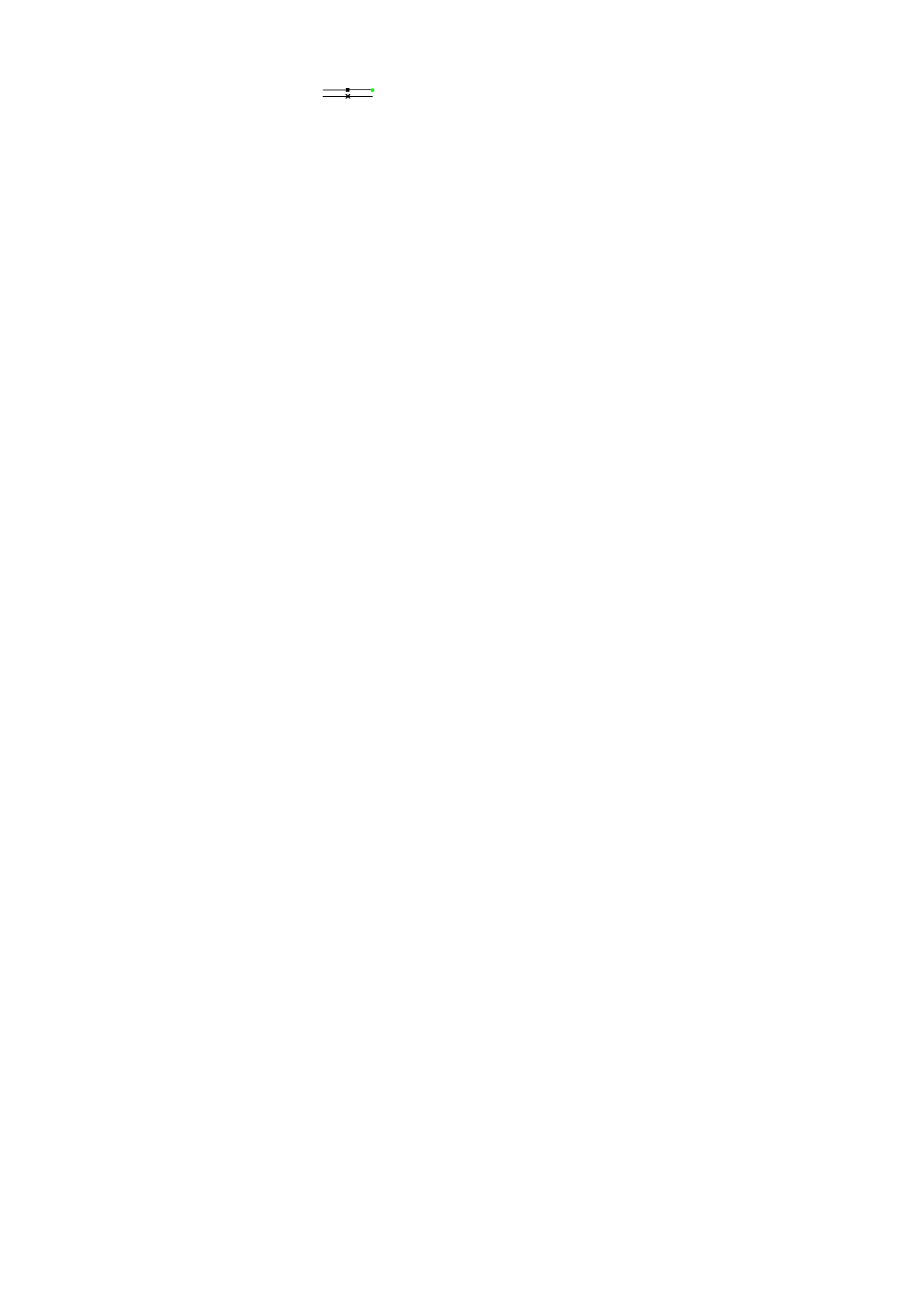}}} \Big)
			\mathrel{\raisebox{-0.25 cm}{\includegraphics{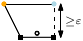}}} \Big)
		\leq 4 W_\lambda(k) \trilame.
	}
\red{Again for the corresponding expression in \eqref{eq:jIntBd}, we simply replace $\trilame$ in \eqref{bound-j=1-vep} by $\trilamo$.}

The second summand in~\eqref{eq:DB:disp:psi_2_coll} is bounded by
	\al{ \lambda^3 \int & \mathrel{\raisebox{-0.25 cm}{\includegraphics{Disp_i_2__collapse_1_no_coll_2.pdf}}} 
			\ \leq \lambda^3 \int \Big( \mathrel{\raisebox{-0.25 cm}{\includegraphics{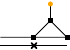}}}
				\Big( \sup_{\textcolor{green}{\bullet}, \textcolor{violet}{\bullet}} \int \mathrel{\raisebox{-0.25 cm}{\includegraphics{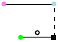}}} \Big)\Big)
		 	\leq 2 \trilamoo \lambda^3 \bigg[ \int \mathrel{\raisebox{-0.25 cm}{\includegraphics{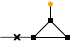}}}
		 		+ \int \mathrel{\raisebox{-0.25 cm}{\includegraphics{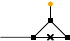}}} \bigg] \\
		 & \leq 2 \trilamoo \lambda^3 \bigg[ \int \mathrel{\raisebox{-0.25 cm}{\includegraphics{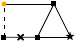}}}
		 		 \; + \int \Big( \mathrel{\raisebox{-0.25 cm}{\includegraphics{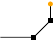}}}
		 		 \Big( \sup_{\textcolor{green}{\bullet}, \textcolor{lblue}{\bullet}} 
		 		 \int \mathrel{\raisebox{-0.25 cm}{\includegraphics{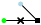}}} \Big)\Big) \bigg]
		 	\leq 4 \trilamoo \trilam W_\lambda(k). }
The first summand in~\eqref{eq:DB:disp:psi_2_no_coll} is
	\[ \lambda^3 \int \mathrel{\raisebox{-0.25 cm}{\includegraphics{Disp_i_2__no_collapse_1_split_left.pdf}}} 
			\ = \lambda^3 \int \mathrel{\raisebox{-0.25 cm}{\includegraphics{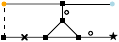}}}
			\ \leq \lambda^3 \int \Big( \Big( \sup_{\textcolor{green}{\bullet}, \textcolor{violet}{\bullet}} 
					\int \mathrel{\raisebox{-0.25 cm}{\includegraphics{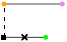}}} \Big)
					\mathrel{\raisebox{-0.25 cm}{\includegraphics{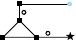}}} \Big) 
			\leq \trilamoo\trilam W_\lambda(k), \]
the second is
	\al{ \lambda^3 \int \mathrel{\raisebox{-0.25 cm}{\includegraphics{Disp_i_2__no_collapse_1_split_right_collapse_2.pdf}}} 
			\ &\leq \lambda^3 \int \Big( \mathrel{\raisebox{-0.25 cm}{\includegraphics{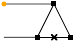}}}
				\Big( \sup_{\textcolor{green}{\bullet}, \textcolor{violet}{\bullet}} 
				\int \mathrel{\raisebox{-0.25 cm}{\includegraphics{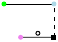}}} \Big) \Big) 
			\leq \lambda^3 \trilamoo \int \Big( \mathrel{\raisebox{-0.25 cm}{\includegraphics{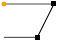}}}
					\Big( \sup_{\textcolor{green}{\bullet}, \textcolor{lblue}{\bullet}} 
					\mathrel{\raisebox{-0.25 cm}{\includegraphics{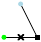}}} \Big)\Big) \\
		& \leq \trilamoo \trilam W_\lambda(k),}
and the third is
	\al{ \lambda^4 \int \mathrel{\raisebox{-0.25 cm}{\includegraphics{Disp_i_2__no_collapse_1_split_right_no_collapse_2.pdf}}} 
			\ &= \lambda^4 \int \mathrel{\raisebox{-0.25 cm}{\includegraphics{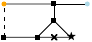}}}
			 + \lambda^5 \int \mathrel{\raisebox{-0.25 cm}{\includegraphics{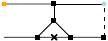}}} \\
			& \leq \lambda^4 \int \Big(\Big( \sup_{\textcolor{lblue}{\bullet}} \int \Big( \sup_{\textcolor{green}{\bullet}, \textcolor{violet}{\bullet}} 
					 \int \mathrel{\raisebox{-0.25 cm}{\includegraphics{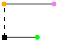}}} \Big)
				 \mathrel{\raisebox{-0.25 cm}{\includegraphics{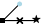}}} \Big)
				 \mathrel{\raisebox{-0.25 cm}{\includegraphics{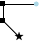}}} \Big) + H_\lambda(k)
				\leq \trilamo \trilam W_\lambda(k) + H_\lambda(k). }
The displacement $d=w-a$ is handled like the first term of~\eqref{eq:DB:disp:psi_2_no_coll} by symmetry, and thus the bound for $d=u$ suffices. We conclude that the joint contribution of~\eqref{eq:DB:disp:psi_2_coll} and~\eqref{eq:DB:disp:psi_2_no_coll} is bounded by $14 W_\lambda(k)(U_\lambda\wedge\bar U_\lambda(k)) + 2 H_\lambda(k)$. 

We turn to $j=4$, for which we get
	\eqan{
	\label{bound-j=4-vep}
	\lambda \int \tilde\psi_k^{(4)} (\vec v) \mathds 1_{\{|b| \geq \varepsilon\}} \dd \vec y &= 
				\lambda^2 \int [1-\cos(k\cdot u)] \tlam(u) \tlamo(a-w) \tlam(u-w) \\
			& \qquad \times \tlamo(x-u) \tlam(b+x-w) \mathds 1_{\{|b| \geq \varepsilon\}} \dd(w,u,x) \nn\\
			& = \lambda^2 \int \mathrel{\raisebox{-0.25 cm}{\includegraphics{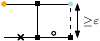}}}
				\ = \lambda^2 \int \mathrel{\raisebox{-0.25 cm}{\includegraphics{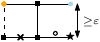}}} 
				\ \leq \lambda \int \Big( \Big( \sup_{\textcolor{green}{\bullet},\textcolor{violet}{\bullet}}
					 \int \mathrel{\raisebox{-0.25 cm}{\includegraphics{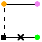}}} \Big)
				\mathrel{\raisebox{-0.25 cm}{\includegraphics{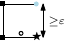}}} \Big) \nn\\
			& \leq \trilame W_\lambda(k). \nn}

Finally, $j=5$ (where $d=u$ is the only option) yields
	\eqan{\lambda \int \tilde\psi_k^{(5)} (\vec v) \mathds 1_{\{|b| \geq \varepsilon\}} \dd \vec y 
	&= 
				\lambda \int [1-\cos(k\cdot u)] \tlam(u) \tlam(z-a) \tlamo(x-u) \tlam(b+x-a) \mathds 1_{\{|b| \geq \varepsilon\}} \dd(u,x) \nonumber \\
			& = \lambda \int \mathrel{\raisebox{-0.25 cm}{\includegraphics{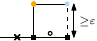}}}
				\ \leq \lambda \int \Big( \mathrel{\raisebox{-0.25 cm}{\includegraphics{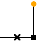}}} \Big( \sup_{\textcolor{green}{\bullet},\textcolor{violet}{\bullet}}
				 \int \mathrel{\raisebox{-0.25 cm}{\includegraphics{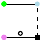}}} \Big)\Big)
				\leq \trilamoo W_\lambda(k). 
	}			
\red{This concludes the proof of \eqref{eq:jIntBd} and \eqref{eq:jIntIndicBd}, and to prove \eqref{eq:DB:disp:thm_U_lambda} and \eqref{eq:DB:disp:thm_U_lambda_eps} we are thus left with dealing with the special cases arising for $n=2$.}

\paragraph{\bf Remaining contributions for $n=2$.} 
We first prove \eqref{eq:j5n2bd}, which deals with the integral over $\psi^{(5)}$ in the setting when $n=2$. The left-hand side of \eqref{eq:j5n2bd} can be written as a sum of  
	\eqq{
	\lambda^3 \int \psi_0^{(s)}(\orig,w,u) \tklam(z-w) \tlam(z-u) \psi_2(u,z,y_1,y_2,x) \dd(w,u,z,y_1,y_2,x) 
	\label{eq:DB:disp:N2_j5_goal}
	}
\red{over $s=1,2,3$. For $s=1,2$,} we can bound~\eqref{eq:DB:disp:N2_j5_goal} by
	\al{ \lambda^3 & \int \psi_0^{(s)}(\orig,w,u) \Big( \sup_{w,u} \int \tklam(z-w) \tlam(z-u) \Big( \sup_{z,u} \int \psi_2(u,z,y_1,y_2,x) \dd(y_1,y_2,x) \Big) \dd u \Big) \dd(w,u) \\
		 &= \lambda^3 \int \psi_0^{(s)}(\orig,w,u) \Big( \sup_{\textcolor{darkorange}{\bullet}, \textcolor{lblue}{\bullet}} 
		 		\int \mathrel{\raisebox{-0.25 cm}{\includegraphics{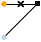}}} \Big( \sup_{\textcolor{green}{\bullet}, \textcolor{violet}{\bullet}}
		 		\lambda \int \mathrel{\raisebox{-0.25 cm}{\includegraphics{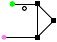}}} 
		 		\ + \sup_{\textcolor{green}{\bullet}, \textcolor{violet}{\bullet}}
		 		\int \mathrel{\raisebox{-0.25 cm}{\includegraphics{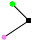}}}\Big) \Big) \dd(w,u) \\
		 & \leq \lambda W_\lambda(k) \trilamo(1 + \trilamo) \int \psi_0^{(j)}(\orig,w,u) \dd (w,u) \leq W_\lambda(k) \trilam\trilamo (1+\trilamo) \leq 2 W_\lambda(k) (U_\lambda \wedge \bar U_\lambda),	 }
as is easily checked for both $s=1,2$. For $s=3$, we shift~\eqref{eq:DB:disp:N2_j5_goal} by $-x$, 
whereupon~\eqref{eq:DB:disp:N2_j5_goal} is equal to
	\al{\lambda^3 & \int \psi_0^{(s)}(-x,w,u) \tklam(z-w) \tlam(z-u) \psi_2(u,z,y_1,y_2,\orig) \dd(w,u,z,y_1,y_2,x) \\
		& \leq \lambda^4 \int \mathrel{\raisebox{-0.25 cm}{\includegraphics{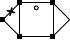}}}
				+ \lambda^3 \int \mathrel{\raisebox{-0.25 cm}{\includegraphics{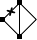}}}
			\ \leq \lambda^4 \int \Big(\Big( \sup_{\textcolor{darkorange}{\bullet}, \textcolor{lblue}{\bullet}} \int \mathrel{\raisebox{-0.25 cm}{\includegraphics{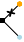}}} \Big)
				\mathrel{\raisebox{-0.25 cm}{\includegraphics{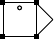}}} \Big)
				+ \lambda^3 \int \Big(\Big( \sup_{\textcolor{darkorange}{\bullet}, \textcolor{lblue}{\bullet}} \int \mathrel{\raisebox{-0.25 cm}{\includegraphics{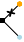}}} \Big)
				\mathrel{\raisebox{-0.25 cm}{\includegraphics{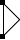}}} \Big) \\
		& \leq W_\lambda(k) \trilam(\trilamo+1) \leq W_\lambda(k) (U_\lambda \wedge \bar U_\lambda).}
\red{All these contributions satisfy the desired bound, as required.}

\red{For the contribution to $n=2$ involving the term $\mathds 1_{\{i=1\}}\,\delta_{0,b_1}\delta_{0,b_2}$, we need to extract an extra factor of $\bar U_\lambda$. Problematic is the occurrence of terms $\trilamo$, because we can not bound it from above by $\bar U_\lambda$. 
Again we develop a direct argument and start from a representation as in \eqref{eq:DB:disp:N2_j5_goal}, where $\psi^{(5)}$ is replaced by the difficult contributions due to $\psi^{(1)}, \psi^{(2)}, \psi^{(4)}$ and $\psi^{(5)}$. The above bound for $\psi^{(5)}$ applies verbatim. We next discuss the problematic contributions due to $\psi^{(1)}$, $\psi^{(2)}$ and $\psi^{(4)}$ in \eqref{bound-j=1-vep}, \eqref{bound-j=2-vep} and \eqref{bound-j=4-vep}, respectively. We are dealing with $\psi_2(u,z,y_1,y_2,x)$ (recall \eqref{eq:DB:disp:N2_j5_goal}), for which $\psi_2 = \psi_2^{(1)} + \psi_2^{(2)}$, and we refer to Definition \ref{def:DB:psi_functions}. The contribution due to $\psi_2^{(1)}$ contains an explicit triangle, and therefore can be dealt with as in, for example, \eqref{bound-with-explicit-triangle}. For the contribution involving $\psi_2^{(2)}$, we obtain a bound that is very alike the ones that we have dealt with before. The one major difference is that the term $\parl(b_1+x-u,b_2+x-w)=\parl(x-u,x-w)$, appearing for $b_1=b_2=0$ in \eqref{tilde-psi-j-bound}, is replaced by $ \tlam(x-u) \tlam(x-w)$, i.e., $\tlamo(x-u)$ is replaced by $\tlam(x-u)$. After this, the problematic terms in \eqref{bound-j=1-vep}, \eqref{bound-j=2-vep} and \eqref{bound-j=4-vep} obtain a factor $\trilam$ instead of $\trilamo$, so that these bound are as desired as well.
}

Carefully putting together all these bounds finishes the proof.
\end{proof}

\begin{proof}[Proof of Proposition~\ref{thm:PsiDiag_Bound_Derangement_N1}]
Let now $n=1$. By \eqref{eq:DB:F_bound_phi}, we get a bound on $\lambda\int [1-\cos(k\cdot x)] \Pi_\lambda^{(1)}(x) \dd x $ of the form
	\eqq{ \lambda^2 \int [1-\cos(k\cdot x)] \sum_{j_0=1}^{3}\sum_{j_1=1}^{2}\psi_0^{(j_0)}(w,u) \psi_1^{(j_1)} (w,u,t,z,x) \dd (w,u,t,z,x). \label{eq:Pi1bound}}
This results in a sum of six diagrams, which we bound one by one. Again, we want to use the Cosine-split Lemma~\ref{lem:cosinesplitlemma} in order to break up the displacement factor $[1-\cos(k\cdot x)]$ and distribute it over edges of the diagrams.
Most terms follow analogously to $n\geq 2$, and so we only do the pictorial representations thereof. For $(j_0,j_1)=(1,1)$, we get the bound
	\eqq{ \lambda^4 \int \mathrel{\raisebox{-0.25 cm}{\includegraphics{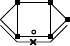}}}
			\ \leq 3\lambda^4 \bigg[ \int \mathrel{\raisebox{-0.25 cm}{\includegraphics{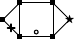}}} 
				+ \lambda \int \mathrel{\raisebox{-0.25 cm}{\includegraphics{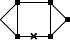}}}
				+ \int \mathrel{\raisebox{-0.25 cm}{\includegraphics{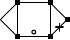}}} \bigg]. \label{eq:DB:Pi1bound_stepA}}
The first and third diagram on the r.h.s.~of~\eqref{eq:DB:Pi1bound_stepA} are the same by symmetry, and so
	\al{ \lambda^4 \int \mathrel{\raisebox{-0.25 cm}{\includegraphics{N1__1,1__general.pdf}}} &
		 \leq 3\lambda^5 \int \Big( \mathrel{\raisebox{-0.25 cm}{\includegraphics{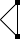}}}
					\Big( \sup_{\textcolor{darkorange}{\bullet}} \int \mathrel{\raisebox{-0.25 cm}{\includegraphics{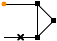}}} \Big)\Big)
				+ 6\lambda^4 \int \mathrel{\raisebox{-0.25 cm}{\includegraphics{N1__1,1__split_right.pdf}}} \\
		& \leq 3 \trilam \sup_{\textcolor{darkorange}{\bullet}} \lambda^3 \int \mathrel{\raisebox{-0.25 cm}{\includegraphics{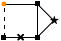}}}
				+ 6\lambda^4 \int \Big( \mathrel{\raisebox{-0.25 cm}{\includegraphics{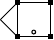}}}
				\Big( \sup_{\textcolor{darkorange}{\bullet}, \textcolor{lblue}{\bullet}}
					\int \mathrel{\raisebox{-0.25 cm}{\includegraphics{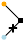}}} \Big) \Big) \\
		&\leq 3 \trilam \sup_{\textcolor{darkorange}{\bullet}} \lambda^3 \int \Big( \Big(\sup_{\textcolor{lblue}{\bullet}, \textcolor{green}{\bullet}} \int
					\mathrel{\raisebox{-0.25 cm}{\includegraphics{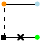}}} \Big)
					\mathrel{\raisebox{-0.25 cm}{\includegraphics{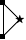}}} \Big) + 6 W_\lambda(k) \trilam\trilamo \\
		& \leq 3W_\lambda(k) \trilam^2 + 6 W_\lambda(k) \trilam\trilamo \leq 9 W_\lambda(k) \trilam\trilamo. }
Similarly, symmetry for $(j_0,j_1)=(1,2)$ gives
	\[ \lambda^3 \int \mathrel{\raisebox{-0.25 cm}{\includegraphics{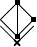}}} 
		\ \leq 4\lambda^3 \int \mathrel{\raisebox{-0.25 cm}{\includegraphics{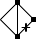}}}
		\ \leq 4\lambda^3 \int \Big( \mathrel{\raisebox{-0.25 cm}{\includegraphics{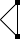}}}
			\Big( \sup_{\textcolor{darkorange}{\bullet}, \textcolor{lblue}{\bullet}} \int \mathrel{\raisebox{-0.25 cm}{\includegraphics{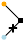}}} \Big)\Big)
		\leq 4 W_\lambda(k) \trilam. \]
By substitution, we can reduce the diagrams $(j_0,j_1)=(2,1)$ to the one from $(j_0,j_1)=(1,1)$, as
	\[ \lambda^4 \int \mathrel{\raisebox{-0.25 cm}{\includegraphics{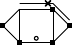}}}
		\ = \lambda^4 \int \mathrel{\raisebox{-0.25 cm}{\includegraphics{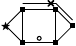}}} 
		\ = \lambda^4 \int \mathrel{\raisebox{-0.25 cm}{\includegraphics{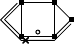}}} \ \leq 16 W_\lambda(k) \trilam\trilamo. \]
The diagram $(j_0,j_1)=(2,2)$, which is $\lambda^2\int \triangle(\orig,w,u) W_\lambda(u;k) \dd u \dd w$, can be bounded directly by $W_\lambda(k) \trilam$. When $(j_0,j_1)=(3,1)$, we see a direct edge, which is pictorially represented with an extra `$\sim$'. The diagram therefore is
	\al{ \lambda^3 \int \mathrel{\raisebox{-0.25 cm}{\includegraphics{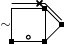}}} &
		\ \leq 2 \lambda^3 \bigg[ \int \mathrel{\raisebox{-0.25 cm}{\includegraphics{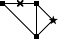}}}
			+ \lambda \int \mathrel{\raisebox{-0.25 cm}{\includegraphics{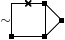}}}
			+ \int \mathrel{\raisebox{-0.25 cm}{\includegraphics{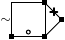}}} \bigg] \\
		& \leq 2W_\lambda(k) \trilam + 2 \lambda^4 \int \Big( \mathrel{\raisebox{-0.25 cm}{\includegraphics{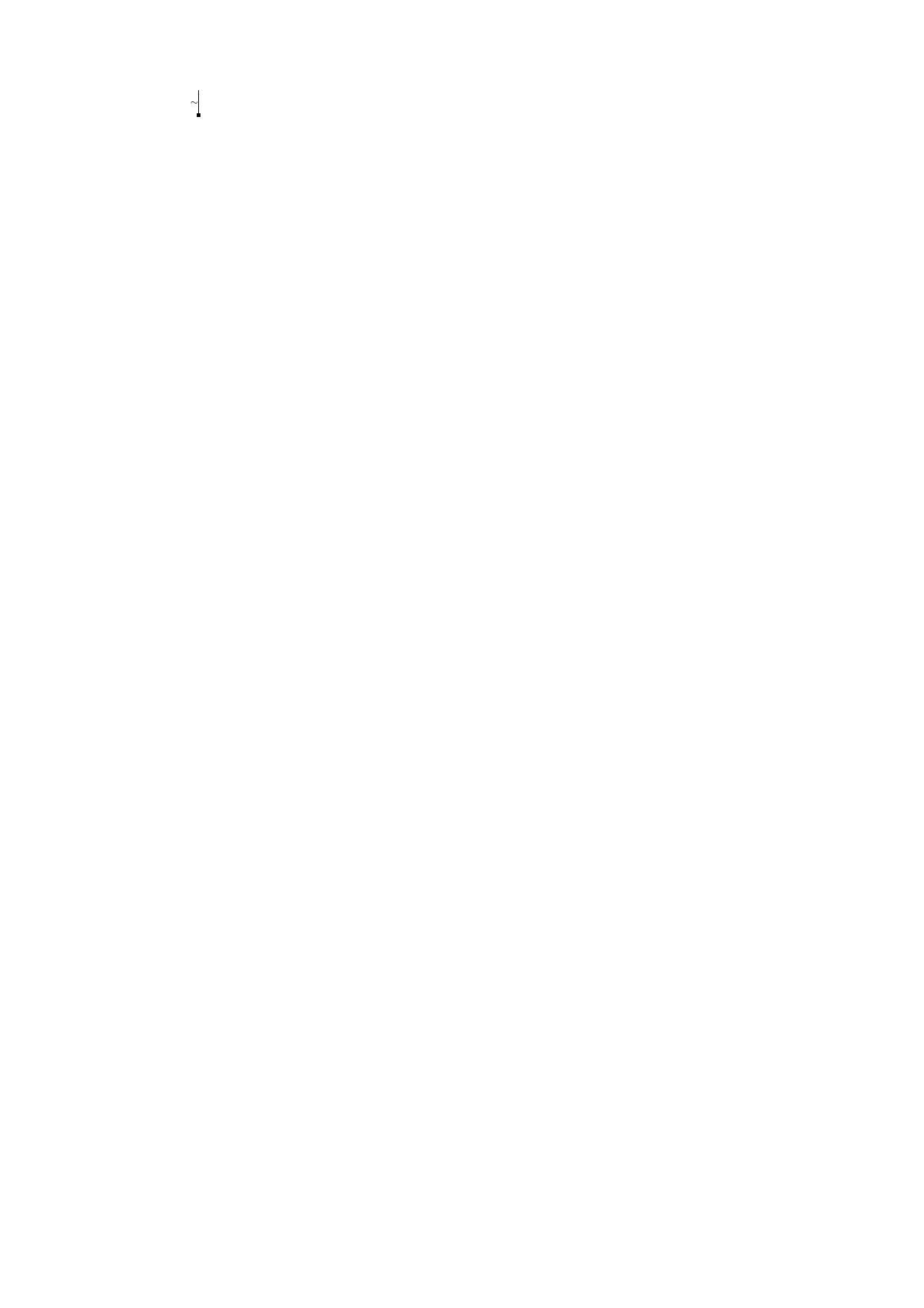}}}
					\Big( \sup_{\textcolor{darkorange}{\bullet}} \int \mathrel{\raisebox{-0.25 cm}{\includegraphics{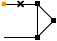}}} \Big)\Big)
					 + 2\lambda W_\lambda(k) (\connf\star\trilamo)(\orig) \\
		& \leq 2 W_\lambda(k) \big( 2\trilam + \lambda(\connf\star\trilam)(\orig)\big) +2 \lambda^4 \Big(\int\connf(x) \dd x \Big) \sup_{\textcolor{darkorange}{\bullet}}
					 \int \mathrel{\raisebox{-0.25 cm}{\includegraphics{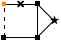}}}  \\
		& \leq 2W_\lambda(k) \Big(2 \trilam + \lambda\trilam\int\connf(u)\dd u\Big) +2 \lambda^4 \sup_{\textcolor{darkorange}{\bullet}}
					 \int \Big( \Big( \sup_{\textcolor{lblue}{\bullet}, \textcolor{green}{\bullet}} \int
					 	\mathrel{\raisebox{-0.25 cm}{\includegraphics{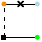}}} \Big)
					  	\mathrel{\raisebox{-0.25 cm}{\includegraphics{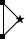}}} \Big) \\
		& \leq 4W_\lambda(k) \trilam (1+\lambda) . }
When $(j_0, j_1)= (3,2)$, we apply Observation~\ref{obs:tildebounds} to get a bound of the form
	\al{\lambda^2 (\tklam\star\tlam\star\connf)(\orig) & = \lambda^2 (\connf_k\star\tlam\star\connf)(\orig) + \lambda^3 \int [1-\cos(k\cdot x)](\connf\star\tlam)^2(x) \dd x \\
		& \leq \lambda^2 (\connf_k\star\tlam\star\connf)(\orig) + 2 \lambda^3 \Big( (\tklam\star\tlam\star\connf^{\star 2})(\orig) + (\connf_k\star\tlam^{\star 2}\star\connf)(\orig) \Big), }
where the Cosine-split Lemma~\ref{lem:cosinesplitlemma} was used in the second line to distribute the factor $[1-\cos(k\cdot x)]$ over $(\connf\star\tlam)$. We still have to handle the first summand. To this end, we use $1 = \mathds 1_{\{|x| \geq \varepsilon\}} + \mathds 1_{\{|x| < \varepsilon\}}$ and obtain
	\al{\lambda^2(\connf_k\star\tlam\star\connf)(\orig) & \leq \lambda^2 \int \connf_k(x) \Big( \mathds 1_{\{|x| \geq \varepsilon\}} \trilamo(x) \Big) \dd x
											+ \lambda^2 \int \tlam^{(\varepsilon)}(x) [1-\cos(k\cdot x)](\tlam\star\tlam)(x) \dd x \\
		& \leq \lambda \trilame \int \connf_k(x) \dd x + 4 \lambda^2 \int \tlam^{(\varepsilon)}(x) (\tklam\star\tlam)(x) \dd x \\
		& \leq \lambda \trilame \int \connf_k(x) \dd x + 4 \big(\ballep\big)^2 W_\lambda(k),}
where, again, we have used the Cosine-split Lemma~\ref{lem:cosinesplitlemma} in the second line. By just the bound from the first line, we get $\lambda^2(\connf_k\star\tlam\star\connf)(\orig) \leq \lambda \trilamo \int \connf_k(x) \dd x$. To finish the proof, note that $\int \connf_k(x) \dd x = 1 - \fconnf(k)$. Carefully putting together all six bounds gives the statement.
\end{proof}

\begin{appendix}
\section{Random walk properties} \label{sec:appendix}

We first prove Proposition~\ref{thm:prelim:conn_fct_examples}. For simplicity of presentation, we 
assume throughout the appendix that $\phiint =1$ (see Section~\ref{sec:rescaling} for an explanation why we may do so).

\begin{proof}[Proof of Proposition~\ref{thm:prelim:conn_fct_examples} (b)]
\col{By definition, (H2.1) and (H2.2) hold}. It remains to prove (H2.3). Note that the constants $b,c_1,c_2$ do not depend on $L$, but on the function $h$ and hence implicitly also on the dimension $d$. (The function $h$ is fixed throughout the proof.)

\col{ Note first that $\fconnf_L(k)=\widehat h(Lk)$. Hence, without loss of generality, we prove (H2.3) for $\connf=h$ and $L=1$. }

For each $k\in\Rd$, the first and second partial derivatives of
$\widehat h$ are given by
\begin{align*}
\frac{\partial}{\partial k_j}\widehat h(k)=\i\int x_j h(x) \e^{\i k\cdot x}\dd x, 	
\quad \frac{\partial^2}{\partial k_j \partial k_l} \widehat h(k)=-\int x_jx_l h(x) \e^{\i k\cdot x}\dd x. 
\end{align*}
Thus, we obtain from the multivariate Taylor theorem (applied to the
origin) with the Peano form of the remainder term that
\begin{align*} 
\widehat{h}(k)=1-\frac{1}{2}\sum^d_{j,l=1}k_jk_l\int x_jx_l h(x)\dd x
-\frac{1}{2}\sum^d_{j,l=1}k_jk_l\int x_jx_lh(x)\big(\e^{\i s k\cdot
  x}-1\big)\dd x, 
\end{align*}
where $s=s(k)\in(0,1)$, and where we have used that
$\int x_jh(x)\dd x=0$ for all $j\in\{1,\ldots,d\}$. 
The symmetric matrix $M$ with entries $\int x_jx_lh(x)\dd x$ is positive semidefinite.
Since we have assumed this matrix to be regular we have that
\begin{align*}
\sum^d_{j,l=1}k_jk_l\int x_jx_l h(x)\dd x\ge c|k|^2,
\end{align*}
where $c>0$ is the smallest eigenvalue of $M$.
By dominated convergence,
$\int x_jx_lh(x)(\e^{\i k'\cdot x}-1)\dd x\to 0$ as $|k'|\to 0$ for
all $j,l\in\{1,\ldots,d\}$, so that the first inequality in (H2.3)
holds for a suitable $c_1$ and all sufficiently small $b>0$. The
second inequality in (H2.3) then follows from the fact that
$\widehat{h}$ is bounded away from $1$ outside any compact
neighborhood of the origin. (Otherwise $h(x)=0$ for almost every
$x\in\Rd$, a contradiction.)
\end{proof}

The next proofs will use the following classical fact on the Fourier transform of the indicator function of a ball; see e.g.~\cite[Section B.5]{Gra14}. Let $r>0$ and $g_r$ be the indicator function of the ball in $\Rd$ with radius $r$ centered at the origin. Then
	\eqq{ \widehat{g}_r(k)=\Big(\frac{2\pi r}{|k|} \Big)^{d/2} J_{d/2}(|k|r), \quad k\in\R^d, \label{eq:app:FT_indicator_ball_bessel} }
where, for $a>-1/2$, the {\em Bessel function}
$J_a\colon\R_{\ge 0}\to\R$ is given by 
	\[ J_a(x):=\sum^\infty_{m=0}\frac{(-1)^m}{m!\,\Gamma(m+a+1)}\Big(\frac{x}{2}\Big)^{2m+a}, \quad x\ge 0. \]
It is helpful to note here that $b_d:=\pi^{d/2} \Gamma(d/2+1)^{-1}$ is the volume of a ball in $\Rd$ with radius 1 \col{and $r_d := \pi^{-1/2} \Gamma(d/2+1)^{1/d}$ is the radius of the unit volume ball (we denote the latter by $\unitball$).}

\begin{proof}[Proof of Proposition~\ref{thm:prelim:conn_fct_examples}(c)] \col{Since we assume that $\phiintL =1$, we assume $h$ and $\connf_L$ to be given as}
	\[ \col{ h(x) = (|x| \vee r_d)^{-d-\alpha} \qquad \text{and} \qquad \connf_L(x) = \frac{h(x/L)}{\int h(x/L)\dd x}. }\]
Note that we only need to consider $\alpha \leq 2$, as the other case is covered in the spread-out (finite-variance) model. We only consider the first bound in (H3.3), as the other properties follow similarly to (H2).

Given $L \geq 1$ and $k\in\R^d$, we again use that $\fconnf_L(k)=\fconnf_1(Lk)=c_h\widehat{h}(Lk)$, where
	\[ c_h^{-1}=\int h(x)\dd x=\int_{|x|\le r_d} (r_d)^{-d-\alpha} \dd x+\int_{|x| >r_d} |x|^{-d-\alpha} \dd x =(r_d)^{-d-\alpha}  + \frac{d b_d}{\alpha (r_d)^\alpha}, \]
and where we have used polar coordinates. Therefore it is no loss of generality to assume $L=1$. \col{Our goal is thus to control $1-\widehat h(k)$. We do this by splitting area of integration into three regions.} By \eqref{eq:app:FT_indicator_ball_bessel},
	\[ \int_{|x|\le r_d}\big(1-\e^{\i k\cdot x}\big)\dd x = \bar b_d-\sum^\infty_{m=0}\frac{(-1)^m \pi^{d/2} (r_d)^d}{m!\Gamma(m+\frac d2 +1)} \Big(\frac{|k| r_d}{2} \Big)^{2m} 
			=\sum^\infty_{m=1}\frac{(-1)^{m+1} \pi^{d/2} (r_d)^d}{m!\Gamma(m+\frac d2 +1)} \Big(\frac{|k| r_d}{2} \Big)^{2m}, \]
which is \col{$\mathcal O(|k|^2) = o(|k|^\alpha)$.}

\col{By choosing $b$ in (H3.3) sufficiently small, we may assume that $r_d |k| < 1$. We use that $1-\cos(k\cdot x) \geq c |k|^2|x|^2$ for $|x| < 1 / |k|$ and obtain
	\[ \int_{r_d < |x| < 1 / |k|} [1-\cos(k\cdot x)] h(x) \dd x \geq c |k|^2 \int_{r_d < |x| < 1 / |k|} |x|^{2-d-\alpha} \dd x.  \]
for some constant $c$. Moreover,
	\[ |k|^2 \int_{r_d < |x| < 1 / |k|} |x|^{2-d-\alpha} \dd x = \begin{cases} c' |k|^\alpha & \mbox{for } \alpha< 2, \\
				c' |k|^2 \log \frac{1}{|k|} & \mbox{for } \alpha =2, \end{cases}  \]
for some constant $c'$. }

\col{Lastly, using $1-\cos(k\cdot x) \leq 2$,
	\[ \int_{|x| > 1 / |k|} [1-\cos(k\cdot x)] h(x) \leq 2 \int_{|x| >1 / |k|} |x|^{-d-\alpha} \dd x = c'' |k|^\alpha, \]
for some constant $c''$. This completes the proof} \end{proof}

Before giving the proof of Proposition~\ref{thm:prelim:conn_fct_examples}(a), we make the following observation about $\connf=\mathds 1_{\unitball}$:
\begin{observation}\label{obs:appendix:unitball_convolutions}
Let $\varepsilon>0$, $m \geq 3$ and $\connf = \mathds 1_{\unitball}$. Then there exists $\rho \in (0,1)$ and $C>0$ such that
	\[ \text{(i) } \sup_{x\in\Rd} \connf^{\star m} (x) \leq C \rho^d, \qquad \text{(ii) } \sup_{x\in\Rd: |x| \geq \varepsilon} \connf^{\star 2} (x) \leq C \rho^d.\]
\end{observation}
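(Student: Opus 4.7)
My plan is to prove both bounds by exploiting the super-exponential decay in $d$ of $b_d := |\unitball| = \pi^{d/2}/\Gamma(d/2+1)$. By Stirling's formula $b_d \sim (2\pi e/d)^{d/2}/\sqrt{\pi d}$, so $b_d^{1/d}\to 0$ and in particular $b_d^2 \leq C_0 \rho_0^d$ for some $C_0 > 0$ and $\rho_0 \in (0,1)$, uniformly in $d$ after absorbing the handful of low dimensions where $b_d > 1$ (note $b_d$ peaks near $d=5$) into the constant $C_0$.

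For part (ii), I would use the classical slicing identity
\[
\connf^{\star 2}(x) \;=\; |\unitball \cap (x+\unitball)| \;=\; 2\,b_{d-1} \int_{|x|/2}^{1} (1-u^2)^{(d-1)/2}\, \mathrm{d}u
\]
valid for $0 \leq |x| \leq 2$ (and $\connf^{\star 2}(x)=0$ for $|x|>2$), obtained by integrating along the axis through the two ball centers. Since $(1-u^2)^{(d-1)/2}$ is decreasing on $[0,1]$, bounding the integrand by its value at the left endpoint $u=|x|/2$ and using $|x|\geq\varepsilon$ yields
\[
\connf^{\star 2}(x) \;\leq\; 2\,b_{d-1}\bigl(1-\varepsilon^2/4\bigr)^{(d-1)/2}.
\]
Setting $\rho := \sqrt{1-\varepsilon^2/4} \in (0,1)$ and using boundedness of $b_{d-1}$ (in fact $b_{d-1}\to 0$ super-exponentially, a bonus), the right-hand side is at most $C\rho^d$ for a constant $C$ depending only on $\varepsilon$.

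For part (i), I would induct on $m \geq 2$ to show $\|\connf^{\star m}\|_\infty \leq b_d^{m-1}$. The base case $m=2$ is the volume estimate $\connf^{\star 2}(x) = |\unitball \cap (x+\unitball)| \leq b_d$, attained at $x=\orig$. The inductive step is Young's inequality,
\[
\|\connf^{\star m}\|_\infty \;=\; \|\connf \star \connf^{\star(m-1)}\|_\infty \;\leq\; \|\connf\|_1\,\|\connf^{\star(m-1)}\|_\infty \;=\; b_d\,\|\connf^{\star(m-1)}\|_\infty.
\]
For $m \geq 3$ this gives $\|\connf^{\star m}\|_\infty \leq b_d^{m-1} \leq b_d^2$ once $b_d \leq 1$, and then $b_d^2 \leq C_0 \rho_0^d$ by the Stirling estimate; finitely many low-$d$ exceptions are again absorbed into $C$.

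No step is genuinely hard. The only bookkeeping care needed is to verify that the constants $C$ and $\rho$ (permitted to depend on $m$ and $\varepsilon$) can be chosen uniformly in $d \geq 1$, which is a direct consequence of the super-exponential decay of $b_d$ past its low-dimensional peak. The whole argument amounts to one line of geometry (the slicing formula) and one line of functional analysis (Young's inequality), combined with Stirling.
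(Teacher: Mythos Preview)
There is a fundamental misreading: in this paper $\unitball$ denotes the \emph{unit-volume} ball of radius $r_d = \pi^{-1/2}\Gamma(d/2+1)^{1/d}$ (this is stated explicitly in the appendix, and the paper's own proof opens with ``so that $\unitball(x) = B(x,r_d)$''). Thus $|\unitball| = \|\connf\|_1 = 1$, not $b_d$; the quantity $b_d=\pi^{d/2}/\Gamma(d/2+1)$ is the volume of the unit-\emph{radius} ball and plays no role here. With the correct normalisation your Young-inequality step for (i) collapses to $\|\connf^{\star m}\|_\infty \leq \|\connf\|_1\,\|\connf^{\star(m-1)}\|_\infty = \|\connf^{\star(m-1)}\|_\infty$, giving no decay whatsoever in $d$. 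Your slicing formula for (ii) likewise has the wrong radius: the upper integration limit should be $r_d$ rather than $1$, and after rescaling the resulting bound is a multiple of $\bigl(1-\varepsilon^2/(4r_d^2)\bigr)^{(d-1)/2}$; since $r_d\to\infty$, this does not decay like $\rho^d$.

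The paper's argument for (i) is genuinely different and does not use smallness of $\|\connf\|_1$. One first reduces to $m=3$ via $\|\connf^{\star(m+1)}\|_\infty\le\|\connf^{\star m}\|_\infty$ (the only place Young enters, and only to say the sequence is non-increasing), then writes $\connf^{\star 3}(\orig) = \int_{\unitball} |\unitball(\orig)\cap\unitball(y)|\dd y$ and splits the domain at $|y|=\delta r_d$ for a fixed $\delta\in(0,1)$. The inner region has volume $\delta^d$; on the outer region the elementary inclusion $\unitball(\orig)\cap\unitball(y)\subseteq B\bigl(y/2,\sqrt{r_d^2-|y|^2/4}\bigr)$ bounds the integrand by $(1-\delta^2/4)^{d/2}$. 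Both pieces decay geometrically in $d$. The exponential gain comes from the \emph{geometry} of overlapping balls (the fraction $\delta$ of the radius is what matters), not from the total mass $\|\connf\|_1$; your approach misses this mechanism entirely.
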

\begin{proof}
Throughout this proof, $|\cdot |$ refers to the Lebesgue measure for sets as well as to the Euclidean norm for vectors. To prove $(i)$, we note that $\sup_x \connf^{\star (m+1)}(x) \leq \sup_x \connf^{\star m}(x)$, so we only consider $m=3$. Next, note that the supremum is in fact a maximum, attaining its maximal value at $x=\orig$. This follows, for example, from the logconcavity of $\connf$ (see, e.g.,~\cite[Theorem 2.18]{DhaJoa88})

Recall that $r_d = \pi^{-1/2} \Gamma(\tfrac d2 + 1)^{1/d}$. We use $B(x,r)$ to denote a ball around $x$ of radius $r$, so that $\unitball(x) = B(x,r_d)$. Fix $\delta \in (0,1)$. We see that
	\al{ \connf^{\star 3} (\orig) &= \int \mathds 1_{\unitball}(y) \int \mathds 1_{\unitball}(y-z) \mathds 1_{\unitball} (-z) \dd z \dd y = \int_{\unitball} \int_{\unitball} \mathds 1_{\unitball}(y-z) \dd z \dd y \\
		&= \int_{\unitball} | \unitball(\orig)\cap \unitball(y) | \dd y \leq \int_{\delta \unitball} 1 \dd z 
							+ |\unitball \setminus \delta \unitball| \sup_{y \in \unitball \setminus \delta\unitball} | \unitball(\orig)\cap \unitball(y) | . }
The first term is simply $\delta^d$. We estimate $| \unitball \setminus \delta\unitball| \leq 1$ and are left to treat $|\unitball(\orig) \cap \unitball(y)| =: f(|y|)$. Note that $f(t)$ is non-increasing in $t$, and so the supremum is attained for $t=\delta r_d$. Let $y$ be a point with $|y|=\delta r_d$, for example $y=(\delta r_d, 0, \ldots, 0)$. We claim that
	\eqq{ \unitball(\orig) \cap \unitball(y) \subseteq B\Big(y/2, \sqrt{r_d^2 - |y|^2/4} \Big) = B\Big(y/2, r_d \sqrt{1- \delta^2/4} \Big) = y/2 + \sqrt{1-\delta^2/4} \; \unitball. \label{eq:appendix:intersection_2_balls}}
Assuming this claim, $(i)$ follows directly from $|\sqrt{1-\delta^2/4} \; \unitball| = (1-\delta^2/4)^{d/2}$. It remains to prove~\eqref{eq:appendix:intersection_2_balls}.

Let $x \in \unitball(\orig) \cap \unitball(y)$. Due to symmetry, we \ch{may assume that $x_1 \geq t/2$  (where $t=|y|=\delta r_d$)}. Now, recalling $y=(t, 0 \ldots, 0)$,
	\[ | x- y/2|^2 = (x_1 - t/2)^2 + \sum_{i=2}^{d} x_i^2 = | x |^2 - x_1 t + \tfrac 14 t^2 \leq r_d^2 - t^2/4,\]
which proves $x \in B(y/2, \sqrt{r_d^2 - t^2/4})$.

To prove $(ii)$, note that, by~\eqref{eq:appendix:intersection_2_balls}, we have $\connf^{\star 2}(x) = |\unitball(\orig) \cap \unitball(x)| = f( | x|) \leq (1-\varepsilon^2/4)^{d/2}$ for $|x| \geq \varepsilon$.
\end{proof}

\begin{proof}[Proof of Proposition~\ref{thm:prelim:conn_fct_examples}(a)]
It is clear that both densities have all moments. The Gaussian case, when $\connf(x)=(2\pi)^{-d/2} \exp(-|x|^2/2)$, is easy to handle. In this case, $\fconnf (k) = \exp(-\tfrac 12 |k|^2)$, and so $1-\fconnf (k) = \tfrac 12 |k|^2 + o(|k|^2)$ \ch{and $\fconnf$ is bounded away from 1 uniformly outside a fixed neighborhood}. Moreover, $\connf^{\star m}(\orig) \leq (\connf \star \connf) (\orig) = (2 \sqrt{\pi})^{-d}$.

The convolution statements for $\connf=\mathds 1_{\unitball}$ are shown in Observation~\ref{obs:appendix:unitball_convolutions}. It remains to prove (H1.3). Taking $k\in\Rd$ and choosing $r=r_d=\pi^{-1/2}\Gamma(d/2+1)^{1/d}$ in \eqref{eq:app:FT_indicator_ball_bessel} gives
	\eqq{ 1-\fconnf(k)=(2\pi)^{d/2}\sum^\infty_{m=1}\frac{(-1)^{m+1}}{m!\Gamma(m+d/2+1)} \frac{|k|^{2m}r^{2m+d}_d}{2^{2m+d/2}}=a_d|k|^{2}+|k|^{4}R_d(k), \label{eq:app:blob_fconnf_bessel} }
where $a_d:=(2\pi)^{d/2}r^{2+d}_d\Gamma(d/2+2)^{-1} 2^{-2-d/2}$ and
	\[ R_d(k):=\col{(2\pi)^{d/2}}\sum^\infty_{m=2}\frac{(-1)^{m+1}}{m!\Gamma(m+d/2+1)} \frac{|k|^{2m-4}r^{2m+d}_d}{2^{2m+d/2}}. \]
Since $\Gamma(d/2+2)=(d/2+1)\Gamma(d/2+1)$,
	\[ a_d=\frac{r^{2}_d}{4(d/2+1)}=\frac{\Gamma(d/2+1)^{2/d}}{4\pi (d/2+1)}. \]
At this stage, we recall the well-known bounds $(2\pi x)^{1/2}(x/ \e)^x<\Gamma(x+1)<(2\pi x)^{1/2}(x/\e)^x \e$, valid for each $x>0$. 
Using the first inequality with $x=d/2$ gives
	\eqq{ a_d>\frac{(\pi d)^{1/d} d}{8\pi \e (d/2+1)}\ge c>0, \label{eq:app:a_d_Gamma_bounds}}
where $c>0$ does not depend on $d$. Further, for $|k|\le 1$,
	\[ |R_d(k)|\le \sum^\infty_{m=2}\frac{r^{2m+d}_d}{m!\Gamma(m+d/2+1)2^{2m+d/2}} =\sum^\infty_{m=2}\frac{\Gamma(d/2+1)^{(2m+d)/d}}{m!\Gamma(m+d/2+1)2^{2m+d/2}\pi^{(2m+d)/2}}. \]
Using the above bounds for the Gamma function, we obtain that
	\[ |R_d(k)| \le \sum^\infty_{m=2}\frac{(\pi d)^{(2m+d)/(2d)}(d/(2 \e))^{(2m+d)/2} \e^{(2m+d)/d}} {m!(2\pi(m+d/2))^{1/2}((m+d/2)/ \e)^{(m+d/2)}2^{2m+d/2}\pi^{(2m+d)/2}}. \]
Since, trivially, $d/(2 \e)\le (m+d/2)/ \e$, we see that
	\al{ |R_d(k)| & \le \sum^\infty_{m=2}\frac{(\pi d)^{(2m+d)/(2d)} \e^{(2m+d)/d}} {m! (2\pi(m+d/2))^{1/2} 2^{2m+d/2}\pi^{(2m+d)/2}}
					\le \sum^\infty_{m=2}\frac{(\pi d)^{m/d} (\pi d)^{1/2} \e^{2m/d} \e} {m!(2\pi)^{1/2}(d/2)^{1/2} 2^{2m}\pi^{m}} \\
			&=\sum^\infty_{m=2}\frac{(\pi d)^{m/d} \e^{2m/d} \e} {m! 2^{2m}\pi^{m}}. }
The latter series is bounded uniformly above in $d$ 
\ch{and $k$, say by some constant $c'$. Together with \eqref{eq:app:blob_fconnf_bessel} and \eqref{eq:app:a_d_Gamma_bounds}, we get that $1-\fconnf(k)\geq c|k|^{2}-c'|k|^{4}>(c/2)|k|^2$ when $|k|^2 c'\leq c/2$, thus proving \eqref{eq:Boolean_D_infraredbound} \chr{with $c_1=c/2$ and $b=c/(2c')$}.}

\ch{For \eqref{eq:Boolean_D_infraredbound2}, we  argue as in the proof of Proposition~\ref{thm:prelim:conn_fct_examples} (b) that $\hat\varphi$ is bounded away from 1 outside a neighborhood of the origin, as otherwise we would get $\varphi\equiv0$ almost everywhere. We thus need to establish uniformity in the dimension. We start from \eqref{eq:app:FT_indicator_ball_bessel}, and sketch the argument here. 
Bessel functions of the first kind satisfy $|J_{d/2}|\le Md^{-1/3}$ (for a $d$-independent constant $M$), and $|J_{d/2}|$ achieves its global maximum at 
$\bar x_{d/2}=d/2+\gamma_1 (d/2)^{1/3}+O(d^{-1/3})$ (for a specific constant $\gamma_1\approx0.81\dots$), \cite{AbrSte70}. Therefore, \chr{using $r_d\sim \sqrt d / \sqrt {2\pi {\rm e}}$,} 
\[ 	|\fconnf(k)|
	=\Big(\frac{2\pi r_d}{|k|} \Big)^{d/2} |J_{d/2}(|k|r_d)|
	\le\Big(\frac{2\pi r_d}{|k|} \Big)^{d/2} Md^{-1/3}
	\to 0\qquad\text{as $d\to\infty$} \]
whenever $|k|r_d\ge \bar x_{d/2}$. 
On the other hand, the relationship between Bessel functions (see \cite[Section B.2]{Gra14}) yields 
\[ \frac{\dd}{\dd |k|}\fconnf(k)=-r_d\left(\frac{2\pi r_d}{|k|}\right)^{d/2} J_{d/2+1}(r|k|),\]
and thus $|k|\mapsto \fconnf(k)$ is decreasing until the first positive zero of $J_{d/2+1}$ located at 
$\bar y_{d/2+1}=(d/2+1)+\gamma_2(d/2+1)^{1/3}+O(d^{-1/3})$ 
with $\gamma_2\approx1.86\dots$, \cite{AbrSte70}.  
Since $\gamma_1<\gamma_2$, and thus $\bar x_{d/2}<\bar y_{d/2+1}$ for $d$ large enough, this proves the desired uniformity. Further details are provided in \cite[Appendix B]{DicHey22}.} 
\end{proof}

The following two propositions are used in a crucial way in Section~\ref{sec:bootstrapanalysis}:

\begin{prop}[Random walk $s$-condition] \label{thm:randomwalkestimates} \
\begin{compactitem}
\item[1.] Let $\connf$ satisfy (H1). Let $s \in \N_0$, $2 \leq m \in\N$, and $\ch{d>2s}$. Then there is a constant $c_s^{\sss\rm{RW}}$ (independent of $d$) such that, for all $\mu\in[0,1]$ and with $\beta =g(d)^{1/4}$ as in~\eqref{eq:def:beta},
	\[ \int \frac{|\fconnf(k)|^m}{[1-\mu\fconnf(k)]^s} \frac{\dd k}{(2\pi)^d} \leq c_s^{\sss\rm{RW}} \beta^{2((m \wedge 3)-2)}. \]
\item[2.] Let $\connf$ satisfy (H2) or (H3). Let $s \in \N_0$, $2 \leq m \in\N$, and $d>(\alpha \wedge 2)s$ \col{(if $\alpha=2$, either $d > 2s$ or $d=2s \geq 4$)}. Then there is a constant $c_s^{\sss\rm{RW}}$ (independent of $L$) such that, for all $\mu\in[0,1]$ and with $\beta=L^{-d}$ as in~\eqref{eq:def:beta},
	\[ \int \frac{|\fconnf(k)|^m}{[1-\mu\fconnf(k)]^s} \frac{\dd k}{(2\pi)^d} \leq c_s^{\sss\rm{RW}} \beta. \]
\end{compactitem}
\end{prop}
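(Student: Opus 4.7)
The proof naturally splits into Part 2 (under (H2)/(H3)), where constants may depend on the dimension $d$, and the harder Part 1 (under (H1)), where the constant $c_s^{\text{RW}}$ must be independent of $d$. For Part 2 I would first reduce to $\mu\in[\tfrac12,1]$: when $\mu\in[0,\tfrac12]$, $(1-\mu\fconnf_L(k))^{-s}\leq 2^s$ uniformly, and Plancherel combined with $|\fconnf_L|^m\leq|\fconnf_L|^2$ and (H2.2) gives
\[
\int |\fconnf_L|^2\frac{\dd k}{(2\pi)^d}=\|\connf_L\|_2^2\leq\|\connf_L\|_\infty\|\connf_L\|_1\leq CL^{-d},
\]
which is the desired order. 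For $\mu\in(\tfrac12,1]$, the plan is to split the integration at $|k|=bL^{-1}$. On $\{|k|\leq bL^{-1}\}$ the lower bound in (H2.3)/(H3.3) gives $1-\mu\fconnf_L(k)\gtrsim (L|k|)^{\alpha\wedge 2}$; after substituting $k'=Lk$ the integral becomes $L^{-d}\int_{|k'|\leq b}|k'|^{-s(\alpha\wedge 2)}\dd k'/(2\pi)^d$, convergent exactly when $d>s(\alpha\wedge 2)$. The borderline case $\alpha=2$, $d=2s\geq 4$ is handled via the logarithmic improvement in (H3.3), which reduces the radial integral to $\int_0^b r^{-1}(\log 1/r)^{-s}\dd r<\infty$ for $s\geq 2$. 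On the complement $\{|k|>bL^{-1}\}$ the denominator is bounded below by a constant uniformly in $\mu$, and the integral is again handled by Plancherel as above.

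For Part 1 my plan is to use $|\fconnf|\leq 1$ to replace $|\fconnf|^m$ by $|\fconnf|^{m\wedge 3}$ and then apply Cauchy--Schwarz to the factorization $|\fconnf|^{m\wedge 3}\fgreensmu^s=[|\fconnf|^{(m\wedge 3)-1}]\cdot[|\fconnf|\fgreensmu^s]$, giving
\[
\int|\fconnf|^{m\wedge 3}\fgreensmu^s\frac{\dd k}{(2\pi)^d}\leq\Bigl(\int|\fconnf|^{2((m\wedge 3)-1)}\frac{\dd k}{(2\pi)^d}\Bigr)^{1/2}\Bigl(\int|\fconnf|^2\fgreensmu^{2s}\frac{\dd k}{(2\pi)^d}\Bigr)^{1/2}.
\]
The first factor is handled by Plancherel: for $m=2$ it equals $\connf^{\star 2}(\orig)^{1/2}\leq\|\connf\|_\infty^{1/2}$, bounded by a $d$-independent constant for the model examples; for $m\geq 3$ it equals $\connf^{\star 4}(\orig)^{1/2}\leq g(d)^{1/2}=\beta^2$ by (H1.2) applied with exponent $4\geq 3$. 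This produces exactly the factor $\beta^{2((m\wedge 3)-2)}$ required in the statement, so the task reduces to showing that $I_s:=\sup_{\mu\in[0,1]}\int|\fconnf|^2\fgreensmu^{2s}\dd k/(2\pi)^d$ is bounded by a constant $C_s$ independent of $d$ for $d>4s$.

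The main obstacle is this uniform bound on $I_s$, which I would establish by splitting the domain according to the value of $\fconnf$. On $\{\fconnf\leq\tfrac12\}$ one has $1-\mu\fconnf\geq\tfrac12$, so $\fgreensmu^{2s}\leq 4^s$, and Plancherel again yields a bound $\leq 4^s\|\connf\|_\infty$ independent of $d$. On $\{\fconnf>\tfrac12\}$, the infrared bound (H1.3)---which for the Poisson blob and the Gaussian holds with constants $c,k_0$ independent of $d$, as verified in the proof of Proposition~\ref{thm:prelim:conn_fct_examples}(a)---forces $|k|\leq k_0$ and $1-\mu\fconnf\geq c|k|^2$, hence
\[
\int_{\{\fconnf>\tfrac12\}}|\fconnf|^2\fgreensmu^{2s}\frac{\dd k}{(2\pi)^d}\leq c^{-2s}\int_{|k|\leq k_0}\frac{\dd k}{|k|^{4s}(2\pi)^d}=C_s\frac{b_d\,k_0^{d-4s}}{(2\pi)^d(d-4s)},
\]
which is finite exactly when $d>4s$ and, by Stirling applied to $b_d/(2\pi)^d$, tends to zero super-exponentially in $d$. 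This last fact simultaneously ensures a uniform bound in $d$ and is the precise reason the hypothesis demands $d>4s$ rather than the weaker $d>2s$; the corresponding weaker requirement in Part 2 arises because $\|\connf_L\|_\infty\leq CL^{-d}$ already supplies the $\beta$-decay without needing Cauchy--Schwarz.
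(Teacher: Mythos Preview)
Your proposal is correct and follows essentially the same route as the paper. In Part~2 the paper skips your preliminary reduction to $\mu\in(\tfrac12,1]$ and simply takes $\mu=1$ (the worst case), then splits at $|k|=bL^{-1}$ exactly as you do; in Part~1 the paper uses the identical Cauchy--Schwarz factorization $|\fconnf|^m=|\fconnf|^{m-1}\cdot|\fconnf|$ and then splits the remaining integral on $|k|\le\varepsilon$ versus $|k|>\varepsilon$ rather than on $\{\fconnf>\tfrac12\}$ versus $\{\fconnf\le\tfrac12\}$. One small caution: the containment $\{\fconnf>\tfrac12\}\subseteq\{|k|\le k_0\}$ is not a consequence of (H1.3) alone (which is only an asymptotic statement near the origin) but requires the additional, example-specific fact that $\fconnf$ stays bounded away from $1$ outside a $d$-uniform neighbourhood of $\orig$; the paper's $|k|$-split needs the same input, so this is a matter of phrasing rather than a real gap.
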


\begin{prop}[Related Fourier integrals] \label{thm:randomwalkestimates2}
\col{Let \ch{$d>6$} for (H1) and $d> 3(\alpha \wedge 2)$ for (H2) and (H3).} Then for $n\in \{1,2,3\}$ and $m\in\{2,3\}$, uniformly in $\mu \leq 1$ and $k \in \Rd$, and with $\beta$ as in~\eqref{eq:def:beta},
	\algn{ \int |\fconnf(l)|^m \fgreensmu(l)^{3-n} [\fgreensmu (l+k) + \fgreensmu (l-k)]^n \frac{\dd l}{(2 \pi)^d} &\leq 2^n c_3^{\sss\rm{RW}} \beta^{m-2}, \label{eq:RWrelatedtriangles1} \\
		\int |\fconnf(l)|^m \fgreensmu(l) \fgreensmu (l+k) \fgreensmu (l-k) \frac{\dd l}{(2 \pi)^d} &\leq c_3^{\sss\rm{RW}} \beta^{m-2}, \label{eq:RWrelatedtriangles2} }
where $c_3^{\sss\rm{RW}}$ is as in Proposition~\ref{thm:randomwalkestimates}.
\end{prop}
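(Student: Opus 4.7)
The plan is to reduce both integrals to the random-walk estimate of Proposition~\ref{thm:randomwalkestimates} applied with $s=3$, which is exactly where the dimensional hypothesis $d>12$ (resp.\ $d>3(\alpha\wedge 2)$) enters. A preliminary observation used throughout is that since $\fconnf(l)\le \fconnf(\orig)=1$ and $\mu\le 1$, one has $1-\mu\fconnf(l)>0$, so $\fgreensmu(l)\ge 0$ everywhere; in particular the integrands on the left-hand sides of \eqref{eq:RWrelatedtriangles1} and \eqref{eq:RWrelatedtriangles2} are non-negative.

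First I would expand $[\fgreensmu(l+k)+\fgreensmu(l-k)]^{n}$ by the binomial theorem, writing the integrand of \eqref{eq:RWrelatedtriangles1} as a sum of at most $2^{n}$ non-negative terms, each of the form
\[
T_{a,b,c}(m):=\int |\fconnf(l)|^{m}\,\fgreensmu(l)^{a}\fgreensmu(l+k)^{b}\fgreensmu(l-k)^{c}\,\frac{\dd l}{(2\pi)^{d}},
\]
with non-negative integers $a+b+c=3$; the integral in \eqref{eq:RWrelatedtriangles2} is already of this form with $(a,b,c)=(1,1,1)$. The aim is therefore reduced to showing $T_{a,b,c}(m)\le c_{3}^{\text{RW}}\beta^{m-2}$ for every such $(a,b,c)$ and for $m\in\{2,3\}$.

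The central step is the Parseval-type inequality
\[
\int \fconnf(l)^{2p}\,\fgreensmu(l)^{a}\fgreensmu(l+k)^{b}\fgreensmu(l-k)^{c}\,\frac{\dd l}{(2\pi)^{d}}\;\le\;\int \fconnf(l)^{2p}\,\fgreensmu(l)^{3}\,\frac{\dd l}{(2\pi)^{d}},\qquad p\in\N,
\]
valid for every $(a,b,c)$ as above. To prove it, apply Parseval to identify the left-hand side with the iterated convolution $(\connf^{\star 2p}\star G_\mu^{\star a}\star(\e^{\i k\cdot\cdot}G_\mu)^{\star b}\star(\e^{-\i k\cdot\cdot}G_\mu)^{\star c})(\orig)$, then use the elementary identity $(\e^{\pm\i k\cdot x}G_\mu)^{\star r}(x)=\e^{\pm\i k\cdot x}G_\mu^{\star r}(x)$ to gather all oscillatory factors into a single unit-modulus phase inside one scalar integral. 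Taking absolute values and invoking the non-negativity and symmetry of $\connf^{\star 2p}$ and $G_\mu^{\star r}$ bounds this by $(\connf^{\star 2p}\star G_\mu^{\star 3})(\orig)$, which is precisely the right-hand side after undoing Parseval.

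With this inequality, the case $m=2$ of Proposition~\ref{thm:randomwalkestimates2} is immediate (since $|\fconnf|^{2}=\fconnf^{2}$): each $T_{a,b,c}(2)$ is bounded by $\int \fconnf^{2}\fgreensmu^{3}\,\dd l/(2\pi)^{d}\le c_{3}^{\text{RW}}$ via Proposition~\ref{thm:randomwalkestimates}. For $m=3$ the function $|\fconnf|^{3}$ is not the Fourier transform of a non-negative function, so the Parseval bound cannot be applied directly; I would instead first invoke the Cauchy--Schwarz inequality
\[
\int |\fconnf|^{3}F(l)\,\frac{\dd l}{(2\pi)^{d}}\;\le\;\Big(\int\fconnf^{2}F(l)\,\frac{\dd l}{(2\pi)^{d}}\Big)^{1/2}\Big(\int\fconnf^{4}F(l)\,\frac{\dd l}{(2\pi)^{d}}\Big)^{1/2}
\]
with the non-negative $F(l):=\fgreensmu(l)^{a}\fgreensmu(l+k)^{b}\fgreensmu(l-k)^{c}$, and then apply the Parseval-type inequality above to each factor. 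Proposition~\ref{thm:randomwalkestimates} with $s=3$ bounds the resulting two integrals: under (H1) by $c_{3}^{\text{RW}}$ and $c_{3}^{\text{RW}}\beta^{2}$ respectively (the exponent $2p=4$ yields $m\wedge 3=3$), whose geometric mean is $c_{3}^{\text{RW}}\beta$; under (H2) or (H3) both are already $c_{3}^{\text{RW}}\beta$, whose geometric mean is again $c_{3}^{\text{RW}}\beta$. Either way this matches $c_{3}^{\text{RW}}\beta^{m-2}$, and summing over the at most $2^{n}$ binomial contributions produces the stated bounds. The main technical point---and effectively the only novelty beyond Proposition~\ref{thm:randomwalkestimates}---is the Parseval-type inequality used to absorb the shifts $\pm k$.
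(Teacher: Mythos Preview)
Your proposal is correct and follows essentially the same route as the paper's proof: both reduce to Proposition~\ref{thm:randomwalkestimates} with $s=3$ by passing to $x$-space and exploiting that the $k$-shift produces a unit-modulus factor, then handle the odd power $m=3$ via Cauchy--Schwarz splitting $|\fconnf|^{3}$ into $\fconnf^{2}$ and $\fconnf^{4}$. The only cosmetic difference is in how the shift is absorbed: the paper introduces the auxiliary real function $G_{\mu,k}(x)=\cos(k\cdot x)\greensmu(x)$, uses the identity $\fgreensmu(l+k)+\fgreensmu(l-k)=2\widehat G_{\mu,k}(l)$ for \eqref{eq:RWrelatedtriangles1} and the pointwise inequality $\fgreensmu(l+k)\fgreensmu(l-k)\le \widehat G_{\mu,k}(l)^{2}$ for \eqref{eq:RWrelatedtriangles2}, and then bounds $|G_{\mu,k}|\le\greensmu$ in the convolution; you instead expand binomially and bound the complex phases $|\e^{\pm\i k\cdot x}|=1$ directly. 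Both packagings lead to the same estimate $(\connf^{\star 2p}\star\greensmu^{\star 3})(\orig)$.
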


\begin{proof}[Proof of Proposition~\ref{thm:randomwalkestimates} for the finite-variance model (H1).]	
We first note that w.l.o.g. we can restrict to considering $m\in\{2,3\}$. 

Note that as soon as $\mu<1$, the denominator is bounded away from zero and the boundedness follows from the integrability of the enumerator. To get a bound that is uniform in $\mu$, we set $\mu=1$.

\ch{We split the area of integration and first consider $\{k\colon |k| \leq b\}$, where we choose $b>0$ as in \eqref{eq:Boolean_D_infraredbound}. 
Applying~\eqref{eq:Boolean_D_infraredbound},
	\[ 
	\int_{|k| \leq b} \frac{|\fconnf(k)|^m}{[1-\fconnf(k)]^{s}} \frac{\dd k}{(2\pi)^d} \leq c_1^{-s} \int_{|k|\leq b} |k|^{-2s} \frac{\dd k}{(2\pi)^d}. 
	\]
\chr{The RHS is $c_1^{-s}b^{d-2s}|S_{d-1}|$ (where $|S_{d-1}|=d\pi^{d/2}/\Gamma(d/2+1)$ is the surface of the $d$-dimensional sphere), and thus finite if $d>2s$ and smaller than a constant times $\beta^2=g(d)^{1/2}$ by our assumption that $g(d)$ decays at most exponentially.} 
For $|k|>b$, we have $1-\fconnf(k) >c_2>0$ by \eqref{eq:Boolean_D_infraredbound2}. 
Consequently,
	\[ \int_{|k| > b} \frac{|\fconnf(k)|^m}{[1-\fconnf(k)]^{s}} \frac{\dd k}{(2\pi)^d} 
						\leq c_2^{-s} \int |\fconnf(k)|^m \frac{\dd k}{(2\pi)^d}.
	\]
This integral is just $(\connf\star\connf)(\orig)\leq 1$ for $m=2$. For $m=3$, by Cauchy-Schwarz, \chr{with $c=c_1\wedge c_2$,}
 	\[
	c^{-s} \int |\fconnf(k)|^3 \frac{\dd k}{(2\pi)^d}\leq c^{-s} \left(\int \fconnf(k)^{4} \frac{\dd k}{(2\pi)^d}\right)^{1/2}\left(\int \fconnf(k)^{2} \frac{\dd k}{(2\pi)^d}\right)^{1/2}.
	\]
The integral in the first factor is $\connf^{\star 4}(\orig) = \mathcal O(\beta^4)$, the integral in the second factor is $(\connf\star\connf)(\orig)\leq 1$.
}						
\end{proof}

\begin{proof}[Proof of Proposition~\ref{thm:randomwalkestimates} for the spread-out models (H2), (H3).]
Again, we set $\mu=1$, since otherwise the statement is clear. We consider the regions $|k| \leq b/L$ and $|k| > b/L$. Applying (H2.3) (resp., (H3.3)),
	\eqq{ \int_{|k| \leq b L^{-1}} \frac{|\fconnf_L(k)|^m}{[1-\fconnf_L(k)]^s} \frac{\dd k}{(2\pi)^d} \leq \frac{1}{c_1^s L^{(\alpha \wedge 2) s}} \int_{|k| \leq b L^{-1}} 
				\frac{1}{|k|^{(\alpha \wedge 2) s}} \frac{\dd k}{(2\pi)^d} \leq C L^{-d} \label{eq:RW_Fourier_bound_spreadout}}
for $d > (\alpha \wedge 2)s$. Note that $C$ depends on $b,c_1, \alpha, d$, but not on $L$. \col{If $\alpha=2$ and $d=2s$, then we can replace the bound on the right-hand side of~\eqref{eq:RW_Fourier_bound_spreadout} by
	\[\frac{1}{c_1^s L^{d}} \int_{|k| \leq b L^{-1}} \frac{1}{|k|^d  \log(\frac{\pi}{2L|k|})^{d/2} }\frac{\dd k}{(2\pi)^d} \leq C L^{-d}, \]
so long as $d \geq 4$.} For the second region,
	\al{ \int_{|k| > b L^{-1}} \frac{|\fconnf_L(k)|^m}{[1-\fconnf_L(k)]^s} \frac{\dd k}{(2\pi)^d} &\leq c_2^{-s} \int |\fconnf_L(k)|^m\frac{\dd k}{(2\pi)^d} \leq c_2^{-s} (\connf_L\star\connf_L)(\orig) \\
		&\leq c_2^{-s} \| \connf_L \|_\infty \int \connf_L(x) \dd x \leq C L^{-d} ,}
using (H2.2) in the last bound.
\end{proof}

\begin{proof}[Proof of Proposition~\ref{thm:randomwalkestimates2}]
We show the proof for (H1); the spread-out models work similarly. To prove \eqref{eq:RWrelatedtriangles2}, we point out that
	\eqq{ \fgreensmu(l \pm k) = \int [\cos(l \cdot x) \cos (k \cdot x) \mp \sin(l \cdot x) \sin(k \cdot x)] \greensmu(x) \dd x. \label{eq:Greenfct-trigonometric-sum-identity}}
Setting $G_{\mu,k}(x) = \cos(k \cdot x) \greensmu (x)$, we thus have
	\eqq{ 0 \leq \fgreensmu(l-k) \fgreensmu(l+k) = \widehat G_{\mu,k} (l) ^2 - \left( \int \sin(l \cdot x) \sin(k \cdot x) \greensmu(x) \dd x \right)^2 \leq \widehat G_{\mu,k} (l)^2, \label{eq:RWestimates2_C_mu,k}}
and so, using the Cauchy-Schwarz inequality, \eqref{eq:RWrelatedtriangles2} is bounded by
	\algn{ & \left( \int \fconnf(l)^{2m-2} \fgreensmu(l) \widehat G_{\mu,k}(l)^2 \frac{\dd l}{(2\pi)^d} \right)^{1/2} \left(\int \fconnf(l)^2 \fgreensmu(l) \widehat G_{\mu,k}(l)^2 \frac{\dd l}{(2\pi)^d} \right)^{1/2} 
						\notag\\
			 & \qquad = \Big(\big( \connf^{\star 2m-2} \star \greensmu \star G_{\mu,k}^{\star 2} \big)(\orig)
			 				 \big( \connf^{\star 2} \star \greensmu \star G_{\mu,k}^{\star 2} \big) (\orig) \Big)^{1/2}. \label{eq:RWestimates2_C_mu,k_bound} }
Using that $\big( \connf^{\star 2m-2} \star \greensmu \star G_{\mu,k}^{\star 2} \big) (\orig) \leq \big( \connf^{\star 2m-2} \star \greensmu^{\star 3} \big) (\orig)$, we continue as in the proof of Proposition~\ref{thm:randomwalkestimates}. By \eqref{eq:Greenfct-trigonometric-sum-identity}, $\fgmu(l+k)+\fgmu(l-k) = 2 \widehat G_{\mu, k}(l)$, and so we can write \eqref{eq:RWrelatedtriangles1} as
	\[\int |\fconnf(l)|^m \fgreensmu(l)^{3-n} 2^n \widehat G_{\mu,k}(l)^n \frac{\dd l}{(2\pi)^d},\]
which is bounded analogously to above.
\end{proof}

\end{appendix}

\paragraph{Acknowledgments.} We are most grateful to Matthew Dickson and Mathew Penrose for comments on an earlier version of the manuscript. The work of RvdH is supported by the Netherlands Organisation for Scientific Research (NWO) through the Gravitation {\sc Networks} grant 024.002.003. MH and GL acknowledge the support of the German Science Foundation (DFG) through the research group ``Geometry and Physics of Spatial Random Systems'' (FOR 1548) and priority program ``Random Geometric Systems'' (SPP 2265).


\begin{thebibliography}{10}

\bibitem{AbrSte70}
Milton Abramowitz and Irene~A Stegun, \emph{Handbook of mathematical functions
  with formulas, graphs, and mathematical tables}, vol.~55, US Government
  printing office, 1970.

\bibitem{AizNew84}
Michael {Aizenman} and Charles~M. {Newman}, \emph{{Tree graph inequalities and
  critical behavior in percolation models}}, {J. Stat. Phys.} \textbf{36}
  (1984), 107--144.

\bibitem{BetLas21}
Matthew~Dickson Alejandro~Caicedo, \emph{Critical exponents for marked random
  connection models}, 2023, Preprint ar{X}iv:2305.07398 [math.{PR}].

\bibitem{AizBar91}
David~J. {Barsky} and Michael {Aizenman}, \emph{{Percolation critical exponents
  under the triangle condition}}, {Ann. Probab.} \textbf{19} (1991), no.~4,
  1520--1536.

\bibitem{BefDum13}
Vincent {Beffara} and Hugo {Duminil-Copin}, \emph{{Planar percolation with a
  glimpse of Schramm-Loewner evolution}}, {Probab. Surv.} \textbf{10} (2013),
  1--50.

\bibitem{vdB96}
Jacob van~den Berg, \emph{{A note on disjoint-occurrence inequalities for
  marked Poisson point processes}}, {J. Appl. Probab.} \textbf{33} (1996),
  no.~2, 420--426.

\bibitem{CaiceDicks23}
Steffen Betsch and Günter Last, \emph{On the uniqueness of {G}ibbs
  distributions with a non-negative and subcritical pair potential}, 2021,
  Preprint ar{X}iv:2108.06303 [math.{PR}].

\bibitem{BolRio06}
B\'ela {Bollob\'as} and Oliver {Riordan}, \emph{{Percolation}}, Cambridge:
  Cambridge University Press, 2006.

\bibitem{BorChaHofSlaSpe05}
Christian {Borgs}, Jennifer~T. {Chayes}, Remco {van der Hofstad}, Gordon
  {Slade}, and Joel {Spencer}, \emph{{Random subgraphs of finite graphs. II:
  The lace expansion and the triangle condition}}, {Ann. Probab.} \textbf{33}
  (2005), no.~5, 1886--1944.

\bibitem{BroHam57}
Simon~B. {Broadbent} and John~M. {Hammersley}, \emph{{Percolation processes. I:
  Crystals and mazes}}, {Proc. Camb. Philos. Soc.} \textbf{53} (1957),
  629--641.

\bibitem{vdBru03}
Tim van~de Brug, \emph{{The random connection model: Construction, central
  limit theorem and asymptotic connectivity}}, Master's thesis, Vrije
  Universiteit Amsterdam, 2003.

\bibitem{CheSak19}
Lung-Chi Chen and Akira Sakai, \emph{Critical two-point function for long-range
  models with power-law couplings: the marginal case for {$d\ge d_{\rm c}$}},
  Commun. Math. Phys. \textbf{372} (2019), no.~2, 543--572.

\bibitem{DhaJoa88}
Sudhakar {Dharmadhikari} and Kumar {Joag-dev}, \emph{{Unimodality, convexity
  and applications}}, London: Academic Press, Inc., 1988.

\bibitem{DicHey22}
Matthew Dickson and Markus Heydenreich, \emph{The triangle condition for the
  marked random connection model}, 2022, Preprint ar{X}iv:2210.07727
  [math.{PR}].

\bibitem{FitHof16}
Robert {Fitzner} and Remco {van der Hofstad}, \emph{{Generalized approach to
  the non-backtracking lace expansion}}, {Probab. Theory Relat. Fields}
  \textbf{169} (2017), no.~3-4, 1041--1119.

\bibitem{FitHof17}
\bysame, \emph{{Mean-field behavior for nearest-neighbor percolation in
  \(d>10\)}}, {Electron. J. Probab.} \textbf{22} (2017), 65, Id/No 43.

\bibitem{Gil61}
Edgar~N. {Gilbert}, \emph{{Random plane networks}}, {J. Soc. Ind. Appl. Math.}
  \textbf{9} (1961), 533--543.

\bibitem{GraHeyMonMor19}
Peter Gracar, Markus Heydenreich, Christian M{\"o}nch, and Peter M{\"o}rters,
  \emph{Recurrence versus transience for weight-dependent random connection
  models}, 2022, Paper No 60, p.~31.

\bibitem{Gra14}
Loukas {Grafakos}, \emph{{Classical Fourier analysis}}, 3rd ed., Graduate Texts
  in Mathematics, New York, NY: Springer, 2014.

\bibitem{Gri99}
Geoffrey Grimmett, \emph{Percolation}, 2nd ed., Grundlehren der Mathematischen
  Wissenschaften [Fundamental Principles of Mathematical Sciences], vol. 321,
  Springer-Verlag, Berlin, 1999.

\bibitem{GupRao99}
J.~C. {Gupta} and Bhamidi~V. {Rao}, \emph{{Van den Berg-Kesten inequality for
  the Poisson Boolean model for continuum percolation}}, {Sankhy\=a, Ser. A}
  \textbf{61} (1999), no.~3, 337--346.

\bibitem{Har90}
Takashi {Hara}, \emph{{Mean-field critical behaviour for correlation length for
  percolation in high dimensions}}, {Probab. Theory Relat. Fields} \textbf{86}
  (1990), no.~3, 337--385.

\bibitem{Har08}
\bysame, \emph{{Decay of correlations in nearest-neighbor self-avoiding walk,
  percolation, lattice trees and animals}}, {Ann. Probab.} \textbf{36} (2008),
  no.~2, 530--593.

\bibitem{HarSla90}
Takashi {Hara} and Gordon {Slade}, \emph{{Mean-field critical behaviour for
  percolation in high dimensions}}, {Commun. Math. Phys.} \textbf{128} (1990),
  no.~2, 333--391.

\bibitem{HarSla95}
\bysame, \emph{{The self-avoiding-walk and percolation critical points in high
  dimensions}}, {Comb. Probab. Comput.} \textbf{4} (1995), no.~3, 197--215.

\bibitem{HarHofSla03}
Takashi {Hara}, Remco {van der Hofstad}, and Gordon {Slade}, \emph{{Critical
  two-point functions and the lace expansion for spread-out high-dimensional
  percolation and related models}}, {Ann. Probab.} \textbf{31} (2003), no.~1,
  349--408.

\bibitem{HeyMat19b}
Markus Heydenreich and Kilian Matzke, \emph{Expansion for the critical point of
  site percolation: the first three terms}, Combin. Probab. Comput. \textbf{31}
  (2022), no.~3, 430--454.

\bibitem{HeyHof17}
Markus {Heydenreich} and Remco {van der Hofstad}, \emph{{Progress in
  high-dimensional percolation and random graphs}}, Cham: Springer, 2017.

\bibitem{HeyHofSak08}
Markus {Heydenreich}, Remco {van der Hofstad}, and Akira {Sakai},
  \emph{{Mean-field behavior for long- and finite range Ising model,
  percolation and self-avoiding walk}}, {J. Stat. Phys.} \textbf{132} (2008),
  no.~6, 1001--1049.

\bibitem{HofSla05}
Remco van~der Hofstad and Gordon Slade, \emph{{Asymptotic expansions in
  \(n^{-1}\) for percolation critical values on the \(n\)-cube and \(\mathbb
  Z^n\)}}, {Random Struct. Algorithms} \textbf{27} (2005), no.~3, 331--357.

\bibitem{HofSla06}
\bysame, \emph{{Expansion in \(n^{-1}\) for percolation critical values on the
  \(n\)-cube and \(\mathbb Z^n\): the first three terms}}, {Comb. Probab.
  Comput.} \textbf{15} (2006), no.~5, 695--713.

\bibitem{KozNac09}
Gady {Kozma} and Asaf {Nachmias}, \emph{{The Alexander-Orbach conjecture holds
  in high dimensions}}, {Invent. Math.} \textbf{178} (2009), no.~3, 635--654.

\bibitem{KozNac11}
\bysame, \emph{{Arm exponents in high dimensional percolation}}, {J. Am. Math.
  Soc.} \textbf{24} (2011), no.~2, 375--409.

\bibitem{LasNesSch18}
G{\"u}nter Last, Franz Nestmann, and Matthias Schulte, \emph{The random
  connection model and functions of edge-marked poisson processes: second order
  properties and normal approximation}, 2021, Preprint ar{X}iv:1808.01203
  [math.{PR}]. To appear at Ann. Applied Probab.

\bibitem{LasPen17}
G\"unter {Last} and Mathew {Penrose}, \emph{{Lectures on the Poisson process}},
  vol.~7, Cambridge: Cambridge University Press, 2018.

\bibitem{LasPenZuy17}
G\"unter {Last}, Mathew~D. {Penrose}, and Sergei {Zuyev}, \emph{{On the
  capacity functional of the infinite cluster of a Boolean model}}, {Ann. Appl.
  Probab.} \textbf{27} (2017), no.~3, 1678--1701.

\bibitem{LasZie17}
G\"unter Last and Sebastian Ziesche, \emph{{On the Ornstein--Zernike equation
  for stationary cluster processes and the random connection model}}, Adv.
  Appl. Probab. \textbf{49} (2017), no.~4, 1260--1287.

\bibitem{Mee95}
Ronald {Meester}, \emph{{Equality of critical densities in continuum
  percolation}}, {J. Appl. Probab.} \textbf{32} (1995), no.~1, 90--104.

\bibitem{MeePenSar97}
Ronald {Meester}, Mathew~D. {Penrose}, and Anish {Sarkar}, \emph{{The random
  connection model in high dimensions}}, {Stat. Probab. Lett.} \textbf{35}
  (1997), no.~2, 145--153.

\bibitem{MeeRoy96}
Ronald {Meester} and Rahul {Roy}, \emph{{Continuum percolation}}, vol. 119,
  Cambridge: Cambridge Univ. Press, 1996.

\bibitem{Pen91}
Mathew~D. {Penrose}, \emph{{On a continuum percolation model}}, {Adv. Appl.
  Probab.} \textbf{23} (1991), no.~3, 536--556.

\bibitem{Pen93}
\bysame, \emph{{On the spread-out limit for bond and continuum percolation}},
  {Ann. Appl. Probab.} \textbf{3} (1993), no.~1, 253--276.

\bibitem{Pen03}
\bysame, \emph{{Random geometric graphs}}, vol.~5, Oxford: Oxford University
  Press, 2003.

\bibitem{Pen16}
\bysame, \emph{{Connectivity of soft random geometric graphs}}, {Ann. Appl.
  Probab.} \textbf{26} (2016), no.~2, 986--1028.

\bibitem{Rud76}
Walter {Rudin}, \emph{{Principles of mathematical analysis}}, {International
  Series in Pure and Applied Mathematics. D\"usseldorf etc.: McGraw-Hill Book
  Company. X, 342 p. DM 47.80 (1976).}, 1976.

\bibitem{Sarka97}
Anish {Sarkar}, \emph{{Continuity and convergence of the percolation function
  in continuum percolation}}, {J. Appl. Probab.} \textbf{34} (1997), no.~2,
  363--371 (English).

\bibitem{Sla06}
Gordon {Slade}, \emph{{The lace expansion and its application. \'Ecole
  d'\'Et\'e de Probabilit\'es de Saint-Flour XXXIV -- 2004}}, vol. 1879,
  Berlin: Springer, 2006.

\bibitem{Tan96}
Hideki {Tanemura}, \emph{{Critical behavior for a continuum percolation
  model}}, {Probability theory and mathematical statistics. Proceedings of the
  seventh Japan-Russia symposium, Tokyo, Japan, July 26--30, 1995}, Singapore:
  World Scientific, 1996, pp.~485--495.

\bibitem{Tor02}
Salvatore {Torquato}, \emph{{Random heterogeneous materials. Microstructure and
  macroscopic properties}}, vol.~16, New York, NY: Springer, 2002.

\bibitem{Tor12}
Salvatore Torquato, \emph{Effect of dimensionality on the continuum percolation
  of overlapping hyperspheres and hypercubes}, J. Chem. Phys. \textbf{136}
  (2012), no.~5, 054106.

\bibitem{Wer09}
Wendelin {Werner}, \emph{{Percolation et mod\`ele d'Ising}}, vol.~16, Paris:
  Soci\'et\'e Math\'ematique de France, 2009 (French).

\end{thebibliography}
\providecommand{\bysame}{\leavevmode\hbox to3em{\hrulefill}\thinspace}
\providecommand{\MR}{\relax\ifhmode\unskip\space\fi MR }
\providecommand{\MRhref}[2]{%
  \href{http://www.ams.org/mathscinet-getitem?mr=#1}{#2}
}
\providecommand{\href}[2]{#2}

\end{document}